\theoremstyle{definition}
\newtheorem{definition}{Definition}
\newtheorem{fact}[definition]{Fact}
\newtheorem{notation}[definition]{Notation}
\newtheorem{remark}[definition]{Remark}
\theoremstyle{plain}
\newtheorem{theorem}[definition]{Theorem}
\newtheorem{proposition}[definition]{Proposition}
\newtheorem{lemma}[definition]{Lemma}
\newtheorem{corollary}[definition]{Corollary}
\newcommand{\po}{\mathbb{P}}
\newcommand{\qo}{\mathbb{Q}}
\newcommand{\la}{\langle}
\newcommand{\ra}{\rangle}
\newcommand{\fr}{{}^{\frown}}
\newcommand{\name}{\dot}
\newcommand{\elem}{\prec}
\newcommand{\uhr}{\upharpoonright}
\newcommand{\mo}{\triangleleft}
\newcommand{\al}{\alpha}
\newcommand{\can}{\check}
\newcommand{\lam}{\lambda}
\newcommand{\power}{\mathcal{P}}
\newcommand{\A}{\mathfrak{A}}
\renewcommand{\S}{\mathcal{S}}
\DeclareMathOperator{\dom}{dom}
\DeclareMathOperator{\rng}{rng}
\DeclareMathOperator{\mc}{mc}
\DeclareMathOperator{\cf}{cof}
\DeclareMathOperator{\cp}{cp}
\DeclareMathOperator{\On}{On}
\DeclareMathOperator{\Coll}{Coll}
\DeclareMathOperator{\coll}{Coll}
\DeclareMathOperator{\otp}{otp}
\DeclareMathOperator{\pcf}{pcf}
\DeclareMathOperator{\dirlim}{dirlim}
\DeclareMathOperator{\low}{low}
\DeclareMathOperator{\high}{high}
\newcommand{\<}{\langle}
\renewcommand{\>}{\rangle}
\newcommand{\op}[1]{\operatorname{#1}}
\newcommand{\restr}{\upharpoonright}
\newcommand{\ult}{\op{Ult}}
\newcommand{\crit}{\op{crit}}
\newcommand{\gen}{\op{gen}}
\newcommand{\cof}{\op{cof}}
\newcommand{\card}{\op{card}}
\newcommand{\hull}{\op{Hull}}
\newcommand{\ptwimg}[2]{{#1}"\left[{#2}\right]}
\newcommand{\kleiner}{\mathord{<}}
\newcommand{\groesser}{\mathord{>}}
\newcommand{\groessergleich}{\mathord{\geq}}
\newcommand{\mitord}{\op{o}}
\newcommand{\length}{\op{lh}}
\newcommand{\eextend}{\trianglelefteq}
\newcommand{\ZFC}{\op{ZFC}}
\newcommand{\concat}{{}^\smallfrown}
\newcommand{\fnktsraum}[2]{{}^{#1}{}_{#2}}
\newcommand{\finsubsets}[1]{{\left[#1\right]}^{\kleiner\omega}}
\newcommand{\on}{\op{On}}
\newcommand{\ran}{\op{ran}}
\newcommand{\Th}{\op{Th}}
\newcommand{\id}{\op{id}}
\title{Approachable Free Subsets and Fine Structure Derived Scales\footnote{2010 \textit{Mathematics Subject Classification.} Primary 03E04, 03E45, 03E55}}
\author{Dominik Adolf and Omer Ben-Neria}
\date{\today}
\begin{document}

\maketitle

\begin{abstract}
    Shelah showed that the existence of free subsets over internally approachable subalgebras follows from the failure of the PCF conjecture on intervals of regular cardinals. We show that a stronger property called the Approachable Bounded Subset Property can be forced from the assumption of a cardinal $\lambda$ for which the set of Mitchell orders $\{ o(\mu) \mid \mu < \lambda\}$ is unbounded in $\lambda$.     
    Furthermore, we study the related notion of continuous tree-like scales, and show that such scales must exist on all products in canonical inner models. We use this result, together with a covering-type argument, to show that the large cardinal hypothesis from the forcing part is optimal. 
\end{abstract}

\section{Introduction}

The study of set theoretic algebras has been central in many areas, with many applications to compactness principles, cardinal arithmetic, and combinatorial set theory. 

An algebra on a set $X$ is a tuple $\A = \la X,f_n\ra_{n<\omega}$ where $f_n: X^{k_n} \rightarrow X$ is a function.
A sub-algebra is a subset $M \subseteq X$ such that $f_n(x_0,\ldots,x_{k_n - 1}) \in M$ for all $(x_0,\ldots,x_{k_n - 1}) \in M^{k_n}$ and $n < \omega$. The set of sub-algebras of $\A$ is known as a club (in $\mathcal{P}(X)$). The characteristic function $\chi_M$ of $M$ is defined on the ordinals of $M$ by $\chi_M(\tau) = \sup(M \cap \tau)$. 

Shelah's celebrated bound in cardinal arithmetic (\cite{ShelahCardArith}) states that if $\aleph_\omega$ is a strong limit cardinal then 
$$
2^{\aleph_\omega} < \min\{ \aleph_{\omega_4},\aleph_{(2^{\aleph_0})^+}  \}.
$$

\noindent Starting from a supercompact cardinal, Shelah proved that for every $\alpha < \omega_1$, there exists a generic extension in which $2^{\aleph_\omega} = \aleph_{\alpha+1}$ (see \cite{GitikMagidorSCHrevosed}). It is a central open problem in cardinal arithmetic if  $2^{\aleph_\omega} \geq \aleph_{\omega_1}$ is consistent. 
A major breakthrough towards a possible solution is the work of Gitik (\cite{GitikSEFI},\cite{GitikSEFII}) on the failure of the PCF-conjecture. Shelah's PCF conjecture states that $|\pcf(A)| \leq |A|$ for every progressive\footnote{I.e., $\min(A) > |A|$.} set $A$ of regular cardinals. 
In \cite{ShelahFree}, Shelah has extracted remarkable freeness properties of sets over subalgrbras, from the assumption of $2^{\aleph_\omega} \geq \aleph_{\omega_1}$, or more generally, from the assumption $|\pcf(A)| > |A|$ for a progressive interval of regular cardinals $|A|$. 

\begin{definition}
Let $\A = \la X,f_n\ra_{n}$ be an algebra and $x \subset X$. 
We say that $x$ is \textbf{free} with respect to $\A$ if for every $\delta \in x$ and $n < \omega$, $\delta \not\in f_n`` (x\setminus\{\delta\})^{<\omega}$. \\
More generally, $x$ is free over a subalgebra $N \subseteq \A$ if for every $\delta \in x$ and $n < \omega$, 
 $\delta \not\in f_n`` (N \cup (x\setminus\{\delta\}))^{<\omega}$. \\
\end{definition}

\noindent A cardinal $\lambda$ has the Free Subset Property if every algebra $\A$ on $\lambda$ or a bigger $H_\theta$, has a free subset $x \subseteq \lambda$ which is cofinal in $\lambda$. A regular cardinal $\lambda$ with the Free Subset Property is Jonsson. 
Koepke \cite{Koepke} has shown that the free subset property at $\aleph_\omega$ is equiconsistent with the existence of a measurable cardinal. 
For a singular limit $\lambda$ of a progressive interval $|A|$, it is shown in \cite{ShelahFree} that if $|\pcf(A)| >|A|$ then $\lambda$ satisfies the Free Subset Property. 
In his PhD thesis (\cite{LuisThesis}), Pereira has isolated the notion of the Approachable Free Subset Property (AFSP) to play a critical role in the result from \cite{ShelahFree}.
The \text{Approachable Free Subset Property} for a singular cardinal $\lambda$ asserts that there exists some sufficiently large $H_\theta$, $\theta > \lambda$ and an algebra $\A$ on $H_\theta$ such that for every internally approachable substructure\footnote{See Definition \ref{def:IAstructure}} $N \elem \A$ with $|N| < \lambda$, there exists an infinite sequence of regular cardinal $\la \tau_i \mid i < \cf(\lambda)\ra \in N$ such that the set $x = \{ \chi_N(\tau_i) \mid i < \cf(\lambda)\}$ is free over $N$.\\
Pereira showed that Shelah's proof yields that if $\lambda$ is a limit of a progressive interval $A$ or regular cardinals and $|\pcf(A)| > |A|$ then the Approachable Free Subset Property holds at $\lambda$. 

Working with fixed sequences $\la \tau_n \mid n < \omega\ra$ of regular cardinal, we consider here the following version of this property.
\begin{definition}
The \textbf{Approachable Free Subset Property (AFSP)} with respect to $\la \tau_n\ra_n$ asserts that for every sufficiently large regular $\theta > \lambda = (\cup_n \tau_n)$ and for every internally approachable subalgebra $N \elem \A$, of an algebra $\A$ extending $(H_\theta,\in,\la \tau_n\ra_n)$, satisfying $|N| < \lambda$ there exists a cofinite set $x \subseteq \{ \chi_N(\tau_n) \mid n <\omega\}$ which is free over $N$. 
\end{definition}
By moving from one cardinal $\theta$ to $\theta' > \theta$ if needed, it is routine to verify the definition of AFSP with respect to a sequence $\la \tau_n \ra_n$ can be replaced with a similar assertion in which the requirement of ``every internally approachable $N$'' is replaced with `` for every internally approachable in some closed unbounded subset of $\power_\lambda(\A)$''.
Clearly, if AFSP holds with respect to a sequence $\la \tau_n \ra_n$ then AFSP holds with respect to the singular limit $
\lambda = \cup_n \tau_n$, as in the original definition of \cite{LuisThesis}.\\

\noindent The above mentioned results, suggest that AFSP can provide a path to possibly improving Shelah's bound, to  $2^{\aleph_\omega} < \aleph_{\omega_1}$. I.e., proving (in ZFC) that AFSP must fail at $\aleph_\omega$ (or AFSP fails w.r.t every subsequence $\la \tau_n \ra_n$ of $\{ \aleph_k \mid k < \omega\}$) would imply that $2^{\aleph_\omega} < \aleph_{\omega_1}$. 
To this end, Pereira (\cite{LuisThesis}) has isolated the notion of \textbf{tree-like scales}, as a potential tool of proving AFSP must fail.

\begin{definition}
Let $\la \tau_n \ra_{n < \omega}$ be an increasing sequence of regular cardinals. 
A scale\footnote{see Definition \ref{def:TLSandETLS} for the definition of a continuous scale} $\vec{f} = \la f_\alpha \mid \alpha < \eta\ra$ is a tree-like scale on $\prod_n \tau_n$ if for every $\alpha \neq \beta < \eta$ and $n < \omega$, 
 $f_\alpha(n+1) = f_\beta(n+1)$ implies $f_\alpha(n) = f_\beta(n)$.
\end{definition}

Pereira shows in \cite{Luis1} that the existence of a continuous tree-like scale on a product $\prod\limits_{n < \omega} \tau_n$ guarantees the failure of AFSP with respect to $\la \tau_n \ra_n$ (see also Lemma \ref{lem:ImplicationsBetweenTLSandAFSP}), and further proves that continuous tree-like scales, unlike other well-known types of scales, such as good scales, can exist in models with some of the strongest large cardinal notions, e.g. $I_0$-cardinals. Moreover, Cummings \cite{Cummings} proved that tree-like scales can exist above supercompact cardinals. These results show that as opposed to other well-known properties of scales such as good and very-good scales, which exhibit desirable "local" behaviour but cannot exist in the presence of certain large cardinals (\cite{CFM}),
the notion of continuous tree-like scales may coexist with the some of the strongest large cardinals hypothesis. \\

\noindent The consistency of the inexistence of a continuous tree-like scale on a product $\prod_n \tau_n$ of regular cardinal has been established by Gitik in 
\cite{Gitik}, from the consistency assumption of a cardinal $\kappa$ satisfying $o(\kappa) = \kappa^{++}+1$.
The argument makes a sophisticated use of the key features of Gitik's extender based Prikry forcing by a $(\kappa,\kappa^{++})$-extender.\footnote{E.g., on the fact that there are unboundedly many pairs $(\alpha,\alpha^*) \in [\kappa]^2$, sharing the same Rudin-Keisler projection map $\pi_{\alpha^*,\alpha}$.} 
Concerning the possible consistency of the  Approachable Free Subset Property, Welch (\cite{Welch}) has shown that AFSP with respect to a sequence $\la \tau_n\ra_n$ implies that the large cardinal assumption of Theorem \ref{thmone} holds in an inner model.\\

\noindent
It remained open whether AFSP with respect to some sequence $\la \tau_n \mid n< \omega\ra$ is consistent at all, and if so, whether its consistency strength is strictly stronger than the (seemingly) weaker property, of no continuous tree-like scale on $\prod_n \tau_n$. 
The current work answers both questions:

\begin{theorem}\label{thmone} 
  It is consistent relative to the existence of a cardinal $\lambda$ such that the set of Mitchell orders
  $\{ o(\mu) \mid \mu < \lambda\}$ is unbounded in $\lam$, that the Approachable Free Subset Property holds with 
  respect to some sequence of regular cardinals $\vec{\tau} = \la \tau_n \ra_n$. \\
  Moreover,the sequence $\tau_n$ can be made to be a subsequence of the first uncountable cardinals, in a model where $\lambda = \aleph_\omega$.
\end{theorem}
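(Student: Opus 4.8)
The plan is to force the Approachable Free Subset Property with respect to a sequence $\la \tau_n \ra_n$ using a Prikry-type forcing built from the unbounded Mitchell order hypothesis. I would first exploit the assumption that $\{ o(\mu) \mid \mu < \lambda \}$ is unbounded in $\lambda$ to fix, for each $n$, a cardinal $\tau_n < \lambda$ carrying a coherent sequence of measures (or a suitable extender) of Mitchell order growing with $n$, arranged so that $\lambda = \sup_n \tau_n$. The idea is that the freeness we want — that for every internally approachable $N$ of size $<\lambda$, a cofinite subset of $\{ \chi_N(\tau_n) \mid n < \omega \}$ is free over $N$ — should be obtained by collapsing the $\tau_n$ down to become the $\aleph$'s below $\aleph_\omega$ (for the ``moreover'' clause) while adding generic Prikry-like points $\chi_N(\tau_n)$ that are mutually generic enough to avoid being captured by the Skolem functions of $N$.

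The main construction would be a diagonal/product Prikry forcing $\po$ that simultaneously singularizes each $\tau_n$ and interleaves Levy collapses $\Coll(\cdot,\cdot)$ to turn the $\tau_n$ into a prescribed subsequence of the $\aleph_k$. The key mechanism I expect to drive freeness is the following: the richness of the Mitchell order at each $\tau_n$ should let me, given any internally approachable $N \elem \A$, find measures/extenders not captured by $N$ and generic objects over $N$ in the sense that the generic value $\chi_N(\tau_n)$ lies outside $f`` (N \cup (x \setminus \{ \chi_N(\tau_n)\}))^{<\omega}$ for all Skolem functions $f$ coded in $N$. I would want to show that for a club of internally approachable $N$, a density/genericity argument forces all but finitely many coordinates to be free, with the finitely many exceptions absorbed by the ``cofinite'' slack in the definition of AFSP.

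The technical heart, and what I expect to be the main obstacle, is verifying that the generic points $\chi_N(\tau_n)$ are genuinely free over $N$ — i.e. controlling the Skolem closure of $N$ together with the other generic points. The difficulty is that freeness is a statement about all finitary functions of $\A$, including functions that mix several coordinates, so a coordinatewise Prikry argument does not immediately suffice; I would need a Mathias-style characterization of the generic and a careful fusion/Prikry-property argument showing that any name for a witness $\delta = f(\ldots)$ landing on the generic value can be reflected into $N$ and hence decided below a condition, contradicting genericity. To make this work I would prove a Prikry property and a suitable ``geometric'' or factorization lemma for $\po$, show that the collapses do not injure freeness (using that the collapsing posets are sufficiently closed/small relative to the relevant critical points), and use the unboundedness of $o(\mu)$ precisely to guarantee enough mutually non-captured generators at arbitrarily high coordinates so that only finitely many coordinates can fail. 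Finally, to obtain the ``moreover'' clause I would arrange the collapses so that in the extension $\lambda = \aleph_\omega$ and each $\tau_n$ becomes an uncountable cardinal $\aleph_{k_n}$, and then check that the forced AFSP instance survives as AFSP with respect to the subsequence $\la \aleph_{k_n} \ra_n$.
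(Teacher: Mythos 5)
Your proposal has the right outer shape---a Prikry-type forcing built from the unbounded Mitchell order hypothesis, a Prikry property plus a density argument, and interleaved Levy collapses for the ``moreover'' clause---but it is missing the idea that actually makes the freeness argument work, and it contains one internal inconsistency. First, the bridge between the forcing and the quantities $\chi_N(\tau_n)$ is never supplied. The values $\chi_N(\tau_n)=\sup(N\cap\tau_n)$ are not ``generic points added by the forcing''; they are determined by $N$, and for a single internally approachable $N$ they are all highly correlated. The paper's route is to force a \emph{continuous generic scale} $\la t_\alpha \mid \alpha<\lambda^+\ra$ on $\prod_n\tau_n$ (via a short-extender-based forcing) and to use the folklore Lemma \ref{lem:IAcharFunc}: for internally approachable $N$, $\chi_N^{\vec\tau}$ agrees mod finite with $t_\delta$ where $\delta=\chi_N(\lambda^+)$. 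Only after this identification does freeness become a statement about the forcing. Second, and more importantly, ``mutual genericity'' is not the mechanism that yields freeness, and an argument phrased that way would apply equally to a diagonal Prikry forcing from one normal measure per level---where the resulting scale can be made tree-like and AFSP provably \emph{fails} (Lemma \ref{lem:ImplicationsBetweenTLSandAFSP}(2)). What the unbounded Mitchell order actually buys, after a preparatory Gitik iteration converting Mitchell-increasing sequences of measures on $\lambda_n$ (with $o(\lambda_n)=\kappa_n$) into Rudin--Keisler increasing systems $E_n$, is this: the condition assigns to each ordinal below $\lambda^+$ a generator of $E_n$, the generator assigned to the new index $\delta$ is strictly RK-above every generator assigned to an ordinal of $N$, any name for $F(\chi_N(\tau_{d_1}),\dots)$ with $F\in N$ is decided by the \emph{lower} projection of the one-point extensions, while $t_\delta(d_0)$ is their \emph{higher} projection and ranges over an unbounded set with the lower projection fixed (Fact \ref{fact:E_n}(4),(5)). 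A measure-one set then defeats any purported capture. Your sketch gestures at ``non-captured generators'' but never isolates this RK-independence, which is the entire content of the proof of Theorem \ref{thm:ABSPnew}; nor does it include the pressing-down step (Lemma \ref{lem:IApressingdown}) needed to fix the offending functions on a stationary set before returning to $V$.

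Two concrete corrections. The $\tau_n$ in the paper are not the cardinals carrying the measures: the measures sit on cardinals $\lambda_n$, and $\tau_n=g_n(\rho_n)<\lambda_n$ is a regular cardinal determined generically from the Prikry point $\rho_n$; the $\lambda_n$ are ultimately collapsed while the $\tau_n$ survive (as $\aleph_{3n+2}$ in the collapsed model). And your forcing cannot ``singularize each $\tau_n$'' and simultaneously turn $\tau_n$ into an uncountable $\aleph_k$; the $\tau_n$ must remain regular for $\prod_n\tau_n$ and the characteristic function $\chi_N^{\vec\tau}$ to behave as required.
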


\begin{theorem}\label{thmthree}
    Let $\lambda$ be a singular cardinal of countable cofinality such that there is no inner model $M$ with $\lambda = \sup\{\mitord^M(\mu) \mid \mu < \lambda\}$. Let $\<\tau_n \mid n < \omega\>$ be a sequence of regular cardinals cofinal in $\lambda$. Then $\prod\limits_{n < \omega} \tau_n$ carries a continuous tree-like scale.
\end{theorem}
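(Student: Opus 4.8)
The plan is to exploit the anti-large-cardinal hypothesis to pass to a core model $K$ satisfying covering, to obtain the scale inside $K$ from the earlier fact that canonical inner models carry continuous tree-like scales on all products, and then to transfer it to $V$ by a covering argument. To begin, I would fix the appropriate Mitchell-style core model $K$. The hypothesis that no inner model $M$ has $\lambda = \sup\{\mitord^M(\mu) \mid \mu < \lambda\}$ places us strictly below the large-cardinal threshold at which $K$ exists, is universal, and satisfies weak covering; applying the hypothesis to $K$ itself yields $\Lambda := \sup\{\mitord^K(\mu) \mid \mu < \lambda\} < \lambda$, which plays the role of the covering error bound. From this I extract: since $K \subseteq V$, every cardinal regular in $V$ is regular in $K$, so each $\tau_n$ is a regular cardinal of $K$; and weak covering gives $(\lambda^+)^K = \lambda^+$ together with preservation of cofinalities above $\Lambda$ between $K$ and $V$.

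Next I produce the scale inside $K$. Invoking the earlier existence result for canonical inner models, $K$ carries a continuous tree-like scale $\vec f = \langle f_\alpha \mid \alpha < (\lambda^+)^K\rangle$ on a product of an increasing $\omega$-sequence of $K$-regular cardinals cofinal in $\lambda$; by $(\lambda^+)^K = \lambda^+$ the scale has length $\lambda^+$. If the given sequence $\langle \tau_n\rangle_n$ lies in $K$ this is already a scale on $\prod_n\tau_n$; in general I would arrange the $K$-sequence to interleave $\langle\tau_n\rangle_n$ and transport $\vec f$ along the resulting coordinatewise correspondence, under which the tree-like condition is preserved.

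It then remains to read $\vec f$ in $V$. The $\le^*$-increasing property and the tree-like implication $f_\alpha(n+1) = f_\beta(n+1) \Rightarrow f_\alpha(n) = f_\beta(n)$ are first-order over the individual functions, hence absolute; continuity asserts $f_\gamma(n) = \sup_{\alpha<\gamma} f_\alpha(n)$ for the relevant pairs $(\gamma,n)$, and since suprema of sets of ordinals are absolute and the constraints on $\cf(\gamma)$ range only over cofinalities above $\Lambda$, continuity survives for all large $n$. For cofinality in $(\prod_n\tau_n)^V$, given $g \in \prod_n\tau_n$ in $V$, I would apply covering to the countable set $\ran(g)$ to obtain $Y \in K$ with $\ran(g)\subseteq Y$ and $|Y|$ well below $\lambda$; since each $\tau_n$ is regular in $K$ and $|Y| < \tau_n$ for all large $n$, the function $h(n) = \sup(Y\cap\tau_n)+1$ lies in $(\prod_n\tau_n)^K$ and dominates $g$ there, so $g <^* h <^* f_\alpha$ for some $\alpha$ by cofinality of $\vec f$ in $K$ (running this computation on the interleaving $K$-sequence and its product when $\langle\tau_n\rangle_n \notin K$). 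Finite discrepancies at small $n$ are harmless, as scale-hood, continuity and tree-likeness are all eventual.

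The step I expect to be the main obstacle is the interaction of covering with the countable cofinality of $\lambda$: the cover of a countable set is only controlled up to the covering bound, and the given sequence $\langle\tau_n\rangle_n$ typically does not belong to $K$, so the transfer must absorb the singularization of $K$-cardinals that occurs in passing to $V$. This is precisely the regime that borders Gitik's non-existence theorem, and it is where the no-accumulation hypothesis on Mitchell orders is sharp: it is exactly strong enough to keep the covering bound below $\lambda$ and to guarantee that the singularizing measures in $K$ are too sparse to destroy the tree-like scale, while any strengthening would reintroduce the $\mitord(\kappa)=\kappa^{++}+1$ phenomenon Gitik uses to remove such scales.
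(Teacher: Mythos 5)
Your high-level plan --- pass to the core model $K$, build the fine-structural tree-like scale there, and transfer it to $V$ by covering --- matches the architecture of the paper (Sections \ref{sec:finestructure} and \ref{sec:CMandTLS}). But the step you rely on for the transfer is not available at this large-cardinal level, and it is precisely the hard part of the theorem. You propose to verify cofinality of the $K$-scale in $\left(\prod_n \tau_n\right)^V$ by covering the countable set $\ran(g)$ with a set $Y \in K$ of size well below $\lambda$. That is \emph{strong} covering, and it fails as soon as $K$ has a single measurable cardinal below $\lambda$ (which the hypothesis certainly permits): after Prikry forcing at a measurable $\kappa < \lambda$, the Prikry sequence is a countable set of ordinals not covered by any $Y \in K$ of size $< \kappa$. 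The correct form of covering here only yields covers in $K$ adjoined with systems of indiscernibles, and one cannot conclude that $\langle \sup(Y \cap \tau_n)\rangle_n$ lies in $K$. Relatedly, your absoluteness claims do not hold: Theorem \ref{thmtwo} produces a scale that is cofinal only in $\prod_n \tau_n \cap \mathcal{M}$, and continuity (being $<^*$-cofinal in $\prod_n f_\delta(n)$, which quantifies over all of $V$) is not a first-order property of the individual functions; nor does weak covering preserve cofinalities of $K$-regular cardinals above any bound --- that is exactly what Prikry-type forcing destroys.

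What the paper does instead is run the machinery of the weak covering proof itself: it takes stationarily many internally approachable (or $\omega$-closed) hulls $X \prec H_\theta$ that are \emph{tight} on $\vec{\tau}$, compares $K_X$ with $K$, analyzes the resulting iteration via the models $\mathcal{N}^X_n$ and the lifted ultrapowers $\mathcal{O}^X_n$, $\mathcal{O}_X$, and shows that $\sup(X \cap \tau_n)$ equals $f^{\vec{\kappa},K}_{\alpha_X}(n)$ for almost all $n$ and a single $\alpha_X = \sup(X \cap \lambda^+)$. Cofinality and continuity of the scale in $V$ then follow from tightness and stationarity of such $X$, not from absoluteness. Two further points your sketch omits: (i) when $\tau_n$ is a limit cardinal of $K$ it is $K$-inaccessible, and one needs the Mitchell--Schimmerling theorem on cofinalities of $K$-regulars --- this is where the hypothesis on Mitchell orders actually enters, via the bound $\delta < \lambda$ ensuring $\sup(X \cap \tau_n)$ is $K$-singular, and it requires the parametrized scales $g^{\vec{\kappa},\vec{\alpha},K}_\beta$ of Section \ref{sec:finestructure} together with a pressing-down argument to remove the dependence of $\vec{\alpha}$ on $X$; (ii) the case split among cutpoints, weak cutpoints, and $K$-inaccessibles (Theorems \ref{dominikthmone}, \ref{dominikthmtwo}, \ref{dominikthmthree}) is not cosmetic --- the soundness pattern of the models $\mathcal{N}^X_n$ genuinely differs in each case. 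As written, your argument would prove the theorem only below $0^\dagger$, where strong covering holds; in the intended regime the gap is essential.
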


To achieve the proof of Theorem \ref{thmthree}, we establish a result of an independent interest, that the continuous tree-like scales naturally appear in fine-structural canonical inner models. Thus obtaining complementary result to aforementioned theorems by Pereira and Cummings, i.e. we know that no large cardinal property that can consistently appear in canonical inner models disproves the existence of products with continuous tree-like scales (e.g., Woodin cardinals).

\begin{theorem}\label{thmtwo}
   Let $\mathcal{M}$ be a premouse such that each countable hull has an $\omega$-maximal $(\omega_1 + 1)$-iteration strategy. Let $\lambda \in \mathcal{M}$ be a singular cardinal of countable cofinality. Let $\<\kappa_i : i < \omega\>$ be a sequence of regular cardinals cofinal in $\lambda$. Then $\prod\limits_{i < \omega} \kappa_i \slash J_{bd}$ carries a continuous tree-like scale.
\end{theorem}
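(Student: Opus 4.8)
The plan is to build the scale directly from fine-structural hulls inside $\mathcal{M}$, reading off each scale function from the way a fixed ordinal collapses when one hulls over successively larger cardinals $\kappa_n$. Tree-likeness will then fall out of the Condensation Lemma, while the scale and continuity properties are secured by a closing-off argument.

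First I would fix a convenient representation of the product. Passing to a cofinal subsequence (and using the standard fact that a continuous tree-like scale on a product over a cofinal subsequence transfers to a continuous tree-like scale on the original product) I may assume $\kappa_n^{+} < \kappa_{n+1}$ for all $n$. Let $P = \mathcal{M} \restr (\lambda^{+})^{\mathcal{M}}$ and fix a finite parameter $p$ with $\lambda, \langle \kappa_n \rangle_n \in \hull^{P}(p)$. Since $P$ computes $\lambda^{+}$ and satisfies GCH and acceptability, every $g \in \prod_n \kappa_n$ lies in $P$, and I aim for a scale of length $(\lambda^{+})^{\mathcal{M}}$, which will at the same time show that this is the true cofinality of $\prod_n \kappa_n \slash J_{bd}$.

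Next, the functions. For $\alpha \in [\lambda, (\lambda^{+})^{\mathcal{M}})$ and $n < \omega$ let $H^{n}_\alpha = \hull^{P}(\kappa_n \cup \{\alpha\} \cup p)$, let $\pi^{n}_\alpha \colon P^{n}_\alpha \to P$ be the inverse of the transitive collapse, and set $\bar\alpha_n = (\pi^{n}_\alpha)^{-1}(\alpha)$ and $\bar p_n = (\pi^{n}_\alpha)^{-1}(p)$. The crucial input is the Condensation Lemma: because every countable hull of $\mathcal{M}$ carries an $\omega$-maximal $(\omega_1+1)$-iteration strategy, the usual condensation argument applies and shows that $P^{n}_\alpha$ is a sound premouse with $P^{n}_\alpha \eextend \mathcal{M}$; as $|H^{n}_\alpha| = \kappa_n$, its ordinal height $\theta^{n}_\alpha$ as well as $\bar\alpha_n$ and the entries of $\bar p_n$ all lie below $\kappa_n^{+} \le \kappa_{n+1}$. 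I then define $f_\alpha(n)$ to be a fixed ordinal code of the triple $(\theta^{n}_\alpha, \bar\alpha_n, \bar p_n)$, an ordinal below $\kappa_{n+1}$; relabelling, $f_\alpha \in \prod_n \kappa_n$. Since $H^{n}_\alpha \subseteq H^{n+1}_\alpha$, condensation also yields a commuting family of embeddings $\sigma^{n}_\alpha \colon P^{n}_\alpha \to P^{n+1}_\alpha$ with $\pi^{n}_\alpha = \pi^{n+1}_\alpha \circ \sigma^{n}_\alpha$, $\sigma^{n}_\alpha(\bar\alpha_n) = \bar\alpha_{n+1}$ and $\sigma^{n}_\alpha(\bar p_n) = \bar p_{n+1}$.

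The heart of the argument is tree-likeness, and here the point is that $f_\alpha(n)$ is recoverable from $f_\alpha(n+1)$ by a computation carried out inside $P^{n+1}_\alpha$. Indeed, $\sigma^{n}_\alpha$ is exactly the inverse collapse of $\hull^{P^{n+1}_\alpha}(\kappa_n \cup \{\bar\alpha_{n+1}, \bar p_{n+1}\})$, so that $P^{n}_\alpha$, $\bar\alpha_n = (\sigma^{n}_\alpha)^{-1}(\bar\alpha_{n+1})$ and $\bar p_n = (\sigma^{n}_\alpha)^{-1}(\bar p_{n+1})$ are all determined by the triple $(P^{n+1}_\alpha, \bar\alpha_{n+1}, \bar p_{n+1})$ together with $\kappa_n$. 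Since $P^{n+1}_\alpha = \mathcal{M} \restr \theta^{n+1}_\alpha$ is coded by $\theta^{n+1}_\alpha$, this triple is decodable from $f_\alpha(n+1)$. Therefore $f_\alpha(n+1) = f_\beta(n+1)$ forces the triples at level $n$ to coincide, hence $f_\alpha(n) = f_\beta(n)$, which is precisely the tree-like property; note that it holds for the entire family $\langle f_\alpha \rangle_\alpha$, and so for any subfamily I later select.

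It remains to arrange that the coded functions form a continuous scale. As tree-likeness is inherited by every subfamily, I am free to thin the index set. Cofinality I would obtain by a capturing argument: given $g \in \prod_n \kappa_n$, since $g \in P$ one finds $\alpha$ whose $\kappa_n$-hulls eventually see $g$, forcing $g(n) < f_\alpha(n)$ for large $n$; this also yields $\mathrm{tcf}\bigl(\prod_n \kappa_n \slash J_{bd}\bigr) = (\lambda^{+})^{\mathcal{M}}$. Monotonicity and continuity I would secure by a closing-off construction, restricting to a club $S \subseteq (\lambda^{+})^{\mathcal{M}}$ along which $\langle f_\alpha \mid \alpha \in S\rangle$ is $<_{J_{bd}}$-increasing, and verifying continuity at $\delta \in S$ with $\cf(\delta) < \kappa_0$: for such $\delta$ the collapse of $\delta$ is continuous, so $\bar\delta_n = \sup_{\beta < \delta} \bar\beta_n$ and $f_\delta$ is the exact upper bound of $\langle f_\beta \mid \beta < \delta\rangle$, as required by Definition \ref{def:TLSandETLS}. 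The main obstacle is exactly this last step: the coding that produces tree-likeness is built from hulls containing $\kappa_n$ and over single ordinals, so it is not manifestly monotone or continuous in $\alpha$, and one must reconcile it with being a genuine continuous scale. A careful condensation analysis at limit points of small cofinality, together with the verification that condensation really delivers initial segments $P^{n}_\alpha \eextend \mathcal{M}$ — which is where solidity and the iterability hypothesis are essential — is where the real work lies.
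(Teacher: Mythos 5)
There is a fatal cardinality obstruction in your definition of the scale functions, and it cannot be repaired by any closing-off or capturing argument. At coordinate $n$ you extract the triple $(\theta^{n}_\alpha,\bar\alpha_n,\bar p_n)$, whose entries all lie in $[\kappa_n,\kappa_n^{+})$, code it as a single ordinal, and then shift it to coordinate $n+1$ using $\kappa_n^{+}<\kappa_{n+1}$. But there are at most $\kappa_n^{+}$ possible triples at level $n$, so the range of any injective coding into $\kappa_{n+1}$ has size at most $\kappa_n^{+}<\kappa_{n+1}$ and is therefore bounded in the regular cardinal $\kappa_{n+1}$. Consequently the whole family $\la f_\alpha\ra_\alpha$ is dominated everywhere by a single element of $\prod_n\kappa_n$ and can never be cofinal in $\prod_n\kappa_n\slash J_{bd}$. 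The paper's construction avoids this by reading off, at coordinate $n$, one ordinal that genuinely ranges cofinally in the target coordinate: when the target is $\tau_n=(\kappa_n^+)^{\mathcal{M}}$ it is $(\kappa_n^+)^{\mathcal{M}^n_\alpha}$, where $\mathcal{M}^n_\alpha$ is the condensation of $\hull^{\mathcal{M}_\alpha}_{n_\alpha+1}(\kappa_n\cup\{p_\alpha\})$ taken over the \emph{collapsing level} $\mathcal{M}_\alpha$ of $\alpha$; when $\kappa_n$ is inaccessible in $\mathcal{M}$ it is $\sup(\hull^{\mathcal{M}_\alpha}_{n_\alpha+1}(\alpha_n\cup\{p_\alpha\})\cap\kappa_n)$ for auxiliary parameters $\alpha_n<\kappa_n$. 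Tree-likeness then comes for free because that single ordinal determines the collapsed level and vice versa, with no need to code auxiliary data; your proposal also misses that these two cases (and the mixed case) require genuinely different definitions.

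A second, independent gap is the condensation step. You hull $P=\mathcal{M}\restr(\lambda^{+})^{\mathcal{M}}$, a structure that does not project below $(\lambda^+)^{\mathcal{M}}$, with $\alpha$ as a parameter; the transitive collapse $P^n_\alpha$ is then a $\ZFC^-$-level that is not sound and does not project to $\kappa_n$, so the hypothesis $\crit(\pi)\geq\rho_{n+1}(\mathcal{N})$ of the Condensation Lemma fails and you cannot conclude $P^n_\alpha\eextend\mathcal{M}$ from it. Working over the sound level $\mathcal{M}_\alpha$ projecting to $\lambda$, with the solidity witness required to lie in the hull (exactly the side condition appearing in the paper's definition of $f_\alpha(n)$), is what makes condensation applicable; it also forces the analysis of ``anomalous'' $\alpha$ (active type III collapsing levels with $\nu^{\mathcal{M}_\alpha}=\lambda$), which your argument never sees. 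Finally, continuity must be checked at every $\delta$ of uncountable cofinality below $(\lambda^+)^{\mathcal{M}}$, not merely $\cof(\delta)<\kappa_0$, and $\hull^{P}(\kappa_n\cup\{\delta,p\})$ is not the union of the hulls at $\beta<\delta$ (none of which contain $\delta$ as an element), so ``the collapse of $\delta$ is continuous'' is unjustified; the paper instead proves continuity by a separate argument using the uncountable cofinality of the relevant projectum (Lemma \ref{fscof}) and coded theories.
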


Continuous tree-like scales on products of successor cardinals in $L$ where implicitly constructed by Donder, Jensen, and Stanly in \cite{DJS}.  In the course of proving Theorem \ref{thmone}, we establish the consistency of a principle stronger than AFSP, which we call the Approchable Bounded Subset Property. \\
Let $N$ be a subalgebra of $\A = \la H_\theta,f_n\ra_n$ and $\vec{\tau} = \la \tau_n \ra_n$ be an increasing sequene of cardinals.
Given a set $x \subseteq H_\theta$, we define $N[x]$ to be the $\A$-closure of the set $(x \cup N)$. 
We say that $N$ satisfies the \textbf{Bounded Appending Property} with respect to $\vec{\tau}$ if
for every $n_0 < \omega$, setting $x = \{ \chi_N(\tau_n) \mid n \neq n_0\}$ then the addition of $x$ to $N$ does not increase the supremum below $\tau_{n_0}$, namely $\chi_{N[x]}(\tau_{n_0}) = \chi_N(\tau_{n_0})$. 

\begin{definition}
The \textbf{Approachable Bounded Subset Property} holds with respect to $\la \tau_n \ra_n$ 
if for every sufficiently large regular $\theta > \lambda = (\cup_n \tau_n)$ and internally approachable subalgebra $N \elem \A$, of an algebra $\A$ extending $(H_\theta,\in,\la \tau_n\ra_n)$, that satisfies $|N| < \lambda$, then $N$ satisfies the bounded appending property with respect to a tail of $\la \tau_n \ra_n$.
\end{definition}

We show in Lemma \ref{lem:ImplicationsBetweenTLSandAFSP} ABSP with respect to a sequence $\la \tau_n\ra_n$ implies AFSP with respect to the same sequence, as well as the inexistence of a continuous \textbf{essentially tree-like} scale; a weakening of tree-like scale introduced by Pereira (see Definition \ref{def:TLSandETLS}). The proof of the forcing Theorem \ref{thmone}, stated above, goes through proving that ABSP is consistent with respect to a sequence of regulars $\la \tau_n \ra_n$. \\

\noindent
The following summarizes the main results of this paper:
\begin{corollary}
The following principles are equiconsistent:
\begin{enumerate}
    \item There exists a sequence of regular cardinals $\la \tau_n \mid n < \omega\ra$ for which the Approachable Bounded Subset Property holds.
    \item There exists a sequence of regular cardinals $\la \tau_n \mid n < \omega\ra$ for which the Approachable Free Subset Property holds.
    \item There exists a sequence of regular cardinals $\la \tau_n \mid n < \omega\ra$ for which the product $\prod_n \tau_n$ does not carry a continuous Tree-Like scale.
    \item There exists a cardinal $\lambda$ such that the set of Mitchell orders $\{o(\mu) \mid \mu < \lambda\}$ is unbounded in $\lambda$.
\end{enumerate}
\end{corollary}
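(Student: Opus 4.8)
The plan is to establish the four principles as equiconsistent by closing a cycle of consistency implications
\[
\mathrm{Con}(4) \Rightarrow \mathrm{Con}(1) \Rightarrow \mathrm{Con}(2) \Rightarrow \mathrm{Con}(3) \Rightarrow \mathrm{Con}(4),
\]
assembling the theorems proved earlier together with the cited results of Pereira and Welch. Throughout I normalize so that each sequence $\la \tau_n \ra_n$ is strictly increasing with singular limit $\lambda = \sup_n \tau_n$ of countable cofinality; this is harmless, since the scale-theoretic content degenerates otherwise, and it is exactly the form in which Theorem \ref{thmthree} applies.

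First I would dispose of the links that already hold outright in a single model, since each such implication immediately yields the corresponding consistency implication. By Lemma \ref{lem:ImplicationsBetweenTLSandAFSP}, ABSP with respect to $\la \tau_n \ra_n$ implies AFSP with respect to the same sequence, giving $(1) \Rightarrow (2)$. By Pereira's result---the contrapositive of the statement that a continuous tree-like scale on $\prod_n \tau_n$ forces the failure of AFSP---AFSP with respect to $\la \tau_n \ra_n$ implies that $\prod_n \tau_n$ carries no continuous tree-like scale, giving $(2) \Rightarrow (3)$. (Lemma \ref{lem:ImplicationsBetweenTLSandAFSP} in fact yields $(1) \Rightarrow (3)$ directly as well, since the absence of a continuous essentially tree-like scale forces the absence of a continuous tree-like scale, the latter being a special case of the former.) Hence $\mathrm{Con}(1) \Rightarrow \mathrm{Con}(2) \Rightarrow \mathrm{Con}(3)$.

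The two substantive links are the forcing step and the inner-model step, both carried by the main theorems. For $\mathrm{Con}(4) \Rightarrow \mathrm{Con}(1)$ I would invoke Theorem \ref{thmone}: from a model with a cardinal $\lambda$ whose set of Mitchell orders $\{ o(\mu) \mid \mu < \lambda\}$ is unbounded in $\lambda$, the forcing constructed there yields a generic extension satisfying ABSP with respect to some sequence of regular cardinals, since that theorem is proved through ABSP, which is precisely what the forcing delivers. For $\mathrm{Con}(3) \Rightarrow \mathrm{Con}(4)$ I would use the contrapositive of Theorem \ref{thmthree}: if $\prod_n \tau_n$ carries no continuous tree-like scale, then---$\lambda = \sup_n \tau_n$ being singular of countable cofinality---there is an inner model $M$ with $\lambda = \sup\{ o^M(\mu) \mid \mu < \lambda\}$, and this $M$ is a model of $\ZFC$ in which principle $(4)$ holds; passing to it gives $\mathrm{Con}(4)$. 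I would remark that Welch's theorem provides the alternative link $\mathrm{Con}(2) \Rightarrow \mathrm{Con}(4)$, but that this alone does not close the cycle, because $(3)$ is weaker than $(2)$ and so the lower bound genuinely needs Theorem \ref{thmthree}.

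The main obstacle is concentrated entirely in the two cited theorems rather than in the assembly: Theorem \ref{thmone} bears the weight of the forcing construction, and Theorem \ref{thmthree} (through the fine-structural Theorem \ref{thmtwo} and a covering argument) bears the weight of the lower bound. At the level of the corollary itself, the one point requiring care is the link $\mathrm{Con}(3) \Rightarrow \mathrm{Con}(4)$: I must check that the negation of the hypothesis of Theorem \ref{thmthree} is exactly principle $(4)$ relativized to an inner model, and that the normalization making $\lambda$ a singular cardinal of countable cofinality goes through for the witnessing sequence, so that the theorem genuinely applies. With the cycle closed, all four principles are seen to be equiconsistent.
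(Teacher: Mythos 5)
Your proposal is correct and matches the paper's intended argument: the corollary is stated as a summary of the main results, and its (implicit) proof is exactly the cycle you describe --- Theorem \ref{thmone} for $\mathrm{Con}(4)\Rightarrow\mathrm{Con}(1)$ (via the ABSP forcing of Theorem \ref{thm:ABSPnew}), Lemma \ref{lem:ImplicationsBetweenTLSandAFSP} for $(1)\Rightarrow(2)\Rightarrow(3)$, and the contrapositive of Theorem \ref{thmthree} for $\mathrm{Con}(3)\Rightarrow\mathrm{Con}(4)$. Your remarks on normalizing $\la\tau_n\ra_n$ so that $\lambda=\sup_n\tau_n$ is singular of countable cofinality, and on why Welch's result alone does not suffice for the lower bound, are accurate and consistent with the paper.
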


\noindent \textbf{The paper is organized as follows:} The remainder of \textbf{this section} will be dedicated to discussing  preliminary material in PCF theory and the theory of inner models. \textbf{Section \ref{sec:forcing}} will dedicated to the forcing argument establishing the proof Theorem \ref{thmone}. In \textbf{Section \ref{sec:finestructure}} we discuss how to construct tree-like scales from the fine structure of canonical inner models. In \textbf{Section \ref{sec:CMandTLS}} we will use these fine structural scales to derive scales on products in $V$ using a covering-like argument. Finally, in \textbf{Section \ref{sec:open}} we finish with a list of open problems.\\

\noindent
\textbf{Acknowledgments:}\\
The work on this project was initiated following a suggestion by Assaf Rinot to study the consistency of the Approachable Free Subset Property. The authors are grateful for this suggestion and for supporting the first author during the academic year of 2018-2019 at Bar-Ilan University under a grant from the European Research Council (grant agreement ERC-2018-StG 802756). 
The initial idea for the inner model construction of continuous tree-like scale was conceived during the Berkeley conference on Inner Model theory in July 2019. The authors would like to thank Ralf Schindler and John Steel for organizing the meeting and creating the opportunity for this collaboration.
The first author would like to thank Grigor Sargsyan for his generous support and warm hospitality during the Spring of 2020 (NSF career award DMS-1352034). During that time the first author had the opportunity to travel to Pittsburgh. It was there that some significant improvements were made to the lower bound argument, and would like to thank James Cummings for the opportunity to present this research and insightful conversations.
The second author was partially supported by the Israel Science Foundation (Grant 1832/19). He would like thank Luis Pereira for insightful discussions on the subject and many valuable remarks on this paper, and to Spencer Unger and Philip Welch for many valuable comments and suggestions.

\subsection{Preliminaries}
For a set $X$ and a cardinals $\lam$, 
$\power_\lambda(X)$ denotes the collections of all subsets $a \subseteq X$ of size $|a| < \lam$.
$J_{bd}$ denotes the ideal of bounded subsets of $\omega$. 
Let $I$ be an ideal on $\omega$ and $f,g$ two functions from $\omega$ to ordinals. 
We write $f <_I g$ if $\{ n < \omega \mid f(n) \geq g(n)\} \in I$.
We write $f <^* g$ for $f <_{J_{bd}} g$. 

\subsubsection{Continuous and Tree-Like Scales}
Let $\la \tau_n \mid n < \omega\ra$ be a sequence of ordinals of strictly increasing cofinalities. 
A sequence of functions $\vec{f} =\la f_\alpha \mid \alpha < \eta\ra \subseteq \prod_n \tau_n$ of a regular length $\eta$, is a \textbf{pre-scale} in $(\prod_n \tau_n, <_I)$ if $\vec{f}$ is strictly increasing in the ordering $<_I$. A prescale is a \textbf{scale} if it is cofinal in $\prod_n \tau_n$.
As we focus on $J_{bd}$ from this point forward, we will frequently say that $\vec{f}$ is a (pre-)scale in $\prod_n \tau_n$, without mentioning the ideal $J_{bd}$.

\begin{definition}\label{def:TLSandETLS}
Suppose that $\vec{f} = \la f_\alpha \mid \alpha < \eta\ra$ is a (pre-)scale in $\prod_n \tau_n$.
\begin{enumerate}
    \item $\vec{f}$ is \textbf{continuous} if for every limit ordinal $\delta < \eta$ of uncountable cofinality, the sequence $\vec{f}\uhr\delta$ is $<^*$-cofinal in $\prod_n f_\delta(n)$. 
    
    \item $\vec{f}$ is \textbf{Tree-like} if for every $\alpha \neq \beta < \eta$ and $n < \omega$, 
    if $f_\alpha(n+1) = f_\beta(n+1)$ then $f_\alpha(n) = f_\beta(n)$.
    
    \item $\vec{f}$ is  \textbf{Essentially Tree-like} if for every $n < \omega$ and $\mu \in [\tau_n,\tau_{n+1})$
    the set 
    \[
  \{ \mu' < \tau_n \mid \exists \beta < \eta, f_\beta(n+1) = \mu \text{ and } f_\beta(n) = \mu'\}\]
    is nonstationary in $\tau_n$.
\end{enumerate}
\end{definition}

\noindent
If a product $\prod_n \tau_n$ carries a scale, it is not difficult to find another scale on it with the tree-like property (see Pereira \cite{LuisThesis}), but such a scale need not be continuous. 

\subsubsection{Internally Approachable Structures and related Principles}

Considering notions such as the Approachable Free Subset Property or the Approachable Bounded Subset Property with respect to subalgebras of Algebras $\A = (\theta,f_n)_n$, 
there is no harm in replacing the domain $\theta$ with another set of the same size, such as $H_\theta$ in cases relevant to us, and adding more structure to the algebra.
Therefore, from this point on, we will only restrict ourselves to set theoretic algebras $\A$ of the form 
$\A = (H_\theta,\in,f_n)_n$, which extend the model $(H_\theta,\in)$ in the language of set theory, and include Skolem functions. In particular, a subalgebra $N \elem \A$ will always be an elementary substructure. 

This allows us to reformulate our notion of freeness. Assuming\footnote{we will always be abe to assume so} the algebra $\A$ is rich enough to satisfy a fraction of ZFC\footnote{specifically, the Replacement property}, and $N \subseteq \A$ is sufficiently closed so it is an elementary substructure $N \elem \A$, then the fact that a set $x$ is free over $N$ is equivalent to 
having that for every $\delta \in x$ and a function $f \in N$, $\delta \not\in f`` (x\setminus\{\delta\})^{<\omega}$. \\ 

The notion of internally approachable structures was formally introduced in \cite{ForemanMagidorShelah}. We refer the reader to  \cite{ForemanMagidor-DefinableCounterOfCH} for further exposition. 
The definition below is similar to the standard ones, with the addition that here, we will focus on internally approachable unions of uncountable cofinality. 

\begin{definition}\label{def:IAstructure}
An elementary subalgebra (substructure) $N \elem \A$ of an algebra $\A=  (H_\theta;\in, f_n)_n$  is said to be internally approachable of length $\rho$ if 
$N = \bigcup_{i<\rho} N_i$ is a union of a sequence 
$\vec{N} = \la N_i \mid i < \rho\ra$ of elementary subalgebras $N_i \elem N$, and for every $j < \rho$, 
$\vec{N}\uhr j = \la N_i \mid i < j\ra$ belongs to $N$.\\

We say that $N \elem \A$ is \textbf{internally approachable} if it is internally approachable of length $\rho$ for some $\rho$ of uncountable cofinality $\cf(\rho) > \aleph_0$.
\end{definition}

\begin{notation}
Let $N \elem (H_\theta;\in)$ for a regular cardinal $\theta$.
\begin{itemize}
    \item For every regular cardinal $\tau \in N$, define $\chi_N(\tau) = \sup(N \cap \tau)$.
    \item Given a sequence $\vec{\tau} = \la \tau_n \mid n < \omega\ra  \subseteq N$ define the function 
    $\chi^{\vec{\tau}}_N \in \prod_n \tau_n$ by $\chi^{\vec{\tau}}_N(n) = \chi_N(\tau_n)$ if the last ordinal is strictly smaller than $\tau_n$, and $0$ otherwise.
\end{itemize}
\end{notation}

The following folklore result connects continuous scales with characteristic functions of internally approachable structures. We include a proof for completeness. 
\begin{lemma}\label{lem:IAcharFunc}
  Suppose that $\vec{\tau} = \la \tau_n  \mid n < \omega\ra\in N$ is a strictly increasing sequence of regular cardinals for which $\prod_n \tau_n$ carries a continuous scale $\vec{f} = \la f_\alpha \mid \alpha < \eta\ra$. 
  For every $N \elem H_\theta$ which is internally approachable of size
  $|N| < \bigcup_n \tau_n$, with $\vec{f} \in N$, if $\delta = \chi_N(\eta)$ then $\chi_N^{\vec{\tau}}(n) = f_\delta(n)$ for 
  all but finitely many $n < \omega$.
\end{lemma}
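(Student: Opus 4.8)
The plan is to prove the (formally stronger) statement that $\chi^{\vec{\tau}}_N(n) = f_\delta(n)$ for all but finitely many $n$, where $\delta = \chi_N(\eta)$, by establishing the two inequalities $f_\delta(n) \le \chi^{\vec{\tau}}_N(n)$ and $\chi^{\vec{\tau}}_N(n) \ge f_\delta(n)$ separately, each on a cofinite set. Throughout write $\lambda = \bigcup_n \tau_n$ and $g = \chi^{\vec{\tau}}_N$, and fix an internally approachable decomposition $N = \bigcup_{i<\rho} N_i$ of length $\rho$ with $\cf(\rho) > \omega$, $N_i \elem N$, and $\vec{N}\uhr j \in N$ for all $j < \rho$. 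Since $\vec{f} \in N$ we have $\eta, \lambda \in N$; as the length of a scale on $\prod_n \tau_n$ is a regular cardinal exceeding $\lambda > |N|$, we get $\delta = \sup(N \cap \eta) < \eta$ with $N \cap \eta$ cofinal in $\delta$, and for all large $n$ (those with $\tau_n > |N|$) we have $g(n) = \sup(N \cap \tau_n) < \tau_n$. The first real step is the cofinality computation $\cf(\delta) > \omega$: each $s_i := \sup(N_i \cap \eta)$ is definable from $N_i, \eta \in N$ and is $<\eta$ by regularity, so $s_i \in N \cap \eta$ and hence $s_i < \delta$, while $\langle s_i \mid i < \rho \rangle$ is non-decreasing with supremum $\delta$; if $\cf(\delta)$ were $\omega$, a cofinal $\omega$-subsequence of the $s_i$ would have its indices bounded below $\rho$ (as $\cf(\rho) > \omega$), forcing the $s_i$ strictly below $\delta$, a contradiction. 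With $\cf(\delta) > \omega$ established, continuity of $\vec{f}$ applies at $\delta$, so $\vec{f}\uhr\delta$ is $<^*$-cofinal in $\prod_n f_\delta(n)$.

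The easy inequality $f_\delta(n) \le g(n)$ (on a cofinite set) is the continuity half. The observation I will use repeatedly is that for $\alpha \in N \cap \eta$ one has $f_\alpha(n) \in N \cap \tau_n$, hence $f_\alpha(n) \le g(n)$, for all large $n$. Now suppose $C = \{ n : f_\delta(n) > g(n)\}$ is infinite, and define $h \in \prod_n f_\delta(n)$ by $h(n) = g(n)$ for $n \in C$ and $h(n) = 0$ otherwise. By continuity fix $\alpha < \delta$ with $h <^* f_\alpha$, then pick $\alpha' \in N \cap \eta$ with $\alpha < \alpha'$ (possible as $N \cap \eta$ is cofinal in $\delta$). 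For large $n \in C$ this yields $g(n) = h(n) < f_\alpha(n) < f_{\alpha'}(n) \le g(n)$, a contradiction, so $f_\delta(n) \le g(n)$ for all but finitely many $n$.

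The reverse inequality $g(n) \le f_\delta(n)$ is the crux, and it is here that internal approachability — not merely continuity — is indispensable. Suppose toward a contradiction that $D = \{ n : g(n) > f_\delta(n)\}$ is infinite, and for each $n \in D$ choose a witness $\gamma_n \in N \cap \tau_n$ with $\gamma_n > f_\delta(n)$. Each $\gamma_n$ lies in some $N_{i(n)}$; since $D$ is countable while $\cf(\rho) > \omega$, the index set $\{ i(n) : n \in D\}$ is bounded by some $i^* < \rho$, so every $\gamma_n$ belongs to the single submodel $N_{i^*}$. Hence the function $\sigma(n) := \sup(N_{i^*} \cap \tau_n)$ (set to $0$ on the finitely many small $n$) satisfies $\sigma(n) \ge \gamma_n > f_\delta(n)$ for all $n \in D$. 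But $\sigma \in N$ (as $N_{i^*} \in N$ via $\vec{N}\uhr(i^*+1) \in N$) and $\sigma \in \prod_n \tau_n$, so by cofinality of the scale and elementarity there is $\beta \in N \cap \eta$ with $\sigma <^* f_\beta$; since $\beta < \delta$ we obtain $\sigma(n) < f_\beta(n) < f_\delta(n)$ for all large $n$, contradicting $\sigma(n) > f_\delta(n)$ on the infinite set $D$. Combining the two inequalities gives $g(n) = f_\delta(n)$ off a finite set.

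The anticipated obstacle is exactly this reverse direction, and more specifically the tension between the countably-indexed coordinates $n < \omega$ and the "mod finite" nature of the scale: a coordinatewise domination argument fails because the witnesses $\gamma_n$ can a priori be scattered across all of $N$. The resolution is the device above, namely that the countably many witnesses attached to the coordinates in $D$ must be captured by one proper initial submodel $N_{i^*}$ — which is where $\cf(\rho) > \omega$ enters decisively — so that their values are uniformly dominated by the single characteristic function $\sigma \in N$, itself dominated by the scale below $\delta$. The remaining points (that $\eta, \lambda, N_i, \sigma$ lie in $N$, that $\sigma$ is a genuine element of the product, and the elementarity used to locate $\alpha'$ and $\beta$) are routine and I would verify them in passing.
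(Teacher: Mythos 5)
Your proof is correct and follows essentially the same route as the paper's: the two inequalities you prove are exactly the two directions of the paper's ``cofinal interleaving'' of $\vec{f}\uhr\delta$ with the functions of $N$ below $\chi_N^{\vec{\tau}}$, and your device of capturing the countably many witnesses $\gamma_n$ in a single $N_{i^*}$ (using $\cf(\rho)>\omega$) so that $\sigma=\chi_{N_{i^*}}^{\vec{\tau}}\in N$ dominates them is precisely the paper's argument. The only (welcome) addition is your explicit verification that $\cf(\delta)>\omega$, which the paper leaves implicit when invoking continuity at $\delta$.
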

\begin{proof}
Let $\vec{N} = \la N_i \mid i < \rho\ra$ be a sequence witnessing $N = \cup_i N_i$ is internally approachable of length $\rho$ which has uncountable cofinality. 
Since $\vec{f}$ is continuous, it suffices to show that $\vec{f}\uhr\delta$ is $<^*$-cofinally interleaved with the functions in $\prod_n \chi_N^{\vec{\tau}}(n)$ to prove that $\chi_N^{\vec{\tau}}(n) = f_\delta(n)$ for almost all $n < \omega$.
First,for every $f_\alpha \in \vec{f}\uhr\delta$ there exists some $\beta\in N \cap \delta$ so that $\alpha < \beta$, and thus
$f_\alpha <^* f_\beta$. But $f_\beta \in N$ since $\beta\in N$, which means that $f_\beta \in \prod_n \chi_N^{\vec{\tau}}(n)$.
Next, fix $g \in \prod_n \chi_N(\tau_n)$. We show that $g <^* f_\alpha$ for some $\alpha < \delta$. To this end,  $N = \bigcup_{i < \rho}N_i$ guarantees that for each $n < \omega$ there is $i < \rho$ such that $g(n) < \chi_{N_i}(\tau_n)$. Since $\cf(\rho) > \aleph_0$ there is $i < \rho$ such that $g(n) < \chi_{N_i}(\tau_n)$ for all $n$, and in particular, 
$g <^* \chi_{N_i}^{\vec{\tau}}$. Since $\vec{f} \in N$ is $<^*$-cofinal in $\prod_n\tau_n$ and $\chi_{N_i}^{\vec{\tau}} \in N$,
 there is some $\alpha \in N \cap \eta \subseteq \delta$ so that $\chi_{N_i}^{\vec{\tau}} <^* f_\alpha$, and thus $g <^* f_\alpha$.
\end{proof}

\begin{lemma}\label{lem:IApressingdown}
Let $\lambda < \theta$ be cardinals with $\theta$ regular, and $\mo$ be a well-ordering of $H_\theta$.
Suppose that $\S \subseteq \power_{\lambda}(H_\theta)$ is a stationary set of 
  internally approachable structures $N \elem (H_\theta;\in,\mo)$, and $X \in H_\theta$ is a set which belongs to all $N \in \S$, and
 satisfies that $|X^\omega| \leq \eta$ is a regular cardinal and $\rho^{\aleph_0} < \eta$ for every cardinal $\rho < \lambda$.
 Then, for every assignment which maps each $N \in \S$ to a countable sequence
 $\la x^N_n \mid n < \omega\ra\in X^\omega$ which is contained in $N$, 
 there exists a stationary subset $S^* \subseteq \eta$ and a constant sequence $\la x_n \mid n < \omega\ra$ such that for every 
 $\delta \in S^*$ there is $N \in \S$ satisfying $\chi_N(\eta) = \delta$ and $\la x^N_n \ra_n = \la x_n\ra_n$.
\end{lemma}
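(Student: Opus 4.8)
The plan is to reduce the statement to a pressing-down argument on $\eta$. Write $A\colon N\mapsto\la x^N_n\ra_n$ for the given assignment and $\delta_N:=\chi_N(\eta)=\sup(N\cap\eta)$. It suffices to produce a single sequence $\vec x=\la x_n\ra_n\in X^\omega$ whose fibre $D_{\vec x}:=\{\delta_N\mid N\in\S,\ \la x^N_n\ra_n=\vec x\}$ is stationary in $\eta$, since then $S^*:=D_{\vec x}$ witnesses the conclusion. First I would record two facts. Since $|N|<\lambda\le\eta$ and $\eta$ is regular we have $\delta_N<\eta$; and since $N$ is internally approachable of length $\rho$ with $\cf(\rho)>\aleph_0$, the set $N\cap\eta$ is the increasing union of the $N_i\cap\eta$, so $\cf(\delta_N)=\cf(\rho)>\aleph_0$. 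Second, $T:=\{\delta_N\mid N\in\S\}$ is stationary in $\eta$: given a club $D\subseteq\eta$, the set $\{N\mid D\in N\}$ is club in $\power_\lambda(H_\theta)$, and for any such $N$ elementarity forces $N\cap D$ to be cofinal in $N\cap\eta$, whence $\delta_N=\sup(N\cap D)\in D$; as $\S$ is stationary it meets this club, so $T\cap D\ne\emptyset$.

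The heart of the argument is to attach to each $N\in\S$ a code for $\la x^N_n\ra_n$ lying \emph{below} $\delta_N$, so that Fodor's lemma can be applied. The difficulty is that although each term $x^N_n$ lies in $N$, the sequence $\la x^N_n\ra_n$ itself need not belong to $N$, so its image under a fixed global injection $X^\omega\to\eta$ need not fall below $\delta_N$. This is exactly where internal approachability enters: since $\cf(\rho)>\aleph_0$ and each $x^N_n\in N=\bigcup_{i<\rho}N_i$, there is a single $j<\rho$ with $\{x^N_n\mid n<\omega\}\cup\{X\}\subseteq N_j$, and $N_j\in N$ because $\vec N\uhr(j+1)\in N$. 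Thus $Z_N:=N_j\cap X\in N$ has size $<\lambda$, and the hypothesis $\rho^{\aleph_0}<\eta$ for $\rho<\lambda$ gives $|Z_N^\omega|<\eta$ with $Z_N^\omega\in N$. Letting $e_N\in N$ be the $\mo$-least injection of $Z_N^\omega$ into its cardinality $\gamma_N:=|Z_N^\omega|$, we get $\gamma_N\in N\cap\eta$, so $\gamma_N<\delta_N$, and since $\la x^N_n\ra_n\in Z_N^\omega$ this yields a code $e_N(\la x^N_n\ra_n)<\gamma_N<\delta_N$, whether or not the sequence lies in $N$. Fixing in addition a global injection $d\colon X\to\eta$ definable from $\mo$ and $X$ (hence $d\in N$), the parameter $Z_N$ is faithfully recorded by the set $d``Z_N\subseteq\delta_N$, of size $<\lambda$ and, since $d``Z_N\subseteq N_j\cap\eta$, bounded strictly below $\delta_N$.

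Finally I would stabilise by pressing down. For each $\delta\in T$ choose $N_\delta\in\S$ with $\delta_{N_\delta}=\delta$ and form the pair $\big(d``Z_{N_\delta},\,e_{N_\delta}(\la x^{N_\delta}_n\ra_n)\big)$, both components of which are $<\delta$-data. Applying Fodor to the ordinal component, and to $\sup(d``Z_{N_\delta})<\delta$ (regressive because $\cf(\delta)>\aleph_0$), I pass to a stationary $T'\subseteq T$ on which both the code and the bound $\beta^*:=\sup(d``Z_{N_\delta})$ are constant; then $d``Z_{N_\delta}\subseteq\beta^*$ is a size-$<\lambda$ set, and stabilising it to a fixed value on a further stationary $S^*\subseteq T'$ pins $Z_{N_\delta}$, hence $e_{N_\delta}$, hence the decoded sequence $e_{N_\delta}^{-1}(\,\cdot\,)=\la x^{N_\delta}_n\ra_n$, to a single $\la x_n\ra_n$, as required. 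The main obstacle is precisely this last stabilisation of the auxiliary parameter $Z_N$: because the label need not sit inside $N$ one cannot press it down directly, and one must instead control the bounded trace $d``Z_N\subseteq\delta_N$ of a size-$<\lambda$ set. It is here that the closure hypothesis $\rho^{\aleph_0}<\eta$ for $\rho<\lambda$, in tandem with the uncountable cofinality of $\delta_N$ furnished by internal approachability, is essential to keep the number of surviving possibilities below $\eta$ so that the pigeonhole/Fodor step goes through.
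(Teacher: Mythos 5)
Your setup is sound and, up to the point where you produce the code, it runs parallel to the paper's argument: you correctly isolate $N_j$ (resp.\ $N_i$ in the paper) containing the whole countable sequence, observe that $Z_N = N_j\cap X$ is an \emph{element} of $N$ of size $<\lambda$, and use $\rho^{\aleph_0}<\eta$ to get $|Z_N^\omega|<\eta$. The divergence, and the gap, is in the last stabilisation step. You code the sequence by a \emph{local} injection $e_N$ of $Z_N^\omega$ into $\gamma_N$, so that the ordinal code only decodes once $Z_N$ (hence $e_N$) is known, and you then propose to pin $Z_N$ down by first fixing $\beta^*=\sup(d``Z_{N_\delta})$ via Fodor and then ``stabilising'' the set $d``Z_{N_\delta}\subseteq\beta^*$ to a constant value on a stationary set. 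That last step is a pigeonhole over the family $[\beta^*]^{<\lambda}$, and it requires that this family have size $<\eta$ (the nonstationary ideal on $\eta$ is only $\eta$-complete; a stationary set split into $\eta$ pieces need not have a stationary piece). The hypotheses give only $\rho^{\aleph_0}<\eta$ for $\rho<\lambda$; they do not give $|\beta^*|^{<\lambda}<\eta$. In the intended application $\lambda$ is singular of cofinality $\omega$, $\eta=\lambda^+$ and $|\beta^*|$ can be $\lambda$, so $|[\beta^*]^{<\lambda}|\geq\lambda^{\aleph_0}\geq\lambda^+=\eta$ by K\"onig's theorem, and there is no structural reason for the sets $Z_{N_\delta}$ to range over fewer than $\eta$ values. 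So the decoder cannot be stabilised this way.

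The fix is to make the code globally decodable, which is exactly what the paper does: since $|X^\omega|\leq\eta$, fix the $\mo$-least enumeration $\la \vec{x}^\alpha \mid \alpha<\eta\ra$ of $X^\omega$ once and for all, and let $\alpha_N$ be the index of $\la x^N_n\ra_n$ in it. Your own observation that $\la x^N_n\ra_n\in Z_N^\omega\in N$ with $|Z_N^\omega|<\eta$ then shows (by elementarity of $N$, which sees the enumeration) that $Z_N^\omega\subseteq\{\vec{x}^\alpha\mid\alpha<\beta_N\}$ for some $\beta_N\in N\cap\eta$, whence $\alpha_N<\beta_N<\chi_N(\eta)$. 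A single application of Fodor to $\delta\mapsto\alpha_{N_\delta}$ now finishes, with no auxiliary parameter left to stabilise.
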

\begin{proof}
Let $\la \vec{x}^\alpha \mid \alpha < \eta\ra$ be the $\mo$-least enumeration of $X^\omega$ in $H_\theta$, where each $\vec{x}^\al$ is of the form  $\la x^\al_n \mid n < \omega\ra$. 
For each $N \in \S$ let $\alpha_N < \eta$ be such that $\la x^N_n \mid n < \omega\ra = \vec{x}^{\alpha_N}$. Note that $\alpha_N$ need not be a member of $N$ since $\la x^N_n \mid n < \omega\ra$ need not. 
Since each $N \in \S$ is the union of a sequence $\la N_i \mid i < \rho\ra$ with $\cf(\rho)> \aleph_0$, and $\la x^N_n \mid n < \omega \ra \subseteq N$ there is some $i < \rho$ so that $\la x^N_n \mid n < \omega\ra \subset N_i$, and thus 
$\la x^N_n \mid n < \omega\ra \in (X \cap N_i)^\omega \in N$. Moreover, as $|X \cap N_i|<\lambda$, we have that $|(X \cap N_i)^\omega| < \eta$, and therefore there exists  
some $\beta_N \in \eta \cap N$ so that $(X \cap N_i)^\omega \subset \la \vec{x}^\alpha \mid \alpha < \beta_N\ra$. We conclude that, $\alpha_N < \beta_N <  \chi_N(\eta)$.
Next, define $S = \{ \chi_N(\eta) \mid N \in \S\}$. $S \subseteq \eta$ is stationary, and by choosing for each $\delta \in S$ a specific structure  $N_\delta \in \S$ with $\delta = \chi_N(\eta)$, we can form a pressing down assignment taking each $\delta \in S$ to $\alpha_{N_\delta} < \delta$. 
Let $\alpha^* < \eta$ and $S^* \subseteq S$ be so that $\alpha_{N_\delta} = \alpha^*$ for all $\delta \in S^*$. The claim follows for $S^*$ and 
$\la x_n \mid n < \omega\ra = \vec{x}^{\alpha^*}$. 
\end{proof}

Let $\vec{\tau} = \la \tau_n \mid n < \omega\ra$ be an increasing sequence of regular cardinals, $\lam = \cup_n \tau_n$, and $\theta > \lambda^+$  regular. 
A set $C \subseteq \power_\lambda(H_\theta)$ is a closed unbounded set if it contains all elementary substructures $M \elem \A$ of size $|M| < \lambda$ of some algebra $\A = (H_\theta,\in,f_n)_n$ on $H_\theta$.
We reformulate the definitions of Approachable Free Subset Property and Approachable Bounded Subset Property from the introduction. 

\begin{definition}\label{def:AF/BSP}
\begin{enumerate}
     \item Let $F : [\lambda]^{<\omega} \to \lambda$ be a function. We say that a subset $X \subseteq \lambda$ is \textbf{free with respect to $F$} if for every $\gamma \in X$, $\gamma \not\in F[X\setminus \{\gamma\}]^{<\omega}$.
     
     \item The \textbf{Approachable Free Subset Property (AFSP)} with respect to $\vec{\tau}$ asserts that there exists a closed unbounded set $C \subseteq \power_\lam(H_\theta)$ of structures $N \elem (H_\theta;\in)$ so that 
     for every internally approachable structure $N \in C$ there exists some $m < \omega$ such that the set $\{ \chi_N(\tau_n) \mid m \leq n < \omega\}$ is free with respect to every function $F \in N$
     
      \item The \textbf{Approachable Bounded Subset Property (ABSP)}with respect to $\vec{\tau}$ asserts that there exists a closed unbounded set $C \subseteq \power_\lam(H_\theta)$ of structures $N \elem (H_\theta;\in)$ so that for every internally approachable structure $N \in C$  there exists some $m < \omega$ 
      such that for every $F\in N$, $F : [\lambda]^k \to \lambda$ of finite arity $k< \omega$,
      and distinct numbers $d, d_1 , d_2 , \dots , d_k \in \omega\setminus m$, if 
      \[F\left(\chi_N(\tau_{d_1}),\dots \chi_N(\tau_{d_k})\right) < \tau_d\]      
      then 
      \[F\left(\chi_N(\tau_{d_1}),\dots \chi_N(\tau_{c_d})\right) < \chi_N(\tau_{d}).\]
\end{enumerate}
\end{definition}

To see that the formulations in Definition \ref{def:AF/BSP} are equivalent to the ones given in the introduction,  note that 
if $\theta > \lambda = \cup_n \tau_n$ is the first for which that there exists a club $C \subseteq \power_\lambda(H_\theta)$ which is definable in $\vec{\tau}$ consisting of subalgebra $M \subseteq \A = (H_\theta,\in,f_n)_n$, then for every $\theta' > \theta$
and $M' \elem H_{\theta'}$, if $\vec{\tau} \in M'$ then $\theta,C \in M'$ and $M' \cap C \in C$. 

\begin{lemma}\label{lem:ImplicationsBetweenTLSandAFSP}
Suppose that $\vec{\tau} = \la \tau_n \mid n < \omega\ra$ is an increasing sequence of regular cardinals. 
\begin{enumerate}

  \item If there is no continuous essentially tree-like scale on 
  $\prod_n \tau_n$ then there is no continuous tree-like scale on $\prod_n \tau_n$.
  
  \item AFSP w.r.t $\vec{\tau}$ implies that there is no continuous tree-like scale on $\prod_n \tau_n$.
  
    \item ABSP w.r.t $\vec{\tau}$ implies both \\
    (i) AFSP w.r.t  $\vec{\tau}$, and \\
    (ii) there is no continuous essentially tree-like scale on $\prod_n \tau_n$.
\end{enumerate}
\end{lemma}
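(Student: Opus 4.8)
The plan is to treat parts (1) and (3)(i) by directly comparing definitions, and to treat parts (2) and (3)(ii) by contraposition, deriving the failure of the relevant subset property from a continuous scale of the corresponding type. For the two contrapositive arguments the setup is uniform: I fix such a scale $\vec f = \la f_\alpha \mid \alpha < \eta\ra$, a well-ordering $\mo$ of $H_\theta$, and an arbitrary club $C \subseteq \power_\lambda(H_\theta)$, and then build an internally approachable $N \in C$ with $\vec f, \vec\tau, \mo \in N$ and $|N| = \aleph_1 < \lambda$, as the union of a continuous $\omega_1$-chain of members of $C$ passing through these parameters. The workhorse throughout is Lemma \ref{lem:IAcharFunc}: writing $\delta = \chi_N(\eta)$, it gives $\chi_N(\tau_n) = f_\delta(n)$ for all but finitely many $n$, so that the characteristic points of $N$ are eventually read off the single branch $f_\delta$ of the scale.

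Part (1) is immediate: if $\vec f$ is tree-like, then for each $n$ and $\mu \in [\tau_n,\tau_{n+1})$ the set $\{\mu' < \tau_n \mid \exists\beta,\ f_\beta(n+1)=\mu,\ f_\beta(n)=\mu'\}$ has at most one element and is therefore nonstationary in $\tau_n$; thus $\vec f$ is in particular essentially tree-like, and a continuous tree-like scale witnesses a continuous essentially tree-like one. For part (2) I would use tree-likeness to define, from $\vec f$, the predecessor functions $g_n \in N$ determined by $g_n(f_\beta(n+1)) = f_\beta(n)$ (well defined by tree-likeness, set to $0$ off the range). For large $n$, Lemma \ref{lem:IAcharFunc} yields $g_n(\chi_N(\tau_{n+1})) = g_n(f_\delta(n+1)) = f_\delta(n) = \chi_N(\tau_n)$. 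Since $\chi_N(\tau_{n+1})$ lies in $X \setminus \{\chi_N(\tau_n)\}$ for any tail $X = \{\chi_N(\tau_k) \mid k \geq m\}$, no such tail is free with respect to $g_n \in N$, so AFSP fails.

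Part (3)(i) unwinds the two definitions. Suppose ABSP holds via a club $C$, and for $N \in C$ let $m$ be as in its statement. If some tail $X = \{\chi_N(\tau_k) \mid k \geq m\}$ failed to be free with respect to some $F \in N$, there would be $\gamma = \chi_N(\tau_d) \in X$ and arguments $\chi_N(\tau_{d_1}), \dots, \chi_N(\tau_{d_k}) \in X \setminus \{\gamma\}$ (with pairwise distinct indices, all different from $d$, after deleting repetitions) such that $F(\chi_N(\tau_{d_1}), \dots, \chi_N(\tau_{d_k})) = \chi_N(\tau_d) < \tau_d$. The ABSP clause then forces this value to be strictly below $\chi_N(\tau_d)$, contradicting that it equals $\chi_N(\tau_d)$. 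Hence the same club and threshold witness AFSP.

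Part (3)(ii) is where the real work lies, and I expect it to be the main obstacle. Assume $\vec f$ is continuous and essentially tree-like and fix $N$ as above together with a large $n$ (large enough that $\tau_n > |N|$ and that $\chi_N(\tau_k) = f_\delta(k)$ for $k \in \{n, n+1\}$); set $\mu = \chi_N(\tau_{n+1})$, so $\mu \in (\tau_n,\tau_{n+1})$ and $\chi_N(\tau_n) = f_\delta(n)$ lies in the set $S = \{\mu' < \tau_n \mid \exists\beta,\ f_\beta(n+1)=\mu,\ f_\beta(n)=\mu'\}$. The difficulty, relative to the tree-like case, is that essential tree-likeness does not recover $\chi_N(\tau_n)$ from $\mu$: it only says $S$ is nonstationary, and the witnessing club cannot belong to $N$, since any club of $\tau_n$ lying in $N$ must contain the limit point $\chi_N(\tau_n)$, which $S$ excludes. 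The key device is to pass to $N[\{\mu\}]$. Using $\mo$ I define a function $G_n \in N$ sending each $\nu \in [\tau_n,\tau_{n+1})$ to the $\mo$-least club of $\tau_n$ disjoint from the corresponding set $S_{n,\nu}$; then $D := G_n(\mu) \in N[\{\mu\}]$ is a club of $\tau_n$ with $\chi_N(\tau_n) \notin D$. Because $D$ is a club lying in $N[\{\mu\}]$ and $|N[\{\mu\}]| < \tau_n$, the sup $\chi_{N[\{\mu\}]}(\tau_n)$ is a limit of points of $D$ below $\tau_n$ and hence lies in the closed set $D$; since $\chi_N(\tau_n) \notin D$ while $\chi_N(\tau_n) \leq \chi_{N[\{\mu\}]}(\tau_n)$, I conclude $\chi_{N[\{\mu\}]}(\tau_n) > \chi_N(\tau_n)$. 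Finally I would extract from a term realizing a point of $N[\{\mu\}] \cap (\chi_N(\tau_n),\tau_n)$ a one-place function $F \in N$ (absorbing the $N$-parameters) with $F(\chi_N(\tau_{n+1})) \in (\chi_N(\tau_n),\tau_n)$, which violates the ABSP clause at $d = n$, $d_1 = n+1$. As $n$ may be taken above any prescribed $m$, ABSP fails at $N$, and since $C$ was arbitrary, ABSP fails.
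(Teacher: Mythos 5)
Your proposal is correct and follows essentially the same route as the paper: part (1) by noting tree-like implies essentially tree-like, parts (2) and (3)(ii) by contraposition using Lemma \ref{lem:IAcharFunc} together with, respectively, the tree-like predecessor function and a club-selector $\nu \mapsto C_n(\nu)$ evaluated in $N[\{\chi_N(\tau_{n+1})\}]$, and part (3)(i) by unwinding the definitions. The only difference is cosmetic: the paper packages the part (2) predecessor functions into a single $F:\lambda\to\lambda$ and treats (3)(i) as immediate, whereas you spell these steps out.
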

\begin{proof}
\begin{enumerate}
    \item  This is an immediate consequence of the definitions of an essentially tree-like scale and a tree-like scale on $\prod_n \tau_n$.

\item 
We prove the contrapositive statement, that if there exists a continuous tree-like scale on $\prod_n \tau_n$ then AFSP fails with respect to $\vec{\tau}$.
Suppose that $\vec{f}$ is a continuous tree-like scale on $\prod_n \tau_n$. 
Since $\vec{f}$ is tree-like, we can assign to it a function $F : \lambda \to \lambda$, $\lambda= \cup_n \tau_n$, defined as follows:
For every $n < \omega$ and $\mu$, $\tau_n \leq \mu < \tau_{n+1}$, define
\[
F(\mu) = 
\begin{cases}
f_\alpha(n) &\mbox{if } \mu = f_{\alpha}(n+1) \text{ for some } \al < \eta\\
0 &\mbox{otherwise.}
\end{cases}
\]
$F(\mu)$ is well defined, i.e., does not depend on the choice of $\alpha$ such that $\mu = f_\alpha(n+1)$, since $\vec{f}$ is tree-like.
It is clear from the definition of $F$ that for every $\delta < \eta$ and $n < \omega$, $F(f_\delta(n+1)) = f_\delta(n)$.
Now, if $C \subseteq \power_{\lambda}(H_\theta)$ is a closed unbounded subset, 
 $N \in C$ is an internally approachable structure  with $F \in N$, and $\delta = \chi_N(\eta)$, then
$\chi_N^{\vec{\tau}}(n) = f_\delta(n)$ for all but finitely many $n < \omega$. 
Hence, for all but finitely many $n < \omega$, $F(\chi_N(\tau_{n+1})) = \chi_N(\tau_n)$, which means that $\{\chi_N(\tau_{n+1}),\chi_N(\tau_{n})\}$ is not free with respect to $F \in N$. Since $C$ was an arbitrary closed and unbounded subset, 
AFSP with respect to $\vec{\tau}$ fails. 

\item 
The fact that ABSP implies AFSP is immediate from the definition of the two properties. 
To show that ABSP w.r.t $\vec{\tau}$ implies that there is no continuous scale on $\prod_n \tau_n$ which is essentially tree-like, 
we prove the contrapositive statement. 
Suppose that $\la f_\alpha \mid \alpha < \eta\ra$ is a continuous essentially tree-like scale on a product $\prod_n \tau_n$. Then by Definition 
\ref{def:TLSandETLS} for every $n < \omega$, there is a function $C_n : \tau_{n+1} \to \power(\tau_n)$ so that for every $\mu < \tau_{n+1}$, 
$C_n(\mu)$ is a closed and unbounded subset of $\tau_n$ which is disjoint from $\{ \mu_n < \tau_n \mid \exists \beta < \eta, f_\beta(n+1) = \mu \text{ and } f_\beta(n) = \mu_n\}$.
Let $C$ be any club of elementary substructures of $(H_\theta;\in)$. Take an internally approachable substructure $N \in C$ and of size $|N| < \lambda = \cup_n \tau_n$, so that both $\la \tau_n \mid n< \omega\ra$ and $\la C_n \mid n < \omega\ra$ belong to $N$.
Define $\delta = \chi_N(\eta)$ and let $m < \omega$ so that $f_\delta(n) = \chi_N(\tau_n)$ for all $n \geq m$.
Fixing $n \geq m$ and examining the elementary extension 
$N' = N[\{f_\delta(n+1)\}] = \{ F(f_\delta(n+1)) \mid F \in N\} \elem (H_\theta;\in)$ of $N$,
we have that $C_n(f_\delta(n+1)) \in N'$ since $C_n \in N$. 
Now, as $C_n(f_\delta(n+1)) \subseteq \tau_n$ is closed unbounded, we must have that $\chi_{N'}(\tau_n) \in C_n(f_\delta(n+1))$. However $\chi_N(\tau_n) = f_\delta(n) \not\in C_n(f_\delta(n+1))$ by the definition of $C_n$. This implies that $\chi_{N'}(\tau_n) > \chi_N(\tau_n) = f_\delta(n)$, which in turn, implies that $F(f_\delta(n+1)) > f_\delta(n)$ for some $F \in N$. Since $N \in C$ where $C$ is an arbitrary closed unbounded collection, and $n$ is an arbitrarily large finite ordinal, we conclude that 
ABSP fails with respect to $\la \tau_n \mid n < \omega\ra$.
\end{enumerate}
\end{proof}   

\subsection{Fine structure primer}

\subsubsection{Ultrafilters}

We shall take our fine structure from \cite{FSIT}. Our result almost certainly also applies to different forms of fine structure such as the fine structure theory of \cite{NewFS}, in fact, the proof of Theorem \ref{thmtwo} in particular would be greatly simplified, but at the cost of significantly complicating the arguments in the core model part of this paper. As there is currently no account of the covering lemma for $\lambda$-indexing, we think it prudent to choose Mitchell-Steel mice at this time. We don't use \cite{Zeman}, as $\lnot O^\P$ is much too strong a limitation for this section. (While technically Mitchell and Steel operate under the assumption of $\lnot M^\#_1$ in \cite{FSIT}, it is well understood by now that their fine structure theory functions well past this point.)

 For our purposes an extender $F$ is a directed system of ultrafilters $\{(a,X)\vert a \in \finsubsets{\length(F)}, X \subset \left[\crit(F)\right]^{\vert a\vert}\}$ as described in \cite[p. 384]{Jech}. The individual ultrafilters will be denoted as $F_a := \{ X \subset \crit(F)^{\vert a\vert} \vert (a,X) \in F\}$. For $a \subset b$ and $f$ a function with domain $\left[\crit{F}\right]^{\vert a\vert}$, we let $f^{a,b}$ be the function with domain $\left[\crit{F}\right]^{\vert b\vert}$ determined by $f^{a,b}(\bar{b}) = f(\bar{a})$ where $\bar{a}$ is the unique subset of $\bar{b}$ determined by the type of $a$ and $b$. This gives rise to an embeddings from $\ult(\mathcal{M},F_a)$ into $\ult(\mathcal{M},F_b)$. The direct limit along those embeddings is the extender ultrapower $\ult(\mathcal{M},F)$, elements of which we will present as pairs $\left[f,a\right]^\mathcal{M}_F$ where $f \in \mathcal{M}$ is a function with domain $\left[\crit(F)\right]^{\vert a\vert}$ and $a \in \finsubsets{\length(F)}$. The direct limit map shall be denoted $\iota^\mathcal{M}_F: \mathcal{M} \rightarrow \ult(\mathcal{M},F)$. We will generally omit the superscript in this notation. This should not lead to confusion. Note that we will later form ultrapowers where some functions involved in the construction are not elements of the structure but merely definable over it.

$\beta < \length(F)$ is a generator of $F$ if it cannot be represented as $\left[f,a\right]_F$ for any $f \in \fnktsraum{\crit(F)}{\crit(F)} \cap \mathcal{M}$ and $a \in \finsubsets{\beta}$, i.e. $\{b \cup \{\xi\} \vert f(b) = \xi\} \notin F_{a \cup \{\beta\}}$. Let $\gen(F)$ denote the strict supremum of the generators of $F$. Also let $\nu(F) = \max\{\gen(F),(\crit(F)^+)^\mathcal{M}\}$.

For a subset $A$ of $\alpha$ we will write $F \restr A := \{ (a,X) \in F \vert a \subset A\}$. We will consider this an extender, forming ultrapowers etc, even if $A$ is not an ordinal. Let $\eta < \alpha$ be such that $\eta = \gen(F \restr \eta)$, then the trivial completion is the $(\crit(F),(\eta^+)^{\ult(\mathcal{M};F \restr \eta)})$-extender derived from $\iota_{F \restr \eta}$.

\subsubsection{Premice}

A potential premouse is a structure of the form $\mathcal{M} = \<J^{\vec{E}}_\alpha; \in, \vec{E}, F\>$ where $J^{\vec{E}}_\alpha$ is a model constructed from a sequence of extenders $\vec{E}$ using the Jensen hierarchy. For $\beta \leq \alpha$ we define $\mathcal{M} \vert \beta := (J^{\vec{E} \restr \beta}_\beta;\in, \vec{E}\restr \beta,\vec{E}_\beta)$ and $\mathcal{M}\vert\vert\beta := (J^{\vec{E}\restr\beta}_\beta;\in,\vec{E}\restr\beta)$. (The difference between the two notations lies in including a top predicate.) If $\mathcal{N}$ is of one of the above forms then we write $\mathcal{N} \eextend \mathcal{M}$ and say $\mathcal{N}$ is an initial segment of $\mathcal{M}$.  

$\vec{E}$ must be good, i.e. it has the following properties:

\begin{itemize}
    \item[(Idx)] for all $\beta < \alpha$ if $\vec{E}_\beta \neq \emptyset$, then $\beta = (\nu(\vec{E}_\beta)^+)^{\ult(\mathcal{M} \vert \beta; \vec{E}_\beta)}$;
    \item[(Coh)] for all $\beta < \alpha$ if $\vec{E}_\beta \neq \emptyset$, then $\mathcal{M} \vert\vert \beta = \ult(\mathcal{M}\vert\beta;\vec{E}_\beta) \vert\beta$;
    \item[(ISC)] for all $\beta < \alpha$ if $\vec{E}_\beta \neq \emptyset$, then for all $\eta < \alpha$ such that $\eta = \gen(\vec{E}_\beta \restr \eta)$ the trivial completion of $\vec{E}_\beta \restr \eta$ is on $\vec{E}$ or $\vec{E}_\eta \neq \emptyset$ and it is on $\iota_{\vec{E}_\eta}(\vec{E})$.
\end{itemize}

Note that $\vec{E}_\beta$ measures exactly those subsets of its critical point that are in $\mathcal{M}\vert\vert\beta$ for any $\beta < \alpha$ such that $\vec{E}_\beta \neq \emptyset$. $F$ the top extender must be such that $\vec{E}\concat F$ remains good. $F$ can be empty in which case $\mathcal{M}$ is called passive, otherwise $\mathcal{M}$ is active.

To an active potential premouse we associate three constants: $\mu^{\mathcal{M}}$ the critical point of the top extender; $\nu^{\mathcal{M}}$ the strict supremum of the generators of $\mathcal{M}$'s top extender or $((\mu^\mathcal{M})^+)^\mathcal{M}$ whichever is larger; $\gamma^\mathcal{M}$ the index of the longest initial segment of $\mathcal{M}$'s top extender (if it exists).

We distinguish three different types of active potential premouse: $\mathcal{M}$ is active type I if $\nu^\mathcal{M} = (\mu^{\mathcal{M},+})^\mathcal{M}$; $\mathcal{M}$ is active type II if $\nu^\mathcal{M}$ is a successor ordinal; $\mathcal{M}$ is a active type III if it is neither type I or type II, i.e. the set of the generators of $\mathcal{M}$'s top extender has limit type. 

\subsubsection{Fine structure}

The big disadvantage of Mitchell-Steel indexing is that we cannot deal directly with definability over $\mathcal{M}$, but instead need to work with an amenable code of our original structure. The exact nature of this coding is dependant on the type of $\mathcal{M}$. We will take inspiration from \cite{JohnHST} and use a uniform notation $\mathcal{C}_0(\mathcal{M})$ for this code. 

If $\mathcal{M} := \<\vert\mathcal{M}\vert;\in,\vec{E},F\>$ is an active potential premouse of type I or II, we will define an alternative predicate $F^{c}$ coding the top extender $F$: $F^c$ consists of tuples $(\gamma,\xi,a,X)$ such that $\xi \in \left(\mu^\mathcal{M},(\mu^{\mathcal{M},+})^\mathcal{M}\right)$ and $\gamma \in \left(\nu(F),\on \cap \vert\mathcal{M}\vert\right)$ is such that $(F \cap (\finsubsets{\nu(F)} \times \mathcal{M}\vert\vert\xi)) \in \mathcal{M}\vert\vert\gamma$, and $(a,X) \in (F \cap (\finsubsets{\gamma} \times \mathcal{M}\vert\vert\xi))$. The point is that $F^c$ is amenable. We let $\mathcal{C}_0(\mathcal{M}) := \<\vert\mathcal{M}\vert; \in, \vec{E},F^c\>$.

If $\mathcal{M}$ on the other hand is active type III we have to make bigger changes. In the language of \cite{FSIT} we have to ``squash", that is remove ordinals from the structure. (This is to ensure that the initial segment condition is preserved by iterations.) We let $\mathcal{C}_0(\mathcal{M}) := \< J^{\vec{E}}_{\nu(F)};\in,\vec{E} \restr \nu(F),F \restr \nu(F)\>$.

We then define $r\Sigma_1$-formulae to be $\Sigma_1$ over $\mathcal{C}_0(\mathcal{M})$, and $r\Sigma_{n + 1}$-formulae to be $\Sigma_1$ in a predicate coding an appropriate segment of the $r\Sigma_n$-theory of $\mathcal{C}_0(\mathcal{M})$. We will let $\Th^\mathcal{M}_n(\alpha,q) := \{ (\lceil \phi \rceil,b)\vert \phi \text{ is } r\Sigma_n, b \in \finsubsets{\alpha}, \mathcal{C}_0(\mathcal{M}) \models \phi(b,q)\}$.

Projecta can then be defined relative to these formulas, i.e. $\rho_{n+1}(\mathcal{N})$ is the least ordinal such that some $r\Sigma_{n+1}$-definable (in parameters) subset of it is not in $\mathcal{C}_0(\mathcal{M})$. $\rho_0(\mathcal{M}) = \on \cap \mathcal{C}_0(\mathcal{M})$ (which might be smaller than $\on \cap \mathcal{M}$).

As usual we define $p_{n+1}(\mathcal{M})$, the $(n+1)$-th standard parameter, to be the lexicographically least $p \in \finsubsets{\on \cap \mathcal{C}_0(\mathcal{M})\slash \rho_{n+1}(\mathcal{M})}$ that defines a missing subset of $\rho_{n+1}(\mathcal{M})$.

We can also define canonical $r\Sigma_{n+1}$-Skolem function allowing us to form $\hull^\mathcal{M}_{n+1}(A)$ given a subset $A$ of $\mathcal{C}_0(\mathcal{M})$. Note that while our notation makes it look like a hull of $\mathcal{M}$ it is a substructure of $\mathcal{C}_0(\mathcal{M})$ not $\mathcal{M}$.

We say $\mathcal{M}$ is $n$-sound above $\beta$ relative to $p$ iff $\mathcal{C}_0(\mathcal{M}) = \hull^\mathcal{M}_{n}(\beta \cup \{p\})$. We will not mention the parameter if $\mathcal{M}$ is $n$-sound above $\beta$ relative to $p_{n}(\mathcal{M})$. If $\mathcal{M}$ is $n$-sound above $\rho_{n}(\mathcal{M})$, we simply say that $\mathcal{M}$ is $n$-sound.

A potential premouse is then a premouse if all its initial segments are $n$-sound for all $n$. We can now also define fine structural ultrapowers. Let $\mathcal{M}$ be a premouse and let $F$ be an extender that measure all subsets of its critical point in $\mathcal{M}$. Let $n$ be such that $\crit(F) < \rho_n(\mathcal{M})$ and $\mathcal{M}$ is $n$-sound. Then $\ult_n(\mathcal{M},F)$ is the ultrapower formed using all equivalence classes $\left[f,a\right]_F$ where $a \in \finsubsets{\length(F)}$ and $f$ is a function with domain $\left[\crit(F)\right]^{\vert a\vert}$ that is $r\Sigma_n$-definable over $\mathcal{M}$ (in parameters).

\begin{lemma}\label{fscof}
   Let $\mathcal{M}$ be a premouse, and let $\kappa \in \mathcal{C}_0(\mathcal{M})$ be a regular cardinal there. Assume $\rho_{n+1}(\mathcal{M}) \leq \beta < \kappa \leq \rho_n(\mathcal{M})$ for some $n$ such that $\mathcal{M}$ is $(n + 1)$-sound above $\beta$. Then $\cof(\kappa) = \cof(\rho_n(\mathcal{M}))$.
\end{lemma}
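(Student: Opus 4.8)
The plan is to pass to the $n$-th master code (reduct) structure and reduce everything to a statement about $\Sigma_1$-hulls. Write $\mathfrak{M}$ for the amenable $r\Sigma_n$ master code structure associated to $\mathcal{M}$, so that $\mathfrak{M}$ has ordinal height $\rho_n(\mathcal{M})$, the $r\Sigma_{n+1}$ relations over $\mathcal{M}$ become $\Sigma_1$ relations over $\mathfrak{M}$, and $\rho_1(\mathfrak{M}) = \rho_{n+1}(\mathcal{M})$, $p_1(\mathfrak{M}) = p_{n+1}(\mathcal{M}) =: p$. Under this translation, the hypothesis that $\mathcal{M}$ is $(n+1)$-sound above $\beta$ becomes $\mathfrak{M} = \hull_1^{\mathfrak{M}}(\beta \cup \{p\})$, and the chain of inequalities reads $\rho_1(\mathfrak{M}) \leq \beta < \kappa \leq \rho_n(\mathcal{M}) = \On \cap \mathfrak{M}$. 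Here $\kappa$ remains a cardinal of $\mathfrak{M}$: by amenability, $A \cap \kappa \in \mathcal{C}_0(\mathcal{M})$ for the master code predicate $A$, so every subset of $\kappa$ in $\mathfrak{M}$ already lies in $\mathcal{C}_0(\mathcal{M})$, where $\kappa$ is regular. If $\kappa = \rho_n(\mathcal{M})$ the conclusion is trivial, so I assume $\kappa < \rho_n(\mathcal{M})$.

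For $\xi < \kappa$ put $X_\xi := \hull_1^{\mathfrak{M}}(\xi \cup \{p\})$ and $g(\xi) := \sup(\On \cap X_\xi)$. Three facts drive the proof. First, the family $\langle X_\xi \mid \xi < \kappa\rangle$ is $\subseteq$-increasing, so $g$ is nondecreasing. Second, since a $\Sigma_1$ Skolem term uses only finitely many ordinal parameters below $\kappa$, $\bigcup_{\xi < \kappa} X_\xi = \hull_1^{\mathfrak{M}}(\kappa \cup \{p\}) \supseteq \hull_1^{\mathfrak{M}}(\beta \cup \{p\}) = \mathfrak{M}$ (using $\beta < \kappa$), so $\sup_{\xi < \kappa} g(\xi) = \On \cap \mathfrak{M} = \rho_n(\mathcal{M})$. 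Third — and this is the heart of the matter — for every $\xi < \kappa$ one has $g(\xi) < \rho_n(\mathcal{M})$, i.e. each hull $X_\xi$ is bounded in the height of $\mathfrak{M}$.

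Granting these three facts, the conclusion follows by a standard two-directions cofinality computation. Applying the nondecreasing, cofinal map $g$ to a cofinal sequence in $\kappa$ yields $\cof(\rho_n(\mathcal{M})) \leq \cof(\kappa)$. Conversely, fix $\langle \eta_j \mid j < \cof(\rho_n(\mathcal{M}))\rangle$ cofinal in $\rho_n(\mathcal{M})$ and choose $\zeta_j < \kappa$ with $g(\zeta_j) > \eta_j$; were $\{\zeta_j\}$ bounded by some $\zeta^* < \kappa$, monotonicity would give $\eta_j < g(\zeta_j) \leq g(\zeta^*) < \rho_n(\mathcal{M})$ for all $j$, contradicting the cofinality of $\langle\eta_j\rangle$, so $\{\zeta_j\}$ is cofinal in $\kappa$ and $\cof(\kappa) \leq \cof(\rho_n(\mathcal{M}))$. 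Note that the boundedness fact is exactly what powers this converse direction.

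The main obstacle is the boundedness claim $g(\xi) < \rho_n(\mathcal{M})$; everything else is soft bookkeeping. I would establish it by condensation: let $\pi_\xi : \bar{\mathfrak{M}}_\xi \to \mathfrak{M}$ be the inverse of the transitive collapse of $X_\xi \prec_{\Sigma_1} \mathfrak{M}$. The fine-structural condensation available for Mitchell–Steel master code structures identifies $\bar{\mathfrak{M}}_\xi$ with a proper initial segment of $\mathfrak{M}$ (its $\Sigma_1$-projectum is at most $\xi < \kappa$, so it condenses in the sound case), whence $\On \cap \bar{\mathfrak{M}}_\xi =: \bar\rho_\xi < \rho_n(\mathcal{M})$. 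Regularity of $\kappa$ in $\mathcal{C}_0(\mathcal{M})$ enters precisely in controlling $\crit(\pi_\xi)$ and verifying that $\pi_\xi$ is continuous at the top, so that $g(\xi) = \sup \pi_\xi[\bar\rho_\xi] < \rho_n(\mathcal{M})$ rather than cofinal; concretely one uses $|X_\xi| = |\xi| < \kappa$ together with $\kappa$ being a cardinal of $\mathfrak{M}$ to rule out $X_\xi$ being unbounded in $\rho_n(\mathcal{M})$. I expect this to be the only step requiring genuine fine-structural care, the reduction of the first paragraph and the cofinality computation of the third being routine.
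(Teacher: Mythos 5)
There is a genuine gap, and it sits exactly where you flagged ``the heart of the matter'': the boundedness claim $g(\xi) < \rho_n(\mathcal{M})$ for all $\xi < \kappa$ is false. By your own translation of the soundness hypothesis, $\mathfrak{M} = \hull_1^{\mathfrak{M}}(\beta \cup \{p\})$, and since $\beta < \kappa$, already $X_\beta = \mathfrak{M}$; hence $g(\xi) = \On \cap \mathfrak{M} = \rho_n(\mathcal{M})$ for every $\xi \in [\beta,\kappa)$. Your ``fact 2'' and ``fact 3'' directly contradict each other, and the map $g$ is eventually constant with value $\rho_n(\mathcal{M})$, so it carries no cofinality information. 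No condensation argument can repair this, because the obstruction is not fine-structural: $\Sigma_1$-hulls of the \emph{full} structure $\mathfrak{M}$ from $\xi \cup \{p\}$ are simply the whole structure once $\xi \geq \beta$. (Even for $\xi < \beta$ there is no reason such a hull should be bounded in $\On \cap \mathfrak{M}$ --- a set of size $|\xi|$ can perfectly well be cofinal in $\rho_n(\mathcal{M})$, whose cofinality is exactly what is in question and may be small.)

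The stratification has to go the other way, which is what the paper does. For $\xi < \rho_n(\mathcal{M})$ let $\mathcal{N}_\xi$ be the \emph{truncation} $\mathcal{M}\vert\vert\xi$ equipped with $\Th^{\mathcal{M}}_n(\xi,p_n(\mathcal{M}))$, and let $\kappa_\xi$ be the supremum of the ordinals below $\kappa$ that are $\Sigma_1$-definable over $\mathcal{N}_\xi$ from $p_{n+1}(\mathcal{M})$ and ordinals below $\beta$. Boundedness now comes for free from the regularity of $\kappa$: each $\mathcal{N}_\xi$ is an element of $\mathcal{M}$ (as $\xi < \rho_n(\mathcal{M})$), so the relevant set of definable ordinals is a subset of $\kappa$ of size at most $|\beta| + \aleph_0 < \kappa$ lying in $\mathcal{C}_0(\mathcal{M})$, whence $\kappa_\xi < \kappa$. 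Soundness above $\beta$ enters only to show $\sup_{\xi < \rho_n(\mathcal{M})} \kappa_\xi = \kappa$, since every ordinal below $\kappa$ acquires an $r\Sigma_{n+1}$ definition from $p_{n+1}(\mathcal{M})$ and parameters below $\beta$ whose witness appears in some $\mathcal{N}_\xi$. A single nondecreasing map $\xi \mapsto \kappa_\xi$ with these two properties yields $\cof(\kappa) = \cof(\rho_n(\mathcal{M}))$ by the same two-direction computation you wrote in your third paragraph; that part of your argument is fine and can be reused verbatim once the map is replaced.
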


\begin{proof}
 For $\xi < \rho_n(\mathcal{M})$ we let $\mathcal{N}_\xi$ be the structure $\mathcal{M}\vert\vert \xi$ with $\Th^\mathcal{M}_n(\xi,p_n(\mathcal{M}))$ as an additional predicate. Let then $\kappa_\xi$ be the supremum of ordinals less than $\kappa$ which are $\Sigma_1$-definable over $\mathcal{N}_\xi$ from $p_{n+1}(\mathcal{M})$ and ordinals less than $\beta$. As all objects involved are elements of $\mathcal{M}$, we must have $\kappa_\xi < \kappa$. On the other hand $\sup\limits_{\xi < \rho_n(\mathcal{M})} \kappa_\xi = \kappa$ as $\mathcal{M}$ was $(n + 1)$-sound above $\beta$.
\end{proof}

An additional fact that we will need is that if $\mathcal{M}$ is an active (potential) premouse, then $\cof(\on \cap \mathcal{M}) = \cof((\mu^{\mathcal{M},+})^\mathcal{M})$. See the last remark of Chapter 1 in \cite{FSIT}.

\subsubsection{Iterability}

A (normal, $\omega$-maximal) iteration tree on a premouse $\mathcal{M}$ is a tuple $\mathcal{T} : = \<\<\mathcal{M}^\mathcal{T}_\alpha: \alpha \leq \length(\mathcal{T})\>,\<E^\mathcal{T}_\alpha: \alpha < \length(\mathcal{T})\>,D^\mathcal{T},\<\iota^\mathcal{T}_{\alpha,\beta}: \alpha \leq_\mathcal{T} \beta \leq \length(\mathcal{T})\>\>$ where $\mathcal{M}^\mathcal{T}_\alpha$ is a premouse for all $\alpha \leq \length(\mathcal{T})$ ($\mathcal{M}^\mathcal{T}_0 = \mathcal{M}$); $E^\mathcal{T}_\alpha$ is an extender from the $\mathcal{M}^\mathcal{T}_\alpha$-sequence for all $\alpha < \length(\mathcal{T})$, $\alpha < \beta$ implies $\length(E^\mathcal{T}_\alpha) < \length(E^\mathcal{T}_\beta)$; $\iota^\mathcal{T}_{\alpha,\beta}:\mathcal{C}_0(\mathcal{M}^\mathcal{T}_\alpha) \rightarrow \mathcal{C}_0(\mathcal{M}^\mathcal{T}_\beta)$ is the (possibly) partial iteration map for all $\alpha \leq_\mathcal{T} \beta \leq \length(\mathcal{T})$, it is total iff $D^\mathcal{T} \cap \left(\alpha,\beta\right]_{\leq_\mathcal{T}} \neq \emptyset$; $\leq_\mathcal{T}$ is the tree order on

$\length(\mathcal{T})$ with root $0$, if $\gamma + 1 \leq \length(\mathcal{T})$, then the $\mathcal{T}$-predecessor is the least $\beta$ such that $\crit(E^\mathcal{T}_\gamma) < \gen(E^\mathcal{T}_\beta)$, in that case $(\mathcal{M}^\mathcal{T}_{\gamma + 1})^*$ is the segment of $\mathcal{M}^\mathcal{T}_\beta$ to which

$E^\mathcal{T}_\gamma$ is applied, if $\lambda \leq \length(\mathcal{T})$ is a limit, then $b^\mathcal{T}_\lambda := \left[0,\lambda\right)_{\leq_\mathcal{T}}$ is a cofinal branch whose intersection with $D^\mathcal{T}$ is finite,

$\mathcal{M}^\mathcal{T}_\lambda$ must be the direct limit of $\la\mathcal{M}^\mathcal{T}_\alpha,\iota^\mathcal{T}_{\alpha,\beta}: \alpha \leq_\mathcal{T} \beta \in b^\mathcal{T}_\lambda\ra$;

finally, $\gamma + 1 \in D^\mathcal{T}$ if and only if $(\mathcal{M}^\mathcal{T}_{\gamma + 1})^* \neq \mathcal{M}^\mathcal{T}_\beta$.

A $\gamma$ iteration strategy $\Sigma$ for a premouse $\mathcal{M}$ is a function such that $\Sigma(\mathcal{T})$ is a cofinal and wellfounded branch for every iteration tree on $\mathcal{T}$ of limit length $\kleiner\gamma$ and with the property that $\Sigma(\mathcal{T} \restr \alpha) = \left[0,\alpha\right)_{\leq_\mathcal{T}}$ for all limit $\alpha < \length(\mathcal{T})$. $\mathcal{M}$ is $\gamma$-iterable if there exists a $\gamma$-iteration strategy for $\mathcal{M}$. We will just say $\mathcal{M}$ is iterable if it is $\gamma$-iterabe for all ordinals $\gamma$.

Let $\mathcal{M}$ be a premouse, $n < \omega$ and let $p_{n+1}(\mathcal{M}) = \<\xi_0,\ldots,\xi_{k-1}\>$. The $(n+1)$-th solidity witness $w_{n+1}(\mathcal{M})$ is a tuple $\<t_0,\ldots,t_{k-1}\>$ where \[t_i = \Th^\mathcal{M}_{n + 1}(\xi_i,\<\xi_0,\ldots,\xi_{i-1}\>).\] We say $\mathcal{M}$ is $(n + 1)$-solid if $w_{n+1}(\mathcal{M}) \in \mathcal{C}_0(\mathcal{M})$.

A core result of \cite{FSIT} is that any reasonably iterable $n$-sound premouse is $(n + 1)$-solid. Mitchell-Steel also showed the following with similar methods, see the remark after Theorem 8.2. Note that the requirement for unique branches can be replaced by the weak Dodd-Jensen property from \cite{JohnHST}.

\begin{lemma}[Condensation Lemma]\label{condens}
   Let $\mathcal{M} := (\vert\mathcal{M}\vert; \in, \vec{E},F)$ be a $(n + 1)$-sound premouse such that every countable hull of $\mathcal{M}$ has a $(\omega_1 + 1)$-iteration strategy. Let $\mathcal{N}$ be a premouse such that there exist an $r\Sigma_{n + 1}$-elementary embedding $\pi:\mathcal{C}_0(\mathcal{N}) \rightarrow \mathcal{C}_0(\mathcal{M})$ with $\crit(\pi) \geq \rho_{n+1}(\mathcal{N})$. Then $\mathcal{N}$ is an initial segment of $\mathcal{M}$ or of $\ult(\mathcal{M},\vec{E}_{\crit(\pi)})$.
\end{lemma}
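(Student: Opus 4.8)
The plan is to prove this by comparison (coiteration) of $\mathcal{N}$ against $\mathcal{M}$ and then analyzing the two sides separately, following the Mitchell--Steel argument referenced after Theorem 8.2 of \cite{FSIT}. First I would set up iterability: by hypothesis every countable hull of $\mathcal{M}$ carries an $(\omega_1+1)$-iteration strategy, and since $\pi$ realizes countable hulls of $\mathcal{N}$ inside hulls of $\mathcal{M}$, the corresponding countable hulls of $\mathcal{N}$ --- and of the phalanx $(\mathcal{M},\ult(\mathcal{M},\vec{E}_{\crit(\pi)}))$ --- are iterable as well, by copying iteration trees through $\pi$. As the excerpt notes, the requirement of unique branches can be replaced by the weak Dodd--Jensen property of \cite{JohnHST}, so I would fix weak Dodd--Jensen strategies for both structures. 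I would also record that, by the core solidity result cited above, the $n$-sound premice $\mathcal{M}$ and $\mathcal{N}$ are $(n+1)$-solid, so their standard parameters $p_{n+1}$ are solid and universal; this is what makes the soundness hulls well behaved under comparison.

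Next I would run the coiteration, producing an iteration tree $\mathcal{U}$ on $\mathcal{N}$ and $\mathcal{T}$ on $\mathcal{M}$ whose last models are initial segments of one another. Writing $\kappa=\crit(\pi)$, the hypothesis $\kappa\geq\rho_{n+1}(\mathcal{N})$ together with $r\Sigma_{n+1}$-elementarity gives that $\mathcal{C}_0(\mathcal{N})$ and $\mathcal{C}_0(\mathcal{M})$ agree on their bounded subsets below $\kappa$, so the least extender disagreement sits at index $\geq\kappa$. The crux is to show the $\mathcal{N}$-side tree $\mathcal{U}$ is trivial: since $\mathcal{N}$ is $(n+1)$-sound, $\mathcal{C}_0(\mathcal{N})=\hull^{\mathcal{N}}_{n+1}(\rho_{n+1}(\mathcal{N})\cup\{p_{n+1}(\mathcal{N})\})$, any extender applied on the $\mathcal{N}$-side would have critical point $\geq\rho_{n+1}(\mathcal{N})$ and would move a generator; this, combined with the Dodd--Jensen minimality of the branch embedding on a sound mouse, rules out both dropping and nontrivial extender use on that side. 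Hence $\mathcal{N}\eextend\mathcal{M}^{\mathcal{T}}_\infty$.

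Finally I would read off the dichotomy from the $\mathcal{T}$-side. If $\mathcal{T}$ uses no extender then $\mathcal{M}^{\mathcal{T}}_\infty=\mathcal{M}$ and $\mathcal{N}\eextend\mathcal{M}$, the first alternative. Otherwise the first extender applied on the $\mathcal{M}$-side is forced, by the location of the least disagreement at $\kappa$, to be $\vec{E}_{\crit(\pi)}$; using coherence (Coh) to see that $\mathcal{M}$ and $\ult(\mathcal{M},\vec{E}_{\crit(\pi)})$ agree up to the relevant index, and the initial segment condition (ISC) to exclude that the witnessing extender is a proper initial segment of $\vec{E}_{\crit(\pi)}$, I would conclude $\mathcal{N}\eextend\ult(\mathcal{M},\vec{E}_{\crit(\pi)})$, the second alternative.

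The hard part will be the triviality of the $\mathcal{N}$-side: one must simultaneously exclude dropping and nontrivial extender application, which genuinely uses solidity and universality of $p_{n+1}(\mathcal{N})$ and the minimality of Dodd--Jensen branch embeddings, not merely soundness. A secondary difficulty is the exact identification of $\vec{E}_{\crit(\pi)}$ and the verification that $\mathcal{N}$ lands as an initial segment of the ultrapower, where the ISC is essential. Throughout, the active type III case requires carrying the argument on the squashed codes $\mathcal{C}_0$ rather than the mice themselves, so I would take care that $\pi$, the ultrapowers, and the comparison are all handled at the level of $\mathcal{C}_0$.
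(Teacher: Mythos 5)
First, a point of calibration: the paper does not actually prove Lemma \ref{condens}. It is quoted from Mitchell--Steel (``see the remark after Theorem 8.2'' of \cite{FSIT}), with the added observation that the unique-branches hypothesis there can be replaced by the weak Dodd--Jensen property of \cite{JohnHST}. So your proposal cannot be matched against an in-paper argument; it has to be judged against the standard proof you are reconstructing. Your overall strategy --- iterability by copying through $\pi$, a comparison, weak Dodd--Jensen together with ``iteration maps move generators out of the soundness hull'' to freeze the $\mathcal{N}$-side, then reading the dichotomy off the $\mathcal{M}$-side tree --- is the right family of ideas, and your closing remarks about solidity, the ISC, and working at the level of $\mathcal{C}_0$ for type III mice are all on target.

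There are, however, two genuine gaps. First, the standard argument does not coiterate $\mathcal{N}$ against $\mathcal{M}$; it coiterates the \emph{phalanx} $\<\<\mathcal{M},\mathcal{N}\>,\crit(\pi)\>$ against $\mathcal{M}$ --- this is the phalanx whose iterability one obtains by copying with the pair $(\id,\pi)$, not the phalanx $(\mathcal{M},\ult(\mathcal{M},\vec{E}_{\crit(\pi)}))$ you name. The phalanx structure is precisely what forces every extender applied to $\mathcal{N}$ or its images to have critical point $\geq\crit(\pi)\geq\rho_{n+1}(\mathcal{N})$, which is what your key claim needs. In a plain coiteration that claim is false as stated: the first extender used on the $\mathcal{N}$-side has \emph{index} at least the least disagreement, hence $\geq\crit(\pi)$, but its \emph{critical point} can lie far below $\rho_{n+1}(\mathcal{N})$, and then the ``moves a generator'' argument and the degree bookkeeping of the ultrapower no longer go through as you describe. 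Second, on the $\mathcal{M}$-side you only treat the alternatives ``$\mathcal{T}$ is trivial'' and ``$\mathcal{T}$ uses exactly $\vec{E}_{\crit(\pi)}$''; you must also exclude $\mathcal{T}$ using further extenders. This is where one uses that $\mathcal{N}$ is sound and projects to $\leq\crit(\pi)$, so that $\on\cap\mathcal{N}$ lies below $(\crit(\pi)^+)$ of the last model and hence below the index of any second extender used; consequently $\mathcal{N}$ is already an initial segment of the one-step ultrapower (and, if the first index used exceeds $\crit(\pi)$, already of $\mathcal{M}$ itself). With the phalanx set-up and these two repairs, your outline becomes the Mitchell--Steel proof.
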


Both these results use the notion of a phalanx (although this notion was not yet fully developed by the time of \cite{FSIT}) of which we too will have need. A phalanx is a tuple $\<\<\mathcal{M}_i: i \leq \alpha\>,\<\kappa_i: i < \alpha\>\>$ where $\mathcal{M}_i$ agrees with $\mathcal{M}_j$ up to $(\kappa^+_i)^{\mathcal{M}_j}$ for all $i < j \leq \alpha$.

Phalanxes are a natural byproduct of iteration trees, i.e. if $\mathcal{T}$ is a normal iteration tree on some premouse, then $\<\<\mathcal{M}^\mathcal{T}_i: i \leq \length(\mathcal{T})\>,\<\nu(E^\mathcal{T}_i): i < \length(\mathcal{T})\>\>$ is a phalanx.

We can then also define iterability on phanlanxes as a natural extension of the structure of iteration trees. Given a phalanx $\<\<\mathcal{M}_i: i \leq \alpha\>,\<\kappa_i:i < \alpha\>\>$ and an extender $E$ we can extend the phalanx by applying $E$ to $\mathcal{M}_i$ where $i$ is minimal with $\crit(E) < \kappa_i$. (Note we have to require that the length of $E$ is above $\sup\limits_{i < \alpha} \kappa_i$ to maintain ``normality".)

A notion of iteration then follows naturally. The most critical difference here is that we have to keep track above which element of the phalanx any given model of the iteration tree lies. The art of phalanx iteration lies in arranging things such that the last model of a co-iteration lies above the ``right" model.

\section{Forcing the Approachable Bounded Subset Property}\label{sec:forcing}
Our forcing notations is mostly standard.
We use the Jerusalem forcing convention by which
``a condition $p$ extends (is more informative than) $q$" is denoted by $p \geq q$. 
In general, names for a set $x$ in a generic extension will be denoted by $\name{x}$. If $x$ is in the ground model then its canonical name is denoted by $\check{x}$. \\

We denote our initial ground model by $V'$, which we assume to satisfy the following assumptions:
there are two increasing sequences $\langle \kappa_n \mid n < \omega \rangle$, $\langle \lambda_n \mid n < \omega \rangle$  of regular cardinals, with $\lambda_n < \kappa_{n+1} < \lambda_{n+1}$ for all $n$, and that each $\lambda_n$ is measurable of Mitchell order $o(\lambda_n) = \kappa_n$. \\
    For each $n < \omega$, let $\la U_{\lambda_n,\alpha} \mid \alpha < \kappa_n \ra$ be a $\mo$-increasing of normal measures on $\lambda_n$. I.e., $U_{\lambda_n,\alpha}$ belongs to the ultrapower by $U_{\lambda_n,\beta}$, whenever $\alpha < \beta$. 
Denote $\lambda = \cup_n \lambda_n$.


In order to apply our main extender-based forcing notion, we first force with a preparatory forcing $\po'$ over $V'$ to transform the Mitchell-order increasing sequences $\la U_{\lambda_n,\alpha} \mid \alpha < \kappa_n \ra$ of normal measures, to Rudin-Keisler increasing sequences.
For this, we force with a Gitik-iteration $\po'$ (\cite{Gitik-Iter}) for changing the cofinality of measurable cardinals between the cardinals $(\kappa_n,\lambda_n)$ for all  $n < \omega$.
Let $G' \subseteq \po'$ be a generic filter over $V'$, and set 
$V = V[G']$. 
We list a number of facts concerning the extensions in $V$ of the measures $\la U_{\lambda_n,\alpha} \mid \alpha < \kappa_n \ra$ from $V'$. The analysis leading to these facts can be found in \cite{Gitik-Iter}, or \cite{HOD1} for a similar type of poset. 
The Mitchell-order increasing sequence $\la U_{\lambda_n,\alpha} \mid \alpha < \kappa_n \ra$ extends to a Rudin-Keisler increasing sequence of $\lambda_n$-complete measures $\la U^*_{\lambda_n,\alpha} \mid \alpha < \kappa_n\ra $, with Rudin-Keisler projections $\pi^{n}_{\beta,\alpha} : \lambda_n \to \lambda_n$ for each $\alpha < \beta < \kappa_n$. We note that the least measure $U^*_{\lambda_n,0}$ remains normal.  
We denote for each $n < \omega$ the linear directed system of measures $\{ U^*_{\lambda_n,\alpha}, \pi^n_{\beta,\alpha} \mid \alpha \leq \beta < \kappa_n\}$ by $E_n$, and 
further denote each $U^*_{\lambda_n,\alpha}$ by $E_n(\alpha)$. 
Let
\[j_{E_n} : V \to M_{E_n} = \ult(V,E_n) = \dirlim_{\alpha < \kappa_n}\ult(V,E_n(\alpha))
\]
Each measure $E_n(\alpha)$ can be derived from $j_{E_n}$ using a generator $\gamma^{E_n}_\alpha < j_{E_n}(\lambda_n)$.
The following list summarizes the key properties of the extenders $E_n$:
\begin{fact}\label{fact:E_n}
${}$
\begin{enumerate}
    
    \item $\cp(j_{E_n}) = \lambda_n$ and $M_{E_n}^{<\kappa_n} \subseteq M_{E_n}$
    \item $\gamma^{E_n}_0 = \lambda_n$ and $\la \gamma_\alpha \mid \alpha < \kappa_n\ra$ is a strictly increasing and continuous sequence
    \item $\gamma^{E_n} = \sup_{\alpha < \kappa_n}\gamma^{E_n}_\alpha$ is strongly inaccessible in $M_{E_n}$, and we may assume that there exists a function $g_n : \lambda_n \to \lambda_n$ such that $\gamma^{E_n} = j_{E_n}(g_n)(\lambda_n)$
    \item for each $\alpha < \beta < \kappa_n$, $E_n(\alpha)$ is strictly weaker than $E_n(\beta)$ in the Rudin-Keisler order. I.e., for every $A \in E_{n}(\alpha)$ there is $\nu \in A$ such that $\pi_{\beta,\alpha}^{-1}(\{\nu\})$ is unbounded in $\lambda_n$. 
    \item for every $\alpha < \kappa_n$ and $h : \lambda_n \to \lambda_n$ such that 
    $j_{E_n}(h)(\gamma^{E_n}_\alpha) < \gamma^{E_n}$.
    $j_{E_n}(h)(\gamma^{E_n}_\alpha) < \gamma^{E_n}_\beta$ for all $\beta > \alpha$.
\end{enumerate}
\end{fact}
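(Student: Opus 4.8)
The plan is to verify the five clauses directly from the analysis of the preparatory Gitik iteration $\po'$ carried out in \cite{Gitik-Iter} (and, for a closely related poset, in \cite{HOD1}); since the statement only collects standard consequences of that analysis, I would present it as a verification keyed to the description of $E_n$ rather than as a self-contained argument. The uniform setup for all clauses is the presentation of $j_{E_n}$ as the direct limit $\dirlim_{\alpha < \kappa_n} j_{E_n(\alpha)}$ of the ultrapowers by the $\lambda_n$-complete measures $E_n(\alpha) = U^*_{\lambda_n,\alpha}$, with commuting maps $k_\alpha : \ult(V,E_n(\alpha)) \to M_{E_n}$, and with $\gamma^{E_n}_\alpha := k_\alpha([\id]_{E_n(\alpha)})$ the generator contributed by the $\alpha$-th coordinate.

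Clauses (1)--(3) are structural. In (1), $\cp(j_{E_n}) = \lambda_n$ holds because each $E_n(\alpha)$ is a $\lambda_n$-complete measure on $\lambda_n$ with critical point $\lambda_n$; the closure $M_{E_n}^{<\kappa_n} \subseteq M_{E_n}$ follows from $\lambda_n$-completeness of the individual measures together with the fact that the indexing system is $<\kappa_n$-directed, so any $<\kappa_n$-sequence of elements of $M_{E_n}$ is already realized in a single factor $\ult(V,E_n(\alpha))$. In (2), $\gamma^{E_n}_0 = \lambda_n$ is immediate from the normality of $U^*_{\lambda_n,0}$ (whose identity class is $\crit = \lambda_n$), strict monotonicity records that each $E_n(\beta)$ contributes a genuinely new generator, and continuity at limits is read off the commutativity of the projection maps $\pi^n_{\beta,\alpha}$. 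In (3), the Mitchell order $o(\lambda_n) = \kappa_n$ enters: $\gamma^{E_n} = \sup_\alpha \gamma^{E_n}_\alpha$ is the image of $\lambda_n$ under the full length-$\kappa_n$ part of the embedding, inaccessible in $M_{E_n}$ because the measures were indexed along an ordinal inaccessible in the ultrapower, and the function $g_n$ is obtained by a standard reflection/representation of this supremum below $\lambda_n$.

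The substantive clause is (4), and it is precisely the reason for forcing with $\po'$ in the first place: in $V'$ the $U_{\lambda_n,\alpha}$ are only Mitchell-increasing, and the cofinality-changing iteration below $\lambda_n$ is what converts the Mitchell order into a \emph{strict} Rudin-Keisler order, supplying honest projection maps $\pi^n_{\beta,\alpha}$. I would verify the ``unbounded fibers'' formulation contrapositively: if some $A \in E_n(\alpha)$ had $(\pi^n_{\beta,\alpha})^{-1}(\{\nu\})$ bounded in $\lambda_n$ for $E_n(\alpha)$-almost every $\nu \in A$, then $E_n(\alpha)$ would be computable from $E_n(\beta)$ by a map that does not move $\gamma^{E_n}_\alpha$ strictly below $\gamma^{E_n}_\beta$, contradicting that $\gamma^{E_n}_\alpha < \gamma^{E_n}_\beta$ are distinct generators. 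Clause (5) is then a discreteness statement about the generator sequence: using the linearity of the directed system and the continuity from (2), a value $j_{E_n}(h)(\gamma^{E_n}_\alpha)$ produced from the single generator $\gamma^{E_n}_\alpha$ cannot land in the interval $[\gamma^{E_n}_{\alpha+1},\gamma^{E_n})$, since every $\gamma^{E_n}_\beta$ with $\beta > \alpha$ is a fresh generator not in the range of any function of $\gamma^{E_n}_\alpha$; hence once it is bounded by $\gamma^{E_n}$ it is already bounded by every $\gamma^{E_n}_\beta$ with $\beta > \alpha$.

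I expect the main obstacle to be clause (4): making the ``unbounded fiber'' behavior of the projections precise requires unwinding exactly how the Gitik iteration attaches cofinality-changing (Prikry-type) sequences below each $\lambda_n$ and how these induce the projection maps on $\lambda_n$ --- this is the technical core imported from \cite{Gitik-Iter}, and a fully self-contained treatment would essentially reproduce that analysis. Clauses (2) and (5), concerning continuity and discreteness of the generators, are the next most delicate, since they hinge on the fine interaction between the normal base measure $U^*_{\lambda_n,0}$ and the higher measures in the linearly directed system.
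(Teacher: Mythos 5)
The paper offers no proof of this Fact at all: it is stated as a list of properties imported wholesale from the analysis of the Gitik iteration in \cite{Gitik-Iter} (and \cite{HOD1}), so your decision to present it as a verification keyed to that analysis, deferring the technical core (in particular the unbounded-fiber behaviour in clause (4)) to those references, is exactly the approach the paper takes. Your supplementary heuristics for the structural clauses are reasonable, with one caveat worth flagging if you ever tried to make them self-contained: in clause (5), the observation that each $\gamma^{E_n}_\beta$ ($\beta>\alpha$) is a generator not representable from $\gamma^{E_n}_\alpha$ only rules out \emph{equality} $j_{E_n}(h)(\gamma^{E_n}_\alpha)=\gamma^{E_n}_\beta$, not the possibility that $j_{E_n}(h)(\gamma^{E_n}_\alpha)$ lands strictly above some $\gamma^{E_n}_\beta$ inside $[\gamma^{E_n}_\beta,\gamma^{E_n})$; the genuine content is that the set of ordinals below $\gamma^{E_n}$ representable from $\gamma^{E_n}_\alpha$ is bounded below the next generator, which again comes from the cited analysis rather than from generator-hood alone. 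Similarly, for the closure claim in (1) one needs $<\kappa_n$-sequences of \emph{generators} (not just of elements realized in a single factor) to lie in $M_{E_n}$, which is part of what \cite{Gitik-Iter} supplies.
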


Next, we force over $V$ with a short extender-based-type forcing $\po$, associated with the extenders $E_n$, $n < \omega$. 
$\po$ is a variant of the forcing in \cite{HOD2}
. Extending the arguments of \cite{HOD2}, we focus here on the generic scale associated with the extender-based-forcing, and use it to analyze the possible internally approachable structures in the generic extensions. This approach follows the one taken in \cite{BN-TS}, where an extender-based forcing has been used to obtain results concerning internally-approachable structures witnessing that ground model sequences $\la S_n \mid n < \omega\ra$ being tightly-stationary.

\begin{definition}\label{def:posetP}

Conditions $p \in \po$ are sequences $p = \la p_n \mid n < \omega\ra$ such that there is some $\ell<\omega$ for which the following requirements hold:
\begin{enumerate}
    
    \item for $n < \ell$,  
$p_n = \la f_n \ra$, where $f_n : \lambda^+ \to \lambda_n$ is a partial function of size $|f_n| \leq \lambda$, with $0 \in \dom(f_n)$ and both $f_n(0),g_n(f_n(0))  < \lambda_n$ are strongly inaccessible cardinals
\item For $n \geq \ell $, $p_n = \la f_n, a_n, A_n\ra$, where
$f_n$ is as above, $a_n : \lambda^+ \to \kappa_n$ is a partial continuous and order-preserving function, whose domain 
 is a closed and bounded set of $\lambda^+$ of has size $|a_n| < \kappa_n$. \\
We define $\mc(a_n)$ to be $a_n(\max(d_n)) = \max(\rng(a_n))$, and require that the set
$A_n$ to be contained in $\lambda_n \setminus \lambda_{n-1}$ and belong to $E_n(\mc(a_n))$.
\item $\dom(a_n) \cap \dom(f_n) = \emptyset$ and $\dom(a_n) \subseteq \dom(a_{n+1})$ for every $n \geq \ell$,
$a_n(0) = 0$, and for every $\delta \in \cup_n \dom(f_n)$ there exists some $m < \omega$ such that $\delta \in \dom(a_m)$. 
\end{enumerate}

For a condition $p \in \po$ as above, we denote $\ell,f_n,a_n,A_n$ by $\ell^p,f_n^p,a_n^p,A_n^p$ respectively. 
Direct extensions and end-extensions of conditions are defined as follows. 
A condition $p^*$ is a direct extension of $p$, if $\ell^{p^*} = \ell^p$, 
$f_n^p \subseteq f_n^{p^*}$ for all $n < \omega$, and $a_n^{p} \subseteq a_n^{p^*}$, 
$A_n^{p^*} \subseteq (\pi^n_{\mc(a_n^{p^*}),\mc(a_n^{p})})^{-1}A_n^p$ for all $n \geq \ell^p$.

For every $\nu \in A^p_n$, define $p_n \fr \la \nu\ra = \la f'_n\ra$, where 
\[
f'_n = f^{p_n} \cup \{ \la \alpha,\pi^n_{\mc(a_n^p),a_n(\alpha)}(\nu) \ra \mid \alpha \in \dom(a_n^p) \}.
\]

If $\vec{\nu} = \la \nu_{\ell^p},\dots,\nu_{n-1}\ra$ belong to $\prod_{i= \ell^p}^{n-1} A^p_{i}$, we define the end extension of $p$ by $\vec{\nu}$,  denoted $p \fr \vec{\nu}$, to be the condition $p' = \la p'_n \mid n < \omega\ra$, defined by $p'_k = p_k$ for every $k \not\in \{\ell^p,\dots, n-1 \}$, 
and $p'_k = p_k \fr \la \nu_k\ra$ otherwise. 
A condition $q \in \po$ extends $p$ if $q$ is obtained from $p$ by a finite sequence of end-extensions and direct extensions. \text{Equivalently}, $q$ is a direct extension of an end-extension $p \fr \vec{\nu}$ of $p$. 
Following the Jerusalem forcing convention,
we write $p \geq q$ if $p$ extends $q$, and $p \geq^* q$ if $p$ is a direct extension of $q$.
\end{definition}

\begin{notation}\label{notation:proj}
We introduce the following notational convention for the Rudin-Keisler projections $\pi^n_{\al,\beta}$ 
to be applied in the context 
of the forcing $\po$.
Let $p$ be a condition and $\nu \in A^p_n$ for some $n \geq \ell^p$ and $\al \in \dom(a_n^p)$. 
We write $\pi^p_{\mc(p),\al}(\nu)$ for $\pi^n_{\mc(a_n^p),a_n(\al)}(\nu)$.\\ \footnote{Note that the index $n$ is determined from the fact that $\nu \in A^p_n \subseteq \lambda_n \setminus \lambda_{n-1}$.}
Similarly, for a sequence 
$\vec{\nu} = \la \nu_{i}\ra_{\ell^p \leq i < n} \in \prod_{\ell^p \leq i < n} A^{p}_i$,
we write $\pi^p_{\mc(p),\al}(\vec{\nu})$ for the projected sequence
$\la \pi^p_{\mc(p),\al}(\nu_i) \mid \ell^p \leq i < n   \ra$.
\end{notation}

We proceed to list several standard basic properties of the poset $\po$, refering the reader to \cite{HOD2} for details.
\begin{lemma}\label{lemma-meetdenseset}
${}$
\begin{enumerate}
    \item $(\po,\leq,\leq^*)$ is a Prikry-type forcing
    \item for each $p \in \po$, the direct extension order $\leq^*$ of $\po/p$ is $\kappa_{\ell}$-closed
    \item $\po$ satisfies the $\lambda^{++}$.c.c
    \item(Strong Prikry Property) Let $D \subseteq \po$ be a dense open set. For every $p \in \po$ there are $p^* \geq^* p$ and $n < \omega$, such that for every 
$\vec{\nu} \in \prod_{\ell^p \leq i < n} A_{i}^{p^*}$, $p^* \fr \vec{\nu} \in D$.
\end{enumerate}
\end{lemma}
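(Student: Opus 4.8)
The plan is to establish the four items in the order that best exposes their dependencies: first the closure (2), then the chain condition (3), then the Strong Prikry Property (4), and finally to read off the Prikry-type property (1) from (4). Items (2) and (3) are essentially bookkeeping over the coordinates $f_n$, $a_n$, $A_n$, while (4) carries the real weight; the extender structure (many measures $E_n(\alpha)$ linked by the projections $\pi^n_{\beta,\alpha}$, rather than a single measure) is what makes (4) delicate.

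For (2), fix $p$ with $\ell = \ell^p$ and a $\leq^*$-increasing sequence of direct extensions $\la p^\xi \mid \xi < \delta\ra$ with $\delta < \kappa_\ell < \lambda$. I would build an upper bound $p^*$ coordinatewise. Each $f_n$ has size $\leq \lambda$ and the sequence has length $<\kappa_\ell < \lambda$, so the unions $f_n^* = \bigcup_\xi f_n^{p^\xi}$ still have size $\leq \lambda$. For $n \geq \ell$ each $\dom(a_n)$ has size $<\kappa_n$ and $\kappa_\ell \leq \kappa_n$ is regular, so $a_n^* = \bigcup_\xi a_n^{p^\xi}$ has domain of size $<\kappa_n$ and max coordinate $\mc(a_n^*) = \sup_\xi \mc(a_n^{p^\xi}) < \kappa_n$; at any new limit point of the domain I define $a_n^*$ by continuity (taking closures of the domains to keep them closed and bounded, using regularity of $\lambda^+$ and of $\kappa_n$). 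For the measure-one sets I pull each $A_n^{p^\xi}$ back under $\pi^n_{\mc(a_n^*),\mc(a_n^{p^\xi})}$ and intersect; since $E_n(\mc(a_n^*))$ is $\lambda_n$-complete and there are $\delta < \kappa_n < \lambda_n$ sets, the intersection is measure one, and the resulting $p^*$ is a $\leq^*$-upper bound.

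For (3), the key observation is that two conditions sharing the same stem — the same $\ell$ and the same $\la f_n, a_n\ra$ on the non-measure-one part — are compatible, since one may form the common refinement with $A_n := A_n^p \cap A_n^q$, which lies in the filter $E_n(\mc(a_n))$ and directly extends both (the relevant projections being the identity as the $a_n$ agree). Consequently any antichain injects into the set of stems, so it suffices to count stems. A stem is a sequence $\la f_n, a_n\ra_n$ with total support $\bigcup_n \dom(f_n) \cup \bigcup_n \dom(a_n) \subseteq \lambda^+$ of size $\leq\lambda$ and values in $\lambda$; under the arithmetic $(\lambda^+)^\lambda = \lambda^+$ available in $V$ there are only $\lambda^+$ such stems, whence every antichain has size $\leq \lambda^+ < \lambda^{++}$.

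The heart is (4). I would run the standard Gitik-style diagonalization inside $\po/p$, using the $\kappa_\ell$-closure from (2) to meet, one potential stem-length at a time, the requirement of being forced into $D$ after adding that many points, and using the completeness and normality of the $E_n$ together with repeated pullbacks under the $\pi^n_{\cdot,\cdot}$ to homogenize the measure-one sets so that a single $p^* \geq^* p$ works simultaneously for all $\vec\nu \in \prod_{\ell^p \leq i < n} A_i^{p^*}$ at some finite level $n$, with density of $D$ guaranteeing that such an $n$ is reached. The main obstacle is exactly this uniformization across coordinates carrying different max coordinates: shrinking at one coordinate must remain coherent, via the Rudin-Keisler projections, with the choices already fixed at earlier coordinates, which is where the extender (as opposed to single-measure) bookkeeping is essential. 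Finally, for (1), given a sentence $\sigma$ I apply (4) to the dense open set of conditions deciding $\sigma$ to obtain $p^* \geq^* p$ and $n$ with every $p^* \fr \vec\nu$ deciding $\sigma$; a further homogenization of the decision ``$\Vdash \sigma$'' versus ``$\Vdash \neg\sigma$'' over the measure-one sets, again by $\lambda_n$-completeness, makes the decision constant, and then density forces $p^*$ itself to decide $\sigma$ — the usual derivation of the Prikry property from its strong form.
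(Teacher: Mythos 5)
The paper offers no proof of this lemma at all --- it simply lists these as ``standard basic properties'' and refers the reader to \cite{HOD2} --- and your outline is exactly the standard argument for such extender-based Prikry forcings: coordinatewise unions (closing the domains of the $a_n$ and intersecting pullbacks of the $A_n$ under the Rudin--Keisler projections, using $\lambda_n$-completeness) for item (2), compatibility of conditions with equal stems plus a count of stems under $2^\lambda=\lambda^+$ for item (3), the Gitik-style diagonalization for item (4), and the usual homogenization-plus-predensity derivation of (1) from (4). Your proposal is correct in approach and in all the details you supply; the only place it remains schematic is the diagonalization in item (4), which is precisely the part the paper itself defers entirely to the literature.
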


It is routine to verify that the above properties imply that $\po$ does not add new bounded subsets to $\lambda$, and does not collapse $\lambda^{++}$. We extend our analysis of $\po$ below to show that that it preserves $\lam^+$. This result can be also derived using a standard application of the Weak Covering Theorem. 
To extend our study of the poset $\po$, we introduce a notation of orderings $\leq^m$, $m < \omega$, which refine the direct extension ordering $\leq^*$.
\begin{notation}
Let $p,q$ be two conditions in $\po$. For $m < \omega$ we write $p \leq^m q$ if $p \leq^* q$ and 
$a^p_n = a^q_n$, $A^p_n = A^q_n$ for all $n <m$.
\end{notation}
Therefore, for each $m < \omega$, $\leq^m$ is $\kappa_m$-closed and $\leq^{m+1}\thinspace \subseteq \thinspace\leq^m$.

\begin{lemma}\label{lemma-densesets}
   Let $\theta > \lambda^+$ regular, $\mo$ be a well-ordering of $H_\theta$, and $M \elem (H_\theta;\in,\mo)$ satisfying $\po \in M and |M| = \lambda, V_\lambda\subseteq M$.
   Suppose that there exists an enumeration $\vec{D} = \la D_\mu \mid \mu < \lambda\ra $ of all dense open subsets of $\po$ in $M$,
   so that $\vec{D} \uhr \nu \in M$ for every $\nu < \lambda$.
   Then for every condition $p \in \po\cap M$ and $\ell^*$, $\ell^p \leq \ell^* < \omega$, there exists $p^* \geq^{\ell^*} p$ 
   so that for each dense open set $D \in M$ there are - 
   \begin{itemize}
   \item $q \in M$ with $p \leq^{\ell^*}q \leq^{\ell^*} p^*$, 
   \item a finite ordinal $n^D < \omega$, and
   \item  a function $N^D : \prod_{\ell^p\leq i < n^D}A^q_i \to \omega$,
   \end{itemize}
   such that for every pair of sequences $\vec{\nu}^1,\vec{\nu}^2$, satisfying 
   \[\vec{\nu}^1 \in \prod_{i = \ell^p}^{n^D-1}A^q_i, \text{ and } 
   \vec{\nu}^2 \in \prod_{i = n^D}^{N^D(\vec{\nu}^1)}A^q_i,\]
   the condition $q \fr \vec{\nu}^1 \fr \vec{\nu}^2$ belongs to $D$. 
\end{lemma}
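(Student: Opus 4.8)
The plan is to prove the lemma by a diagonal Prikry-style fusion of length $\lambda$, processing the dense sets $D_\mu$ one at a time along the enumeration $\vec{D}$ and producing $p^*$ as a common $\leq^{\ell^*}$-lower bound of the resulting sequence of conditions. The engine of the argument is a single-step lemma: given a condition $q \in M$, a dense open $D \in M$, and a level $m$ with $\ell^* \leq m < \omega$, one produces $q' \geq^{m} q$ together with a function $N : \prod_{i=\ell^p}^{m-1} A^{q}_i \to \omega$ such that for every $\vec{\nu}^1 \in \prod_{i=\ell^p}^{m-1} A^{q}_i$ and every $\vec{\nu}^2 \in \prod_{i=m}^{N(\vec{\nu}^1)} A^{q'}_i$ one has $q' \fr \vec{\nu}^1 \fr \vec{\nu}^2 \in D$. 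This is obtained as follows: there are fewer than $\kappa_m$ many short blocks $\vec{\nu}^1$ (at most $\prod_{i<m}\lambda_i = \lambda_{m-1} < \kappa_m$ of them), and for each we apply the Strong Prikry Property of Lemma \ref{lemma-meetdenseset} to $q \fr \vec{\nu}^1$ and $D$, obtaining a direct extension that shrinks only the measure-one sets at coordinates $\geq m$ and a finite level $N(\vec{\nu}^1)$ past which every continuation lands in $D$. Since $\leq^{m}$ is $\kappa_m$-closed, these fewer than $\kappa_m$ direct extensions amalgamate into a single $q' \geq^m q$ (the coordinates below $m$ are untouched, so $A^{q'}_i = A^q_i$ there); taking $\mo$-least witnesses throughout and using $V_\lambda \subseteq M$ keeps $q'$ and $N$ inside $M$.

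For the global construction, set $n^{D_\mu} = m(\mu) := \max(\ell^*, m_\mu)$, where $m_\mu$ is the least $m$ with $\mu < \kappa_m$, and build a $\geq$-decreasing sequence $\la q_\mu \mid \mu \leq \lambda\ra$ with $q_0 = p$. At a successor $\mu+1$ apply the single-step lemma to $q_\mu$, $D_\mu$ at level $m(\mu)$ to obtain $q_{\mu+1} \geq^{m(\mu)} q_\mu$ together with $N^{D_\mu}$. At a limit $\mu$, note that for $\nu$ in the final segment $[\kappa_{m_\mu - 1},\mu)$ we have $m(\nu) = m(\mu)$, so a final segment of $\la q_\nu \mid \nu<\mu\ra$ is $\leq^{m(\mu)}$-decreasing and has length $|\mu| < \kappa_{m(\mu)}$; as $\leq^{m(\mu)}$ is $\kappa_{m(\mu)}$-closed this final segment has a $\leq^{m(\mu)}$-lower bound, which (the sequence being $\geq$-decreasing) is a $\geq^{\ell^*}$-extension of every $q_\nu$, and we take it to be $q_\mu$. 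Because each initial segment $\vec{D}\uhr\mu$ lies in $M$ and all choices are made $\mo$-canonically, elementarity guarantees that every $q_\mu$ with $\mu<\lambda$ lies in $M$. Finally let $p^* = q_\lambda$ be the coordinatewise lower bound: coordinates below $\ell^*$ are never touched and agree with $p$, and $A^{p^*}_n$ is the intersection, pulled back along the Rudin--Keisler projections $\pi^n_{\cdot,\cdot}$ to a common coordinate $\mc(a^{p^*}_n)$, of all measure-one sets produced at coordinate $n$.

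What makes this length-$\lambda$ fusion legitimate, despite no $\leq^m$ being $\lambda$-closed, is the interaction between the schedule $m(\mu)$ and the completeness of the extenders. A stage $\mu$ shrinks the measure-one set at coordinate $n$ only when $m(\mu) \leq n$, i.e. only when $\mu < \kappa_n$; hence at coordinate $n$ at most $\kappa_n \cdot \kappa_n = \kappa_n$ many shrinkings occur, counting also the amalgamation over short blocks inside each single step. Since $\kappa_n < \lambda_n$ and each $E_n(\alpha)$ is $\lambda_n$-complete, the intersection $A^{p^*}_n$ is measure one for $E_n(\mc(a^{p^*}_n))$. As the single-step extensions can be taken to leave the functions $f_n,a_n$ unchanged and only shrink the $A_n$, no growth of the $a_n$ occurs and $|a^{p^*}_n| = |a^p_n| < \kappa_n$, so $p^*$ is a genuine condition of $\po$ with $p^* \geq^{\ell^*} q_\mu$ for all $\mu$. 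For each dense open $D = D_\mu \in M$ the required witness is then $q := q_{\mu+1} \in M$, which satisfies $p \leq^{\ell^*} q \leq^{\ell^*} p^*$, together with $n^D = m(\mu)$ and $N^D = N^{D_\mu}$; by the single-step lemma $q \fr \vec{\nu}^1 \fr \vec{\nu}^2 \in D$ for all admissible $\vec{\nu}^1,\vec{\nu}^2$, which is exactly the desired conclusion.

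I expect the main obstacle to be precisely this bookkeeping at limit stages and in the final amalgamation: verifying that the diagonal schedule $m(\mu)$ really forces each coordinate $n$ to absorb fewer than $\lambda_n$ shrinkings, so that $\lambda_n$-completeness applies, while simultaneously keeping every $q_\mu$ inside $M$ by reflecting the recursion through $\vec{D}\uhr\mu \in M$. Aligning the measure-one sets at coordinate $n$ to a single coordinate $\mc(a^{p^*}_n)$ via the projections $\pi^n_{\cdot,\cdot}$ before intersecting, and confirming that the Strong Prikry direct extensions for $\po$ can indeed be chosen to fix the $f_n, a_n$ and shrink only the $A_n$ (so that no $a_n$ grows to size $\kappa_n$), are the routine but delicate points underlying the construction.
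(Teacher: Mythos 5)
Your overall architecture --- a single-step amalgamation of the Strong Prikry Property over all short blocks $\vec{\nu}^1$, glued together by a fusion that exploits the $\kappa_m$-closure of $\leq^m$, with $\mo$-canonical choices keeping everything inside $M$ --- is exactly the paper's. The difference is the schedule, and that is where the argument has a genuine gap. You rest the entire length-$\lambda$ fusion on the claim that the Strong Prikry direct extensions ``can be taken to leave the functions $f_n,a_n$ unchanged and only shrink the $A_n$.'' This is false. Meeting a dense open set cannot in general be reduced to shrinking measure-one sets: for any $\delta<\lambda^+$ in $M$, the set of conditions $q$ with $\delta\in\bigcup_m\left(\dom(f^q_m)\cup\dom(a^q_m)\right)$ is a dense open set lying in $M$, and no extension of any $q\fr\vec{\nu}^1$ obtained by shrinking the $A$-parts alone can enter it; moreover clause (3) of Definition \ref{def:posetP} forces every new Cohen coordinate to be absorbed into some $\dom(a_m)$. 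So the single-step extensions genuinely grow the $a_n$'s, each time by a set of size $<\kappa_n$.

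Once the $a_n$'s grow, your schedule $m(\mu)=\max(\ell^*,m_\mu)$ with $\mu<\kappa_{m_\mu}$ is too slow: coordinate $n$ stays active at all $\kappa_n$ many stages $\mu<\kappa_n$, and a union of $\kappa_n$ many increments each of size $<\kappa_n$ can have size exactly $\kappa_n$, destroying the requirement $|a_n|<\kappa_n$. (The $\lambda_n$-completeness argument does save the $A_n$'s, since $\kappa_n<\lambda_n$; nothing saves the $a_n$'s.) The paper's proof is organized precisely to dodge this: it runs only $\omega$ outer stages, at stage $n$ handling the whole block of dense sets $D_\mu$ with $\mu<\lambda_n$ at once (so that $n^{D_\mu}=n+1$, rather than your $m(\mu)$) in $\lambda_n+1$ inner steps that modify the $(a_k,A_k)$-parts only for $k\geq n+1$. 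Since $\lambda_n<\kappa_{n+1}\leq\kappa_k$ and each coordinate is touched at only finitely many outer stages, $|a_k|$ remains below $\kappa_k$. Your scheme could be repaired by the analogous change --- for instance, freezing coordinate $n$ from stage $\lambda_{n-1}$ on rather than from stage $\kappa_n$ --- but as written the counting does not close.
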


\begin{remark}\label{rmk:properness}
We note that the condition $q \fr \vec{\nu}^1 \fr \vec{\nu}^2$ in the statement of the Lemma belongs 
to $M$ as $V_\lambda \subseteq M$. 
Therefore, Lemma \ref{lemma-densesets} implies that $p^*$ is a generic condition for $(M,\po)$, namely, it forces the statement $ \name{G} \cap \check{M} \cap \check{D} \neq \emptyset$ for every dense open set $D \in M$. 
\end{remark}

\begin{proof}
We assume for notational simplicity that $\ell^p = 0$. The proof for the general case is similar. 
We fix for each $n < \omega$ a bijection $\psi_n : \lambda_n \to [\lambda_n]^{n+1} \times \lambda_n$ in $M$.
Our final condition $p^*$ will be obtained as a limit of a carefully constructed sequence $\la p^n \mid \ell^* \leq n < \omega\ra$, starting from $p^{\ell^*} = p$, and consisting of conditions in $M$.
 Moreover, it will satisfy $p^n \leq^{n+1} p^{n+1}$ for all $n \geq \ell^*$.
Suppose that $p^n$ has been defined for some $n \geq \ell^*$. 
Our goal is to construct an extension $p^{n+1} \geq^{n+1} p^n$, 
so that for every ordinal $\mu$, $\lambda_{n-1} \leq \mu < \lambda_{n}$ there exists a function 
    $N^{D_\mu} : \prod_{i\leq n}A^{p^{n+1}}_i \to \omega $ so that for every 
    \[
    \vec{\nu}^1 \in \prod_{i\leq n}A^{p^{n+1}}_i \text{ and  } \thinspace
    \vec{\nu}^2 \in \prod_{n+1 \leq  i < N^{D_\mu}(\vec{\nu}^1)}A^{p^{n+1}}_i
    \]
    $p^{n+1}\fr \vec{\nu}^1 \fr \vec{\nu}^2 \in D_\mu$. We note that this will guarantee $n^{D_\mu}= n+1$ for 
    $\lambda_n \leq \mu < \lam_{n+1}$.
$p^{n+1}$ will be constructed from $p^n$ in $(\lambda_n+1)$-many steps, using two sequences of condition parts, 
$ \la \vec{f}^i \mid i \leq \lambda_n\ra$ and $\la q^i \mid i \leq \lambda_n  \ra$ which satisfy the following requirements:
\begin{enumerate}

    \item 
$\vec{f}_i = \la f_{i,0}, f_{i,1}, \dots, f_{i,n}\ra \in M$ is an $(n+1)$-tuple, consisting of Cohen functions, $f_{i,k} : \lambda^+ \to \lambda_k$ of size at most $\lambda$. 
For $i = 0$, $\vec{f}_0 = \la f^{p^n}_0,\dots,f^{p^n}_n\ra$ is the tuple of the Cohen functions of the first $(n+1)$ Cohen components of $p^{n+1}$.
\item For each $k\leq n$, the sequence $f_{i,k},i\leq \lambda_n$ is increasing in $\subseteq$, 
and $\dom(f_{i,k}) \cap \dom(a_k^{p^n}) = \emptyset$ for all $i \leq \lambda_n$.
\item 
$q^i = \la q^i_m \mid n < m <  \omega\ra$ consists of tail segments of conditions in $\po$ starting from the $(n+1)$-th component,
and $q^0 = p^n\setminus n+1 = \la p^n_m \mid n < m< \omega\ra$.
\item the sequence $q^i$,$ i \leq \lambda_n$ will be $\leq^*$-increasing in the obvious sense. 
\end{enumerate}
The construction of the two sequences will be internal to $M$, and definable from $p^n,\vec{D}\uhr \lambda_{n+1}$, and using 
the fixed well-ordering $\mo$ of $H_\theta$.
Let $\delta \leq \lambda_n$ and suppose that $\la q^i,\vec{f}_i \mid i < \delta\ra$ has been defined and belongs to $M$. 
If $\delta$ is a limit ordinal, we define $\vec{f}_\delta =\la f_{\delta,k} \mid k \leq n\ra$ by $f_{\delta,k} = \bigcup_{i<\delta}f_{i,k}$. 
Similarly,  $q_\delta$ is taken to be the supremum in the direct extension ordering of $q^i$, $i < \delta$, 
which is possible due to the fact that for each
$k > n$, the direct extension ordering of the $k$-th components $q^i_k$, is $\kappa_{k+1}$-closed, and $\kappa_{k+1} > \lambda_n$. Therefore, for every $k > n$, we define  $q^\delta_k = (f_k^{q^\delta}, a_k^{q^\delta}, A_k^{q^\delta})$ where
\[f_k^{q^\delta} = \bigcup_{i<\delta}f_k^{q^i}, a_k^{q^\delta} = \bigcup_{i<\delta}a_k^{q^i} \cup \{ (\alpha,\gamma)\}, \text{ and }
A_k^{q^\delta} = \bigcap_{i < \delta} (\pi^n_{\gamma,\mc(a_k^{q^i})})^{-1}A_k^{q^i}, 
\]
where $\alpha = \sup\left( \bigcup_{i<\delta} \dom(a_k^{q^i})\right)$, and
$\gamma = \sup\left( \bigcup_{i<\delta} \rng(a_k^{q^i})\right)$.
Clearly, $q^\delta \in M$. Suppose now that $\delta = i+1$ is a successor ordinal. We appeal to our fixed bijection 
 $\psi_n : \lambda_n \to [\lambda_n]^{n+1} \times \lambda_n$, and consider $ \psi_n(i) = (\vec{\nu}^i,\mu^i)$, where $\vec{\nu}^i \in [\lambda_n]^{n+1}$ and $\mu^i < \lambda_n$. 
 We proceed as follows:
 If $\vec{\nu}^i \not\in \prod_{i \leq n} A^{p^n}_i$ we make no change,
 setting $\vec{f}_\delta = \vec{f}_i$ and $q^\delta = q^i$.
 Otherwise, $\vec{\nu}^i \in \prod_{i \leq n} A^{p^n}_i$ and we consider the associated functions
 $\la g_{i,0},\dots, g_{i,n} \ra$, defined by 
 \[g_{i,k} = \{ \la \alpha,\pi^k_{\mc(a_k^{p^n}),a^{p^n}_k(\alpha)}(\nu) \ra \mid \alpha \in \dom(a_k^{p^n}) \}.\]
 
We note that since $\dom(g_{i,k}) = \dom(a^{p^n}_{k})$, it is disjoint from $\dom(f_{i,k})$, 
and we can therefore take their unions  $f^*_{i,k} = f_{i,k} \cup g_{i,k}$, $k < n$ to define a sequence of functions
 $\vec{f_i^*} = \la f^*_{i,0},\dots, f^*_{i,n}\ra$. 
 By concatenating the $(n+1)$-sequence $\vec{f_i^*}$ with the tail $q^i$, we get a condition  
 $q^i_* = \vec{f_i^*} \fr q_i \in \po$ with $\ell^{q^i_*} = n+1$, to which we apply the last clause of Lemma 
 \ref{lemma-meetdenseset} (Strong Prikry Property) and find a direct extension $q^i_{**} \geq^* q^i_*$ and an integer $N$ so that for every $\vec{\nu} \in \prod_{k \leq N} A^{q^i_{**}}_{n+1 + k}$, 
 $q^i_{**} \fr \vec{\nu}$ belongs to $D_{\mu_i}$. 
 Specifically, we choose $q^i_{**} \in M$ to be such a condition which is minimal according to the fixed well-ordering $\mo$ of $H_\theta$, 
 and define 
 \begin{itemize}
 \item $N^{D_{\mu_i}}(\vec{\nu}^i) =  N$,  
 \item $\vec{f}_{\delta} = \la f_{\delta,k} \mid k \leq n\ra$ with 
 $f_{\delta,k} = f^{q^i_{**}}_k \setminus g_{i,k}$,\footnote{thus, $\dom(f_{\delta,k})$ is disjoint from $\dom(g_{i,k}) = \dom(a^{p^n}_k)$.}
 and 
 \item  $q^\delta = \la q^\delta_m \mid m \geq n+1\ra$ with $q^\delta_m = (q^i_{**})_m$ for every $m \geq n+1$. 
 \end{itemize}

 Finally, given $\vec{f}_{\lambda_n} = \la f_{\lambda_n,k} \mid k \leq n \ra$ and  $q^{\lambda_n} = \la q^{\delta_n}_m \mid m \geq n+1 \ra$. 
 we define $p^{n+1} \geq^* p^n$ by setting $a_m^{p^{n+1}} = a_m^{p^{n}}$ and $A_m^{p^{n+1}}  = A_m^{p^{n}} $ 
 and $f_m^{p^{n+1}} = f_{\lambda_n,m}$ for $m \leq n$, and  $p^{n+1}_m = q^{\lambda_n}_m$ for $m \geq n+1$.
 Our use of the well-ordering $\mo$ throughout the construction guarantees that $p^{n+1} \in M$.
 
 This concludes the construction of the sequence $\la p^n \mid \ell^* \leq n <\omega\ra$. 
 We now define $p^* \geq^* p$ by $p^*_n = p_n^{n}$. It is straightforward to verify from the construction that $p^* \geq \ell^*$ satisfies the conclusion in the statement of the Lemma. 
\end{proof}

We now show that there are plenty of models $M$, satisfying the conclusion of Lemma \ref{lemma-densesets}.

\begin{proposition}\label{prop:properness}
Let $\vec{M} = \la M_\al \mid \al < \lambda^+\ra$ be an internally approachable sequence (i.e., $\vec{M}\uhr\beta \in M_{\beta+1}$ 
for every $\beta < \lambda^+$) $\subseteq$-increasing and continuous sequence of elementary substructures $M_\alpha \elem (H_\theta;\in,\mo)$ of size $|M_\alpha| = \lambda$, and satisfy $M_\alpha \cap \lambda^+ \in \lambda^+$.
For every limit ordinal $\al < \lam^+$ of $\cf(\al) = \omega$,  satisfying $\alpha = M_\alpha \cap \lambda^+$, $\ell^* < \omega$, and $p \in M_\alpha$, there exists a direct extension $p^* \geq^{\ell^*} p$ satisfying the conclusion of Lemma \ref{lemma-densesets} with respect to $M= M_\al$.\\
Moreover, if the approachable ideal on $\lam^+$ is trivial, i.e., $I[\lam^+] = \lam^+$, then the requirement of $\cf(\al) = \omega$ can be removed. 
\end{proposition}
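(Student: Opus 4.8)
The plan is to deduce the Proposition directly from Lemma \ref{lemma-densesets} applied with $M = M_\alpha$, so the entire task reduces to verifying that $M_\alpha$ meets the hypotheses of that lemma. Most of these are immediate: $M_\alpha \elem (H_\theta;\in,\mo)$ and $|M_\alpha| = \lambda$ are given, $\po \in M_\alpha$ holds by elementarity since $\po$ is definable over $(H_\theta;\in,\mo)$, and $V_\lambda \subseteq M_\alpha$ follows because $\lambda \in M_\alpha$ together with $\alpha = M_\alpha \cap \lambda^+$ forces $\lambda < \alpha$, hence $\lambda \subseteq M_\alpha$, whence any surjection $\lambda \to V_\lambda$ (existing as $\lambda$ is a strong limit, being a limit of measurables) lies in $M_\alpha$ and has range inside $M_\alpha$. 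The only substantial point is to produce an enumeration $\vec D = \la D_\mu \mid \mu < \lambda\ra$ of all dense open subsets of $\po$ lying in $M_\alpha$ with $\vec D \uhr \nu \in M_\alpha$ for every $\nu < \lambda$.

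To build $\vec D$, I first fix a cofinal $A \subseteq \alpha$ of order type $\delta := \cf(\alpha)$ every proper initial segment $A \cap \beta$ (for $\beta < \alpha$) of which belongs to $M_\alpha$. When $\cf(\alpha) = \omega$ this is automatic for any cofinal $\omega$-sequence, since then the initial segments of $A$ are finite and finite tuples of elements of $M_\alpha$ lie in $M_\alpha$; this is the only place the hypothesis $\cf(\alpha) = \omega$ enters. When instead $I[\lambda^+] = \lambda^+$, I fix an approachability witness $\bar a = \la a_\xi \mid \xi < \lambda^+\ra \in M_\alpha$; approachability of $\alpha$ yields a cofinal $A$ of order type $\delta$ with $\{A \cap \beta \mid \beta < \alpha\} \subseteq \{a_\xi \mid \xi < \alpha\}$, and since $\alpha = M_\alpha \cap \lambda^+$ each such $a_\xi$ (with $\xi < \alpha \subseteq M_\alpha$) lies in $M_\alpha$. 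Writing $\beta_\iota$ for the $\iota$-th element of $A$ and using $\vec M \uhr \beta \in M_\alpha$ for $\beta < \alpha$, I obtain for each $\iota_0 < \delta$ that $\la M_{\beta_\iota} \mid \iota < \iota_0\ra \in M_\alpha$, hence $\la e_{\beta_\iota} \mid \iota < \iota_0\ra \in M_\alpha$, where $e_{\beta_\iota} \in M_\alpha$ is the $\mo$-least surjection of $\lambda$ onto the dense open subsets of $\po$ in $M_{\beta_\iota}$.

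Since $M_\alpha = \bigcup_{\iota < \delta} M_{\beta_\iota}$, the values $\{e_{\beta_\iota}(\zeta) \mid \iota < \delta,\ \zeta < \lambda\}$ exhaust the dense open subsets of $\po$ in $M_\alpha$. I enumerate this $\delta \times \lambda$ array in length $\lambda$ by a rectangular staircase: fix in $M_\alpha$ nondecreasing cofinal maps $\iota(\cdot)\colon \lambda \to \delta$ and $\zeta(\cdot)\colon \lambda \to \lambda$ with $\iota(s) < \delta$ for all $s$ (possible as $\delta < \lambda$ and $\delta \in M_\alpha$), let $R_s = \{e_{\beta_\iota}(\zeta) \mid \iota < \iota(s),\ \zeta < \zeta(s)\}$, and sweep the cells of $\bigcup_s R_s$ in an $\mo$-definable order of type $\lambda$ to obtain $\vec D$. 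Every initial segment $\vec D \uhr \nu$ equals some $R_s$ together with an $\mo$-initial part of the next increment; as these involve only columns $\iota < \iota(s+1) < \delta$, they are definable from $\la e_{\beta_\iota} \mid \iota < \iota(s+1)\ra \in M_\alpha$ and ordinals below $\lambda$, so $\vec D \uhr \nu \in M_\alpha$. Feeding this $\vec D$ into Lemma \ref{lemma-densesets} produces the required $p^* \geq^{\ell^*} p$.

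The main obstacle is exactly the construction of $\vec D$: a naive block-by-block enumeration fails because the cofinal sequence $A$ (equivalently $\la M_{\beta_\iota} \mid \iota < \delta\ra$) is \emph{not} in $M_\alpha$, and indeed, as a sanity check, $M_\alpha$ admits no internally approachable filtration of length $\lambda$ into pieces of size $< \lambda$ once $\cf(\alpha) > \omega$, since the suprema of such pieces would yield a nondecreasing cofinal map $\lambda \to \alpha$ with bounded proper initial segments, forcing $\cf(\alpha) = \cf(\lambda) = \omega$. What rescues the argument is that dense sets, unlike ordinals, do not expose their column index inside $M_\alpha$: an initial segment $\vec D \uhr \nu$ may meet cofinally many columns only in the limit, while each single $\vec D \uhr \nu$ is reconstructible from an initial segment $\la e_{\beta_\iota} \mid \iota < \iota_0\ra$ with $\iota_0 < \delta$. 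Arranging the staircase so that $\iota(s)$ stays strictly below $\delta$ while tending to $\delta$ is the delicate step, and it is precisely here that the availability of the initial segments of $A$ in $M_\alpha$—finite when $\cf(\alpha) = \omega$, supplied by approachability in general—is indispensable.
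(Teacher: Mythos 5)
Your argument for the countable cofinality case is correct and is essentially the paper's: there the ``staircase'' degenerates into the interleaving of the countably many enumerations $\vec D^n \in M_\alpha$ of the dense open sets of the $M_{\alpha_n}$, and a nondecreasing cofinal map $\iota(\cdot)\colon\lambda\to\omega$ (constant $n$ on $[\lambda_{n-1},\lambda_n)$, say) exists, so every $\vec D\uhr\nu$ is definable from finitely many of the $\vec D^n$ together with ordinals below $\lambda$. The verification of the side hypotheses ($V_\lambda\subseteq M_\alpha$, etc.) is also fine.

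The ``moreover'' part, however, has a genuine gap, and it sits exactly where you flag the delicate step. Your staircase needs a nondecreasing map $\iota(\cdot)\colon\lambda\to\delta$ with range cofinal in $\delta=\cf(\alpha)$ and all values below $\delta$. No such map exists when $\delta$ is regular uncountable: extracting the successive points where $\iota$ strictly increases produces a strictly increasing continuous sequence $\la s_\xi\ra$ in $\lambda$ whose image $\la\iota(s_\xi)\ra$ is cofinal in $\delta$; since the range of $\iota$ is unbounded in $\delta$ the extraction can only terminate when $\sup_\xi s_\xi=\lambda$, which forces $\cf(\delta)=\cf(\lambda)=\omega$. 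Worse, the intermediate goal itself is unattainable, so no rearrangement of the enumeration can rescue it: if $\vec D$ lists \emph{all} dense open subsets of $\po$ in $M_\alpha$ with every proper initial segment in $M_\alpha=\bigcup_{\iota<\delta}M_{\beta_\iota}$, then $h(\nu):=$ the least $\iota$ with $\vec D\uhr\nu\in M_{\beta_\iota}$ is nondecreasing (since $\lambda\subseteq M_{\beta_\iota}$, an initial segment of $\vec D\uhr\nu$ is definable from it and an ordinal below $\lambda$) and is cofinal in $\delta$ (dense sets such as $\{p\mid\gamma\in\bigcup_n\dom(f^p_n)\cup\dom(a^p_n)\}$ for $\gamma=M_{\beta_\iota}\cap\lambda^+$ first appear cofinally often along the chain, since they interdefine with $\gamma$). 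So ``dense sets do not expose their column index'' is exactly the claim that fails: membership of the initial segments $\vec D\uhr\nu$ in the increasing union $\bigcup_\iota M_{\beta_\iota}$ does expose a column index. Consequently the hypothesis of Lemma \ref{lemma-densesets} provably fails for $M_\alpha$ of uncountable cofinality, and the proposition cannot be reduced to that lemma there. The paper instead proves the \emph{conclusion} of the lemma directly by induction on $\alpha$: after increasing $\ell^*$ so that $\kappa_{\ell^*}>\cf(\alpha)$, it uses the approachability club $X\subseteq\alpha$ to chain together, via the inductive hypothesis and the well-ordering $\mo$, a $\leq^{\ell^*}$-increasing sequence of conditions each generic for the smaller models $M_{\beta_i}$, taking upper bounds at limits by the $\kappa_{\ell^*}$-closure of $\leq^{\ell^*}$; every dense open $D\in M_\alpha$ already lies in some $M_{\beta_{i+1}}$ and is handled at that stage. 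Your proof is missing this fusion idea entirely.
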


\begin{proof}
 Suppose first that $\cf(\al) = \omega$ and let $\la \alpha_n \mid n < \omega\ra$ be a cofinal sequence in $\alpha$.
 Then $M_\alpha = \cup_n M_{\alpha_n}$, and for each $n < \omega$ since $M_{\alpha_n} \in M_\alpha$, there exists an enumeration $\vec{D}^n = \la D^n_\mu \mid \mu < \lambda\ra \in M_\alpha$ of all dense open subsets of $\po$ in  $M_{\alpha_n}$.
Using bijections from $\lambda_n \times n$ to $\lambda_n$, we can form a sequence 
$\vec{D} = \la D_\mu \mid \mu < \lambda\ra$ so that for every $n < \omega$, $\vec{D}\uhr\lambda_n$ enumerates $\vec{D}^i\uhr\lambda_n$ for each $i < n$. Therefore $\vec{D}$ enumerates all dense open sets of $\po$ in $M_\alpha$ and satisfies $\vec{D}\uhr\beta \in M_\alpha$ for every $\beta < \lambda$. It follows from Lemma \ref{lemma-densesets} that for every condition $p \in M_\alpha$ and $\ell^* <\omega$ there exists a direct extension $p^* \geq^{\ell^*} p$ as in the statement of the lemma. This concludes the first part of the statement. \\

Suppose now that $I[\lam^+] = \lam^+$. We proceed to prove by induction on limit ordinals $\al < \lam^+$ with $\alpha = M_\alpha \cap \lambda^+$, that for every $\ell^* < \omega$ and $p \in M_\alpha$, there is $p^* \geq^{\ell^*} p$ satisfying the desirable property for $M_\al$.
Let $\al$ be such an ordinal and assume the statment holds for all $\beta < \al$. If $\cf(\al) = \omega$ we are done by the first case above. 
Therefore, suppose that $\cf(\alpha) = \rho$ is an uncountable regular cardinal. 
Since $I[\lam^+] = \lam^+$
, there exists a closed and unbounded subset $X \subset \alpha$ of order-type $\otp(X) = \rho$ so that $X \cap \beta \in M_{\alpha'}$ whenever $\beta < \alpha'$ , $\alpha' \in \{\alpha\} \cup X$. Moreover, since $\vec{M}\uhr\beta$ belongs to $M_{\alpha'}$, so does
$\vec{M}\uhr(X \cap \beta) = \la M_\gamma \mid \gamma \in X \cap \beta\ra$.
Given $\ell^* < \omega$ as in the statement of the claim, we further increase it to assume that $\kappa_{\ell^*} > \rho$. Let $\la \beta_i \mid i \leq \rho\ra$ be an increasing enumeration 
of the limit points $\beta$ in $X \cup \{\alpha\}$
 which satisfy that $M_\beta \cap \lam^+ = \beta$.
Given $p \in M_\alpha$, we may assume that $p \in M_{\beta_0}$ and denote it by $p^0$. Then, by applying the inductive assumption and using the well ordering $\mo$, we form a sequence of conditions
$\la p^{i} \mid i \leq \rho\ra$ which is increasing in $\leq^{\ell^*}$, so that for each $i < \rho$
$p^i \in M_{\beta_{i+1}}$ and $p^{i+1} \geq^{\ell^*} p^i$ is the $\mo$-minimal such extension, which is satisfies the conclusion of Lemma \ref{lemma-densesets} for $M_{\beta_{i+1}}$.
Suppose now that $j \leq \rho$ is limit. Then every initial segment of $\la p^i \mid i < j\ra$ belongs to $M_{\beta_j}$, and the sequence has an upper bound in $\leq^{\ell^*}$ since this ordering is $\kappa_{\ell^*}$-closed and $\kappa_{\ell^*} > \rho$. Defining the upper bound by $p^j$, it follows from the continuity of the sequence $\vec{M}$ that $p^j$ satisfies the desirable property for $M_{\beta_{j}}$.
In particular, for $j = \rho$, we obtain a suitable condition $p^* = p^\rho$ for $M = M_\al$.
\end{proof}

The following consequences of Lemma \ref{lemma-densesets} and Proposition
\ref{prop:properness} will play a key role in our arguments concerning Approachable Bounded Subset Property in $V[G]$.


\begin{lemma}\label{lemma-functionname}
 Let $\name{F}$ be a $\po$-name of a function from $\lambda^{<\omega}$ to ordinals, and $p \in \po$.
 There is a direct extension $p^* \geq^* p$ and a function $f^* : [\lambda]^{<\omega} \times [\lambda]^{<\omega} \to \On$ which provide the following recipe for 
 deciding the $\po$-names of ordinals $\name{F}(\vec{\mu})$, $\vec{\mu} \in [\lambda]^{<\omega}$:
 
For every $\vec{\mu} \in [\lambda]^{<\omega}$ there are $n^{\vec{\mu}} <\omega$ and a function  
$N^{\vec{\mu}} : [\lambda]^{<\omega} \to \omega$ such that
for every $\vec{\nu}^1\in \prod_{\ell^p \leq i < n^{\vec{\mu}}} A_i^{p^*}$ and
 $\vec{\nu}^2 \in \prod_{n^\mu \leq i<N^{\vec{\mu}}(\vec{\nu}^1)} A_i^{p^*}$, 
\[
p^* \fr \vec{\nu}^1 \fr \vec{\nu}^2 \Vdash \name{F}(\vec{\mu}) = \can{f^*}(\can{\vec{\mu}},\can{\vec{\nu}}^1 \fr \can{\vec{\nu}}^2).
\]
\end{lemma}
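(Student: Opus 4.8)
The plan is to reduce the problem to a single application of the ``generic condition'' machinery of Lemma~\ref{lemma-densesets} and Proposition~\ref{prop:properness}, exploiting the fact that the instructions ``decide $\name{F}(\vec\mu)$'' form $\lambda$-many dense open sets which all live inside a suitable model $M$. To this end I would first associate to each $\vec\mu \in [\lambda]^{<\omega}$ the set
\[
D_{\vec\mu} = \{ q \in \po \mid q \text{ decides the value of } \name{F}(\vec\mu) \},
\]
which is dense open since $\name{F}(\vec\mu)$ is a $\po$-name for an ordinal. Deciding $\name{F}$ everywhere then amounts to meeting all of the $D_{\vec\mu}$ simultaneously, with uniform control over where the decision occurs along the Prikry sequence.

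Next I would fix an internally approachable sequence $\vec M = \la M_\al \mid \al < \lam^+\ra$ as in Proposition~\ref{prop:properness}, arranged so that $\name{F}, p \in M_0$ and $V_\lambda \subseteq M_0$ (possible since $\lambda$ is a limit of measurables, hence strongly limit, so $|V_\lambda| = \lambda$), and I would choose a limit ordinal $\al$ of countable cofinality with $\al = M_\al \cap \lam^+$, setting $M = M_\al$. Applying Proposition~\ref{prop:properness} to $p$ with $\ell^* = \ell^p$ yields a direct extension $p^* \geq^* p$ which is a generic condition for $(M,\po)$ in the sense of Lemma~\ref{lemma-densesets}. Because $V_\lambda \subseteq M$, every $\vec\mu \in [\lambda]^{<\omega}$ lies in $M$, and hence so does each $D_{\vec\mu}$, being definable from $\name{F} \in M$ and $\vec\mu \in M$.

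Now, for each $\vec\mu$, I would invoke the conclusion of Lemma~\ref{lemma-densesets} for the dense open set $D = D_{\vec\mu}$: this produces a condition $q^{\vec\mu} \in M$ with $p \leq^{\ell^*} q^{\vec\mu} \leq^{\ell^*} p^*$, an integer $n^{\vec\mu}$, and a function $N^{\vec\mu}$ such that $q^{\vec\mu} \fr \vec\eta^1 \fr \vec\eta^2 \in D_{\vec\mu}$ for all admissible $\vec\eta^1, \vec\eta^2$ drawn from the measure-one sets of $q^{\vec\mu}$. Since $p^* \geq^* q^{\vec\mu}$, every end-extension $p^* \fr \vec\nu^1 \fr \vec\nu^2$ by points from the sets $A^{p^*}_i$ extends $q^{\vec\mu} \fr \bar{\vec\nu}^1 \fr \bar{\vec\nu}^2$, where $\bar{\vec\nu}$ denotes the coordinatewise Rudin--Keisler projection into the measure-one sets of $q^{\vec\mu}$; this latter condition lies in the open set $D_{\vec\mu}$ and forces $\name{F}(\vec\mu)$ to equal a definite ordinal. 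I would define $f^*(\vec\mu, \vec\nu^1 \fr \vec\nu^2)$ to be precisely that ordinal. Since all remaining data ($p^*$ and the family $\la q^{\vec\mu}\ra$) is fixed once and for all, the decided value depends only on $\vec\mu$ together with the displayed points $\vec\nu^1 \fr \vec\nu^2$, so $f^* : [\lambda]^{<\omega} \times [\lambda]^{<\omega} \to \On$ is a well-defined function, and openness of $D_{\vec\mu}$ gives $p^* \fr \vec\nu^1 \fr \vec\nu^2 \Vdash \name{F}(\vec\mu) = \can{f^*}(\can{\vec\mu}, \can{\vec\nu}^1 \fr \can{\vec\nu}^2)$.

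The main obstacle is the uniformity. A naive appeal to the Strong Prikry Property (Lemma~\ref{lemma-meetdenseset}) handles one $\vec\mu$ at a time and yields a different direct extension for each, whereas the lemma demands a single $p^*$ good for all $\lambda$-many tuples at once; this is exactly what the master/generic condition of Lemma~\ref{lemma-densesets} together with Proposition~\ref{prop:properness} secures. Consequently the substantive work is already carried by those results, and the only remaining care is the bookkeeping that verifies that passing from $q^{\vec\mu}$ to $p^*$ through the projection maps does not disturb the decided value, so that $f^*$ is genuinely a function of $\vec\mu$ and the raw points $\vec\nu^1 \fr \vec\nu^2$.
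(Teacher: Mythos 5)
Your proposal is correct and follows essentially the same route as the paper: fix a model $M$ as in Lemma~\ref{lemma-densesets} (via Proposition~\ref{prop:properness}) containing $\name{F}$ and $p$, observe that each dense open set of conditions deciding $\name{F}(\vec{\mu})$ lies in $M$ because $V_\lambda \subseteq M$, and read off $n^{\vec{\mu}}$, $N^{\vec{\mu}}$, and $f^*$ from the conclusion of that lemma applied to the single generic condition $p^*$. The paper's proof is just a terser version of this, leaving the bookkeeping about the projections and the extraction of $f^*$ implicit.
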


\begin{proof}
Let $M \elem (H_\theta;\in,\mo)$ be a model of size which satisfies the assumption of Lemma \ref{lemma-densesets} and has $\name{F},p \in M$ (the proof of Proposition \ref{prop:properness} shows that such structures exist). Since $\lambda \subseteq M$ then for every
$\vec{\mu} \in [\lambda]^{<\omega} $, the dense open set
\[E_{\vec{\mu}}= \{ q \in \po \mid \exists \xi \in \On, q \Vdash \name{F}(\can{\vec{\mu}}) = \can{\xi}\}\]
belongs to $M$. 
By taking $p^* \geq^* p$ as in the statement of Lemma \ref{lemma-densesets} we obtain the desired extension of $p$.
\end{proof}

\begin{corollary}\label{cor:lam+Preserved}
   $\po$ preserves $\lambda^+$.
\end{corollary}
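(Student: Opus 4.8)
The plan is to deduce $\lambda^+$-preservation from Lemma \ref{lemma-functionname} by showing that $\po$ cannot collapse $\lambda^+$ to any smaller cardinal. Since $\po$ adds no bounded subsets of $\lambda$ (as already noted after Lemma \ref{lemma-meetdenseset}), every cardinal up to and including $\lambda$ is preserved, so the only danger is that some new function in $V[G]$ maps a cardinal $\le \lambda$ cofinally into $\lambda^+$. Equivalently, it suffices to show that $\cf^{V[G]}(\lambda^+) > \lambda$, i.e. that $\lambda^+$ remains regular. So I would fix a condition $p$ forcing that $\name{F} : \check{\lambda} \to \check{\lambda}^+$ is a function, and aim to find a direct extension $p^*$ together with a set of ordinals of size $\le \lambda$ in $V$ that is forced to bound the range of $\name{F}$.

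The key step is to apply Lemma \ref{lemma-functionname} to $\name{F}$, viewing it as a name for a function on $[\lambda]^{<\omega}$ (restricting to singletons). This produces a direct extension $p^* \geq^* p$ and a ground-model function $f^* : [\lambda]^{<\omega} \times [\lambda]^{<\omega} \to \On$ such that for each $\vec{\mu}$, whenever $\vec{\nu}^1 \concat \vec{\nu}^2$ ranges over the appropriate product of measure-one sets $A_i^{p^*}$, the value of $\name{F}(\vec{\mu})$ is decided below $p^* \fr \vec{\nu}^1 \fr \vec{\nu}^2$ to be $f^*(\vec{\mu}, \vec{\nu}^1 \concat \vec{\nu}^2)$. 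The crucial counting observation is that for each fixed $\vec{\mu}$, the set of possible decided values
\[
R_{\vec{\mu}} = \{ f^*(\vec{\mu}, \vec{s}) \mid \vec{s} \in [\lambda]^{<\omega} \}
\]
has size at most $\lambda$, and hence $R = \bigcup_{\vec{\mu} \in [\lambda]^{<\omega}} R_{\vec{\mu}}$ is a subset of the ordinals of size at most $\lambda^{<\omega} \cdot \lambda = \lambda$ in $V$ (using that $\po$ adds no new finite sequences of ordinals below $\lambda$, so the relevant $\vec{\mu}$ and $\vec{\nu}$ all come from $V$). Since the finitely many coordinates being appended come from ground-model sets $A_i^{p^*} \subseteq \lambda_i$, every potential value of $\name{F}(\vec{\mu})$ lies in $R$, a set of size $\le\lambda$ in $V$.

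Now I would argue that $p^*$ forces $\rng(\name{F}) \subseteq \check{R}$: any value $\name{F}(\vec{\mu})$ in the generic extension is decided by some condition extending $p^*$, which by the Prikry-type structure (Lemma \ref{lemma-meetdenseset}(1)) factors as a direct extension of an end-extension $p^* \fr \vec{\nu}^1 \fr \vec{\nu}^2$, and by the conclusion of Lemma \ref{lemma-functionname} that value equals $f^*(\vec{\mu}, \vec{\nu}^1 \concat \vec{\nu}^2) \in R$. Because $|R| \le \lambda < \lambda^+$, the function $\name{F}$ cannot be forced to be cofinal in $\lambda^+$, so $p^*$ forces $\lambda^+$ to remain a cardinal; as $p$ and $\name{F}$ were arbitrary, $\lambda^+$ is preserved. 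The main obstacle I anticipate is the bookkeeping in the counting argument: one must make sure that ranging $\vec{\mu}$ over all of $[\lambda]^{<\omega}$ while also ranging the appended sequences over the products $\prod A_i^{p^*}$ still yields only $\lambda$-many ground-model ordinals, which relies on $\lambda^{<\omega} = \lambda$ (automatic since $\lambda$ is a singular limit of the $\lambda_n$ and $\po$ adds no bounded subsets) and on the fact that the decisions are genuinely captured by the single function $f^*$ uniformly across all end-extensions.
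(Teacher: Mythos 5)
Your proposal is correct and follows essentially the same route as the paper: the paper's proof is exactly to apply Lemma \ref{lemma-functionname} to the name $\name{F}:\lambda\to\lambda^+$, obtaining $p^*\geq^* p$ and a ground-model $f^*$ with $p^*\Vdash \rng(\name{F})\subseteq\rng(f^*)$, and then to note that $|\rng(f^*)|\leq\lambda$. Your additional bookkeeping (the counting of $R$ and the factorization of arbitrary extensions through end-extensions) is just an expansion of the same one-line argument.
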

\begin{proof}
   If $\name{F}: \lambda \to \lambda^+$ is a $\po$-name of a function, then by Lemma \ref{lemma-functionname} for every condition $p$ there are $p^* \geq^* p$ and a function $f^* : [\lambda]^{<\omega} \times [\lambda]^{<\omega} \to \lambda^+$ in $V$, so that $p^*$ forces $\rng(\name{F})$ is contained in $\rng(f^*)$.
\end{proof}

Let $G \subseteq \po$ be a generic filter. By a standard density argument, for every $\alpha < \lambda^+$ and $n < \omega$ there exists $p \in G$ so that $\ell^p > n$ and  $\alpha \in \dom(f^p_n)$.
We define the generic scale $\la t_\alpha \mid \alpha < \lambda^+\ra$ by $t_\alpha(n)  = f^p_n(\alpha)$ for any such a condition $p \in G$.

Recalling that our setup includes that $a_n(0) = 0$ and $E_n(0)$ is a normal measure on $\lambda_n$, we get that the sequence $\la \rho_n \mid n < \omega\ra$, given by $\rho_n = t_0(n)$, is generic over $V$ for the diagonal Prikry forcing with the sequence of normal measures $\la E_n(0) \mid n  <\omega \ra$.

Recall that for every $n < \omega$, 
there exists a function  $g_n : \lam_n \to \lam_n$ so that  $j_{E_n}(g_n)(\lambda_n)$ is the supremum of the generators of $E_n$, and is inaccessible in $M_{E_n}$.
It follows from a standard density argument that the sequence $\la t_\alpha \mid \alpha < \lambda^+\ra$ is a scale in the product $\prod_{n} g_n(\rho_n)$, and that $g_n(\rho_n) < \lambda_n$ is regular for almost all $n < \omega$. Moreover, it is straightforward to verify that our assumption that the functions $a_n$ in conditions $p \in \po$ are continuous and have closed domains, implies that the scale $\la t_\alpha \mid \alpha < \lambda^+\ra$ is continuous. 

\begin{notation}
In $V[G]$, we denote $g_n(\rho_n)$ by $\tau_n$.
\end{notation}

\begin{theorem}\label{thm:ABSPnew}
The Approachable Bounded Subset Property (ABSP) holds in $V[G]$ with respect to the sequence 
$\la \tau_n \mid n < \omega\ra$.
\end{theorem}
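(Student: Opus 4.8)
The plan is to verify the Approachable Bounded Subset Property directly using the structural results established for the poset $\po$, specifically the decision recipe from Lemma~\ref{lemma-functionname} and the genericity properties from Proposition~\ref{prop:properness}. Working in $V[G]$, I would fix a regular $\theta > \lambda^+$ together with a well-ordering $\mo$ of $H_\theta$, and take an internally approachable structure $N \elem (H_\theta;\in,\mo,\la \tau_n\ra_n)$ of size $|N| < \lambda$ lying in the relevant club. The first step is to lift $N$ back to the ground model: since $\po$ does not add bounded subsets of $\lambda$ and preserves $\lam^+$, the structure $N$ should be captured by (or decomposed via) a ground-model internally approachable tower of models $\vec{M} = \la M_\al \mid \al < \lam^+\ra$ as in Proposition~\ref{prop:properness}, so that the characteristic values $\chi_N(\tau_n)$ are governed by the generic scale $\la t_\al \mid \al < \lam^+\ra$. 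By the continuity of that scale and Lemma~\ref{lem:IAcharFunc}, setting $\delta = \chi_N(\lam^+)$ I would obtain that $\chi_N(\tau_n) = t_\delta(n)$ for all but finitely many $n$.

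The heart of the argument is then the following. Suppose toward a contradiction that ABSP fails at $N$: for cofinally many $m$ there is a function $F \in N$, say $F:[\lambda]^k \to \lambda$, and indices $d,d_1,\dots,d_k \geq m$ with $F(\chi_N(\tau_{d_1}),\dots,\chi_N(\tau_{d_k})) < \tau_d$ yet $F(\chi_N(\tau_{d_1}),\dots,\chi_N(\tau_{d_k})) \geq \chi_N(\tau_d)$. I would pull $F$ back to a $\po$-name $\name{F} \in M_\al$ (for the appropriate $M_\al$ with $\al = M_\al \cap \lam^+ = \delta$-cofinal in the tower), and apply Lemma~\ref{lemma-functionname} to obtain a direct extension $p^* \in G$ (this requires that the generic $G$ meets the dense set of conditions carrying such a decision recipe, which is exactly what Proposition~\ref{prop:properness} supplies since $p^*$ is $(M,\po)$-generic) together with a ground-model function $f^* : [\lambda]^{<\omega}\times[\lambda]^{<\omega} \to \On$ deciding each value $\name{F}(\vec\mu)$ from finitely much generic information. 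The key point is that the arguments $\chi_N(\tau_{d_i}) = t_\delta(d_i) = \pi^{p}_{\mc(p),a_{d_i}(\cdot)}(\nu_{d_i})$ are themselves read off the generic Prikry points $\nu_{d_i}$ appearing in conditions of $G$, and that $f^*$ is evaluated on these same projected values.

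The main obstacle — and the crux of the whole proof — is to show that the value $F(\chi_N(\tau_{d_1}),\dots,\chi_N(\tau_{d_k}))$, \emph{when it lands below $\tau_d$}, is in fact computed from generic data available \emph{before} stage $d$, so that it cannot exceed $\chi_N(\tau_d) = \sup(N \cap \tau_d)$. Concretely, using the Rudin--Keisler projection structure (Fact~\ref{fact:E_n}(4),(5)) and the tree-like/continuity features of the extender-based generic scale, I would argue that if the output of $f^*$ on the relevant coordinates is forced to be $<\tau_d$, then by a measure-one/indistinguishability argument (pressing down via Lemma~\ref{lem:IApressingdown} on the choices of generators, and using that the $A_n$ are sets of $E_n(\mc(a_n))$-measure one) that output depends only on $\nu_{d_1},\dots,\nu_{d_k}$ and the part of the condition below coordinate $d$, all of which lie in $N[\{f_\delta(d_i)\}_i]$ and project into $N \cap \tau_d$. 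This is where the specific design of $\po$ — the continuity and closed domains of the $a_n$, making the scale continuous, together with the genericity witnessing of Proposition~\ref{prop:properness} — must be exploited to rule out the ``boundary'' values being pushed up to $\chi_N(\tau_d)$. Deriving the bounded appending inequality $F(\chi_N(\tau_{d_1}),\dots,\chi_N(\tau_{d_k})) < \chi_N(\tau_d)$ then contradicts the assumed failure, and since $N$ and $m$ were arbitrary in the club, ABSP holds with respect to $\la \tau_n \mid n < \omega\ra$ in $V[G]$.
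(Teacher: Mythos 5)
Your setup matches the paper's: the generic scale $\la t_\al \mid \al<\lam^+\ra$ realizes the characteristic functions of internally approachable structures, and Lemma \ref{lemma-functionname} together with Proposition \ref{prop:properness} gives a ground-model recipe $f^*$ deciding $\name{F}(\vec{\mu})$ from finitely many generic points. You also correctly locate the crux: why can the decided value, once it lands below $\tau_{d}$, not reach $\chi_N(\tau_{d})$? But your resolution of that crux is not an argument. Saying that the relevant data ``lie in $N[\{f_\delta(d_i)\}_i]$ and project into $N\cap\tau_d$'' is essentially a restatement of the bounded appending property you are trying to prove, and ``generic data available before stage $d$'' points at the wrong axis: the decision of $\name{F}(\vec{\mu})$ uses $\omega$-coordinates above $d$ as well (the paper even arranges the decision length $N$ to exceed $d_k$), and the generic points $\nu_{d_i}$ are not members of $N$, so there is no reason the output of $f^*$ lies in $N$ at all.

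The mechanism you are missing lives on the $\lambda^+$-axis, not the $\omega$-axis. The paper first presses down (Lemma \ref{lem:IApressingdown}) over a stationary set of bad structures to fix the witnessing functions and index patterns as a single sequence of names $\name{F_n}$, $\vec{d}^n$; then, working in $V$ below a condition forcing the failure, it builds $p^*\geq^* p$ whose decision recipes for all the $\name{F_n}$ have supports bounded by one ordinal $\al<\lam^+$, and only afterwards picks $\delta\in S^*$ with $\delta>\al$. The decided value of $\name{F_n}(\vec{\mu})$ then depends only on the projections $\pi^{q}_{\mc(q),\al}(\vec{\nu})$, whereas $t_\delta(d_0)=\pi^{q}_{\mc(q),\delta}(\nu_{d_0})$; since the measure indexed at $a(\delta)$ is strictly above the one at $a(\al)$ in the Rudin--Keisler order, Fact \ref{fact:E_n}(5) yields a measure-one set of $\nu$ on which any function of the $\al$-projection taking values below $\tau_{d_0}$ is strictly below $\pi^{q}_{\mc(q),\delta}(\nu)$. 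Choosing such a $\nu_{d_0}$ produces an extension forcing $F_n(t_\delta(d_1),\dots,t_\delta(d_k))<t_\delta(d_0)=\chi_N(\tau_{d_0})$, the desired contradiction. Your order of quantifiers (fix one $N$ in $V[G]$, then pull its $F$ back) obstructs exactly this step: the measure-one variation of $\nu_{d_0}$ is a density argument in $V$ that requires the witnessing objects to be fixed as names before $\delta$ (equivalently, before $N$) is chosen, which is what the stationarity and pressing-down are for.
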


\begin{proof}
Suppose otherwise, then there exists a stationary set $\S \subseteq \power_\lambda(H_\theta)$ of 
internally approachable structures $N \elem (H_\theta;\in)$ 
  such that for every $N \in \S$ and $n < \omega$ there is a function 
 $F^N_n : [\lambda]^{k^N_n} \to \lambda$ in $N$, of a finite arity $k^N_n < \omega$, and a finite sequence of distinct numbers  $\vec{d}^{N,n} = \la d^{N,n}_{0},\dots,d^{N,n}_{k^N_n}\ra \subseteq \omega\setminus n$, satisfying 
\[ \chi_N(\tau_{d^{N,n}_0}) \leq F^N_n
\left(\chi_N(\tau_{d^{N,n}_1} ),\dots,\chi_N(\tau_{d^{N,n}_{k^N_n}})\right) < \tau_{d^{N,n}_0}.
\]
By Lemma \ref{lem:IApressingdown}, applied to the assignments
$N \mapsto \la F^N_n\mid n < \omega\ra$ and $N \mapsto \la \vec{d}^{N,n} \mid n < \omega\ra$, 
there exists a stationary set $S^* \subseteq \lam^+$ and two fixed sequences 
$\la F_n \mid n < \omega\ra$, $\la \vec{d}^n \mid n < \omega\ra$, 
with $F_n : [\lambda]^{k_n} \to \lambda$ and 
$\vec{d}^n = \la d^n_0,\dots d^n_{k_n}\ra$, such that 
for every $\delta \in S^*$ there exists $N \in \S$ so that
$\delta = \chi_N(\lam^+)$,  $\la F_n^N \mid n < \omega\ra = \la F_n \mid n < \omega\ra$, and
$\la \vec{d}^{N,n} \mid n < \omega\ra = \la \vec{d}^n \mid n < \omega\ra$.
For each $\delta \in S^*$ there are  $m_\delta < \omega$ and $N \in \S^*$ such that for every $n \geq m_\delta$, 
$t_\delta(n) = \chi_N(\tau_n)$ and thus, 
\begin{equation}\label{equation:crux}
   t_\delta(d^{n}_0) \leq F_n
\left(t_\delta({d^{n}_1} ),\dots,t_\delta({d^{n}_{k_n}})\right) < \tau_{d^{n}_0} 
\end{equation}

We move back to $V$ to contradict the above, and complete the proof. 
Let $p$ be a condition forcing the statement of (\ref{equation:crux}) with respect to the $\po$-names $\name{S^*}$, $\la \name{F_n} \mid n < \omega\ra$, and $
\la {\name{\vec{d}}^n} \mid n < \omega\ra$.
By taking a direct extension if needed, we may assume $p$ decides the integer values for $\vec{d}^n \subseteq \omega\setminus n$, for all $n < \omega$. 
Apply Lemma \ref{lemma-functionname} repeatedly for each $F_n$, $n < \omega$, to form sequences, 
$\la p^n \mid n <\omega\ra$ of $\leq^*$-extensions of $p$, and $\la f^n \mid n < \omega\ra$ of functions, $f^n : [\lambda]^{<\omega} \times [\lambda]^{<\omega} \to \lambda$, so that for each $n < \omega$, $p^n \geq^* p^{n-1}$ and $f^n$ are formed to satisfy the conclusion of Lemma \ref{lemma-functionname} with respect to $\name{F_n}$. 
We define \[\alpha = \sup\left(\bigcup_{n,m}(\dom(a^{p^n}_m) \cup \dom(f^{p^n}_m))\right) + 1\]
and let $p^*$ be a common direct extension of $\la p^n \mid n < \omega\ra$ with
$\alpha = \max(\dom(a_m^{p^*}))$ for all $m \geq \ell^{p^*}$. 
Next, let $q$ be an extension of $p^*$ which forces $\can{\delta} \in \name{S^*}$ for some ordinal $\delta > \alpha$.
Since $q$ extends $p^*$, it is a direct extension of $p^* \fr \vec{\nu}^*$ for some 
$\vec{\nu}^* \in \prod_{\ell^{p^*} \leq i < \ell^*}A^{p^*}_i$. By taking a direct extension of $q$ if needed, we may also assume that $q$ decides the integer values $m_\delta$ from above and that $\delta \in \dom(a^q_n)$ for some $n < \omega$.

Next, we pick $n <\omega$ satisfying $n \geq m_\delta,\ell^q$ and $\delta \in \dom(a_n^q)$, and denote for ease of notation, 
$k_n$, $\la d^n_0,\dots,d^n_{k_n}\ra$ by $k$, $\la d_0,\dots,d_k\ra$ respectively. 
Our choice of $p^* \geq^* p^n$, and function $f^n$ guarantee that for every 
$\vec{\mu} = \la \mu_{d_1},\dots,\mu_{d_k} \ra \in [\lambda]^k$ there are $n_*^{\vec{\mu}} < \omega$ and a function 
\[N_*^{\vec{\mu}} : \prod_{\ell^q \leq i < n_*^{\vec{\mu}}}A^q_i \to \omega,\footnote
{
The vales $n_*^{\vec{\mu}}$ and $N^{\vec{\mu}}_*$ are the obvious shifts of $n^{\vec{\mu}}$ and $N^{\vec{\mu}}$ 
associated to $p^n,f^n$ from Lemma \ref{lemma-functionname}, resulting from the fact that the values given by 
 $n^{\vec{\mu}}$ and $N^{\vec{\mu}}$ apply to $p^n$ and hence to $p^*$, while
$q \geq^* p^* \fr \vec{\nu}^*$, already determined an initial segment $\vec{\nu}^*$ of a possible end-extension of $p^*$ which decides $\name{F_n}(\vec{\mu})$ according to the recipe of the lemma.
}\]
such that for every 
\[\vec{\nu}^1 \in \prod_{\ell^q \leq i < n_*^{\vec{\mu}}}A^q_i \text{ and }
\vec{\nu}^2 \in \prod_{ n_*^{\vec{\mu}} \leq i < N_*^{\vec{\mu}}(\vec{\nu}^1)}A^q_i,\]

denoting $ \vec{\nu}^* \fr \pi^{q}_{\mc(q),\al}(\vec{\nu}^1 \fr \vec{\nu}^2)$ by $\vec{\nu}$, we have that $p^* \fr \vec{\nu}$ forces $\name{F_n}(\can{\vec{\mu}}) = \can{f}^n(\vec{\mu},\vec{\nu})$.

Recalling that $q \geq^* p^* \fr \vec{\nu}^*$, we get that
in particular, $q \fr \vec{\nu}^1 \fr \vec{\nu}^2$, which extends $p^* \fr \vec{\nu}$ forces the same value, which depends only on $\pi^{q}_{\mc(q),\al}(\vec{\nu}^1 \fr \vec{\nu}^2)$.

To complete the argument, we will make use of the last fact, and the fact that $E_n(a^q_n(\delta))$ is strictly stronger than $E_n(a^q_n(\al))$ in the Rudin-Keisler ordering, to find many distinct choices of sequences $\vec{\nu}^1 \fr \vec{\nu}^2$, whose projections $\pi^p_{\mc(q),\delta}(\vec{\nu}^1 \fr \vec{\nu}^2)$ are fixed, as well as the values they force for 
$t_\delta(d_i)$, $1 \leq i \leq k$, yet they force many distinct values for $t_\delta(d_0)$. This will be used to find a condition which extends $p$ but forces $(\ref{equation:crux})$ to fail.

To this end, we fix first a sequence of $\la d_1,\dots,d_k\ra$-indices,
\[\vec{\nu}_{+} = \la \nu_{d_1},\dots,\nu_{d_k}\ra \in \prod_{i \in \{ d_1,\dots,d_k\}}A^q_i\] 
(note that we omit choosing a $d_0$-coordinate) and define $\vec{\mu} = \pi^q_{\mc(q),\delta}(\vec{\nu}_+)$. 

Let $j \leq k+1$ largest so that $d_i < n_*^{\vec{\mu}}$ for every $i < j$ and define 
$\vec{\nu}^1_+ = \la \nu_{d_0},\dots,\nu_{d_{j-1}}\ra$.
Therefore, in order to extend $\vec{\nu}^1_+$ to a relevant sequence $\vec{\nu}^1 \in \prod_{\ell^q \leq i < n_*^{\vec{\mu}}}A^q_i$, one needs to choose remaining coordinates 
\[\vec{\nu}^1_{-} \in \prod_{i \in [\ell^q, n_*^{\vec{\mu}}) \setminus \vec{d}}A^q_i.\] 
In particular, to every such sequence 
$\vec{\nu}^1_{-}$ we can assign the integer 
$N^{\vec{\mu}}_*(\vec{\nu}^1)$, where $\vec{\nu}^1 = \vec{\nu}^1_{-} \cup \vec{\nu}^1_+$, 
and by taking a direct extension of $q$ if needed, we may assume that the numbers 
$N_*^{\vec{\mu}}(\vec{\nu}^1)$ take a constant value $N$ 
for all $\vec{\nu}^1_{-} \in \prod_{i \in [\ell^q, n_*^{\vec{\mu}}) \setminus \vec{d}}A^q_i$.
Moreover, by increasing $N$ if necessary, we may also assume that $N > d_k$.  

With $N$ being fixed, we conclude that every choice of a sequence 
\[\vec{\nu}^2_{-}\in \prod_{i \in [n_*^{\vec{\mu}},N) \setminus \vec{d}}A^q_i\] 
will allow us to extend the remaining portion of the fixed sequence 
$\vec{\nu}_+$ to $\vec{\nu}^2 \in  \prod_{i \in [n_*^{\vec{\mu}},N)}A^q_i$, which together with a choice of $\vec{\nu}^1$ will produce a suitable extension $q \fr \vec{\nu}^1 \fr \vec{\nu^2}$ forcing 
\[\name{F_n}(\vec{\mu}) = f^n\left(\vec{\mu},\vec{\nu}^*\fr \pi^{q}_{\mc(q),\al}(\vec{\nu}^1 \fr \vec{\nu}^2) \right).\]
Following this recipe, we extend our fixed choice of $\vec{\nu}_+$ to a choice of all relevant coordinates for $\vec{\nu}^1 \fr \vec{\nu}^2$,  except for the coordinate of $i = d_0$. 
Namely, we extend $\vec{\nu}_+$ to a fixed sequence $\vec{\nu}_{++} \in \prod_{i \in [\ell^q,n_*^{\vec{\mu}}+N)\setminus \{d_0\}} A^q_i$. With the fixed choice $\vec{\nu}_{++}$, we derive a 
function $h : A^q_{d_0} \to \tau_{d_0}$, defined by
$h(\nu) = f^n(\vec{\mu},\pi^{q}_{\mc(q),\alpha}(\vec{\nu}_{++} \cup \{\nu\}))$ if the last ordinal value is below $\tau_{d_0}$, and $h(\nu) = 0 $ otherwise.
The properties of the function $f^n$ guarantee that $h(\nu)$ depends only on $\pi_{\mc(q),\alpha}(\nu)$, 
and by the last item on \ref{fact:E_n}, there is a subset $A^* \in E_{a_n(d_0)}$,  $A^* \subseteq A^{q}_{d_0}$, so that 
 $h(\nu) < \pi^q_{\mc(q),\delta}(\nu)$ for all $\nu \in A^*$. 
 Picking such an ordinal $\nu$, and setting $\vec{\nu}^1 \fr \vec{\nu}^2  = \vec{\nu}_{++} \cup \{\nu\}$, 
 we conclude that $p^* \fr \vec{\nu}^1 \fr \vec{\nu}^2 \geq p$ must force 
 \[ 
 h(\nu) = \name{F_n}\left(t_\delta(d_1),\dots, t_\delta(d_k)\right) < t_\delta(d_0).
 \]
 Contradicting the statement \ref{equation:crux} forced by $p$.
\end{proof}

\begin{corollary}
  ABSP holds in $V[G]$ with respect to $\la \tau_n \mid n < \omega\ra$ and thus, by Lemma \ref{lem:ImplicationsBetweenTLSandAFSP}, AFSP holds and there are no continuous scales on $\prod_n\tau_n$ which are essentially tree-like. 
\end{corollary}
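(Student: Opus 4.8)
The plan is to recognize that this corollary is simply a packaging of two results already in hand, so its proof amounts to citing them in the right order. First I would observe that the clause ``ABSP holds in $V[G]$ with respect to $\la \tau_n \mid n < \omega\ra$'' is verbatim the content of Theorem \ref{thm:ABSPnew}, whose proof has just been completed; nothing further is required to justify this part of the statement.

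For the two remaining clauses I would invoke Lemma \ref{lem:ImplicationsBetweenTLSandAFSP}. Part (3) of that lemma asserts that ABSP with respect to a sequence $\vec{\tau}$ implies both (i) AFSP with respect to $\vec{\tau}$, and (ii) the nonexistence of a continuous essentially tree-like scale on $\prod_n \tau_n$. Instantiating $\vec{\tau} = \la \tau_n \mid n < \omega\ra$ and feeding in the ABSP established in Theorem \ref{thm:ABSPnew}, clause (i) delivers AFSP in $V[G]$ and clause (ii) delivers that $\prod_n \tau_n$ carries no continuous essentially tree-like scale — which is exactly what the corollary claims. Thus the corollary follows by concatenating Theorem \ref{thm:ABSPnew} with Lemma \ref{lem:ImplicationsBetweenTLSandAFSP}(3).

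For completeness I would append one brief remark: since any continuous tree-like scale is in particular a continuous essentially tree-like scale, the nonexistence obtained via clause (ii) a fortiori rules out continuous tree-like scales on $\prod_n \tau_n$, in agreement with Lemma \ref{lem:ImplicationsBetweenTLSandAFSP}(1) and (2). There is no genuine obstacle at this stage: all of the mathematical content lives in the forcing argument of Theorem \ref{thm:ABSPnew} and in the already-established implications of Lemma \ref{lem:ImplicationsBetweenTLSandAFSP}, and the corollary merely records their conjunction.
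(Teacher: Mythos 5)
Your proposal is correct and matches the paper exactly: the corollary is stated without proof precisely because it is the conjunction of Theorem \ref{thm:ABSPnew} with Lemma \ref{lem:ImplicationsBetweenTLSandAFSP}(3), which is exactly the citation chain you give. Your closing remark that essentially tree-like nonexistence a fortiori rules out tree-like scales is a harmless (and accurate) addition.
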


\subsection{Down to $\aleph_\omega$}\label{sec:DownToAlephOmega}
We define a variant $\hat{\po}$ of the forcing $\po$ from the previous section, to obtain the result of Theorem \ref{thm:ABSPnew} in a model where $\la \tau_n \mid n < \omega\ra$ form a subsequence of the first uncountable cardinals.

Conditions $q \in \hat{\po}$ are pairs $q = \la p,h\ra$ of sequences, $p = \la p_n \mid n < \omega \ra$ and $h = \la h_{-1}^{\high} \ra \fr \la h_n \mid  n \in \omega\ra$ satisfying the following conditions:

\begin{enumerate}
    \item $p \in \po$, i.e., $p$ satisfies Definition 
    \ref{def:posetP} above
    
    \item for every $n < \ell^p$, $h_n = \la h_n^{\low},h_n^{\high}\ra$,
    is a pair of functions, which satisfy the following properties:
    \begin{itemize}
        \item $h_n^{\low}\in
      \coll( \rho_n^p,<\tau_{n}^p)$
     where 
    $\rho_n^p = f^p_n(0)$, 
     and $\tau_n^p = g_n(\rho_n^p)$,\footnote{By Definition \ref{def:posetP} $\rho_n^p < \tau_n^p< \lambda_n$ are both inaccessible for $n \geq 0$.} 
    \item 
    $h_n^{\high}\in \coll( (\tau_n^p)^+,<\rho_{n+1}^p)$ if $n < \ell^p-1$, and $h_{\ell^p-1}^{\high} \in \coll( (\tau_{\ell^p-1}^p)^+,<\lambda_{\ell^p})$. 
    \end{itemize}

    \item for every $n \geq \ell^p$, 
    $h_n = \la h_n^{\low},h_n^{\high}\ra$,
    is a pair of functions, which satisfy the following properties:
    \begin{itemize}
        \item $\dom(h_n^{\low}) = \dom(h_n^{\high}) = A_n^p$,
        \item for every $\nu \in {A_n}^p$, 
    \[ h_n^{\low}(\nu) \in \coll(\rho_n^{\nu},<\tau_n^{\nu})\]
    where $\rho_n^{\nu} = \pi^n_{\mc(a_n^p),0}(\nu)$ and  $\tau_n^\nu = g_n(\rho_n^{\nu})$, 
    and 
        \[ h_n^{\high}(\nu) \in \coll\left( (\tau_n^{\nu})^+, <\lambda_{n+1} \right)\]
    \end{itemize}
    
    \item $h_{-1}^{\high}$ belongs to  $\coll(\omega,<\rho_0^p)$ if $\ell^p \geq 1$, and to $\coll(\omega,<\lambda_0)$ otherwise
    
    \item 
    $h_{\ell^p-1}^{\high} \in V_{\rho_{\ell^p}^{\nu}}$ for every $\nu \in {A_{\ell^p}}^p$, and 
    $h_{n-1}^{\high}(\nu') \in V_{\rho_n^{\nu}}$ for every $n > \ell^p$, $\nu' \in A_{n-1}^p$, and $\nu \in {A_n}^p$.

    \end{enumerate}


A condition $q^* = \la p^*,h^*\ra$ is a direct extension of $p = \la p,h\ra$ if the following conditions hold:
\begin{enumerate}
    \item 
$p^* \geq^* p$ in the sense of $\po$, 
\item for every $n < \ell^p$, 
$h_n^{\low} \subseteq (h_n^*)^{\low}$ and
$h_n^{\high} \subseteq (h_n^*)^{\high}$ ,

\item for every $n \geq \ell^p$, and $\nu \in A_n^{p^*}$,
$h_n^{\low}( \pi^n_{\mc(a_n^{p^*}),\mc(a_n^{p})}(\nu)) \subseteq (h_n^*)^{\low}(\nu)$, and 
$h_n^{\high}( \pi^n_{\mc(a_n^{p^*}),\mc(a_n^{p})}(\nu)) \subseteq (h_n^*)^{\high}(\nu)$.
\end{enumerate}

Given a condition $q = \la p,h\ra$ and an ordinal $\nu \in A_{\ell^p}^p$, we define the one-point end-extension of $q$ by $\nu$, denoted $q \fr \la \nu\ra$ to be the condition $\la p',h'\ra$ given as follows:
\begin{itemize}
\item $p' = p \fr \la \nu\ra$ in the sense of $\po$, in particular $\ell^{p'} = \ell^p + 1$,
\item 
$h'_n = h_n$ for every $n \leq \ell^p$, and in addition, $(h'_{\ell^p})^{\high}$ is now considered as a condition of the restricted collapse poset 
$\coll( \rho_{\ell^p-1}^p,<\tau_{\ell^p}^{\nu})$ (replacing  $\coll( \rho_{\ell^p-1}^p,<\lambda_{\ell^p})$).

\item $(h'_{\ell^{p}})^{\low} = h_{\ell^{p}}^{\low}(\nu)$ and 
$(h'_{\ell^{p}})^{\high} = h_{\ell^{p}}^{\high}(\nu)$,
\item $h'_n = h_n$ for every $n \geq \ell^p+1$.
\end{itemize}

Given a condition $q = \la p,h\ra$ and a finite sequence $\vec{\nu}= \la \nu_{\ell^p},\dots,\nu_{n-1}\ra \in \prod_{\ell^p \leq i <n}A^p_i$, the end-extension of $q$ by $\vec{\nu}$ is defined by 
\[
q \fr \vec{\nu} = q \fr \la \nu_0\ra \fr \la \nu_1\ra \fr \dots  \fr \la \nu_{n-1}\ra
\]

In general, a condition in $\hat{\po}$ extends $q$ if it is obtain from $q$ by finitely many end-extensions and direct extensions. Equivalently, it is a direct extension of $q \fr \vec{\nu}$ for some  $n \geq \ell^p$ and
$\vec{\nu} \in \prod_{\ell^p \leq i <n}A^p_i$ .

${}$\\
\noindent
Let $G \subseteq \hat{\po}$ be a $V$-generic filter.
Through its projection to $\po$, given by $q = (p,h) \mapsto p$, $p \in \po$, it is clear that $V[G]$ adds a sequence of functions $\la t_\al \mid \al < \lam^+\ra$ in the product $\prod_n \tau_n$, where $\tau_n = g_n(\rho_n)$ is derived from the generic filter, as in the case of $\po$. In addition, it is clear that the 
 cardinals in the intervals 
$(\tau_{n-1}^+,\rho_n) \cup (\rho_n, \tau_n)$, $n < \omega$, are all collapsed in $V[G]$.

A standard argument for diagonal Prikry-type forcings with collapses (e.g., see \cite{Gitik-HB}) shows that no other cardinals are collapsed. It follows that  $\tau_n = \aleph_{3n+2}^{V[G]}$ for every $n \in \omega$.
Most relevant to us, is Lemma \ref{lem:StrongPrikryPropertywithCOl} below, which is the $\hat{\po}$ analog of the Strong Prikry Property from \ref{lemma-meetdenseset}.

We first introduce certain useful notations. 
For a sequence of regular cardinals $\vec{\rho} = \la \rho_i \mid i < n\ra$,
satisfying  $\lambda_{i-1} < \rho_i < g_i(\rho_i) < \lambda_i$, 
$\qo_{\vec{\rho}}$ to be the product of collapse forcings
\[
\coll(\omega,<\rho_0) \times 
\left( \prod_{i<n-1} \coll(\rho_i,< g_i(\rho_i)) \times
\coll(g_i(\rho_i)^+, < \rho_{i+1}))
\right) \times \Coll(g_{n-1}(\rho_{n-1})^+,<\lambda_{n})
\]
where for $i = 0$ we set $g_{-1}(\rho_{-1}) = \omega$.
For a condition $q  = \la p,h\ra \in \hat{\po}$ we denote
$\vec{\rho}^q = \la f_n^p(0) \mid n < \ell^p\ra$, 
$\qo_p = \qo_{\vec{\rho}^p}$, and define 
for every $q \geq p$, the collapse restriction 
$q\uhr \qo_{p}$ to be  
$h\uhr \ell^p+1 = \la h_0,\dots h_{\ell^p}\ra$.

\begin{lemma}\label{lem:StrongPrikryPropertywithCOl}
Suppose that $D \subseteq \hat{\po}$ is a dense open set and $q = \la p,h\ra \in \hat{\po}$ a condition. Then there exists a direct extension $q^* \geq^* q$, $n < \omega$,
such that for every $\vec{\nu} \in \prod_{\ell^p \leq i < n}A^{q^*}_i$,
$q^* \fr \vec{\nu}$ reduces meeting $D$ to $\qo_{p^* \vec{\nu}}$, in the sense that there exists a dense open subset 
$D(\vec{\nu})$ of $\qo_{q^* \vec{\nu}}$ so that for every $q' \geq q$, 
if $q'\uhr \qo_{q^* \vec{\nu}} \in D(\vec{\nu})$ then $q' \in D$.
\end{lemma}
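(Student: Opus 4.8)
The plan is to derive this collapse version of the Strong Prikry Property by combining the Strong Prikry Property of the underlying extender forcing $\po$ (Lemma \ref{lemma-meetdenseset}(4)) with a diagonalization over the collapse coordinates, following the fusion technique already used in Lemmas \ref{lemma-densesets} and \ref{lemma-functionname}. The guiding idea is that once the extender/Prikry direction has been resolved down to a fixed finite length $n$, membership in $D$ should no longer depend on the extender data or on the collapse functions living above level $n$, but only on the collapse conditions that have already been realized below level $n$ --- and these are exactly the coordinates recorded by $\qo_{q^* \vec\nu}$. Concretely, for an end-extension $\vec\nu$ of length $n$ I write $(q^* \fr \vec\nu)^c$ for the condition obtained from $q^* \fr \vec\nu$ by extending its lower-collapse part to $c \in \qo_{q^* \vec\nu}$ while leaving its tail (the extender data and the collapse functions at levels $\geq n$) unchanged, and I set $D(\vec\nu) := \{\, c \in \qo_{q^* \vec\nu} \mid (q^* \fr \vec\nu)^c \in D \,\}$. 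Openness of $D(\vec\nu)$ is inherited from that of $D$, and the required implication is then immediate: if $q' \geq q^* \fr \vec\nu$ satisfies $q' \uhr \qo_{q^* \vec\nu} = c \in D(\vec\nu)$, then $q'$ extends $(q^* \fr \vec\nu)^c \in D$, so $q' \in D$ because $D$ is open.

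Thus the whole content is the construction of $q^* \geq^* q$ and $n < \omega$ for which each $D(\vec\nu)$ is dense in $\qo_{q^* \vec\nu}$. I would build $q^*$ by a level-by-level fusion. The extender direction is treated exactly as in the proof of the Strong Prikry Property for $\po$: at each level $i$ the relevant decision is averaged over the measure-one set using the ultrafilter $E_i(\mc(a_i^{q^*}))$, which simultaneously yields a uniform finite $n$ and the shrunk measure-one sets $A_i^{q^*}$, while the collapse functions $h_i^{\low}, h_i^{\high}$ are carried along. Interleaved with this, I would diagonalize over the lower-collapse conditions. For a fixed $\vec\nu$ of length $n$ there are $|\qo_{q^* \vec\nu}| = \lambda_n$ conditions $d$ to treat, and over all $\vec\nu$ of length $n$ there are still at most $\lambda_n$ many requirements. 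For each such $d$ I would appeal to the Strong Prikry Property of $\po$ to find a direct extension of the tail, together with an extension $c \geq d$ of the lower collapse, placing $(q^* \fr \vec\nu)^c$ into $D$ with no further end-extension --- this being possible precisely because $n$ was chosen deep enough. These tail extensions are then amalgamated into a single $\leq^*$-lower bound, and by openness of $D$ the resulting condition still witnesses $c \in D(\vec\nu)$, giving density.

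The step I expect to be the main obstacle is the closure bookkeeping that makes the amalgamation possible, together with the tracking of collapse data through end-extensions. The amalgamation succeeds for the same reason as in Lemma \ref{lemma-densesets}: the direct-extension order of the tail, fixing all coordinates below level $n+1$, is $\kappa_{n+1}$-closed, and by the ground-model hypothesis $\lambda_n < \kappa_{n+1}$, so all $\lambda_n$-many requirements at a given stage admit a common $\leq^*$-lower bound --- whereas the closure of the full direct-extension order, governed by the extender part and hence bounded by $\kappa_{\ell^p} < \lambda_{\ell^p}$, would not suffice. Two further points must be handled with care. First, the collapse data appears as functions over the measure-one sets $A_i^{q^*}$, so the realized values $h_i^{\low}(\nu), h_i^{\high}(\nu)$ produced by an end-extension must be tracked to ensure that the realized coordinates below level $n$ match precisely the factors of $\qo_{q^* \vec\nu}$; this is the $\hat\po$-analog of the index-shift bookkeeping in Lemma \ref{lemma-functionname}. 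Second, the entire construction is carried out in $V$ using the $V$-extenders $E_i$ throughout, so one never needs the measures on the $\lambda_i$ to survive the interleaved collapses: the collapse conditions enter only as passengers extended under the highly closed tail ordering.
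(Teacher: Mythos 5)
You should first know that the paper does not prove Lemma \ref{lem:StrongPrikryPropertywithCOl} at all: it is stated as the $\hat{\po}$-analogue of the Strong Prikry Property and left to the standard theory of diagonal Prikry-type forcings with interleaved collapses (the surrounding text points the reader to \cite{Gitik-HB}, and even the Strong Prikry Property for $\po$ itself is only quoted from \cite{HOD2}). Measured against that standard template, your framing is right in several essential respects: you correctly identify that the whole content is the density of $D(\vec{\nu})$ in the small product $\qo_{q^* \fr \vec{\nu}}$ (the direct-extension order of $\hat{\po}$ is not even countably closed in the lower collapse coordinates --- $h^{\high}_{-1}$ is a finite condition --- so no honest Prikry property is available and the reduction to a dense subset of the finite collapse product is forced on you); the closure arithmetic $\lambda_n < \kappa_{n+1}$ that lets the $\lambda_n$-many level-$n$ requirements be absorbed into the tail ordering is the correct one; and the use of openness of $D$ to keep witnesses alive under amalgamation, together with the shrinking of the $A_i$ so that the realized $\rho_i^{\nu}$ are large enough for the upper collapse coordinates to be sufficiently closed, is exactly the bookkeeping the standard proof requires.

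The gap is in the density argument itself, which is the entire content of the lemma. You propose, for each $d \in \qo_{q^* \fr \vec{\nu}}$, to ``appeal to the Strong Prikry Property of $\po$'' to produce a tail extension and some $c \geq d$ putting $(q^* \fr \vec{\nu})^c$ into $D$ \emph{with no further end-extension}, ``this being possible precisely because $n$ was chosen deep enough.'' As stated this is circular on two counts. First, $D$ is a dense open subset of $\hat{\po}$, not of $\po$, so Lemma \ref{lemma-meetdenseset}(4) does not apply to it; its dependence on the collapse coordinates is precisely what is at issue. Second, the existence of a single finite $n$ below which \emph{every} stem $(\vec{\nu},d)$ can be completed into $D$ without adding further Prikry points is the assertion of the lemma, not a property you may invoke when choosing $n$. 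Density of $D$ only hands you, for a given $(q \fr \vec{\nu})^d$, some extension in $D$ that may add arbitrarily many further points. What the standard argument actually supplies here is: (i) a fusion over all stems $(\vec{\nu},d)$ recording, for each one, whether some tail-only extension (paired with some $c \geq d$) lands in $D$, with the witnessing tails folded into $q^*$ using exactly the closure you describe; (ii) a level-by-level homogenization over the measures $E_i(\mc(a_i))$ of the resulting good/bad colouring of stems; and (iii) a density-of-$D$ argument that produces the finite $n$ and propagates ``good'' to a dense set of $d$'s for all $\vec{\nu}$ of that length --- this last step being where conditions in $D$ that still carry extra Prikry points are pulled back to shorter stems by integrating over the measures. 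Step (iii) is absent from your sketch, and without it the claim that each $D(\vec{\nu})$ is dense is not established.
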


This version of the strong Prikry Property naturally extends to versions of
Lemmas \ref{lemma-densesets} and \ref{lemma-functionname}, in which in addition to $n^D,N^D$ ($n^{\vec{\mu}}$, $N^{\vec{\mu}}$, respectively) 
which were used to determine the length of sequences $\vec{\nu}^1 \fr \vec{\nu}^2$ for $q^* \fr \vec{\nu}^1 \fr \vec{\nu}^2$ to meet dense open sets $D$ (or decide values of functions $\name{F}(\vec{\mu})$), here 
an additional function $\bar{D}$ mapping sequences $ \vec{\nu}^1 \fr \vec{\nu}^2$ to dense open subsets
$\bar{D}(\vec{\nu}^1 \fr \vec{\nu}^2)$ of
$\qo_{q^* \fr \vec{\nu}^1 \fr \vec{\nu}^2}$, are added, and reduce the problem of finding 
$q' \geq q^* \fr \vec{\nu}^1 \fr \vec{\nu}^2$ inside $D$, (or deciding $\name{F}(\vec{\mu})$) to $q' \uhr \qo_{q^* \fr \vec{\nu}^1 \fr \vec{\nu}^2}$ being a member of
$\bar{D}(\vec{\nu}^1 \fr \vec{\nu}^2)$.

We note that in particular, the conclusion of \ref{rmk:properness} applies to $\hat{\po}$, since $V_\lambda \subseteq M$ implies that the finite collapse products $\qo_{\vec{\rho}}$ are contained in $M$.\\

From this point, it is straightforward to verify that the rest of the argument, leading to an analogous proof of Theorem \ref{thm:ABSPnew}, remains essentially the same, with the additional key being that the identity of the
newly introduced collapse products $\qo_{q^* \vec{\nu}}$ and their dense sets $\bar{D}^{\vec{\mu}}(\vec{\nu})$ will be decided by the generic information of a bounded part of the scale, $t_\beta$, $\beta < \alpha = \sup(M \cap\lambda^+)$ for a suitable structure $M$ of size $\lambda$. This information remains independent from higher generic scale functions $t_\delta$, $\delta > \alpha$, which allows one to naturally modify the proof of Theorem 
\ref{thm:ABSPnew}, to conclude the same result. 

\begin{theorem}\label{thm:ABSPnewWithCol}
Let $G \subseteq \hat{\po}$ be a generic filter over $V$.
The Approachable Bounded Subset Property (ABSP) holds in $V[G]$ with respect to the sequence $\la \aleph_{3n+2} \mid n < \omega\ra$.
\end{theorem}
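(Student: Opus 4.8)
The plan is to obtain Theorem \ref{thm:ABSPnewWithCol} by transporting the entire argument of Theorem \ref{thm:ABSPnew} through the projection $\pi : \hat{\po} \to \po$, $\la p, h\ra \mapsto p$, exploiting the fact that the collapsing components $h$ are determined by \emph{bounded} generic information. First I would set up the same contradiction hypothesis: suppose ABSP fails with respect to $\la \aleph_{3n+2}\ra_n = \la \tau_n\ra_n$, fixing a stationary set $\S \subseteq \power_\lambda(H_\theta)$ of internally approachable structures, each $N \in \S$ carrying witnessing functions $F^N_n \in N$ and index sequences $\vec{d}^{N,n}$. Since $\hat{\po}$ collapses the intervals $(\tau_{n-1}^+,\rho_n) \cup (\rho_n,\tau_n)$ but preserves $\lambda^+$ (the $\leq^m$-closure and $\lambda^{++}$-c.c.\ survive the collapse as in the standard diagonal-Prikry-with-collapse analysis of \cite{Gitik-HB}), Lemma \ref{lem:IApressingdown} applies verbatim in $V[G]$ to homogenize the witnesses to fixed sequences $\la F_n\ra_n, \la \vec{d}^n\ra_n$ on a stationary $S^* \subseteq \lambda^+$, yielding the analog of inequality (\ref{equation:crux}) for the generic scale $\la t_\delta \mid \delta < \lambda^+\ra$ read off from the $\po$-part of $G$.

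The core of the argument is then the descent back into $V$. Here I would invoke the $\hat{\po}$-versions of Lemmas \ref{lemma-densesets} and \ref{lemma-functionname} announced in the paragraph following Lemma \ref{lem:StrongPrikryPropertywithCOl}: each decision of $\name{F_n}(\vec{\mu})$ is now reduced not to a single condition $p^* \fr \vec{\nu}^1 \fr \vec{\nu}^2 \in \po$, but to that condition \emph{together} with membership $q' \uhr \qo_{q^* \fr \vec{\nu}^1 \fr \vec{\nu}^2} \in \bar{D}^{\vec\mu}(\vec{\nu}^1 \fr \vec{\nu}^2)$ in an associated dense subset of the finite collapse product $\qo_{q^* \fr \vec{\nu}^1 \fr \vec{\nu}^2}$. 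The decisive observation is that $\qo_{q^* \fr \vec{\nu}^1 \fr \vec{\nu}^2}$, together with its dense set $\bar{D}^{\vec\mu}$, depends only on $\vec\rho = \la f^{p}_i(0)\ra$ and on the low coordinates $\pi^n_{\mc,0}(\nu_i)$ of the Prikry points — i.e.\ only on $t_\beta$ for $\beta < \alpha = \sup(M \cap \lambda^+)$ — and hence is \emph{independent} of the high generic scale values $t_\delta$, $\delta > \alpha$, that we intend to vary. Concretely I would choose $M \elem H_\theta$ of size $\lambda$ with $V_\lambda \subseteq M$, $\alpha = M \cap \lambda^+$, force $\can\delta \in \name{S^*}$ with $\delta > \alpha$ and $\delta \in \dom(a^q_n)$, and then run the same Rudin–Keisler argument as in Theorem \ref{thm:ABSPnew}: fixing all coordinates $\vec\nu_{++}$ except $d_0$, I form $h(\nu) = f^n(\vec\mu,\pi^q_{\mc(q),\alpha}(\vec\nu_{++} \cup \{\nu\}))$, observe via the last clause of Fact \ref{fact:E_n} that $h(\nu) < \pi^q_{\mc(q),\delta}(\nu)$ on a set $A^* \in E_n(a^q_n(d_0))$, and pick $\nu \in A^*$ to contradict (\ref{equation:crux}).

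The main obstacle I anticipate is ensuring that the auxiliary collapse data can be satisfied \emph{uniformly} as $\nu$ ranges over $A^*$, so that a single extension of $q$ forces the failing instance. Because $\bar{D}^{\vec\mu}(\vec{\nu}^1 \fr \vec{\nu}^2)$ lives in $\qo_{q^* \fr \vec\nu^1 \fr \vec\nu^2}$ and its identity is fixed by the bounded-below-$\alpha$ part of the scale, I would pre-process by shrinking $A^q_{d_0}$ (via $E_n(a^q_n(d_0))$-measure-one intersections) so that the reduced collapse conditions are compatible across all $\nu \in A^*$ — this is exactly where the clause $h^{\high}_{n-1}(\nu') \in V_{\rho_n^\nu}$ in the definition of $\hat{\po}$, together with $V_\lambda \subseteq M$, guarantees that the relevant collapse fragments are small enough to be captured inside $M$ and amalgamated. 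Once the collapse side is thus absorbed into the bounded generic, the Prikry-point variation is orthogonal to it, and the contradiction goes through precisely as before. The remaining verifications — that $\tau_n = \aleph_{3n+2}^{V[G]}$ and that no unintended cardinals collapse — are the routine diagonal-Prikry bookkeeping already cited, so I would state them and defer to \cite{Gitik-HB}.
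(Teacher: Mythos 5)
Your proposal is correct and follows essentially the same route as the paper: the paper's own argument for Theorem \ref{thm:ABSPnewWithCol} is exactly the observation that the collapse products $\qo_{q^* \fr \vec{\nu}}$ and their associated dense sets from the $\hat{\po}$-version of the Strong Prikry Property are determined by the bounded (low-projection) generic data, hence orthogonal to the variation of the high scale values $t_\delta$, $\delta > \alpha$, so the Rudin--Keisler argument of Theorem \ref{thm:ABSPnew} goes through unchanged. Your additional remarks on amalgamating the collapse fragments over $A^*$ only flesh out a point the paper leaves implicit.
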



\section{Fine structure and the tree-like scale}\label{sec:finestructure}

\subsection{Successor Cardinals}

Let $\mathcal{M} \models \ZFC^-$ be a premouse such that every countable hull of $\mathcal{M}$ has an $(\omega_1 + 1)$ iteration strategy, $\lambda \in M$ a limit cardinal (in $\mathcal{M}$) of $V$-cofinality $\omega$ (which need not agree with its cofinality in $\mathcal{M}$) such that $\lambda^{+}$ exists in $\mathcal{M}$. 

Note if $\mathcal{N}$ is a premouse and $\alpha \in \mathcal{N}$ is such that $\mathcal{N} \models \alpha \text{ is the largest cardinal}$, then we let $(\alpha^+)^\mathcal{N} = \on \cap \mathcal{N}$.

Let $\vec{\kappa} := \<\kappa_n: n < \omega\>$ be a sequence of $\mathcal{M}$-cardinals cofinal in $\lambda$. We do note asume $\vec{\kappa}$ is in $\mathcal{M}$. Let $\tau_n := (\kappa^+_n)^{\mathcal{M}}$. We will define a sequence in $\prod\limits_{n < \omega} \tau_n$ that is increasing, tree-like, and continuous.

Let $C^{\lambda,\mathcal{M}} := \{ \alpha < (\lambda^{+})^{\mathcal{M}} \vert \mathcal{M}\vert\vert \alpha \prec \mathcal{M} \vert\vert (\lambda^{+})^{\mathcal{M}}\}$. For $\alpha \in C^{\lambda,\mathcal{M}}$ let $\mathcal{M}_\alpha$ be the collapsing level for $\alpha$. Let $n_\alpha$ be minimal such that $\rho^{\mathcal{M}_\alpha}_{n+1} = \lambda$, $p_\alpha := p^{\mathcal{M}_\alpha}_{n_\alpha+1}$, and $w_\alpha := w^{\mathcal{M}_\alpha}_{n_\alpha + 1}$. Let also $F_\alpha$ be the top predicate of $\mathcal{M}_\alpha$.

By Lemma \ref{condens} there exists some $\mathcal{M}^n_\alpha \eextend \mathcal{M}$ such that $\mathcal{C}_0(\mathcal{M}^n_\alpha)$ is isomorphic to $\hull^{\mathcal{M}_\alpha}_{n_\alpha + 1}(\kappa_n \cup \{p_\alpha\})$. 

\begin{equation*}
f^{\vec{\kappa},\mathcal{M}}_\alpha(n) = \begin{cases} (\kappa^+_n)^{\mathcal{M}^n_\alpha} & \{w_\alpha,\lambda\} \in \hull^{\mathcal{M}_\alpha}_{n_\alpha + 1}(\kappa_n \cup \{p_\alpha\}) \\ 0 & \text{otherwise} \end{cases}
\end{equation*}

Note that the above function is non-zero almost everywhere, that is if $\lambda \in \mathcal{C}_0(\mathcal{M}_\alpha)$. This can fail if (and only if) $\mathcal{M}_\alpha$ is active and $\nu^{\mathcal{M}_\alpha} = \lambda$. Such $\alpha$ we will call anomalous. For such $\alpha$ we define:

\begin{equation*}
    f^{\vec{\kappa},\mathcal{M}}_\alpha(n) = \begin{cases} (\kappa^+_n)^{\ult(\mathcal{M};F_\alpha \restr \kappa_n)} & \kappa_n > \mu^{\mathcal{M}_\alpha} \\ 0 & \text{otherwise} \end{cases}
\end{equation*}

By the initial segment there must be some $\gamma < \lambda$ such that the trivial completion of $F_\alpha \restr \kappa_n$ is indexed at $\gamma$. We that it is impossible to have the alternative case as $\kappa_n$ is a cardinal and hence not an index \footnote{It also cannot be type Z. Type Z extenders have a largest generator.}

Note that in either case $\mathcal{M}^n_\alpha$ is the least level of $\mathcal{M}$ over which a surjection from $\kappa_n$ on to the ordinal $f^{\vec{\kappa},\mathcal{M}}_\alpha(n)$ is definable. Hence the ordinal defines the level and vice versa.

\emph{In cases where it is clear which mouse and which sequence of cardinals we are talking about, e.g. for the rest of this subsection, we will omit the superscripts.}

\begin{lemma}
Let $\alpha < \beta$ both in $C$. If $m$ is such that $f_\alpha(m) = f_\beta(m)$ then $f_\alpha(n) = f_\beta(n)$ for all $n \leq m$.
\end{lemma}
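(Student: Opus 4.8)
The plan is to prove the slightly stronger assertion that, for $n \le m$, the level $\mathcal{M}^n_\alpha$ is already determined by $\mathcal{M}^m_\alpha$ and $\kappa_n$, and then to read off the values $f_\alpha(n)$. Recall from the remark following the definition of $f_\alpha$ that the ordinal $f_\alpha(n)$ and the level $\mathcal{M}^n_\alpha$ determine one another, the latter being the $\eextend$-least level of $\mathcal{M}$ over which a surjection from $\kappa_n$ onto $f_\alpha(n)$ is definable. Hence it suffices to show $\mathcal{M}^n_\alpha = \mathcal{M}^n_\beta$ for all $n \le m$.

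First I would exploit the hypothesis $f_\alpha(m) = f_\beta(m)$ through this value/level correspondence: both $\mathcal{M}^m_\alpha$ and $\mathcal{M}^m_\beta$ are the $\eextend$-least level of $\mathcal{M}$ admitting a definable surjection from $\kappa_m$ onto the common ordinal $f_\alpha(m) = f_\beta(m)$, and since $\eextend$ linearly orders the initial segments of $\mathcal{M}$, this least level is unique. Thus $\mathcal{M}^m_\alpha = \mathcal{M}^m_\beta =: \mathcal{P}$.

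The heart of the argument is to recover $\mathcal{M}^n_\alpha$ from $\mathcal{P}$ independently of $\alpha$. For non-anomalous $\alpha$, $\mathcal{C}_0(\mathcal{M}^m_\alpha)$ is the transitive collapse of $\hull^{\mathcal{M}_\alpha}_{n_\alpha+1}(\kappa_m \cup \{p_\alpha\})$; writing $\sigma \colon \mathcal{C}_0(\mathcal{P}) \to \mathcal{C}_0(\mathcal{M}_\alpha)$ for the associated uncollapse, $\sigma$ is $r\Sigma_{n_\alpha+1}$-elementary, fixes $\kappa_m$ pointwise, and carries $\bar p := p_{n_\alpha+1}(\mathcal{P})$ to $p_\alpha$, by the usual preservation of standard parameters under the condensation collapse. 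Applying $\sigma$ to $\hull^{\mathcal{P}}_{n_\alpha+1}(\kappa_n \cup \{\bar p\})$ maps it onto $\hull^{\mathcal{M}_\alpha}_{n_\alpha+1}(\kappa_n \cup \{p_\alpha\})$, so the transitive collapse of the former is exactly $\mathcal{C}_0(\mathcal{M}^n_\alpha)$. The key point is that both inputs to this inner hull are intrinsic to $\mathcal{P}$ and $\kappa_m$: one checks that $\rho_{n_\alpha+1}(\mathcal{P}) = \kappa_m$, so $n_\alpha$ is the least $k$ with $\rho_{k+1}(\mathcal{P}) = \kappa_m$, and then $\bar p = p_{n_\alpha+1}(\mathcal{P})$ is the corresponding standard parameter. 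Since $\mathcal{M}^m_\alpha = \mathcal{M}^m_\beta = \mathcal{P}$, these data coincide for $\alpha$ and $\beta$, whence $\mathcal{M}^n_\alpha = \mathcal{M}^n_\beta$, and therefore $f_\alpha(n) = (\kappa^+_n)^{\mathcal{M}^n_\alpha} = (\kappa^+_n)^{\mathcal{M}^n_\beta} = f_\beta(n)$ for every $n \le m$.

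The main obstacle is the anomalous case and its interaction with the non-anomalous one. When $\alpha$ is anomalous, $\mathcal{M}^n_\alpha$ is instead the level of $\mathcal{M}$ indexing the trivial completion of $F_\alpha \restr \kappa_n$, and the recovery proceeds via the initial segment condition: from $\mathcal{P}$, whose top extender is the trivial completion of $F_\alpha \restr \kappa_m$, one obtains $F_\alpha \restr \kappa_n$ by restriction and locates the index of its trivial completion, again intrinsically in $\mathcal{P}$ and $\kappa_n$. I expect the genuinely delicate point to be verifying that the uniform ``$\eextend$-least level admitting a definable surjection from $\kappa_n$'' description pins down the same level under both recoveries, so that a possible mixed case (one of $\alpha,\beta$ anomalous, the other not, yet $\mathcal{M}^m_\alpha = \mathcal{M}^m_\beta$) is either excluded or reconciled; this is precisely where the stated bijection between $f_\bullet(n)$ and $\mathcal{M}^n_\bullet$, now read off inside $\mathcal{P}$, must be invoked to force agreement.
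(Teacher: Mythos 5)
Your main line is the same as the paper's: use the value/level correspondence to conclude $\mathcal{M}^m_\alpha=\mathcal{M}^m_\beta=:\mathcal{P}$, observe that $n_\alpha=n_\beta$ and that $p_\alpha$ collapses to $p_{n_\alpha+1}(\mathcal{P})$, and then recompute $\mathcal{C}_0(\mathcal{M}^n_\alpha)$ as the collapse of $\hull^{\mathcal{P}}_{n_\alpha+1}(\kappa_n\cup\{p_{n_\alpha+1}(\mathcal{P})\})$, which is intrinsic to $\mathcal{P}$. One remark on the step you cite as ``the usual preservation of standard parameters under the condensation collapse'': this preservation is exactly what the clause $\{w_\alpha,\lambda\}\in\hull^{\mathcal{M}_\alpha}_{n_\alpha+1}(\kappa_m\cup\{p_\alpha\})$ in the definition of $f_\alpha(m)\neq 0$ is there to secure --- the solidity witness must lie in the hull for $p_\alpha$ to collapse to a (hence the) standard parameter of $\mathcal{P}$ --- so you are tacitly using the hypothesis $f_\alpha(m)\neq 0$ at that point; it would be worth saying so.

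There are two genuine gaps. First, you never treat the degenerate case $f_\alpha(m)=f_\beta(m)=0$, where your opening move (``both levels are the $\eextend$-least level admitting a definable surjection onto the common value'') says nothing. The paper dispatches it by monotonicity of the defining conditions in $n$: if $\{w_\alpha,\lambda\}$ is not in the hull over $\kappa_m$ (resp.\ $\kappa_m\leq\mu^{\mathcal{M}_\alpha}$), the same holds with $\kappa_n$ for $n\leq m$, so all the relevant values are $0$ on both sides. Second, the mixed case you flag is real, but your proposed resolution --- forcing agreement via the surjection description read inside $\mathcal{P}$ --- is not how it goes; the paper instead \emph{excludes} the mixed case. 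If $\alpha$ is anomalous then $\mathcal{M}_\alpha$ is active type III with $\nu^{\mathcal{M}_\alpha}=\lambda$; since types are preserved under taking hulls, $\mathcal{P}$ is active type III, and its top extender (the trivial completion of $F_\alpha\restr\kappa_m$) has no generators above $\kappa_m$. If $\beta$ were not anomalous, then $\lambda$ belongs to $\hull^{\mathcal{M}_\beta}_{n_\beta+1}(\kappa_m\cup\{p_\beta\})$, so $\mathcal{C}_0(\mathcal{M}^m_\beta)=\mathcal{C}_0(\mathcal{P})$ has ordinals, and hence (being type III) generators, above $\kappa_m$ --- contradiction. Once both are anomalous, $F_\alpha\restr\kappa_m=F_\beta\restr\kappa_m$ yields $\mu^{\mathcal{M}_\alpha}=\mu^{\mathcal{M}_\beta}$ and $F_\alpha\restr\kappa_n=F_\beta\restr\kappa_n$, which is the restriction argument you sketched. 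With these two points supplied, your argument matches the paper's proof.
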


\begin{proof}
  Note first that if $f_\beta(m) = 0$, then $f_\beta(n) = 0$ and the same holds for $\alpha$. Let us then consider $f_\beta(m) \neq 0$, it follows that $\mathcal{M}^m_\alpha = \mathcal{M}^m_\beta $. We will start with the assumption that neither $\alpha$ nor $\beta$ are anomalous. In that situation we must have that $w_\alpha \in \hull^{\mathcal{M}_\alpha}_{n_\alpha + 1}(\kappa_n \cup \{p_\alpha\})$. This implies that $p_\alpha$ collapses down to $p_{n_\alpha + 1}(\mathcal{M}^m_\alpha)$. The same, of course, holds for $\beta$. Note we must have $n_\alpha = n_\beta$.
  It follows that
  
  \begin{align*}
      \mathcal{C}_0(\mathcal{M}^n_\alpha) & \cong \hull^{\mathcal{M}^m_\alpha}_{n_\alpha + 1}(\kappa_n \cup \{p_{n_\alpha + 1}(\mathcal{M}^m_\alpha)\}) \\
      & = \hull^{\mathcal{M}^m_\beta}_{n_\beta + 1}(\kappa_n \cup \{p_{n_\beta + 1}(\mathcal{M}^m_\beta)\}) \cong \mathcal{C}_0(\mathcal{M}^n_\beta).  \end{align*}

  This implies $f_\beta(n) = f_\alpha(n)$. Note that $f_\beta(n) = 0$ if and only if $w_{n_\beta + 1}(\mathcal{M}^m_\beta) \notin \hull^{\mathcal{M}^m_\beta}_{n_\beta + 1}(\kappa_n \cup \{p_{n_\beta + 1}(\mathcal{M}^m_\beta)\})$ and similarly for $\alpha$.
  
  Assume then that at least one of $\alpha$ and $\beta$ is anomalous. Let us assume that $\alpha$ is anomalous, the proof for $\beta$ is only notationally different. We will realize that, in fact, both must be anomalous. As types are preserved by taking hulls we must have that both are active type III. As at least one is anomalous we do know that the top extender of $\mathcal{M}^m_\alpha$ has no generators above $\kappa_m$. If then the other were not to be anomalous we must have that $\lambda$ is an element of the appropriate hull. This implies that $\mathcal{C}_0(\mathcal{M}^m_\beta)$ has ordinals and hence generators above $\kappa_n$. Contradiction!
  
  As then both are anomalous and $\mathcal{M}^m_\alpha = \mathcal{M}^m_\beta$, we have $F_\alpha \restr \kappa_m = F_\beta \restr \kappa_m$. From this follows $\mu^{\mathcal{M}_\alpha} = \mu^{\mathcal{M}_\beta}$ and $F_\alpha \restr \kappa_n = F_\beta \restr \kappa_n$. Therefore $f_\alpha(n) = f_\beta(n)$.
\end{proof}

\begin{lemma}
  Let $\alpha < \beta$ both in $C$. Then $f_\alpha(n) < f_\beta(n)$ for all but finitely many $n$.
\end{lemma}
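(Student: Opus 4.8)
```latex
\textbf{Proof plan.}
The statement claims that for $\alpha < \beta$ in $C$, the functions $f_\alpha$ and $f_\beta$ satisfy $f_\alpha(n) < f_\beta(n)$ for all but finitely many $n$; i.e. $f_\alpha <^* f_\beta$. The plan is to exploit the fact, recorded right after the definition of the $f$'s, that $\mathcal{M}^n_\alpha$ is precisely the least level of $\mathcal{M}$ over which a surjection from $\kappa_n$ onto $f_\alpha(n)$ is definable, so that the ordinal $f_\alpha(n)$ and the level $\mathcal{M}^n_\alpha$ determine one another. Thus comparing $f_\alpha(n)$ with $f_\beta(n)$ reduces to comparing the collapsing levels $\mathcal{M}_\alpha$ and $\mathcal{M}_\beta$ and their hulls. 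Since $\alpha < \beta$ are both in $C$, we have $\mathcal{M}\vert\vert\alpha \prec \mathcal{M}\vert\vert\beta \prec \mathcal{M}\vert\vert(\lambda^+)^{\mathcal{M}}$, so $\alpha$ (together with the relevant fine-structural data defining $\mathcal{M}_\alpha$) is an element definable inside, or at least below, the structure associated to $\beta$.

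First I would fix $n$ large enough that both $f_\alpha(n)$ and $f_\beta(n)$ are non-zero (anomalous $\alpha$ or $\beta$ only affects finitely many coordinates near the bottom, and in the non-anomalous case the witness condition $\{w_\alpha,\lambda\}\in\hull$ holds once $\kappa_n$ is large enough to capture the finitely many parameters). The key is to show $f_\alpha(n) \neq f_\beta(n)$ together with $f_\alpha(n) \le f_\beta(n)$ for cofinitely many $n$; by the previous Lemma, if $f_\alpha(m) = f_\beta(m)$ for some $m$ then $f_\alpha \restr (m{+}1) = f_\beta \restr (m{+}1)$, so the coordinates where they agree form an initial segment, and it suffices to rule out $f_\alpha = f_\beta$ on a tail. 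The plan is to observe that if $f_\alpha(n) = f_\beta(n)$ held for infinitely many $n$ — hence, by the preceding Lemma, for all $n$ up to arbitrarily large $m$ and so on a cofinal set — then $\mathcal{M}^n_\alpha = \mathcal{M}^n_\beta$ for all those $n$, and taking the union (direct limit) of these common hulls as $n \to \omega$ recovers a single structure whose projectum is $\lambda$, from which both $\mathcal{M}_\alpha$ and $\mathcal{M}_\beta$ are recovered by the standard core/soundness argument. This would force $\mathcal{M}_\alpha = \mathcal{M}_\beta$ and hence $\alpha = \beta$ (distinct collapsing levels give distinct $\alpha$, as $\alpha$ is recovered as the ordinal height where the relevant surjection first appears), a contradiction.

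For the inequality direction (not just $\neq$), I would use that $\alpha < \beta$ makes $\mathcal{M}_\alpha$ an initial segment of, or definable earlier than, $\mathcal{M}_\beta$ in the hierarchy: the least level over which a surjection $\kappa_n \twoheadrightarrow f_\alpha(n)$ is definable sits below the analogous level for $\beta$, because elementarity $\mathcal{M}\vert\vert\alpha \prec \mathcal{M}\vert\vert\beta$ means everything definable over the $\alpha$-side is reflected into the $\beta$-side at a point below $\beta$. Concretely I would show $\mathcal{M}^n_\alpha$ is an initial segment of $\mathcal{M}^n_\beta$ (or that its ordinal height is strictly smaller) for cofinitely many $n$, and then invoke the ``ordinal defines the level and vice versa'' remark to conclude $f_\alpha(n) = (\kappa_n^+)^{\mathcal{M}^n_\alpha} \le (\kappa_n^+)^{\mathcal{M}^n_\beta} = f_\beta(n)$, with strictness coming from $\alpha \neq \beta$ via the previous paragraph.

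\textbf{Main obstacle.} The hard part will be controlling the anomalous coordinates and making the ``union of hulls recovers the level'' argument fully rigorous in the Mitchell--Steel fine structure, where one must work with $\mathcal{C}_0(\mathcal{M}^n_\alpha)$ and the amenable coding rather than with $\mathcal{M}^n_\alpha$ directly. In particular, verifying that the standard parameters $p_{n_\alpha+1}(\mathcal{M}^n_\alpha)$ cohere as $n$ grows (so that the direct limit of the hulls is well-behaved and its projectum is exactly $\lambda$) requires the solidity and condensation machinery (Lemma \ref{condens}), and the type-III/squashing subtleties flagged for anomalous $\alpha$ must be shown to perturb only finitely many $n$. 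I expect the cleanest route is to reduce everything to comparing the least definable surjection levels and to lean on the already-proved previous Lemma to restrict the disagreement set to a tail.
```
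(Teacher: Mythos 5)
Your overall instinct --- reduce the comparison of $f_\alpha(n)$ and $f_\beta(n)$ to a comparison of the levels $\mathcal{M}^n_\alpha$ and $\mathcal{M}^n_\beta$ --- is the right one, but the proposal has two genuine gaps. First, the crucial step, that $\mathcal{M}^n_\alpha$ sits strictly below $\mathcal{M}^n_\beta$ for cofinitely many $n$, is justified only by the phrase ``elementarity $\mathcal{M}\vert\vert\alpha \prec \mathcal{M}\vert\vert\beta$ means everything definable over the $\alpha$-side is reflected into the $\beta$-side.'' That does not do the job: $\mathcal{M}^n_\alpha$ is the collapse of a hull of the collapsing level $\mathcal{M}_\alpha$, not of $\mathcal{M}\vert\vert\alpha$, and elementarity between $\mathcal{M}\vert\vert\alpha$ and $\mathcal{M}\vert\vert\beta$ says nothing directly about those hulls. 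The mechanism that actually works (and which is essentially the paper's entire proof) is: since $\alpha<\beta$ are in $C$ we have $\mathcal{M}_\alpha \in \mathcal{M}_\beta$, hence $\mathcal{M}_\alpha \in \hull^{\mathcal{M}_\beta}_{n_\beta+1}(\kappa_{n}\cup\{p_\beta\})$ for all $n\geq n^*$, where $n^*$ is large enough that $\kappa_{n^*}$ captures the finitely many ordinal parameters generating $\mathcal{M}_\alpha$ over that hull; the preimage of $\mathcal{M}_\alpha$ under the collapse then lets $\mathcal{M}^n_\beta$ compute $\mathcal{M}^n_\alpha$, and hence $f_\alpha(n)$, internally, so $f_\alpha(n)$ is an ordinal of $\mathcal{M}^n_\beta$-cardinality at most $\kappa_n$ and therefore $f_\alpha(n) < (\kappa_n^+)^{\mathcal{M}^n_\beta} = f_\beta(n)$. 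Note that this yields strictness in one stroke; your decomposition into ``$\leq$ plus $\neq$ on a tail,'' with the attendant direct-limit and coherence-of-standard-parameters argument needed to exclude $f_\alpha=f_\beta$, is an unnecessary detour (it could probably be pushed through, but it is substantially more work than the lemma requires and you correctly flag it as the main obstacle).

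Second, your treatment of anomalous ordinals is mistaken. You propose to dismiss them as perturbing ``only finitely many $n$ near the bottom,'' but when $\beta$ is anomalous the value $f_\beta(n)$ is given by a structurally different formula, namely $(\kappa_n^+)^{\ult(\mathcal{M};F_\beta\restr\kappa_n)}$, at \emph{every} coordinate with $\kappa_n>\mu^{\mathcal{M}_\beta}$, i.e.\ on a cofinite set; this is not a finite perturbation and requires its own argument. The paper handles it by choosing $n^*$ and $h$, $a\in\finsubsets{\kappa_{n^*}}$ with $\mathcal{M}_\alpha=\iota_{F_\beta}(h)(a)$, and then computing $f_\alpha(n)$ from $\iota_{F_\beta\restr\kappa_n}(h)(a)$ inside $\ult(\mathcal{M};F_\beta\restr\kappa_n)$ using \L o\'{s}'s theorem. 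Without some such argument your claim for cofinitely many $n$ is simply not established in the anomalous case.
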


\begin{proof}
  Note that since $\alpha < \beta$ are in $C$ then $\mathcal{M}_\alpha \neq \mathcal{M}_\beta$ and so $\mathcal{M}_\alpha \in \mathcal{M}_\beta$.
  Let us first assume that $\beta$ is not anomalous. 
  
  Let $n^*$ be such that $\mathcal{M}_\alpha \in \hull^{\mathcal{M}_\beta}_{n_\beta + 1}(\kappa_{n^*} \cup \{p_\beta\})$. The pre-image of $\mathcal{M}_\alpha$ in $\mathcal{M}^n_\beta$ ($n \geq n^* $) can then compute $\mathcal{M}^n_\alpha$ and hence $f_\alpha(n)$ correctly.
  
  If on the other hand $\beta$ were anomalous, let $n^*$ be such that $\mathcal{M}_\alpha$ is generated by some $a \in \finsubsets{\kappa_{n^*}}$, i.e. $\mathcal{M}_\alpha = \iota_{F_\beta}(h)(a)$ for some $h \in (\fnktsraum{\mu^{\mathcal{M}_\beta}}{\mathcal{M} \vert\vert \mu^{\mathcal{M}_\beta}})$. Then $f_\alpha(n)$ ($n \geq n^*$) can be computed from $\iota_{F_\beta \restr \kappa_n}(h)(a)$ inside $\ult(\mathcal{M};F_\beta \restr \kappa_n)$ by \L o\'{s}'s Theorem. 
\end{proof}

\begin{lemma}
  Let $\beta \in C$ be of uncountable cofinality. Then $\beta$ is a continuity point of the sequence ( i.e.  $f_\beta$ is the exact upper bound of $\<f_\alpha : \alpha \in C \cap \beta\>$).
\end{lemma}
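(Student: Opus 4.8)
The plan is to verify the two halves of the exact-upper-bound condition separately: that $f_\beta$ is a $<^*$-upper bound of $\<f_\alpha : \alpha \in C\cap\beta\>$, and that this sequence is $<^*$-cofinal in $\prod_n f_\beta(n)$. Together these say exactly that $f_\beta$ is the exact upper bound, and they match the continuity clause of Definition \ref{def:TLSandETLS}. The first half is immediate from the preceding lemma: for $\alpha \in C\cap\beta$ we have $f_\alpha(n)<f_\beta(n)$ for all but finitely many $n$, i.e.\ $f_\alpha <^* f_\beta$. All the work is in the cofinality half: given $g\in\prod_n f_\beta(n)$ lying in $V$, I must produce $\alpha\in C\cap\beta$ with $g<^* f_\alpha$, and it is here that the hypothesis $\cf(\beta)>\omega$ is used.

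First I would uncollapse $g$. Writing $\sigma^\beta_n\colon \mathcal{C}_0(\mathcal{M}^n_\beta)\to \mathcal{C}_0(\mathcal{M}_\beta)$ for the inverse of the transitive collapse of $\hull^{\mathcal{M}_\beta}_{n_\beta+1}(\kappa_n\cup\{p_\beta\})$, recall that $\sigma^\beta_n$ is order preserving on ordinals, fixes $\kappa_n$, and sends $f_\beta(n)=(\kappa_n^+)^{\mathcal{M}^n_\beta}$ to $\tau_n=(\kappa_n^+)^{\mathcal{M}}$; hence $f_\beta(n)=\otp(H^\beta_n\cap\tau_n)$, where $H^\beta_n:=\ran(\sigma^\beta_n)=\hull^{\mathcal{M}_\beta}_{n_\beta+1}(\kappa_n\cup\{p_\beta\})$. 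Thus $\bar g(n):=\sigma^\beta_n(g(n))$ is an element of $H^\beta_n\cap\tau_n$ whose rank inside $H^\beta_n\cap\tau_n$ is precisely $g(n)$. The countable set $\{\bar g(n):n<\omega\}\subseteq\mathcal{M}_\beta$ is the object I need to capture below $\beta$.

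The central step is a capturing claim: $H^\beta_n\cap\tau_n=\bigcup_{\alpha\in C\cap\beta}(H^\alpha_n\cap\tau_n)$ for every $n$, with the union increasing and continuous in $\alpha$, so that each initial segment $H^\beta_n\cap\bar g(n)$ is already contained in some $H^\alpha_n$. The inclusion $H^\alpha_n\cap\tau_n\subseteq H^\beta_n\cap\tau_n$ (for $\alpha$ with $\mathcal{M}_\alpha\in H^\beta_n$) is exactly the computation used in the preceding lemma: once $\mathcal{M}_\alpha\in\hull^{\mathcal{M}_\beta}_{n_\beta+1}(\kappa_n\cup\{p_\beta\})$, the structure $\mathcal{M}^n_\alpha$, hence $H^\alpha_n\cap\tau_n$, is computed inside $\mathcal{M}^n_\beta$. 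The reverse inclusion is the reflection that drives the argument: since $\beta\in C$ and $C\cap\beta$ is cofinal in $\beta$, we have $\mathcal{M}\vert\vert\beta=\bigcup_{\alpha\in C\cap\beta}\mathcal{M}\vert\vert\alpha$, and any $x\in H^\beta_n$ --- being $r\Sigma_{n_\beta+1}$-definable over $\mathcal{M}_\beta$ from parameters in $\kappa_n$ together with $p_\beta$ --- reflects into some $\mathcal{M}_\alpha$ with $\alpha\in C\cap\beta$, landing in $H^\alpha_n$. Granting the claim, for each $n$ pick $\alpha(n)\in C\cap\beta$ with $\bar g(n)\in H^{\alpha(n)}_n$ and $H^{\alpha(n)}_n\cap\bar g(n)=H^\beta_n\cap\bar g(n)$; because $\cf(\beta)>\omega$, the ordinal $\alpha:=\sup_n\alpha(n)$ is still below $\beta$, and it lies in $C$ as $C$ is closed. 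For this single $\alpha$ the agreement of the two hulls below $\bar g(n)$ gives $g(n)=\otp(H^\beta_n\cap\bar g(n))=\otp(H^\alpha_n\cap\bar g(n))<\otp(H^\alpha_n\cap\tau_n)=f_\alpha(n)$ for all but finitely many $n$, i.e.\ $g<^* f_\alpha$, as required.

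I expect the reflection half of the capturing claim to be the main obstacle, together with the bookkeeping needed to make it uniform in $n$ and to treat the anomalous $\beta$. In the non-anomalous case the reflection is driven by the elementarity $\mathcal{M}\vert\vert\alpha\prec\mathcal{M}\vert\vert\beta$ and the convergence of the standard parameters $p_\alpha$ to $p_\beta$ along $C\cap\beta$. In the anomalous case $\mathcal{M}_\beta$ is active of type III with $\nu^{\mathcal{M}_\beta}=\lambda$ and $f_\beta(n)=(\kappa_n^+)^{\ult(\mathcal{M};F_\beta\restr\kappa_n)}$; there I would replace the hull computation by a \L o\'{s}-theorem computation inside the ultrapowers $\ult(\mathcal{M};F_\beta\restr\kappa_n)$, using that $\ult(\mathcal{M};F_\beta)$ is the direct limit of these as $\kappa_n\uparrow\lambda$, so that any generator below $\lambda$, and hence $\bar g(n)$, is captured at a bounded stage, the uncountable cofinality of $\beta$ again letting a single $\alpha$ absorb all coordinates.
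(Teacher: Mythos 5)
Your reduction to the two halves of the exact-upper-bound condition is fine, and the upper-bound half does follow at once from the preceding lemma; the reformulation $f_\beta(n)=\otp(H^\beta_n\cap\tau_n)$ is also correct. But the cofinality half has a genuine gap, concentrated in the ``capturing claim'' and in the passage from the per-coordinate $\alpha(n)$ to a single $\alpha$. The claim that $H^\beta_n\cap\tau_n=\bigcup_{\alpha\in C\cap\beta}(H^\alpha_n\cap\tau_n)$ as an increasing, continuous union is not something the machinery gives you. An element of $H^\beta_n$ is $r\Sigma_{n_\beta+1}$-definable over the \emph{collapsing level} $\mathcal{M}_\beta$ from $p_\beta$ and parameters below $\kappa_n$, and $\mathcal{M}_\beta$ is in general far taller than $\beta$; the elementarity $\mathcal{M}\vert\vert\alpha\prec\mathcal{M}\vert\vert\beta$ says nothing about reflecting such definitions into the entirely different structures $\mathcal{M}_\alpha$, and ``convergence of the standard parameters $p_\alpha$ to $p_\beta$'' is neither established anywhere nor literally meaningful across distinct premice. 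Nor is the union increasing at a fixed coordinate: $H^{\alpha_1}_n\subseteq H^{\alpha_2}_n$ requires $\mathcal{M}_{\alpha_1}\in\hull^{\mathcal{M}_{\alpha_2}}_{n_{\alpha_2}+1}(\kappa_n\cup\{p_{\alpha_2}\})$, and the monotonicity argument of the previous lemma only yields this for all $n$ beyond a threshold depending on the pair $(\alpha_1,\alpha_2)$. The same issue kills your final step: for $\alpha=\sup_n\alpha(n)$ you need $H^{\alpha(n)}_n\subseteq H^{\alpha}_n$ \emph{at the coordinate $n$ itself}, and there is no reason the threshold for the pair $(\alpha(n),\alpha)$ lies below $n$. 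This is a genuine diagonalization problem, not bookkeeping.

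The paper's proof sidesteps both problems by using the uncountable cofinality in a different place. Given $g(n)<f_\beta(n)$, the witnessing surjection from $\kappa_n$ onto $g(n)$ is $\Sigma_1$-definable over $\mathcal{M}\vert\vert\xi_n$ equipped with the master code $\Th^{\mathcal{M}_\beta}_{n_\beta}(\xi_n,p_{n_\beta}(\mathcal{M}_\beta))$ for some $\xi_n<\rho_{n_\beta}(\mathcal{M}_\beta)$. Lemma \ref{fscof} converts $\cof(\beta)>\omega$ into $\cof(\rho_{n_\beta}(\mathcal{M}_\beta))>\omega$, so $\xi:=\sup_n\xi_n<\rho_{n_\beta}(\mathcal{M}_\beta)$ and the corresponding theory $A$ of $\mathcal{M}\vert\vert\xi$ is a \emph{single element} of $\mathcal{M}_\beta$ coding all the witnesses at once. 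One then picks a single $\alpha\in C\cap\beta$ with $A\in\mathcal{M}_\alpha$, and for all large $n$ the preimage of $A$ in $\mathcal{M}^n_\alpha$ computes $g(n)$, giving $g(n)<f_\alpha(n)$; no supremum of ordinals $\alpha(n)$ and no pointwise agreement of hulls is needed. (The anomalous case is handled analogously, bounding the functions $h_n$ below $((\mu^{\mathcal{M}_\beta})^+)^{\mathcal{M}}$, which has uncountable cofinality, and reflecting the resulting extender fragment via weak amenability; your sketch of that case is closer to the mark but inherits the same uniformization gap.) If you want to salvage your approach, the fix is exactly this packaging step: replace the family $\{\alpha(n)\}_n$ by one object of $\mathcal{M}_\beta$ witnessing all coordinates simultaneously.
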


\begin{proof}
  Let $\alpha_n < f_\beta(n)$. We shall find some $\alpha < \beta$ such that $f_\alpha$ dominates $\<\alpha_n: n < \omega\>$ almost everywhere. Towards that end, we deal first with the case where $\beta$ is not anomalous.
  
  For almost all $n < \omega$ we have some surjection from $\kappa_n$ onto $\alpha_n$ in $\mathcal{M}^n_\beta$, given by some parameter $a_n \in \finsubsets{\kappa_n}$ and term $\tau_n$. Let $\xi_n < \rho_{n_\beta}(\mathcal{M}_\beta)$ be such that the image of such a surjection is ($\Sigma_1$)-definable over $\mathcal{M} \vert\vert\xi_n$ with $\Th^{\mathcal{M}_\beta}_{n_\beta}(\xi_n,p_{n_\beta}(\mathcal{M}_\beta))$ as an additional predicate.
  
  By Lemma \ref{fscof}, $\rho_{n_\beta}(\mathcal{M}_\beta)$ has uncountable cofinality. So $\xi := \sup\limits_{n < \omega} \xi_n < \rho_{n_\beta}(\mathcal{M}_\beta)$. Take then some $A$ that codes the $\Sigma_1$ theory of $\mathcal{M} \vert\vert \xi$ with $\Th^{\mathcal{M}_\beta}_{n_\beta}(\xi,p_{n_\beta}(\mathcal{M}_\beta))$ as an additional predicate. Such an $A$ exists in $\mathcal{M}_\beta$. 
  
  Pick some $\alpha < \beta$ such that $A \in \mathcal{M}_\alpha$. Let $n < \omega$ be such that $A$ has a pre-image $\bar{A}$ in $\mathcal{M}^n_\alpha$. $\mathcal{M}^n_\alpha$ can then compute $\alpha_n$ as the ordertype of \[\{(\gamma,\delta)\vert (k,a_n \concat \<\gamma,\delta\>) \in \bar{A}\}\] where $k$ is the G\"{o}del number of $`` \tau_n(a_n)(\gamma) < \tau_n(a_n)(\delta)''$. Hence $\alpha_n < f_\alpha(n)$. 
  Similarly, if $\alpha$ were to be anomalous, we can pick $n$ such that $A = \iota_{F_\alpha}(h)(a)$ for some $h \in \fnktsraum{\mu^{\mathcal{M}_\alpha}}{\mathcal{M}\vert\vert\mu^{\mathcal{M}_\alpha}}$ and $a \in \finsubsets{\kappa_n}$. The rest of the argument remains the same.
  
  Let us then assume that $\beta$ is anomalous. Pick $h_n \in \fnktsraum{\mu^{\mathcal{M}_\beta}}{\mathcal{M}\vert\vert\mu^{\mathcal{M}_\beta}}$ such that $\iota_{F_\beta}(h_n)(a_n)$ is a surjection from $\kappa_n$ onto $\alpha_n$ for some $a_n \in \finsubsets{\kappa_n}$. We have that $\cof((\mu^{\mathcal{M}_\beta,+})^\mathcal{M}) > \omega$.
  
  Pick then some $\xi < (\mu^{\mathcal{M}_\beta,+})^\mathcal{M}$ such that $\<h_n: n < \omega\> \subset \mathcal{M} \vert\vert\xi$. By weak amenability the extender fragment $\bar{F} := \{(a,X) \in F_\beta\vert X \in \mathcal{M} \vert\vert\xi, a \in \finsubsets{\lambda}\}$ in $\mathcal{M}_\beta$. Pick then $\alpha < \beta$ with $\bar{F} \in \mathcal{M}_\alpha$. Any $\mathcal{M}^n_\alpha$ containing $\bar{\bar{F}}$ a pre-image of $\bar{F}$ can then compute $\alpha_n$ as the ordertype of $\{(\gamma,\delta)\vert B^{\gamma,\delta}_n \in \bar{\bar{F}}\}$ where   \[B^{\gamma,\delta}_n =  \{\bar{a} \in \left[ \mu^{\mathcal{M}_\beta}\right]^{\vert b^{\gamma,\delta}_n\vert}\vert h^{a_n,b^{\gamma,\delta}_n}_n(\bar{a})(\id^{\gamma,b^{\gamma,\delta}_n}(\bar{a})) < h^{a_n,b^{\gamma,\delta}_n}_n(\bar{a})(\id^{\delta,b^{\gamma,\delta}_n}(\bar{a}))\},\] and $b^{\gamma,\delta}_n := a \cup \{\gamma,\delta\}$. Hence $\alpha_n < f_\alpha(n)$.
  \end{proof}

\begin{lemma}
   Assume $\<\kappa_n: n < \omega\> \in \mathcal{M}$, then $\<f_\alpha : \alpha \in C\>$ is a scale in $\prod\limits_{n < \omega} \tau_n \cap \mathcal{M}$.
\end{lemma}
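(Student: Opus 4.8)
The three preceding lemmas already show that $\langle f_\alpha : \alpha \in C\rangle$ is $<^*$-increasing, tree-like, and continuous at points of uncountable cofinality, so the only thing left for it to be a scale is \emph{cofinality}: every $g \in \prod_{n<\omega} \tau_n \cap \mathcal{M}$ should be dominated mod finite by some $f_\alpha$. The plan is to fix such a $g$ and, crucially using $\vec{\kappa} \in \mathcal{M}$, produce a single $\alpha \in C$ with $g(n) < f_\alpha(n)$ for almost all $n$. I will also note in passing that $f_\alpha \in \prod_n \tau_n$, since the hull is taken over $\kappa_n < \lambda = \rho_{n_\alpha+1}(\mathcal{M}_\alpha)$ and is therefore proper, giving $f_\alpha(n) = (\kappa^+_n)^{\mathcal{M}^n_\alpha} < (\kappa^+_n)^{\mathcal{M}_\alpha} = \tau_n$.

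The heart of the matter is the following \emph{uncollapsing} fact. Fix $\alpha \in C$ which is not anomalous and has $\vec{\kappa} \in \mathcal{M}_\alpha$, write $H_n := \hull^{\mathcal{M}_\alpha}_{n_\alpha + 1}(\kappa_n \cup \{p_\alpha\})$, and let $\sigma_n : \mathcal{C}_0(\mathcal{M}^n_\alpha) \cong H_n \hookrightarrow \mathcal{C}_0(\mathcal{M}_\alpha)$ be the inverse of the transitive collapse; recall that $\sigma_n((\kappa^+_n)^{\mathcal{M}^n_\alpha}) = (\kappa^+_n)^{\mathcal{M}_\alpha} = \tau_n$ and $f_\alpha(n) = (\kappa^+_n)^{\mathcal{M}^n_\alpha}$. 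I would first prove that $H_n \cap \tau_n$ is an \emph{initial segment} of the ordinals, i.e. $H_n \cap \tau_n = f_\alpha(n)$, equivalently $\sigma_n \restr f_\alpha(n) = \id$ and $\crit(\sigma_n) = f_\alpha(n)$. This is exactly where $\kappa_n$ being an $\mathcal{M}_\alpha$-cardinal enters: given $\xi \in H_n \cap \tau_n$, since $\xi < (\kappa^+_n)^{\mathcal{M}_\alpha}$ there is a surjection $s : \kappa_n \to \xi$ in $\mathcal{M}_\alpha$, and the $\mathcal{M}_\alpha$-least such $s$ is definable from $\xi, \kappa_n, p_\alpha$, hence lies in $H_n$; as $\kappa_n \subseteq H_n$, evaluating $s$ at points below $\kappa_n$ witnesses $\xi \subseteq H_n$. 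Thus $H_n \cap \tau_n$ is downward closed, and since its order type is $f_\alpha(n)$ it equals $f_\alpha(n)$.

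With the uncollapsing fact available, cofinality is nearly immediate. Since $g$ and $\vec{\kappa}$ are bounded subsets of $\lambda$ in $\mathcal{M}$, they lie in $\mathcal{M}\vert\vert(\lambda^{+})^\mathcal{M} = \bigcup_{\alpha \in C} \mathcal{M}\vert\vert\alpha$, so I fix any $\alpha \in C$ with $g, \vec{\kappa} \in \mathcal{M}\vert\vert\alpha \eextend \mathcal{M}_\alpha$ and argue $g <^* f_\alpha$ by cases on whether $\alpha$ is anomalous. If $\alpha$ is not anomalous, then by $(n_\alpha + 1)$-soundness $\mathcal{C}_0(\mathcal{M}_\alpha) = \bigcup_n H_n$ (the hulls increase to $\hull^{\mathcal{M}_\alpha}_{n_\alpha+1}(\lambda \cup \{p_\alpha\})$ as $\kappa_n \to \lambda$), so $g, \vec{\kappa} \in H_n$ for all large $n$, and in particular $\kappa_n \in H_n$. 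For such $n$ the value $g(n)$ is definable in $H_n$ from $g$ and $n$, whence $g(n) \in H_n \cap \tau_n = f_\alpha(n)$, i.e. $g(n) < f_\alpha(n)$. Therefore $g <^* f_\alpha$, and together with the increasing property this shows $\langle f_\alpha : \alpha \in C\rangle$ is a scale in $\prod_{n < \omega} \tau_n \cap \mathcal{M}$.

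I expect the main obstacle to be the uncollapsing fact together with the anomalous case. When the chosen $\alpha$ is anomalous, $\mathcal{M}_\alpha$ is active of type III with $\nu^{\mathcal{M}_\alpha} = \lambda$, and $g$ need not belong to the squashed structure $\mathcal{C}_0(\mathcal{M}_\alpha)$, so the hull computation does not apply. There I would instead mirror the anomalous case of the continuity lemma: pass to the weakly amenable fragment $\bar{F} := \{(a,X) \in F_\alpha \mid X \in \mathcal{M}\vert\vert\xi\}$ coding $g$ for a suitable $\xi < (\mu^{\mathcal{M}_\alpha,+})^\mathcal{M}$, choose $\alpha$ so that $\bar{F} \in \mathcal{M}_\alpha$, and then compute $g(n)$ by \L o\'{s}'s theorem inside $\ult(\mathcal{M}; F_\alpha \restr \kappa_n)$ for $\kappa_n > \mu^{\mathcal{M}_\alpha}$, obtaining $g(n) < (\kappa^+_n)^{\ult(\mathcal{M}; F_\alpha \restr \kappa_n)} = f_\alpha(n)$. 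Verifying that the definable surjection genuinely lands in $H_n$ at the correct $r\Sigma_{n_\alpha+1}$ level, and that the weakly amenable fragment suffices in the anomalous case, are the two delicate points of the argument.
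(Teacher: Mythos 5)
Your proposal is correct and follows essentially the same route as the paper: the paper's own proof is a one-liner that picks $\alpha \in C$ with $f \in \mathcal{M}_\alpha$ and asserts $f(n) < f_\alpha(n)$ for almost all $n$, leaving implicit exactly the mechanism you spell out (the hulls $\hull^{\mathcal{M}_\alpha}_{n_\alpha+1}(\kappa_n \cup \{p_\alpha\})$ eventually absorb $f$ by soundness, and their intersection with $\tau_n$ collapses to, and in fact equals, $f_\alpha(n)$), the same mechanism already used in the paper's proof that the sequence is increasing. Your explicit treatment of the anomalous case via the weakly amenable extender fragment likewise mirrors what the paper does in its continuity lemma, so there is no substantive divergence.
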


\begin{proof}
   Let $f \in \prod\limits_{n < \omega} (\tau_n \slash J_{bd}) \cap \mathcal{M}$. Pick $\alpha \in C$ such that $f \in \mathcal{M}_\alpha$. Then $f(n) < f_\alpha(n)$ for all but finitely many $n$.
\end{proof}

\begin{remark}
  We note that it is possible to associate a sequence in $\prod\limits_{n < \omega} \tau_n$ to any initial segment of $\mathcal{M}$ projecting to $\lambda$ and it would obey the established rules. 
\end{remark}

In certain situations we will want to consider a variant construction. Let us consider an additional set of parameters $\vec{\alpha} := \<\alpha_n: n < \omega\> \in \prod\limits_{n < \omega} \tau_n$. Let $\beta \in C.$ By the condensation lemma there exists some $\mathcal{M}^{n,\alpha_n}_\beta$ such that $\mathcal{C}_0(\mathcal{M}^{n,\alpha_n}_\beta)$ is isomorphic to $\hull^{\mathcal{M}_\beta}_{n_\beta + 1}(\kappa_n \cup \{p_\beta\concat \la \alpha_n\ra\})$. We then define:
\begin{equation*}
    f^{\vec{\kappa},\vec{\alpha},\mathcal{M}}_\beta(n) = \begin{cases} (\kappa^+_n)^{\mathcal{M}^{n,\alpha_n}_\beta} & \{\lambda,w_\beta\} \in \hull^{\mathcal{M}_\beta}_{n_\beta + 1}(\kappa_n \cup \{p_\beta \concat\la \alpha_n\ra\}) \\ 0 & \text{otherwise} \end{cases}
\end{equation*}
If $\beta$ is anomalous, then we use $F_\beta \restr (\alpha_n + 1)$ (instead of $F_\beta \restr \kappa_n$) to define the sequence.

This sequence will behave just like the previously defined sequence. The proofs are mostly the same. The only minor problem adapting these arguments lie in the preservation of standard parameters. Let $p^n_\beta$ be the image of $p_\beta$ under the collapse map in $\mathcal{M}^{n,\alpha_n}_\beta$. Then $p^n_\beta$ might fail to be the standard parameter of $\mathcal{M}^{n,\alpha_n}_\beta$ as it can fail to be \textit{a} good parameter.

Though certainly we do know that $p^n_\beta \concat \la\alpha_n\ra$ is a parameter so the standard parameter is below it in the lexicographic order. As we do have a preimage of the solidity witness in $\mathcal{M}^{n,\alpha_n}_\beta$, its standard parameter can only be lesser on that last component, i.e. $p_{n_\beta + 1}(\mathcal{M}^{n,\alpha_n}_\beta) = p^n_\beta \concat \alpha'$ with $\alpha' \leq \alpha_n$.

Then $\mathcal{M}^{m,\alpha_m}_\beta$ can always compute $\mathcal{M}^{n,\alpha_n}_\beta$ from its standard parameter and the ordinal $\alpha_n$ in a consistent matter, guaranteeing tree-likeness of the sequence. Everything else goes through with minor changes.

\subsection{Limit cardinals}

Let now each of the $\kappa_n$ be an inaccessible cardinal in $\mathcal{M}$. We want to extract from $\mathcal{M}_\beta$ ,$\beta \in C$, a sequence of structures that singularize some $g_\beta(n) < \kappa_n$. For this we need a vector of parameters $\vec{\alpha} = \<\alpha_n: n < \omega\>$ where $\alpha_n < \kappa_n$. We also do require that $\sup\limits_{n < \omega} \alpha_n = \lambda$. When do these parameters give rise to the right structure? This will depend on whether $\beta$ is anomalous or not. When begin with listing three key factors for the case $\beta$ is not anomalous:

\begin{itemize}
   \item[$(1)^\beta_n$] $\sup(\hull^{\mathcal{M}_\beta}_{n_\beta + 1}(\alpha_n \cup \{p_\beta\}) \cap \kappa_n) > \alpha_n$;
   \item[$(2)^\beta_n$] $\kappa_n \in \hull^{\mathcal{M}_\beta}_{n_\beta + 1}(\alpha_n \cup \{p_\beta\})$
   \item[$(3)^\beta_n$] $\hull^{\mathcal{M}_\beta}_{n_\beta + 1}(\alpha_n \cup \{p_\beta\})$ is cofinal in $\rho_{n_\beta}(\mathcal{M}_\beta)$.
\end{itemize}

If $\beta$ is anomalous, we have the following two considerations:

\begin{itemize}
    \item[$(4)^\beta_n$] $\sup(\{\iota_{F_\beta}(h)(a)\vert h \in \fnktsraum{\mu^{\mathcal{M}_\beta}}{(\mu^{\mathcal{M}_\beta})}, a \in \finsubsets{\alpha_n}\} \cap \kappa_n) > \alpha_n$;
    \item[$(5)^\beta_n$] $\kappa_n = \iota_{F_\beta}(h)(a)$ for some $h \in \fnktsraum{\mu^{\mathcal{M}_\beta}}{(\mu^{\mathcal{M}\beta})}$ and $a \in \finsubsets{\alpha_n}$.
\end{itemize}

We say $\beta$ is adequate iff $(1)^\beta_n + (2)^\beta_n + (3)^\beta_n$ or $(4)^\beta_n + (5)^\beta_n$ (depending on type)  are met for all but finitely many $n$. If $\beta$ is adequate and not anomalous then
\begin{equation*}
     g^{\vec{\kappa},\vec{\alpha},\mathcal{M}}_\beta(n) := \begin{cases} \sup(\hull^{\mathcal{M}_\beta}_{n_\beta + 1}(\alpha_n \cup \{p_\beta\}) \cap \kappa_n) & (1)^\beta_m + (2)^\beta_m + (3)^\beta_m \forall m \geq n \\ 0 & \text{ otherwise} \end{cases}
\end{equation*}

We then let $\mathcal{M}^n_\beta$ be the unique initial segment of $\mathcal{M}$ such that $\mathcal{C}_0(\mathcal{M}^n_\beta)$ is isomorphic to $\hull^{\mathcal{M}_\beta}_{n_\beta + 1}(g_\beta(n) \cup \{p_\beta\})$. (Note that the second case in the condensation lemma cannot hold as $g_\beta(n)$ is a limit of cardinals and hence a cardinal itself. This follows by elementarity, trivially so when $n_\beta > 0$ otherwise by $(3)^\beta_n$.)  

If on the other hand $\beta$ is anomalous then
\begin{equation*}
    g^{\vec{\kappa},\vec{\alpha},\mathcal{M}}_\beta(n) := \begin{cases} \sup(\{\iota_{F_\beta}(h)(a)\vert h \in \fnktsraum{\mu^{\mathcal{M}_\beta}}{\mu^{\mathcal{M}_\beta}}, a \in \finsubsets{\alpha_n}\} \cap \kappa_n) & (4)^\beta_m + (5)^\beta_m \forall m \geq n \\ 0 & \text{otherwise} \end{cases}
\end{equation*}

$\mathcal{M}^n_\beta$ will be the unique initial segment of $\mathcal{M}$ with the trivial completion of $F_\beta  \restr g_\beta(n)$ as its top extender. 
As in the previous section, we will omit superscripts for the remainder of this section.

To ensure tree-likeness for this sequence we need a strong interdependence between the ordinal $g_\beta(n)$ and structure $\mathcal{M}^n_\beta$. Towards that end notice that $g_\beta(n)$ is definably singularized over $\mathcal{M}^n_\beta$. The next lemma will show that $\mathcal{M}^n_\beta$ is the least level of $\mathcal{M}$ with this property.

\begin{lemma}
  $g_\beta(n)$ is regular in $\mathcal{M}^n_\alpha$ for all $n$ such that $(1)^\beta_n + (2)^\beta_n + (3)^\beta_n$ or $(4)^\beta_n + (5)^\beta_n$ holds.
\end{lemma}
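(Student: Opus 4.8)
The plan is to analyze the inverse of the collapse underlying the definition of $\mathcal{M}^n_\beta$ (the lemma is about $\mathcal{M}^n_\beta$, the superscript in the statement being a typo). Write $\pi : \mathcal{C}_0(\mathcal{M}^n_\beta) \to \mathcal{C}_0(\mathcal{M}_\beta)$ for the $r\Sigma_{n_\beta + 1}$-elementary uncollapse, so that $\ran(\pi) = \hull^{\mathcal{M}_\beta}_{n_\beta + 1}(g_\beta(n) \cup \{p_\beta\})$. As in the discussion of the $\vec{\alpha}$-variant of the previous subsection, conditions $(1)^\beta_n$--$(3)^\beta_n$ guarantee that $p_\beta$ collapses to (a parameter lexicographically at least as large as) the standard parameter of $\mathcal{M}^n_\beta$, that $\mathcal{M}^n_\beta$ is $(n_\beta + 1)$-sound, and that $\rho_{n_\beta + 1}(\mathcal{M}^n_\beta) = g_\beta(n)$; in particular $g_\beta(n) = \crit(\pi)$ and $g_\beta(n)$ is a cardinal of $\mathcal{M}^n_\beta$ (the second alternative of the Condensation Lemma being excluded exactly as noted after the construction, since $g_\beta(n)$ is a cardinal). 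First I would record that $g_\beta(n) < \kappa_n$: by $(1)^\beta_n$ the hull $\hull^{\mathcal{M}_\beta}_{n_\beta+1}(\alpha_n \cup \{p_\beta\})$ has $\mathcal{M}_\beta$-cardinality at most $|\alpha_n| < \kappa_n$, and $\kappa_n$ is inaccessible, hence regular, in $\mathcal{M}_\beta$.

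The heart of the argument is the claim that $\pi(g_\beta(n)) = \kappa_n$, equivalently that no ordinal of $\ran(\pi)$ lies in the interval $[g_\beta(n), \kappa_n)$. Here I would use $(2)^\beta_n$, which places $\kappa_n$ in the hull, together with the defining equation $g_\beta(n) = \sup(\hull^{\mathcal{M}_\beta}_{n_\beta+1}(\alpha_n \cup \{p_\beta\}) \cap \kappa_n)$ and the regularity of $\kappa_n$ in $\mathcal{M}_\beta$: any $\xi \in \ran(\pi) \cap \kappa_n$ is a Skolem term evaluated at parameters below $g_\beta(n)$ and at $p_\beta$, and one bounds such values by passing to the sub-hull generated below a single $\eta \in \hull^{\mathcal{M}_\beta}_{n_\beta+1}(\alpha_n \cup \{p_\beta\}) \cap \kappa_n$ exceeding those parameters, whose image in $\kappa_n$ is bounded by regularity; condition $(3)^\beta_n$ is what lets one carry this bounding uniformly and keep the resulting supremum equal to $g_\beta(n)$ rather than strictly above it. I expect this to be the principal obstacle, and the only place where the inaccessibility of $\kappa_n$ in $\mathcal{M}$ is genuinely used: it is precisely the assertion that $\mathcal{M}^n_\beta$ is no later than the level attached to $g_\beta(n)$, and without it the projectum $g_\beta(n)$ could a priori be singular in $\mathcal{M}^n_\beta$, since projecta need not be regular in their own model.

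Granting $\pi(g_\beta(n)) = \kappa_n$, regularity follows cleanly. Suppose toward a contradiction that some $f \in \mathcal{M}^n_\beta$ is a cofinal map $\gamma \to g_\beta(n)$ with $\gamma < g_\beta(n)$, and set $A := \ran(f) \in \mathcal{M}^n_\beta$, a cofinal subset of $g_\beta(n)$ of order type $\otp(A) < g_\beta(n)$. Then $\pi(A) \in \ran(\pi)$ is a subset of $\pi(g_\beta(n)) = \kappa_n$ whose $\mathcal{M}_\beta$-cardinality equals $\otp(A) < g_\beta(n) < \kappa_n$, since this order type lies below $\crit(\pi)$ and is therefore fixed. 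By regularity of $\kappa_n$ in $\mathcal{M}_\beta$ we get $\sup \pi(A) < \kappa_n$, so $\sup \pi(A)$ is an ordinal of $\ran(\pi) \cap \kappa_n$ and hence, by the previous paragraph, lies below $g_\beta(n)$. On the other hand $\pi$ fixes each ordinal of $A$, so $A \subseteq \pi(A)$ and $\sup \pi(A) \geq \sup A = g_\beta(n)$, a contradiction. Thus no such $f$ exists and $g_\beta(n)$ is regular in $\mathcal{M}^n_\beta$.

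Finally, the anomalous case is handled by the parallel construction in which the hull is replaced by the set $\{\iota_{F_\beta}(h)(a) : h \in \fnktsraum{\mu^{\mathcal{M}_\beta}}{\mu^{\mathcal{M}_\beta}},\ a \in \finsubsets{\kappa_n}\}$ and $\mathcal{M}^n_\beta$ is the initial segment whose top extender is the trivial completion of $F_\beta \restr g_\beta(n)$. Using weak amenability of $F_\beta$ over $\mathcal{M}_\beta$ and \L o\'{s}'s theorem to transport the cofinal set $A$, exactly as in the proofs for the $f$-sequence, together with the fact recorded in the fine-structure primer that for an active level the cofinality of its ordinal height equals $\cof((\mu^{\mathcal{M}^n_\beta,+})^{\mathcal{M}^n_\beta})$, one again reduces a hypothetical singularization of $g_\beta(n)$ inside $\mathcal{M}^n_\beta$ to a subset of $\kappa_n$ of $\mathcal{M}_\beta$-size below $\kappa_n$ that is cofinal in $g_\beta(n)$, contradicting the regularity of $\kappa_n$. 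Here conditions $(4)^\beta_n$ and $(5)^\beta_n$ play the roles of $(1)^\beta_n$ and $(2)^\beta_n$ respectively.
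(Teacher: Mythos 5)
Your treatment of the non-anomalous case is essentially the paper's proof: both arguments reduce to showing $\hull^{\mathcal{M}_\beta}_{n_\beta+1}(g_\beta(n)\cup\{p_\beta\})\cap\kappa_n=g_\beta(n)$ (your $\pi(g_\beta(n))=\kappa_n$), proved by bounding an arbitrary $\xi$ in that hull by the supremum of a small sub-hull which lies in $\hull^{\mathcal{M}_\beta}_{n_\beta+1}(\alpha_n\cup\{p_\beta\})\cap\kappa_n$ --- with $(3)^\beta_n$ used to find the bounding level $\delta<\rho_{n_\beta}(\mathcal{M}_\beta)$ inside the small hull and $(2)^\beta_n$ to make the resulting supremum definable there from $\kappa_n$ --- and then transferring the regularity of $\kappa_n$ in $\mathcal{M}_\beta$ through the collapse. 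Your explicit $A\subseteq\pi(A)$ argument is a harmless elaboration of the paper's one-line reduction.

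The anomalous case, however, has a real gap. The analogue of $\pi(g_\beta(n))=\kappa_n$ there is the identity $g_\beta(n)=\iota_{F_\beta\restr g_\beta(n)}(h)(a)$, where $h$ and $a$ come from $(5)^\beta_n$ with $\kappa_n=\iota_{F_\beta}(h)(a)$; only once this is established does {\L}o\'{s}'s theorem transfer the regularity of $\kappa_n$, as represented by the pair $(h,a)$, down to $g_\beta(n)$. That identity is the entire content of the paper's argument in this case and needs a genuine computation: given any $\iota_{F_\beta\restr g_\beta(n)}(h_0)(b)<\iota_{F_\beta\restr g_\beta(n)}(h)(a)$ with $b\in\finsubsets{g_\beta(n)}$, one covers $b$ by $\iota_{F_\beta}(h_1)(c)$ for some $c\in\finsubsets{\alpha_n}$ and replaces $h_0$ by a function $h_2$ taking suprema over $\left[h_1(d)\right]^{\vert b\vert}$, so that the value is represented with generators below $\alpha_n$ and is therefore below $g_\beta(n)$. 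Your sketch skips this step entirely, and the contradiction you state --- a set of $\mathcal{M}_\beta$-size below $\kappa_n$ that is cofinal in $g_\beta(n)$ ``contradicting the regularity of $\kappa_n$'' --- does not follow as written, since $g_\beta(n)<\kappa_n$: a small set cofinal in $g_\beta(n)$ says nothing about $\kappa_n$ until one knows the factor map sends $g_\beta(n)$ to $\kappa_n$. Also, the fact $\cof(\on\cap\mathcal{M})=\cof((\mu^{\mathcal{M},+})^{\mathcal{M}})$ for active premice plays no role in this lemma (it is used for the continuity argument).
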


\begin{proof}
  First we will consider $\beta$ that is not anomalous. Since $\kappa_n$ is regular in $\mathcal{M}_\beta$, it will then be enough to show that $\sup(\hull^{\mathcal{M}_\beta}_{n_\beta + 1}(g_\beta(n) \cup \{p_\beta\}) \cap \kappa_n) = g_\beta(n)$.
  
  Let $\xi < \kappa_n$ be such that $\xi \in \hull^{\mathcal{M}_\beta}_{n_\beta + 1}(g_\beta(n) \cup \{p_\beta\})$. We can then take $\gamma < g_\beta(n)$ and $\delta < \rho_{n_\beta}(\mathcal{M}_\beta)$ such that $\xi \in \hull^{\mathcal{N}_\delta}_1(\gamma)$ where $\mathcal{N}_\delta$ is $\mathcal{M}\vert\vert\delta$ together with $\Th^{\mathcal{M}_\beta}_{n_\beta}(\delta,p_{n_\beta}(\mathcal{M}_\beta))$ as an additional predicate. We can take $\gamma$ and $\delta$ to be in $\hull^{\mathcal{M}_\beta}_{n_\beta + 1}(\alpha_n \cup \{p_\beta\})$ (by definition of $g_\beta(n)$ and $(3)_n$ respectively).
  
  Then $\eta := \sup(\hull^{\mathcal{N}_\delta}_1(\gamma) \cap \kappa_n)$ is also in that hull (uses $(2)_n$) and thus $\xi < \eta < g_\beta(n)$.
  
  Now consider an anomalous $\beta$. We will show that $g_\beta(n)$ is regular in $\ult(\mathcal{M};F_\beta \restr g_\beta(n))$. We have some $h \in \fnktsraum{\mu^{\mathcal{M}_\beta}}{\mu^{\mathcal{M}_\beta}}$ and $a \in \finsubsets{\alpha_n}$ such that $\kappa_n = \iota_{F_\beta}(h)(a)$. We will show that $g_\beta(n) = \iota_{F_\beta \restr g_\alpha(n)}(h)(a)$. As this pair represents a regular cardinal in the larger ultrapower this will suffice.
  
  Pick then some $h_0$ and $b$ such that $b \in \finsubsets{g_\beta(n)}$ and $\iota_{F_\alpha \restr g_\beta(n)}(h_0)(b) < \iota_{F_\beta \restr g_\beta(n)}(h)(a)$. Pick some $c \in \finsubsets{\alpha_n}$ (w.l.o.g. $a \subset c$) and $h_1$ such that $b \subset \iota_{F_\beta}(h_1)(c)$. Define $h_2: \left[\mu^{\mathcal{M}_\beta}\right]^{\vert c\vert} \rightarrow \mu^{\mathcal{M}_\beta}$ by $d \mapsto \sup\{h_0(e) \vert e \in \left[h_1(d)\right]^{\vert b \vert}, h_0(e) < h^{a,c}(d)\}$.
  
  We then have $\iota_{F_\beta \restr g_\beta(n)}(h_0)(b) \leq \iota_{F_\beta \restr g_\beta(n)}(h_2)(c) < g_\beta(n)$ as required.
\end{proof}

Thus $g_\beta(n)$ is regular in $\mathcal{M}^n_\beta$ but is definably singular over it. Thus it is uniquely determined as a level of $\mathcal{M}$ by $g_\beta(n)$.

The following is a straightforward corollary of the proof of the previous Lemma. 

\begin{corollary}\label{stabilitycor}
 Let $\alpha \in C$ be such that $g_\beta(n)$ is defined for all but finitely many $n$. Let $\vec{\alpha}^* := \<\alpha^*_n: n < \omega\>$ be such that $\alpha_n \leq \alpha^*_n < g_\alpha(n)$ for all but finitely many $n$. Then $g^{\vec{\kappa},\mathcal{M},\vec{\alpha}^*}_\beta$ is defined and agrees with $g_\beta$ almost everywhere.
 \end{corollary}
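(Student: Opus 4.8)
The plan is to exploit the monotonicity of the relevant hulls in their ordinal parameter together with the regularity fact established in the previous lemma, so that as the parameter ranges over the interval $[\alpha_n, g_\beta(n))$ both the defining clauses and the resulting value are pinned down by a sandwich argument. (I read the statement with $\beta$ in place of the misprinted $\alpha$, so that $\beta \in C$ is the adequate point and $\alpha_n \leq \alpha_n^* < g_\beta(n)$.) Throughout I would fix $N_0 < \omega$ large enough that for all $n \geq N_0$ the value $g_\beta(n)$ is defined, i.e. the governing clauses hold for all $m \geq n$, and $\alpha_n \leq \alpha_n^* < g_\beta(n)$; it then suffices to prove agreement for indices $n \geq N_0$.

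First I would treat the non-anomalous case. Since $\alpha_n \leq \alpha_n^* < g_\beta(n)$, monotonicity of the Skolem hull in its generating set gives
\[
\hull^{\mathcal{M}_\beta}_{n_\beta+1}(\alpha_n \cup \{p_\beta\}) \subseteq \hull^{\mathcal{M}_\beta}_{n_\beta+1}(\alpha_n^* \cup \{p_\beta\}) \subseteq \hull^{\mathcal{M}_\beta}_{n_\beta+1}(g_\beta(n) \cup \{p_\beta\}).
\]
Intersecting with $\kappa_n$ and taking suprema, the two outer terms both equal $g_\beta(n)$: the left one by the definition of $g_\beta(n)$, and the right one by the computation carried out in the proof of the previous lemma, which showed precisely that $\sup(\hull^{\mathcal{M}_\beta}_{n_\beta+1}(g_\beta(n) \cup \{p_\beta\}) \cap \kappa_n) = g_\beta(n)$. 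Hence the middle supremum is $g_\beta(n)$ as well. It remains to check the clauses for $\vec{\alpha}^*$ at each $m \geq N_0$: clauses $(2)^\beta_m$ and $(3)^\beta_m$ are inherited from the corresponding clauses for $\alpha_m$ through the first inclusion above, while $(1)^\beta_m$ holds because the supremum just computed is $g_\beta(m) > \alpha_m^*$ by hypothesis. Thus for every $n \geq N_0$ all clauses hold for all $m \geq n$, so $g^{\vec{\kappa},\mathcal{M},\vec{\alpha}^*}_\beta(n)$ is defined and equals $g_\beta(n)$.

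The anomalous case runs in parallel, replacing the hulls by the sets $X_n(\gamma) := \{\iota_{F_\beta}(h)(a) \mid h \in \fnktsraum{\mu^{\mathcal{M}_\beta}}{\mu^{\mathcal{M}_\beta}},\, a \in \finsubsets{\gamma}\}$. Here $\finsubsets{\alpha_n} \subseteq \finsubsets{\alpha_n^*} \subseteq \finsubsets{g_\beta(n)}$ yields $X_n(\alpha_n) \subseteq X_n(\alpha_n^*) \subseteq X_n(g_\beta(n))$, and the identical squeeze applies once $\sup(X_n(g_\beta(n)) \cap \kappa_n) = g_\beta(n)$ is known. This last equality is exactly the regularity of $g_\beta(n)$ in $\ult(\mathcal{M};F_\beta \restr g_\beta(n))$ verified in the previous lemma: that argument bounds every value $\iota_{F_\beta \restr g_\beta(n)}(h_0)(b)$ lying below $g_\beta(n)$ with $b \in \finsubsets{g_\beta(n)}$ by a value obtained from a parameter in $\finsubsets{\alpha_n}$, so enlarging the parameter set from $\alpha_n$ up to $g_\beta(n)$ cannot raise the supremum. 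Clauses $(4)^\beta_m$ and $(5)^\beta_m$ for $\vec{\alpha}^*$ follow as before, the former from $\sup(X_m(\alpha_m^*) \cap \kappa_m) = g_\beta(m) > \alpha_m^*$ and the latter by monotonicity of $X_m(\cdot)$.

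Finally, since in either case $g^{\vec{\alpha}^*}_\beta(n) = g_\beta(n)$ and, as noted after the construction, the ordinal $g_\beta(n)$ determines the level $\mathcal{M}^n_\beta$ uniquely — it is the least level of $\mathcal{M}$ over which $g_\beta(n)$ is singularized — the two parameter choices also induce the same structures $\mathcal{M}^n_\beta$. The only genuine point requiring care, and the sole place where the previous lemma is invoked rather than pure monotonicity, is the upper end of the squeeze: that the supremum does not strictly increase when the parameter is pushed all the way up to $g_\beta(n)$. This is precisely the regularity of $g_\beta(n)$ in $\mathcal{M}^n_\beta$ established there, which is what lets the present argument be read off the earlier proof rather than redone.
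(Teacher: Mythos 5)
Your proposal is correct and matches the paper's intent: the paper states this as ``a straightforward corollary of the proof of the previous Lemma,'' and the content of that remark is exactly your sandwich argument, with the upper bound $\sup(\hull^{\mathcal{M}_\beta}_{n_\beta+1}(g_\beta(n)\cup\{p_\beta\})\cap\kappa_n)=g_\beta(n)$ (resp.\ the ultrapower regularity computation in the anomalous case) supplied by the proof of that lemma and the clauses $(1)$--$(3)$ (resp.\ $(4)$--$(5)$) inherited by monotonicity. You also correctly repair the $\alpha$/$\beta$ typo in the statement.
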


\begin{lemma}
   Say $\beta^*$ is adequate, then every $\beta > \beta*$ in $C$ of uncountable cofinality is also adequate.
\end{lemma}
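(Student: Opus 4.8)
The plan is to fix $\beta > \beta^*$ in $C$ with $\cf(\beta) > \omega$ and check, for all but finitely many $n$, the adequacy clauses for $\beta$ by transferring them from the corresponding clauses for $\beta^*$. The guiding observation is that, since $\beta^* < \beta$ are both in $C$, the earlier lemmas give $\mathcal{M}_{\beta^*} \in \mathcal{M}_\beta$, so any object that is $r\Sigma_{n_{\beta^*}+1}$-definable over $\mathcal{M}_{\beta^*}$ from $p_{\beta^*}$ together with parameters below $\alpha_n$ is computable inside $\mathcal{M}_\beta$ from $\mathcal{M}_{\beta^*}$, $p_{\beta^*}$ and those same parameters. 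Hence, once $\mathcal{M}_{\beta^*}$ (equivalently the ordinal $\beta^*$) lands in $\hull^{\mathcal{M}_\beta}_{n_\beta+1}(\alpha_n \cup \{p_\beta\})$, all witnesses for the local clauses of $\beta^*$ are inherited by $\beta$. I treat $\beta$ not anomalous first, and reduce the anomalous and squashed type~III cases to the analogous extender argument.

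Assume $\beta$ is not anomalous. Since every level of $\mathcal{M}$ is sound and $\rho_{n_\beta+1}(\mathcal{M}_\beta) = \lambda$, soundness gives $\mathcal{C}_0(\mathcal{M}_\beta) = \hull^{\mathcal{M}_\beta}_{n_\beta+1}(\lambda \cup \{p_\beta\}) = \bigcup_n \hull^{\mathcal{M}_\beta}_{n_\beta+1}(\alpha_n \cup \{p_\beta\})$, using $\sup_n \alpha_n = \lambda$ (and, w.l.o.g., that $\langle \alpha_n\rangle$ is increasing). As $\beta^* < \beta \le \on \cap \mathcal{C}_0(\mathcal{M}_\beta)$, there is $N_0$ with $\beta^* \in \hull^{\mathcal{M}_\beta}_{n_\beta+1}(\alpha_n \cup \{p_\beta\})$ for all $n \ge N_0$, whence $\mathcal{M}_{\beta^*}, p_{\beta^*}, n_{\beta^*}$ lie in that hull too. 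For $n$ past the finite exceptional set of $\beta^*$, clause $(2)^{\beta^*}_n$ gives $\kappa_n = t^{\mathcal{M}_{\beta^*}}(\vec\gamma, p_{\beta^*})$ for some Skolem term $t$ and $\vec\gamma \in \finsubsets{\alpha_n}$; evaluating this term inside $\mathcal{M}_\beta$ places $\kappa_n$ in $\hull^{\mathcal{M}_\beta}_{n_\beta+1}(\alpha_n \cup \{p_\beta\})$, giving $(2)^\beta_n$. Applying the same computation to the ordinal $\xi$ with $\alpha_n < \xi < \kappa_n$ witnessing $(1)^{\beta^*}_n$ yields $(1)^\beta_n$.

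It remains to verify the cofinality clause $(3)^\beta_n$, and this is the only step that uses $\cf(\beta) > \omega$. Exactly as in the continuity argument of the previous subsection, Lemma \ref{fscof} gives $\cf(\rho_{n_\beta}(\mathcal{M}_\beta)) > \omega$. Set $\eta_n := \sup\big(\hull^{\mathcal{M}_\beta}_{n_\beta+1}(\alpha_n \cup \{p_\beta\}) \cap \rho_{n_\beta}(\mathcal{M}_\beta)\big)$; the $\eta_n$ are nondecreasing and, by the union computed above, $\sup_n \eta_n = \rho_{n_\beta}(\mathcal{M}_\beta)$. Since a nondecreasing $\omega$-sequence cannot be cofinal in an ordinal of uncountable cofinality, we must have $\eta_n = \rho_{n_\beta}(\mathcal{M}_\beta)$ for all sufficiently large $n$, which is precisely $(3)^\beta_n$. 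Combining the three clauses shows $\beta$ is adequate.

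Finally, the anomalous case — and the type~III case in which the squash removes $\beta^*$ from $\mathcal{C}_0(\mathcal{M}_\beta)$ — is handled by the same reflection run through the top extender, as in the anomalous parts of the preceding lemmas: one replaces $\mathcal{M}_{\beta^*}$ by a weakly amenable fragment $\bar F$ of its top extender, chooses $n$ large enough that $\bar F$ has a preimage in $\mathcal{M}^n_\beta$, and reads off $(4)^\beta_n$ and $(5)^\beta_n$ from the representations $\iota_{F_\beta}(h)(a)$ via \L o\'{s}'s theorem; no cofinality clause occurs for anomalous $\beta$, so this case needs only the reflection. I expect the main obstacle to be precisely the cofinality step $(3)$ — ensuring the hypothesis $\cf(\beta) > \omega$ propagates to $\rho_{n_\beta}(\mathcal{M}_\beta)$ and forces the partial suprema $\eta_n$ to stabilize — together with the bookkeeping required to make the reflection deliver the tail clauses for $\beta$ simultaneously and uniformly across the (non)anomalous types.
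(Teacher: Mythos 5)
Your proposal is correct and follows essentially the same route as the paper: get $\beta^*$ (or, in the anomalous cases, a representation of $\mathcal{M}_{\beta^*}$ or its top extender via $\iota_{F_\beta}(h)(a)$) into $\hull^{\mathcal{M}_\beta}_{n_\beta+1}(\alpha_n\cup\{p_\beta\})$ for all large $n$ and transfer the witnesses, with $(3)^\beta_n$ handled by the uncountable cofinality of $\rho_{n_\beta}(\mathcal{M}_\beta)$. Your stabilization argument for the partial suprema $\eta_n$ fills in exactly the step the paper compresses to ``$(3)^\beta_m$ almost everywhere follows for cofinality reasons.''
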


\begin{proof}
  Let us assume for simplicity's sake that $(1)^{\beta^*}_n + (2)^{\beta^*}_n + (3)^{\beta^*}_n$ holds for all $n$. Let then $n^*$ be such that $\beta^* \in \hull^{\mathcal{M}_\beta}_{n_\beta + 1}(\alpha_m \cup \{p_\beta\})$ for all $m \geq n^*$. Then that hull can compute $\hull^{\mathcal{M}_{\beta^*}}_{n_{\beta^*} + 1}(\alpha_m \cup \{p_{\beta^*}\})$ for all $m \geq n^*$. $(1)^\beta_m + (2)^\beta_m$ then follows.
  
  That is if $\beta^*$ is not anomalous. If it is anomalous note that $\hull^{\mathcal{M}_\beta}_{n_\beta +1}(\alpha_m \cup \{p_\beta\})$ has access to the extender $F_{\beta^*}$ and can compute $\kappa_m$ from it assuming $\alpha_m > \mu^{\mathcal{M}_{\beta^*}}$. $(1)^\beta_m$ follows for similar reasons.
  
  $(3)^\beta_m$ almost everywhere follows for cofinality reasons.
  
  If $\beta$ is anomalous then we take some $h$ and $a \in \finsubsets{\alpha_m}$ such that $\beta^* = \iota_{F_\beta}(h)(a)$ and let $\tau$ be some $r\Sigma_{n_{\beta^*} + 1}$-term such that $\kappa_m = \tau^{\mathcal{M}_{\beta^*}}_m(b_m,p_{\beta^*})$ for $b_m \in \finsubsets{\alpha_m}$. Define then $h^m_0:\left[\mu^{\mathcal{M}_\beta}\right]^{\vert a \cup b_m\vert} \rightarrow \mu^{\mathcal{M}_\beta}$ by \[c  \mapsto \tau^{\mathcal{M}_{h^{a,a\cup b_m}(c)}}_m(\id^{b_m,a \cup b_m}(c),p_{h^{a,a \cup b_m}(c)}).\] We then have $\iota_{F_\beta}(h_0)(a \cup b_m) = \kappa_m$. $(4)^\beta_m$ follows for similar reasons.
  
  The idea is similar if $\beta^*$ and $\beta$ are both anomalous. (Pick $h,a$ representing $F_{\alpha^*}$ etc.) We skip further details.
\end{proof}

Assuming the existence of an adequate ordinal $\beta^*$ we can then show that $\<g_\beta : \beta \in C \backslash \beta^* \cap \cof(\groesser\omega)\>$ is increasing (mod finite), tree-like, and continuous as before.

\begin{lemma}
  Let $\<\kappa_n : n < \omega\>$ and $\<\alpha_n : n < \omega\>$ both in $\mathcal{M}$ then there exists an adequate $\beta^*$ and $\<g_\beta : \beta \in C \backslash \beta^* \cap \cof(\groesser\omega)\>$ is a scale in $\prod\limits_{n < \omega} \kappa_n \cap \mathcal{M}$.
\end{lemma}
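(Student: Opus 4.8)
The plan is to split the statement into three pieces: the existence of a single adequate $\beta^*$; the fact that the displayed sequence is $<^*$-increasing, tree-like and continuous (which the discussion preceding the statement has already reduced to the existence of $\beta^*$, arguing exactly as in the successor-cardinal case); and finally cofinality in $\prod_n \kappa_n \cap \mathcal{M}$. Thus the genuine work is producing $\beta^*$ and checking cofinality, the increasing/tree-like/continuous clauses being quoted from the earlier development.

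For the existence of an adequate $\beta^*$, recall that $C = C^{\lambda,\mathcal{M}}$ is club in $(\lambda^+)^{\mathcal{M}}$ and that $\vec{\kappa},\vec{\alpha}\in\mathcal{M}$ are coded by subsets of $\lambda$, hence lie in $\mathcal{M}\vert\vert\gamma_0$ for some $\gamma_0 < (\lambda^+)^{\mathcal{M}}$. First I would pick $\beta^* \in C$ with $\cf^V(\beta^*) > \omega$ and $\beta^* > \gamma_0$; such points exist cofinally (take the supremum of the first $\omega_1$ elements of $C$ above any ordinal, a limit point of $C$ of cofinality $\omega_1$). Since $\mathcal{M}_{\beta^*}$ has height above $\beta^*>\gamma_0$ we get $\vec{\kappa},\vec{\alpha}\in\mathcal{M}_{\beta^*}$, and by soundness $\mathcal{C}_0(\mathcal{M}_{\beta^*}) = \hull^{\mathcal{M}_{\beta^*}}_{n_{\beta^*}+1}(\lambda \cup \{p_{\beta^*}\})$, so both sequences are definable over $\mathcal{C}_0(\mathcal{M}_{\beta^*})$ from $p_{\beta^*}$ and finitely many ordinals below $\lambda$. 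As $\sup_n\alpha_n=\lambda$, these finitely many ordinals sit below $\alpha_n$ for all large $n$, whence $\vec{\kappa},\vec{\alpha}\in H_n := \hull^{\mathcal{M}_{\beta^*}}_{n_{\beta^*}+1}(\alpha_n \cup \{p_{\beta^*}\})$ for almost every $n$. If $\beta^*$ is not anomalous this gives $(2)^{\beta^*}_n$ ($\kappa_n\in H_n$) at once, and $(1)^{\beta^*}_n$ since $\alpha_n\in H_n$ forces $\alpha_n+1\in H_n\cap\kappa_n$; condition $(3)^{\beta^*}_n$ follows from $\cf^V(\rho_{n_{\beta^*}}(\mathcal{M}_{\beta^*}))>\omega$ — which holds by Lemma \ref{fscof}, exactly as in the continuity argument — together with the fact that the $H_n$ increase with union $\mathcal{C}_0(\mathcal{M}_{\beta^*})$, cofinal in $\rho_{n_{\beta^*}}(\mathcal{M}_{\beta^*})$: were no single $H_n$ cofinal there, the suprema $\sup(H_n\cap\rho_{n_{\beta^*}}(\mathcal{M}_{\beta^*}))$ would form a countable cofinal sequence, contradicting uncountable cofinality. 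The anomalous case is parallel, reading $\iota_{F_{\beta^*}}(h)(a)$ with $a\in\finsubsets{\alpha_n}$ in place of the hulls and using that each of $\kappa_n,\alpha_n\in\mathcal{C}_0(\mathcal{M}_{\beta^*})$ is represented by a fixed finite $a\subseteq\lambda$ lying below $\alpha_n$ for large $n$, yielding $(4)^{\beta^*}_n+(5)^{\beta^*}_n$. Hence $\beta^*$ is adequate, and by the previous lemma every $\beta\in(C\setminus\beta^*)\cap\cof(\groesser\omega)$ is adequate, so $g_\beta$ is defined throughout the index set.

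For cofinality, fix $f\in\prod_n\kappa_n\cap\mathcal{M}$; as before $f\in\mathcal{M}\vert\vert\gamma_f$ for some $\gamma_f<(\lambda^+)^{\mathcal{M}}$. Choose $\beta\in(C\setminus\beta^*)\cap\cof(\groesser\omega)$ with $\beta>\gamma_f$, so that $\beta$ is adequate and $f\in\mathcal{M}_\beta$. Then $f\in\mathcal{C}_0(\mathcal{M}_\beta)=\hull^{\mathcal{M}_\beta}_{n_\beta+1}(\lambda\cup\{p_\beta\})$, and being definable from $p_\beta$ and finitely many ordinals $<\lambda$, for all large $n$ we have $f\in\hull^{\mathcal{M}_\beta}_{n_\beta+1}(\alpha_n\cup\{p_\beta\})$; applying the definable evaluation at the integer $n$ gives $f(n)\in\hull^{\mathcal{M}_\beta}_{n_\beta+1}(\alpha_n\cup\{p_\beta\})$. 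Since $f(n)<\kappa_n$ and $\kappa_n$ is inaccessible, also $f(n)+1<\kappa_n$ lies in this hull, so $g_\beta(n)=\sup(\hull^{\mathcal{M}_\beta}_{n_\beta+1}(\alpha_n\cup\{p_\beta\})\cap\kappa_n)\geq f(n)+1>f(n)$. Thus $f<^*g_\beta$ (for anomalous $\beta$ the same computation runs with $f(n)=\iota_{F_\beta}(h_f)(a_f)$ and $a_f\subseteq\alpha_n$ for large $n$). Combined with the increasing, tree-like and continuous properties quoted above, this shows $\<g_\beta : \beta\in(C\setminus\beta^*)\cap\cof(\groesser\omega)\>$ is a scale in $\prod_n\kappa_n\cap\mathcal{M}$.

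The main obstacle is the verification of $(3)^{\beta^*}_n$ in the construction of $\beta^*$: unlike $(1)$ and $(2)$, which are soft consequences of $\vec{\kappa},\vec{\alpha}$ entering the bounded hulls, condition $(3)$ genuinely requires that $\rho_{n_{\beta^*}}(\mathcal{M}_{\beta^*})$ have uncountable $V$-cofinality, which is precisely why selecting $\beta^*$ of uncountable cofinality is indispensable. The secondary difficulty is keeping the anomalous bookkeeping (representations $\iota_{F_\beta}(h)(a)$ with $a\in\finsubsets{\alpha_n}$) exactly parallel to the non-anomalous one at every step, where most of the routine but delicate case analysis resides.
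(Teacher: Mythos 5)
Your proposal is correct and follows essentially the same route as the paper, whose entire proof is the two-sentence assertion that any $\beta^*$ of uncountable cofinality with $\vec{\kappa},\vec{\alpha}\in\mathcal{M}\vert\vert\beta^*$ is adequate and that ``the rest is as before''; you simply supply the verification of $(1)$--$(5)$ (with the cofinality argument for $(3)$ matching the paper's earlier use of Lemma \ref{fscof}) and the domination argument via $f(n)+1$ entering the hull. The only point to tighten is the anomalous case: one should represent the whole sequences $\vec{\kappa},\vec{\alpha}$ as $\iota_{F_{\beta^*}}(h)(a)$ for a single fixed $a\in\finsubsets{\lambda}$ and then extract $\kappa_n,\alpha_n$ from them, rather than speak of each individual $\kappa_n$ having its own representative (which a priori might need parameters above $\alpha_n$).
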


\begin{proof}
Any $\beta^*$ of uncountable cofinality (in $\mathcal{M}$) such that $\mathcal{M} \vert\vert \beta^*$ contains both sequences will be adequate. The rest is then as before.
\end{proof}

Note that while we have only considered sequences of a ``pure" type, we could easily deal with sequences $\<\kappa_n: n < \omega\>$ of regular cardinals with both successor cardinals and inaccessible cardinals by mixing both constructions using parameters where needed. With this we finish the proof of Theorem \ref{thmtwo}.

\begin{remark}
  Assuming that $\lambda$ is not subcompact in $\mathcal{M}$ the sequences we defined should be very good, but we have yet to check this in detail. The proof would presumably proceed along similar lines as in \cite{DJS}.
\end{remark}

\section{Core models and the tree-like scale}\label{sec:CMandTLS}

We now want to consider the question when the sequences constructed in the previous section are scales in $V$. For this we need to consider the right mouse. The natural candidate is, of course, the core model. But even core model sequences are not always scales.

To keep the following as accessible as possible we are going to operate under a smallness assumption. This will allow us to cover all known anti tree-like scale results while greatly simplify the following arguments.

This assumption is:
\begin{equation}\label{smallness}
\begin{split}
   \text{There is no inner model } W &\text{ and } F \in W \text{ a total extender } \\
   & \text{such that } \gen(F) \geq (\crit(F)^{++})^W
\end{split}
\end{equation}

\begin{corollary}\label{smallnesskor}
   There is no $\omega_1$-iterable premouse  $(\mathcal{M},\in,\vec{E},F)$ such that $\gen(F) > (\crit(F)^{++})^\mathcal{M}$.
\end{corollary}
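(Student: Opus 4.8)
The plan is to prove the statement by contradiction, manufacturing from a hypothetical bad premouse a proper-class inner model that violates the smallness assumption (\ref{smallness}). So suppose $\mathcal{M} = (|\mathcal{M}|;\in,\vec{E},F)$ is $\omega_1$-iterable with $\kappa := \crit(F)$ and $\gen(F) > (\kappa^{++})^{\mathcal{M}}$. Before iterating I would first normalize $F$ to a short extender: since $F$ has generators above $(\kappa^{++})^{\mathcal{M}}$, let $\eta \geq (\kappa^{++})^{\mathcal{M}}$ be least with $\eta = \gen(F \restr \eta)$, and let $G$ be the trivial completion of $F \restr \eta$. By the initial segment condition $G$ lies on $\vec{E}$ (or on $\iota_{\vec{E}_\eta}(\vec{E})$), so some initial segment $\mathcal{N} \eextend \mathcal{M}$ is an active premouse with top extender $G$; it is again $\omega_1$-iterable, still has $\crit(G) = \kappa$ and $\gen(G) = \eta \geq (\kappa^{++})^{\mathcal{N}}$, and now $\gen(G) < j_G(\kappa)$, i.e. $G$ is short. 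Replacing $\mathcal{M}$ by $\mathcal{N}$ and $F$ by $G$, I may assume from the start that $\gen(F) < j_F(\kappa)$ while keeping $\gen(F) \geq (\kappa^{++})^{\mathcal{M}}$.

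Next I would linearly iterate $\mathcal{M}$ by its top extender through the ordinals. Put $\mathcal{M}_0 := \mathcal{M}$, $F_0 := F$, and recursively $\mathcal{M}_{\xi+1} := \ult(\mathcal{M}_\xi, F_\xi)$ with $F_\xi$ the top extender of $\mathcal{M}_\xi$, taking direct limits at limit stages and running the construction for all $\xi < \on$. The critical points $\kappa_\xi := \crit(F_\xi)$ are increasing, continuous, and cofinal in $\on$, so the direct limit $W := \mathcal{M}_{\on}$ is a transitive proper class, i.e. an inner model. The structural fact to pin down is that $F$ survives into $W$ on its sequence. After the first ultrapower, coherence and the indexing convention place $F$ on the $\mathcal{M}_1$-sequence at index $\gamma := \on \cap \mathcal{M} = (\nu(F)^{+})^{\mathcal{M}_1}$, with $\mathcal{M}_1 || \gamma = \mathcal{M}$ (the passivization of $\mathcal{M}$ as an initial segment). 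Shortness gives $\gamma < \kappa_1 = j_{0,1}(\kappa) = \crit(j_{1,\on})$, so every later iteration map fixes $\gamma$, fixes $\mathcal{M}_1 || \gamma$, and fixes the extender indexed at $\gamma$; hence $F$ lies on the $W$-sequence and $W || \gamma = \mathcal{M}$.

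It then remains to read off the violation of (\ref{smallness}) in $W$. Since the later maps have critical point $\kappa_1 > \kappa$, they add no subsets of $\kappa$, so $\mathcal{P}(\kappa)^{W} = \mathcal{P}(\kappa)^{\mathcal{M}_1}$; and every subset of $\kappa$ in $\mathcal{M}_1$ appears below $(\kappa^{+})^{\mathcal{M}_1} < \gamma$, hence lies in $\mathcal{M}_1 || \gamma = \mathcal{M}$ and is measured by $F$. Thus $F$ is total in $W$. Moreover $W || \gamma = \mathcal{M}$ together with $(\kappa^{++})^{\mathcal{M}} < \gamma$ gives $(\kappa^{++})^{W} = (\kappa^{++})^{\mathcal{M}} \leq \gen(F)$, so $W$ is an inner model carrying a total extender $F$ with $\gen(F) \geq (\crit(F)^{++})^{W}$, contradicting (\ref{smallness}).

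The delicate points, where I would spend the real work, are two. First, one must justify that the length-$\on$ linear iteration never breaks down given only $\omega_1$-iterability: the standard reflection argument shows that any ill-founded $\mathcal{M}_\theta$ would pull back to an ill-founded countable linear iteration of a countable hull of $\mathcal{M}$, contradicting the hypothesized strategy. Second, and more technical, is the fine-structural bookkeeping behind ``$W || \gamma = \mathcal{M}$'': that passing to $W$ introduces no new subsets of $\kappa$ and does not collapse $(\kappa^{++})^{\mathcal{M}}$ below $\gen(F)$. This rests entirely on the coherence and initial-segment behaviour of the Mitchell--Steel top extender, together with the shortness normalization of the first paragraph, and is the step I expect to require the most care.
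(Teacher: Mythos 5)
Your argument is essentially the paper's own proof: iterate the top extender out of the universe, use a reflection argument to get wellfoundedness from $\omega_1$-iterability, and use the initial segment condition to land a total extender with generators reaching $(\crit(F)^{++})$ inside the resulting inner model, contradicting (\ref{smallness}). The extra normalization to a short extender and the explicit tracking of $F$ onto the $W$-sequence are just a more detailed rendering of the same steps the paper compresses into one line.
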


\begin{proof}
  Assume towards a contradiction that $(\mathcal{M};\in,\vec{E},F)$ is a counterexample. Then we can generate an inner model $W$ by iterating the top extender out of the universe. Note that by a standard reflection argument, $\omega_1$-iterability is enough to ensure that this model is wellfounded. By the initial segment condition $F \restr (\crit(F)^{++})^W$ is then in $W$ contradicting (\ref{smallness}).
\end{proof}

The reader should be aware, though, that our main results will hold under much weaker anti-Large Cardinals assumptions (up to one Woodin cardinal and beyond). Neither should the choice of indexing scheme affect their validity (though we have yet to check this in detail).

The most immediate payoff of (\ref{smallness}) will be that all iterations we are going to consider are linear (This is one instance in which ms-indexing will make things simpler for us).

\begin{proposition}
   Let $\mathcal{M}$ be a $\omega_1$-iterable premouse, and $\mathcal{T}$ a normal iteration tree on $\mathcal{M}$. Then no $\alpha < \beta \leq \length(\mathcal{T})$ is such that $\crit(E^\mathcal{T}_\beta) < \gen(E^\mathcal{T}_\alpha)$.
\end{proposition}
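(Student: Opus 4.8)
The plan is to argue by contradiction, exploiting the fact that under (\ref{smallness}) no extender on any iterate can have generators reaching $\crit^{++}$. Suppose the conclusion fails and let $\beta$ be \emph{least} such that $\crit(E^\mathcal{T}_\beta) < \gen(E^\mathcal{T}_\alpha)$ for some $\alpha < \beta$. Let $\gamma \le \alpha$ be the $\mathcal{T}$-predecessor of $\beta+1$, i.e.\ the least $\eta$ with $\crit(E^\mathcal{T}_\beta) < \gen(E^\mathcal{T}_\eta)$; then $\gamma < \beta$ and $E^\mathcal{T}_\beta$ is applied to an initial segment of $\mathcal{M}^\mathcal{T}_\gamma$ (assume no dropping to $\gamma+1$, the dropping case only replacing $\mathcal{M}^\mathcal{T}_\gamma$ by an initial segment). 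By minimality of $\beta$ the tree $\mathcal{T}\restr(\beta+1)$ is linear: for every $\delta+1 \le \beta$ the predecessor is $\delta$, which forces $\crit(E^\mathcal{T}_\delta) \ge \gen(E^\mathcal{T}_{\delta-1}) > \crit(E^\mathcal{T}_{\delta-1})$, so critical points strictly increase through index $\beta-1$ and in particular $\crit(E^\mathcal{T}_\delta) \ge \gen(E^\mathcal{T}_\gamma)$ for all $\gamma < \delta < \beta$. I will use the normality fact $\length(E^\mathcal{T}_\gamma) < \length(E^\mathcal{T}_\beta)$ together with agreement of models: the composite embedding $\mathcal{M}^\mathcal{T}_{\gamma+1} \to \mathcal{M}^\mathcal{T}_\beta$ has critical point $\ge \gen(E^\mathcal{T}_\gamma)$, so these models agree below $\gen(E^\mathcal{T}_\gamma)$.

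Write $\kappa := \crit(E^\mathcal{T}_\gamma)$ and $\mu := \crit(E^\mathcal{T}_\beta)$. Applying Corollary \ref{smallnesskor} to the active initial segment of $\mathcal{M}^\mathcal{T}_\gamma$ whose top extender is $E^\mathcal{T}_\gamma$ gives $\gen(E^\mathcal{T}_\gamma) \le (\kappa^{++})^{\mathcal{M}^\mathcal{T}_\gamma}$, whence $\mu < \kappa^{++}$. First I rule out $\mu < \kappa$: Corollary \ref{smallnesskor} applied to $E^\mathcal{T}_\beta$ bounds $\gen(E^\mathcal{T}_\beta)$, hence $\nu(E^\mathcal{T}_\beta)$ and the index $\length(E^\mathcal{T}_\beta)$, below $(\mu^{++})^{+}$; but $\kappa$ is inaccessible in the common part of the models and $\mu < \kappa$, so $\length(E^\mathcal{T}_\beta) < \kappa < \gen(E^\mathcal{T}_\gamma) \le \length(E^\mathcal{T}_\gamma)$, contradicting $\length(E^\mathcal{T}_\gamma) < \length(E^\mathcal{T}_\beta)$. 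Hence $\kappa \le \mu < \gen(E^\mathcal{T}_\gamma) \le \kappa^{++}$. Since $\mu$ is a critical point it is inaccessible in $\mathcal{M}^\mathcal{T}_\beta$, and by the agreement of models below $\gen(E^\mathcal{T}_\gamma)$ the cardinal structure of $[\kappa,\gen(E^\mathcal{T}_\gamma))$ is common to $\mathcal{M}^\mathcal{T}_\gamma$ and $\mathcal{M}^\mathcal{T}_\beta$; the only inaccessible it contains is $\kappa$ itself (the remaining cardinal there is the successor $\kappa^+$). Therefore $\mu = \kappa$: the two extenders $E^\mathcal{T}_\gamma$ and $E^\mathcal{T}_\beta$ share the critical point $\kappa$, with $\gen(E^\mathcal{T}_\gamma) > \kappa$ and $\length(E^\mathcal{T}_\gamma) < \length(E^\mathcal{T}_\beta)$.

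The remaining — and main — difficulty is to contradict the coexistence of two extenders with the same critical point $\kappa$, the earlier one having generators properly above $\kappa$ and the later one the larger index; I expect this to be the heart of the proof. The strategy mirrors Corollary \ref{smallnesskor}: in $\mathcal{M}^\mathcal{T}_{\gamma+1} = \ult(\mathcal{M}^\mathcal{T}_\gamma; E^\mathcal{T}_\gamma)$ the embedding $\iota_{E^\mathcal{T}_\gamma}$ has critical point $\kappa$, so every extender on its sequence indexed at or above $\nu(E^\mathcal{T}_\gamma)$ is an image and hence has critical point $\ge \iota_{E^\mathcal{T}_\gamma}(\kappa) > \kappa$, while by the initial segment condition the only extenders with critical point $\kappa$ it carries are proper initial segments of $E^\mathcal{T}_\gamma$, all indexed below $\length(E^\mathcal{T}_\gamma)$. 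Transferring this to $\mathcal{M}^\mathcal{T}_\beta$ through the agreement below $\gen(E^\mathcal{T}_\gamma)$, the normal measure of $E^\mathcal{T}_\beta$ — whose trivial completion sits, by the initial segment condition, on the $\mathcal{M}^\mathcal{T}_\beta$-sequence at an index below $\gen(E^\mathcal{T}_\gamma)$ — must coincide with the corresponding initial segment of $E^\mathcal{T}_\gamma$; iterating the top extender of the active level carrying $E^\mathcal{T}_\beta$ out of the universe, exactly as in Corollary \ref{smallnesskor}, then yields via coherence a total extender on an inner model whose generators reach $\kappa^{++}$, contradicting (\ref{smallness}). The delicate point, where real care is needed, is that the indices of $E^\mathcal{T}_\gamma$ and $E^\mathcal{T}_\beta$ both lie \emph{above} the region $[0,\gen(E^\mathcal{T}_\gamma))$ on which the models provably agree, so the identification of the two extenders cannot be read off the agreement directly and must instead be routed through the coherence and initial-segment conditions — equivalently, through a short phalanx comparison of $\mathcal{M}^\mathcal{T}_\gamma$ against $\mathcal{M}^\mathcal{T}_\beta$ — which is the technical core of the argument.
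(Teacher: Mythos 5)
Your reduction is sound up to the equal-critical-point case, and there it matches the paper's proof in substance: the subcase $\crit(E^\mathcal{T}_\beta) < \crit(E^\mathcal{T}_\gamma)$ is the paper's Case 2 (the paper derives $\gen(E^\mathcal{T}_\beta) > \crit(E^\mathcal{T}_\gamma)$ from acceptability and contradicts Corollary \ref{smallnesskor} directly, while you contradict normality of the indices; both work), and ruling out $\crit(E^\mathcal{T}_\gamma) < \crit(E^\mathcal{T}_\beta) < \gen(E^\mathcal{T}_\gamma)$ via inaccessibility of critical points is the paper's Case 1 read contrapositively.

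The case $\crit(E^\mathcal{T}_\beta) = \crit(E^\mathcal{T}_\gamma) = \kappa$ is not proved, and the sketch you give for it rests on claims that are false or unjustified. It is not true that every extender on the sequence of $\ult(\mathcal{M}^\mathcal{T}_\gamma; E^\mathcal{T}_\gamma)$ indexed at or above $\nu(E^\mathcal{T}_\gamma)$ ``is an image and hence has critical point $\geq \iota_{E^\mathcal{T}_\gamma}(\kappa)$'': entries of $\iota_{E^\mathcal{T}_\gamma}(\vec{E})$ above $\length(E^\mathcal{T}_\gamma)$ are represented via \L o\'{s} by functions $\xi \mapsto F_\xi$, and when $\crit(F_\xi) = \xi$ almost everywhere the resulting extender has critical point $\kappa$. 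For the same reason there is no basis for identifying the normal measure derived from $E^\mathcal{T}_\beta$ with an initial segment of $E^\mathcal{T}_\gamma$, and your closing appeal to (\ref{smallness}) presupposes $\gen(E^\mathcal{T}_\beta) \geq (\kappa^{++})^{\mathcal{M}^\mathcal{T}_\beta}$, which you never establish. What actually closes this case in the paper is an index computation, not a phalanx comparison: normality gives $\length(E^\mathcal{T}_\gamma) < \length(E^\mathcal{T}_\beta)$; the index $\length(E^\mathcal{T}_\gamma)$ is a cardinal of $\mathcal{M}^\mathcal{T}_\beta$ above $(\kappa^+)^{\mathcal{M}^\mathcal{T}_\beta}$, so $(\kappa^{++})^{\mathcal{M}^\mathcal{T}_\beta} \leq \length(E^\mathcal{T}_\gamma) < \length(E^\mathcal{T}_\beta)$; and since the ultrapower computing the index of $E^\mathcal{T}_\beta$ is a subclass of $\mathcal{M}^\mathcal{T}_\beta$ and hence cannot collapse $(\kappa^{++})^{\mathcal{M}^\mathcal{T}_\beta}$, the indexing convention (Idx) forces $\gen(E^\mathcal{T}_\beta) \geq (\kappa^{++})^{\mathcal{M}^\mathcal{T}_\beta}$. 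This is the step that punishes two extenders sharing a critical point. One then uses the last drop to locate an extender on the $\mathcal{M}^\mathcal{T}_\beta$-sequence with critical point strictly between $\kappa$ and $\length(E^\mathcal{T}_\beta)$ which $E^\mathcal{T}_\beta$ overlaps, and iterates it out of the universe to produce an inner model over which $E^\mathcal{T}_\beta \restr (\kappa^{++})$ is total, contradicting (\ref{smallness}) itself; Corollary \ref{smallnesskor} alone does not suffice here because the inequality on generators need not be strict. Your instinct that this case is the technical core is right, but the argument you gesture at does not close it.
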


\begin{proof}
  Let $\alpha < \beta$ such that $\crit(E^\mathcal{T}_\beta) < \gen(E^\mathcal{T}_\alpha)$. There are three cases:
  
  \textbf{Case 1:} $\crit(E^\mathcal{T}_\alpha) < \crit(E^\mathcal{T}_\beta)$
  
  By agreement between models in an iteration we have that $\crit(E^\mathcal{T}_\beta)$ is inaccessible in $\mathcal{M}^\mathcal{T}_\alpha \vert\vert \length(E^\mathcal{T}_\alpha)$ and thus $(\crit(E^\mathcal{T}_\alpha)^{++})^{\mathcal{M}^\mathcal{T}_\alpha \vert\vert \length(E^\mathcal{T}_\alpha)} < \crit(E^\mathcal{T}_\beta)$. As $E^\mathcal{T}_\alpha$ has generators above $\crit(E^\mathcal{T}_\beta)$, $\mathcal{M}^\mathcal{T}_\alpha \vert \length(E^\mathcal{T}_\alpha)$ is a counterexample to Corollary \ref{smallnesskor}.
  
  \textbf{Case 2:} $\crit(E^\mathcal{T}_\alpha) > \crit(E^\mathcal{T}_\beta)$
  
  In $\mathcal{M}^\mathcal{T}_\beta$ due to the agreement between models in an iteration, $\length(E^\mathcal{T}_\alpha) > \crit(E^\mathcal{T}_\beta)$ is a cardinal in $\mathcal{M}^\mathcal{T}_\beta$ so by strong acceptability $\crit(E^\mathcal{T}_\alpha)$ is inaccessible in $\mathcal{M}^\mathcal{T}_\beta$ and thus above $(\crit(E^\mathcal{T}_\beta)^{++})^{\mathcal{M}_\beta})$. As $\mathcal{T}$ is a normal iteration $\length(E^\mathcal{T}_\beta) > \length(E^\mathcal{T}_\alpha) > (\crit(E^\mathcal{T}_\alpha)^+)^{\mathcal{M}^\mathcal{T}_\beta}$ and so $\gen(E^\mathcal{T}_\beta)  > \crit(E^\mathcal{T}_\alpha)$ but then $\mathcal{M}^\mathcal{T}_\beta \vert \length(E^\mathcal{T}_\beta)$ is a counterexample to Corollary \ref{smallnesskor}.
  
  \textbf{Case 3:} $\crit(E^\mathcal{T}_\alpha) = \crit(E^\mathcal{T}_\beta)$
  
   Because $\mathcal{T}$ is normal we have $\length(E^\mathcal{T}_\alpha)  < \length(E^\mathcal{T}_\beta)$ but this means that $E^\mathcal{T}_\beta$ must have generators cofinal in $(\crit(E^\mathcal{T}_\beta)^{++})^{\mathcal{M}^\mathcal{T}_\beta}$. Now, let $\gamma$ be the last drop in the interval $\left(\alpha,\beta\right]$ if it exists or $\alpha + 1$ otherwise. We can assume that $\crit(E^\mathcal{T}_{\gamma-1}) \geq \crit(E^\mathcal{T}_\beta)$. $\iota^\mathcal{T}_{\gamma - 1,\beta}(\crit(E^\mathcal{T}_{\gamma-1}))$ is then the critical point of an extender on the $\mathcal{M}^\mathcal{T}_\beta$ sequence and greater than $\crit(E^\mathcal{T}_\beta)$. As $E^\mathcal{T}_\beta$ must be total over $\mathcal{M}^\mathcal{T}_\beta$. Thus we can produce a class size model $W$ containing $E^\mathcal{T}_\beta$ and agreeing with $\mathcal{M}^\mathcal{T}_\beta$ past $(\crit(E^\mathcal{T}_\beta)^{++})^W$ which contradicts (\ref{smallness}). 
  
\end{proof}

Another consequence of (\ref{smallness}) is that the Jensen-Steel core model $K$ exists by \cite{knomble}. Note that by the smallness assumption there can be no anomalous ordinals in $K$. For the following results we will follow the general framework of the proof of weak covering for that model. Before going into the proofs we shall take quick note of the involved objects.

Let $\lambda$ be a singular cardinal of countable cofinality. Let $\vec{\kappa} := \<\kappa_n: n < \omega\>$ be a sequence cofinal in $\lambda$. Let $\tau_n := (\kappa^+_n)^K$. Consider some $X \prec H_\theta$ ($\theta >> \lambda$) and let $\sigma^X : H_X \to X$ be the inverse of the transitive collapse map.  $X$ will need to satisfy certain properties:

\begin{itemize}
    \item certain phalanxes ``lift" through $\sigma^X$
    \item $\card(X) < \lambda$,
    \item $X$ is tight on $\vec{\kappa}$ (and $\<\tau_n: n < \omega\>$), i.e. $X \cap \prod\limits_{n < \omega} \kappa_n$ is cofinal in $\prod\limits_{n < \omega} (X \cap \kappa_n) \slash J_{bd}$,
    \item the collection of $X \prec H_\theta$ with the above three properties is stationary.
\end{itemize}

The first point is quite vague, and we will provide more details where needed in the course of the argument. By \cite{cover} $\omega$-closed $X$ do satisfy the first property, but it seems possible that there are not enough, i.e. stationary many, $X$ with all properties available. In such cases, by \cite{covernoclosure} we do know that for every internally approachable chain $\vec{Y} := \<Y_i : i < \kappa\>$ in $H_\theta$ there exists some $i < \kappa$ of uncountable cofinality such that $Y_i$ satisfies the first property. That it satisfies the other properties should be easy to see.

Let then from now on $X$ be some such set with the required properties. Let $\sigma_X: H_X \rightarrow X$ be an isomorphism where $H_X$ is transitive. Write $K_X := \ptwimg{\sigma^{-1}_X}{K}$, $\lambda_X := \sigma^{-1}_X(\lambda)$, $\kappa^X_n := \sigma^{-1}_X(\kappa_n)$, etc.

As is standard we will compare $K_X$ with $K$, we should have (for our choice of $X$) that the iteration tree on $K_X$ is trivial (this is $(1)_\alpha$ from \cite{cover} or $(1)^i_\alpha$ from \cite{covernoclosure} respectively). Let then $\mathcal{I}_X$ be the iteration tree on $K$ that arises from the co-iteration. We will simplify notation by writing $\mathcal{M}^X_\alpha$ for  $\mathcal{M}^{\mathcal{I}_X}_\alpha$ etc. Let $\zeta_X := \length(\mathcal{I}_X)$ be the length of the iteration. 

\begin{lemma}\label{drop}
   $(\crit(\sigma_X)^{+})^{K_X} < (\crit(\sigma_X)^+)^K$ and if $E^X_0$ is defined then it is not total over $K$.
\end{lemma}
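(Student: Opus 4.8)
The plan is to write $\mu := \crit(\sigma_X)$ and to treat the two assertions as two aspects of a single fact, namely that the least disagreement between $K_X$ and $K$ sits strictly below $(\mu^+)^K$. First I would extract the soft consequences of $\mu = \crit(\sigma_X)$. Since $\sigma_X\restriction\mu = \id$, elementarity of $\sigma_X \colon \mathcal{C}_0(K_X) \to \mathcal{C}_0(K)$ gives $K_X\vert\mu = K\vert\mu$, and for every $A \in \mathcal{P}(\mu)\cap K_X$ we have $A = \sigma_X(A)\cap\mu \in K$; hence $\mathcal{P}(\mu)^{K_X}\subseteq\mathcal{P}(\mu)^K$ and $(\mu^+)^{K_X}\le(\mu^+)^K$. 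In particular $K_X$ and $K$ genuinely agree below $\mu$, so that, when $\mathcal{I}_X$ is nontrivial, the first extender $E^X_0$ used on the $K$-side of the coiteration has $\crit(E^X_0)\ge\mu$.

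Next I would turn to the coiteration $\mathcal{I}_X$ itself, which by the lifting hypothesis on $X$ has a trivial $K_X$-side, so that $K_X \eextend \mathcal{M}^X_{\zeta_X}$ and (by the smallness assumption) $\mathcal{I}_X$ is a linear tree on $K$. Since $K_X\vert\mu = K\vert\mu$, the least-disagreement rule forces $\crit(E^X_0) = \mu$ exactly: the two structures must first diverge at a subset of $\mu$, because $\nu := (\mu^+)^{K_X}$ is a cardinal of $K_X$ whereas $K$ already carries strictly more structure on $\mathcal{P}(\mu)$. The crux is then to show that $E^X_0$ is not total over $K$. Here I would argue that totality is incompatible with $\sigma_X$ being an uncollapse embedding: writing $\beta$ for the index of $E^X_0$ on the $K$-sequence, totality would force (via the initial segment condition (ISC) and the indexing (Idx)) $\beta \ge (\mu^+)^K$ and $\mathcal{P}(\mu)^{\ult(K,E^X_0)} = \mathcal{P}(\mu)^K$, whence $K_X\vert\vert\beta = K\vert\vert\beta$ would already compute $\mathcal{P}(\mu)^{K_X} = \mathcal{P}(\mu)^K$; this makes $\mu^* := \sigma_X(\mu)\in X$ the critical point of a total $K$-extender definable from $\mu^*$, and the standard continuity/agreement analysis of \cite{cover} — played off against the factor map of the extender derived from $\sigma_X$ and the fact that $\crit(\sigma_X)=\mu$ — yields the desired contradiction.

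Granting non-totality, the strict inequality falls out cleanly. Non-totality gives $\beta < (\mu^+)^K$, and since the models below $\beta$ agree and the remaining extenders of the linear tree have critical point at least $\mu$ (by monotonicity along the branch), the subsets of $\mu$ are frozen at stage $\beta$, so that the subsets of $\mu$ in $K_X$ are exactly those of $K\vert\vert\beta$; therefore
\[ (\mu^+)^{K_X} \;\le\; (\mu^+)^{K\vert\vert\beta} \;\le\; \beta \;<\; (\mu^+)^K. \]
When $E^X_0$ is undefined the tree $\mathcal{I}_X$ is trivial, so $K_X \eextend K$, and I would invoke the Condensation Lemma \ref{condens} to see that $K_X$ must then be a level of $K$ projecting to at most $\mu$, which again forces $(\mu^+)^{K_X} < (\mu^+)^K$. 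The main obstacle is precisely the non-totality of $E^X_0$: unlike the agreement bookkeeping, it genuinely uses that $\sigma_X$ comes from collapsing an elementary hull of $H_\theta$ (so is not the ultrapower embedding of a total $K$-extender) together with the universality and iterability of $K$, and this is the step where the covering framework of \cite{cover} is indispensable.
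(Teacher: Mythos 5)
There is a genuine gap, and it concerns both the logical order of the two assertions and a missing case. The paper proves the successor inequality \emph{first} and \emph{directly}: if $(\crit(\sigma_X)^+)^{K_X} = (\crit(\sigma_X)^+)^K$, then the $(\crit(\sigma_X),\sigma_X(\crit(\sigma_X)))$-extender $E_{\sigma_X}$ derived from $\sigma_X$ is total over $K$ and coheres with it; iterability of the phalanx $\<\<K,\ult(K;E_{\sigma_X})\>,\sigma_X(\crit(\sigma_X))\>$ (property $(2)_\alpha$ of the covering argument) plus \cite[8.6]{CMIP} then puts $E_{\sigma_X}$ on the $K$-sequence, and since $\gen(E_{\sigma_X}) = \sigma_X(\crit(\sigma_X)) \geq (\crit(\sigma_X)^{++})^K$ this contradicts Corollary \ref{smallnesskor}. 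You instead try to derive the inequality \emph{from} non-totality of $E^X_0$, but that direction does not close: non-totality only says $(\crit(E^X_0)^+)^{K\vert\vert\beta} < (\crit(E^X_0)^+)^K$, not $\beta < (\crit(\sigma_X)^+)^K$; there is no ``monotonicity of critical points along the branch'' for a normal tree (only the lengths increase); and the case where $\mathcal{I}_X$ is trivial is not covered by an appeal to non-totality at all. Your final chain of inequalities therefore rests on facts you have not established.

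The second, more substantive, problem is your claim that the least-disagreement rule forces $\crit(E^X_0) = \crit(\sigma_X)$. This is false: the first disagreement sits at an index $\geq (\crit(\sigma_X)^+)^{K_X}$, but the extender of $K$ indexed there can have critical point strictly below $\crit(\sigma_X)$. Indeed, the only configuration that survives all the soft arguments --- and the one the paper spends most of the proof on --- is $\crit(E^X_0) < \crit(\sigma_X)$ with $\crit(\sigma_X) = (\crit(E^X_0)^{++})^{K_X}$ and $\length(E^X_0) = (\crit(\sigma_X)^+)^{K_X}$. Ruling out totality there requires a new idea: form $\tilde{K} := \ult(\mathcal{M}^X_1;E_{\sigma_X})$, use weak covering to see that $(\crit(E^X_0)^{+++})^{\tilde{K}} = \sup(\ptwimg{\sigma_X}{(\crit(\sigma_X)^+)^{K_X}}) < (\crit(E^X_0)^{+++})^K$ while the two models agree past $\sigma_X(\crit(\sigma_X))$, and then observe that the first extender used on the $K$-side of the coiteration of $K$ with $\tilde{K}$ has index above $\sigma_X(\crit(\sigma_X))$ but critical point $\leq \crit(E^X_0)$, violating the smallness assumption (\ref{smallness}). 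Your proposal never isolates this case, and the phrase ``the standard continuity/agreement analysis of \cite{cover} yields the desired contradiction'' is doing all of the work at precisely the point where a genuinely new argument is needed.
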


Note that $K_X$ and $K$ agree up to $(\crit(\sigma_X)^+)^{K_X}$ as a result of the condensation lemma. 

\begin{proof}
  Assume towards a contradiction that $(\crit(\sigma_X)^+)^{K_X} = \crit(\sigma_X)^+)^K$. Then $E_{\sigma_X}$ the $(\crit(\sigma_X),\sigma_X(\crit(\sigma_X)))$-extender derived from $\sigma_X$ measures all subsets of its critical point that are in $K$. It also coheres with $K$ by the elementarity of $\sigma_X$. (This is a little bit of a lie. We would actually need to know that all Mitchell-Steel initial segments of $E_{\sigma_X}$ are on the $K$-sequence. But if this fails we could simply apply the argument we are about to give to the least missing segment instead.)
  
  We do know that the phalanx $\<\<K,\ult(K;E_{\sigma_X})\>,\sigma_X(\crit(\sigma_X))\>$ is iterable. This is $(2)_\alpha$ from \cite{cover} or \cite{covernoclosure} where $\crit(\sigma_X) = (\aleph_\alpha)^{K_X}$. (Once again this is something of a lie. We actually have to replace $K$ with an appropriate soundness witness in the above statement, but we can choose $W$ such that it agrees with $K$ past the level we actually care about. Thus this will not make a difference here.)
  
  But then by \cite[8.6]{CMIP} we have that $E_{\sigma_X}$ is on the $K$-sequence. It should be obvious that $\gen(E_{\sigma_X}) = \sigma_X(\crit(\sigma_X))$ and thus $K \vert \length(E_{\sigma_X})$ contradicts Proposition \ref{smallnesskor}.
  
  As for the second part, assume $E^X_0$ is applied to $K$. By the first part, if $\crit(E^X_0) \geq \crit(\sigma_X)$, then we must truncate. If $(\crit(E^X_0)^+)^{K_X} = \crit(\sigma_X)$, then by elementarity $(\crit(E^X_0)^+)^K = \sigma_X(\crit(\sigma_X)$ so we must truncate.
  
  If $\crit(\sigma_X) \geq (\crit(E^X_0)^{++})^{K_X}$ then its generators must be cofinal in $\crit(\sigma_X)$. So, if the strict inequality holds then $E^X_0$ contradicts Corollary \ref{smallnesskor}. A similar argument applies if $\length(E^X_0) > (\crit(\sigma_X)^+)^{K_X}$.
  
  So, we must have that $\crit(\sigma_X) = (\crit(E^X_0)^{++})^{K_X}$. Consider $\mathcal{M}^X_1$. It agrees with $K_X$ up to $(\crit(\sigma_X)^+)^{K_X}$ and that ordinal is a cardinal there. Thus we can apply the extender $E_{\sigma_X}$ to it. The properties of $X$ will guarantee that $\tilde{K} := \ult(\mathcal{M}^X_1;E_{\sigma_X})$ is iterable (similar to the proof of \cite[Lemma 3.13]{cover}).
  
  We have that $K$ and $\tilde{K}$ agree up to $\sup(\ptwimg{\sigma_X}{(\crit(\sigma_X)^+)^{K_X}})$ which lies past $\sigma_X(\crit(\sigma_X))$ their common $\crit(E^X_0)^{++}$, but on the other hand
  \begin{equation*} (\crit(E^X_0)^{+++})^{\tilde{K}} = \sup(\ptwimg{\sigma_X}{(\crit(\sigma_X)^+)^{K_X}}) <
                    \sigma_X((\crit(\sigma_X)^{+}) = (\crit(E^X_0)^{+++})^K \end{equation*}
  as a result of weak covering. 
  
  Consider then $\tilde{E}$ the first extender applied on the $K$ side in the co-iteration of $K$ and $\tilde{K}$. Its index must be above $\sigma_X(\crit(\sigma_X))$ but its critical point cannot be larger than $\crit(E^X_0)$. $\tilde{E}$ on the $K$-sequence then contradicts (\ref{smallness}).
  
\end{proof}

Remember now the sequence $\<\kappa_n : n < \omega\>$ and the sequence of successors $\<\tau_n: n < \omega\>$. The general idea for the following proofs is to find some ordinal $\alpha_X < \lambda^+$ such that the natural scales of the core model at $\alpha_X$ align with the characteristic function of $X$.

From now on we shall assume that $\kappa_n$ is a cutpoint of (the extender sequence of) $K$ and hence $\kappa^X_n$ is a cutpoint of $K_X$. ( $\alpha \in (\mathcal{M}; \in, \vec{E})$ is a cutpoint (of $\vec{E}$) iff whenever $\crit(E_\beta) < \alpha$, then $\length(E_\alpha) < \alpha$ for all $\beta \in \dom(\vec{E})$.)

\begin{lemma}\label{models}
  There exist some $n_X,k_X < \omega$, a sequence of models  $\<\mathcal{N}^X_n: n_X \leq n < \omega\>$, and maps $\<\upsilon^X_{n,m}: n_X \leq n \leq m < \omega\>$ such that:
  \begin{itemize}
      \item $((\kappa^X_n)^+)^{\mathcal{N}^X_n} = \tau^X_n$ and $\mathcal{N}^X_n$ agrees with $K_X$ up to $\tau^X_n$ for all $n \geq n_X$;
      \item $\mathcal{N}^X_n$ is $(k_X + 1)$-sound above $\kappa^X_n$ for all $n \geq n_X$;
      \item $\upsilon^X_{n,m}:\mathcal{C}_0(\mathcal{N}^X_n) \rightarrow \mathcal{C}_0(\mathcal{N}^X_m)$ is $r\Sigma_{k_X + 1}$-elementary for all $m \geq n \geq n_X$;
      \item $\crit(\upsilon^X_{n,m}) \geq \kappa^X_n$ for all $m \geq n \geq n_X$.
  \end{itemize}
\end{lemma}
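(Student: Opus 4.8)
The plan is to transplant the fine-structural construction of Section~\ref{sec:finestructure} from the ambient mouse to the pair $(K,K_X)$, using the comparison $\mathcal{I}_X$ and the cutpoint hypothesis to guarantee that the resulting local structures compute the \emph{full} successors $\tau_n^X$ rather than merely some scale value. First I would fix the threshold $n_X$. Since Lemma~\ref{drop} confines the first disagreement between $K_X$ and $K$ to $(\crit(\sigma_X)^+)^{K_X}$, and $\<\kappa_n^X : n < \omega\>$ is cofinal in $\lambda_X$, all but finitely many $\kappa_n^X$ exceed $\crit(\sigma_X)$; let $n_X$ be least with $\crit(\sigma_X) < \kappa_{n_X}^X$. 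By the Proposition preceding the lemma every iteration in play is linear, and by hypothesis each $\kappa_n^X$ is a cutpoint of $K_X$, so no extender on the $K_X$-sequence overlaps $\kappa_n^X$; this is what keeps both the successor computation and the later condensation clean.

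Next I would isolate a single master structure. Let $\mathcal{W}$ be the least initial segment of $K_X$ past $(\lambda_X^+)^{K_X}$ that projects to $\lambda_X$ (such a level exists since $\lambda$ is singular in $V$, hence $\lambda_X$ is singularized over $K_X$), let $k_X$ be least with $\rho_{k_X+1}(\mathcal{W}) = \lambda_X$, and put $p := p_{k_X+1}(\mathcal{W})$. For $n \geq n_X$ define $\mathcal{N}_n^X$ to be the transitive collapse of $\hull^{\mathcal{W}}_{k_X+1}(\kappa_n^X \cup \{p\})$. Working with one $\mathcal{W}$ forces the uniform degree $k_X$ and immediately yields the second and third clauses: each $\mathcal{N}_n^X$ is $(k_X+1)$-sound above $\kappa_n^X$ by construction, and the inclusions $\hull^{\mathcal{W}}_{k_X+1}(\kappa_n^X \cup \{p\}) \subseteq \hull^{\mathcal{W}}_{k_X+1}(\kappa_m^X \cup \{p\})$ (valid since $\kappa_n^X \leq \kappa_m^X$) compose with the two collapses to produce $\upsilon_{n,m}^X := \pi_m^{-1}\circ \pi_n$, which is $r\Sigma_{k_X+1}$-elementary and fixes every ordinal below $\kappa_n^X$, so $\crit(\upsilon_{n,m}^X) \geq \kappa_n^X$; coherence $\upsilon_{m,\ell}^X\circ \upsilon_{n,m}^X = \upsilon_{n,\ell}^X$ is automatic.

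The first clause is where the real work lies. Applying the Condensation Lemma~\ref{condens} to the collapse $\pi_n:\mathcal{C}_0(\mathcal{N}_n^X)\to\mathcal{C}_0(\mathcal{W})$, whose critical point equals $\kappa_n^X = \rho_{k_X+1}(\mathcal{N}_n^X)$, places $\mathcal{N}_n^X$ as an initial segment either of $\mathcal{W}$ or of $\ult(\mathcal{W},\vec{E}_{\kappa_n^X})$. The cutpoint hypothesis on $\kappa_n^X$ together with Corollary~\ref{smallnesskor} rules out the ultrapower alternative, since there is no extender on the $K_X$-sequence overlapping $\kappa_n^X$; hence $\mathcal{N}_n^X \eextend \mathcal{W} \eextend K_X$, and it therefore agrees with $K_X$ up to its own height, in particular up to $\tau_n^X$. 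It then remains to verify $((\kappa_n^X)^+)^{\mathcal{N}_n^X} = \tau_n^X$, which reduces to showing that $\hull^{\mathcal{W}}_{k_X+1}(\kappa_n^X \cup \{p\})$ is cofinal in $\tau_n^X$ and retains it as a cardinal, so that the collapse fixes $\tau_n^X$. I would extract this from the choice of $\mathcal{W}$ as projecting to $\lambda_X$ by a cofinality argument of the type used in Lemma~\ref{fscof} and in condition $(3)^\beta_n$ of the previous section, combined with the fact that $\tau_n^X = (\kappa_n^X)^+$ is a genuine $K_X$-cardinal lying below $\lambda_X$.

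The main obstacle, and the point where covering genuinely enters, is ensuring that $\mathcal{W}$ realizes the successors $\tau_n^X$ correctly, i.e. that the local structures do not under-shoot. This is precisely what the comparison $\mathcal{I}_X$ being trivial on the $K_X$-side, and Lemma~\ref{drop}, secure: the only disagreement between $K_X$ and $K$ sits at $\crit(\sigma_X)$, strictly below every $\kappa_n^X$ in play, so each $\tau_n^X$ survives as a true successor in the relevant levels of $K_X$ and is thus realized by $\mathcal{W}$. A secondary bookkeeping point would be the anomalous/type~III casework that cluttered Section~\ref{sec:finestructure}, but under the smallness assumption~(\ref{smallness}) there are no anomalous ordinals in $K$, so that simplification is available throughout and the construction stays uniform in $n$.
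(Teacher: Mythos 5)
Your construction does not produce the models the lemma asks for, and the problem is already in the first step. The ``master structure'' $\mathcal{W}$ --- the least initial segment of $K_X$ \emph{past} $(\lambda_X^+)^{K_X}$ projecting to $\lambda_X$ --- does not exist: a sound level of $K_X$ of height above $(\lambda_X^+)^{K_X}$ with $\rho_{k_X+1}=\lambda_X$ would be collapsed onto $\lambda_X$ inside $K_X$, contradicting that $(\lambda_X^+)^{K_X}$ is a cardinal of $K_X$. (``$\lambda$ is singular in $V$'' gives no definable singularization of $\lambda_X$ over $K_X$; the level of the core model that does project to $\lambda$ is the structure $\mathcal{O}_X$ obtained \emph{later} in the argument by lifting up through $\sigma_X$, and it is a level of $K$, not of $K_X$.) Even if you replace $\mathcal{W}$ by some legitimate structure, the hulls $\hull^{\mathcal{W}}_{k_X+1}(\kappa^X_n\cup\{p\})$ cannot satisfy the first bullet: if the hull contained all of $\tau^X_n=((\kappa^X_n)^+)^{K_X}$ you would get an $r\Sigma_{k_X+1}(\mathcal{W})$ surjection of $[\kappa^X_n]^{<\omega}$ onto $\tau^X_n$, forcing $\rho_{k_X+1}(\mathcal{W})\leq\kappa^X_n<\lambda_X$; so the transitive collapse computes $((\kappa^X_n)^+)$ as $\otp(\hull\cap\tau^X_n)<\tau^X_n$. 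That ordinal is precisely the \emph{scale value} $f_\alpha(n)$ of Section \ref{sec:finestructure}, not the true $K_X$-successor, so your $\mathcal{N}^X_n$ under-shoots exactly where the lemma requires it not to. You have in effect re-run the scale construction on the $K_X$ side, whereas the whole point of this lemma is to find structures that genuinely end-extend $K_X\vert\tau^X_n$ and still project to $\kappa^X_n$; the hull-taking happens only afterwards, on the $K$ side, after the lift-up by $\sigma_X$.

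The models actually come from the other side of the comparison. With the $K_X$-side of the coiteration trivial, one does a case analysis on the (linear) iteration $\mathcal{I}_X$ of $K$: if no extender is used below $\lambda_X$, then $K_X\vert(\lambda^+)^{K_X}\eextend K$ and one takes the least level of $K$ end-extending it and projecting below $\lambda_X$, with identity maps; if the indices used are bounded below $\lambda_X$, one takes the corresponding iterate $\mathcal{M}^X_{\eta_X}$, again constant in $n$; and in the interesting case where the indices are cofinal in $\lambda_X$, one sets $\mathcal{N}^X_n:=\mathcal{M}^X_{\eta^X_n}$ for $\eta^X_n$ least with $\crit(E^X_{\eta^X_n})\geq\kappa^X_n$ and lets $\upsilon^X_{n,m}$ be the iteration maps. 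The cutpoint hypothesis is used there, via the initial segment condition, to show the generators of the iteration up to $\eta^X_n$ are bounded by $\kappa^X_n$, which is what gives $(k_X+1)$-soundness above $\kappa^X_n$; it is not used through the Condensation Lemma in the way you propose. Your claim that the only disagreement between $K$ and $K_X$ sits at $\crit(\sigma_X)$ is also not right --- Lemma \ref{drop} only locates the \emph{first} disagreement, and the iteration can act cofinally in $\lambda_X$, which is exactly why the case split and the moving system of models are needed.
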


For our purposes the critical point of the identity will be defined as the ordinals of its domain.

\begin{proof}
  There are a couple of cases.
  
  \textbf{Case 1:} $\mathcal{I}_X$ has no indices below $\lambda$.
  
  In that case, we have $K_X \vert (\lambda^+)^{K_X} \eextend K$. By Lemma \ref{drop} we do know that some $\mathcal{N}' \eextend K$ exists with $(\crit(\sigma_X)^+)^{\mathcal{N}'} = (\crit(\sigma_X)^+)^{K_X}$ and $\rho_\omega(\mathcal{N}') \leq \crit(\sigma_X)$. By assumption we must have $K_X \vert (\lambda^+)^{K_X} \eextend \mathcal{N}'$.
  
  Take then $\mathcal{N}^*$ to be the smallest initial segment of $K$ that end-extends $K_X \vert (\lambda^+)^{K_X}$ such that $\rho_\omega(\mathcal{N}^*) < \lambda_X$. 
  
  Let $k_X$ be minimal such that $\rho_{k_X + 1}(\mathcal{N}^*) < \lambda_X$. Let $n_X$ be minimal such that $\kappa^X_{n_X} \geq \rho_{k_X + 1}(\mathcal{N}^*)$. We then let $\mathcal{N}^X_n := \mathcal{N}^*$ for all $n \geq n_X$, and let $\upsilon^X_{n,m}$ be the identity for all $m \geq n \geq n_X$. As an initial segment of $K$, $\mathcal{N}^*$ is sound so this works.
  
  \textbf{Case 2:} The set $\{\length(E^X_\beta)\vert\beta < \zeta_X\}$ is bounded below $\lambda_X$.
  
  Let $\eta_X < \zeta_X$ be minimal such that $E^X_{\eta_X}$ has length $\groesser \lambda_X$, if it exists. If there is no such ordinal, let $\eta_X = \zeta_X$. We must then have that $\mathcal{M}^X_{\eta_X}$ agrees with $K_X$ past $\lambda_X$. If $\mathcal{M}^X_{\eta_X}$ has some proper initial segment of length greater than $\lambda_X$ projecting below $\lambda_X$, then this is no different from the previous case.
  
  So let us assume that this is not the case. Let $n_X$ be minimal such that $\kappa^X_{n_X}$ the set $\{\length(E^X_\beta)\vert\beta < \eta_X\}$. By Lemma \ref{drop}, $\mathcal{M}^X_{\eta_X}$ is not a weasel and is $(k_X + 1)$-sound above $\kappa^X_{n_X}$ for some unique $k_X$. 
  
  We then let $\mathcal{N}^X_n := \mathcal{M}^X_{\eta_X}$ for all $n \geq n_X$, and $\upsilon^X_{n,m}$ the identity for all $m \geq n \geq n_X$.
  
  \textbf{Case 3:} The set $\{\length(E^X_\beta)\vert\beta < \zeta_X\}$ is cofinal below $\lambda_X$.
  
 Let $\eta_X := \sup(\{\beta < \zeta_X \vert \length(E^X_\beta) < \lambda_X\})$. By assumption and Lemma \ref{drop} there is some drop in the interval $\left(0,\eta_X \right)$. Let then $\gamma + 1$ be the last such.
 
 Let $k_X$ be minimal such that $\rho_{k_X + 1}((\mathcal{M}^X_{\gamma + 1})^*) \leq \crit(E^X_\gamma)$. Let $n_X$ be minimal such that $\kappa^X_{n_X} \geq \length(E^X_\gamma)$. Let $\eta^X_n < \eta_X$ be minimal such that $\crit(E^X_{\eta^X_n}) \geq \kappa^X_n$ for $n \geq n_X$.
 
 Let then $\mathcal{N}^X_n := \mathcal{M}^X_{\eta^X_n}$ and $\upsilon^X_{n,m} := \iota^X_{\eta^X_n,\eta^X_m}$ for $m \geq n \geq n_X$. It is easy to see that the maps are as wanted, but it remains to check that $\mathcal{N}^X_n$ is $(k_X + 1)$-sound above $\kappa^X_n$. This is going to be the one critical use of the assumption that $\kappa^X_n$ is a cutpoint.
 
 We have to show that the generators of the iteration up to $\eta^X_n$ are bounded by $\kappa^X_n$. If $\eta^X_n$ is a limit this is obvious as by  choice of $\eta^X_n$ all previous critical points are less than $\kappa^X_n$. So assume that $\eta^X_n = \delta + 1$ and $E^X_\delta$ has a generator $\groessergleich \kappa^X_n$. By the initial segment condition we then have that the trivial completion $G$ of $E^X_\delta \restr \kappa^X_n$ is on the sequence of $K_X$. But we have $\crit(G) = \crit(E^X_\delta) < \kappa^X_n$ and $\length(G) > \kappa^X_n$, contradicting that $\kappa^X_n$ is a cutpoint.
\end{proof}

The covering argument goes through three cases. Thanks to Lemma \ref{drop} we can eliminate one of these cases, we will now see that we can also eliminate the other less than convenient case.

\begin{lemma}
  If $\mathcal{N}^X_n$ for $n \geq n_X$ has a top extender, then $\mu^X_n$, its critical point, is $\groessergleich\kappa^X_n$.
\end{lemma}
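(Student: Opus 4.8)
The plan is to argue by contradiction: assume $\mu_n^X < \kappa_n^X$ and manufacture an extender on the $K_X$-sequence with critical point below $\kappa_n^X$ and length above $\kappa_n^X$, contradicting that $\kappa_n^X$ is a cutpoint of $K_X$. Write $\mathcal{N} := \mathcal{N}_n^X$, $\kappa := \kappa_n^X$, $\mu := \mu_n^X$, $\tau := \tau_n^X$, and let $F$ be the top extender of $\mathcal{N}$, so $\crit(F) = \mu$. Recall from Lemma \ref{models} that $\mathcal{N}$ agrees with $K_X$ up to $\tau = (\kappa^+)^{\mathcal{N}}$, and that $\mathcal{N}$, being either an initial segment of $K$ or a model on the tree $\mathcal{I}_X$, is $\omega_1$-iterable; in particular every extender indexed below $\tau$ is common to $\mathcal{N}$ and $K_X$, and $\kappa$ is a cardinal of $\mathcal{N}$ strictly below $\tau < \on \cap \mathcal{N}$.

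First I would locate generators of $F$ above $\kappa$. In an active premouse $\nu(F)$ is essentially the largest cardinal, so there are no cardinals of $\mathcal{N}$ strictly between $\nu(F)$ and $\on \cap \mathcal{N}$; since $\tau = (\kappa^+)^{\mathcal{N}}$ is a cardinal of $\mathcal{N}$ with $\tau < \on \cap \mathcal{N}$, this forces $\tau \le \nu(F)$, hence $\kappa < \nu(F)$. Thus $F \restriction \kappa$ is a \emph{proper} initial segment of $F$. The core step is then to apply the initial segment condition (ISC) to $F$ at the closure point $\eta^* := \gen(F \restriction \kappa) \le \kappa$: it yields that the trivial completion $G$ of $F \restriction \eta^*$ is on the $\mathcal{N}$-sequence (or, in the second alternative of ISC, sits on $\iota_{\vec{E}_{\eta^*}}(\vec{E})$). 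By the indexing condition $G$ is indexed at $((\eta^*)^+)^{\ult(\mathcal{N}; F \restriction \eta^*)}$, and coherence lets the ultrapower compute this successor as $\mathcal{N}$ does.

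In the favorable case $\eta^* = \kappa$, i.e. $\kappa$ is a limit of generators of $F$, the index of $G$ equals $(\kappa^+)^{\ult} = \tau$, so by the agreement of $\mathcal{N}$ with $K_X$ up to $\tau$ the extender $G$ lands on the $K_X$-sequence. But $\crit(G) = \mu < \kappa < \tau = \length(G)$, so $G$ overlaps $\kappa$, contradicting that $\kappa$ is a cutpoint of $K_X$. This already proves $\mu_n^X \ge \kappa_n^X$ whenever the generators of $F$ are cofinal in $\kappa$.

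The main obstacle is the remaining case $\eta^* < \kappa$, i.e. the generators of $F$ below $\kappa$ are bounded, so that the segment produced by ISC is indexed at $((\eta^*)^+)^{\ult} \le \kappa$ and is too short to overlap. Here I would bring in the generator bound: since $\mathcal{N}$ is $\omega_1$-iterable, Corollary \ref{smallnesskor} gives $\gen(F) = \nu(F) \le (\mu^{++})^{\mathcal{N}}$. Combining this with $\mu < \kappa < \tau \le \nu(F) \le (\mu^{++})^{\mathcal{N}}$ and the fact that the only cardinals of $\mathcal{N}$ in $(\mu,(\mu^{++})]$ are $(\mu^+)^{\mathcal{N}}$ and $(\mu^{++})^{\mathcal{N}}$, I expect to pin the configuration down to $\kappa = (\mu^+)^{\mathcal{N}}$ and $\nu(F) = (\mu^{++})^{\mathcal{N}} = \tau$. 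Since then $\gen(F) = (\crit(F)^{++})^{\mathcal{N}}$ with generators cofinal in $\tau$, a closer analysis of the type of $F$ should show that $\kappa = (\mu^+)^{\mathcal{N}}$ is after all a limit of generators, returning us to the favorable case. The delicate points I anticipate are exactly this type-by-type verification that $\kappa$ is a limit of generators, the precise index computation $(\kappa^+)^{\ult} = \tau$ at the very boundary of agreement (so that $G$ is genuinely shared with $K_X$), and disposing of the second ISC alternative, which likewise must deliver an extender overlapping $\kappa$ on the $K_X$-sequence.
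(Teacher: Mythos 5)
Your approach is genuinely different from the paper's, and it has gaps that are not merely routine verifications. The paper does not argue abstractly from ``agreement up to $\tau^X_n$ plus cutpoint plus smallness''; it simply runs through the three construction cases of Lemma \ref{models}. In Cases 1 and 2 the model $\mathcal{N}^X_n$ agrees with $K_X$ past $\lambda_X$, so $\lambda_X$ is a limit cardinal of $\mathcal{N}^X_n$; since $\nu(F)$ is the largest cardinal of an active premouse, a top extender with critical point $\mu < \lambda_X$ would have $\gen(F) \geq \lambda_X > (\mu^{++})^{\mathcal{N}^X_n}$, against Corollary \ref{smallnesskor} --- and this gives the much stronger conclusion $\mu^X_n \geq \lambda_X$. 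In Case 3 the extender $E^X_{\eta^X_n}$ lies on the $\mathcal{N}^X_n = \mathcal{M}^X_{\eta^X_n}$ sequence with critical point $\geq \kappa^X_n$, and the no-overlap property of the sequence (the Proposition preceding Lemma \ref{drop}) immediately forces $\mu^X_n \geq \kappa^X_n$. The cutpoint hypothesis on $K_X$ is not what drives this lemma.

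The concrete problems with your route are these. First, your sub-case $\eta^* := \gen(F \restr \kappa) < \kappa$ is the crux, and your proposed resolution --- that ``a closer analysis of the type of $F$ should show that $\kappa$ is after all a limit of generators'' --- contradicts the defining hypothesis of that very sub-case: if the generators of $F$ below $\kappa$ are bounded by $\eta^* < \kappa$, then $\kappa$ is not a limit of generators, full stop. To salvage anything you would have to use generator closure points in $[\kappa,\nu(F))$, whose trivial completions are indexed in the interval $(\kappa,\tau]$, which runs directly into your second acknowledged gap: the trivial completion has index $(\kappa^+)^{\ult(\mathcal{N};F\restr\kappa)} \leq \tau$, and when this index equals $\tau$ the agreement $\mathcal{N}^X_n \Vert \tau = K_X \Vert \tau$ does \emph{not} place $G$ on the $K_X$-sequence (agreement below $\tau$ says nothing about a predicate indexed at $\tau$), so the cutpoint contradiction is unavailable exactly at the boundary. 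Second, your opening step already assumes $\tau < \on \cap \mathcal{N}^X_n$ without proof; by the paper's convention $(\kappa^+)^{\mathcal{N}} = \on \cap \mathcal{N}$ when $\kappa$ is the largest cardinal of $\mathcal{N}$, so this requires an argument, and the natural one goes back to the three construction cases anyway --- at which point you may as well read off the lemma directly as the paper does.
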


\begin{proof}
    Let us first assume that $\mathcal{N}^X_n$ has been constructed according to Case 1 or Case 2. Then $\lambda_X$ is a limit cardinal in $\mathcal{N}^X_n$ and thus by (\ref{smallness}) $\mu^X_n$ cannot be smaller than $\lambda_X$.
    
    If $\mathcal{N}^X_n$ is constructed according to Case 3, then some ordinal $\groessergleich \kappa^X_n$ has to be the critical point of an extender on the $\mathcal{N}^X_n$-sequence. As no overlaps can exist on the $\mathcal{N}^X_n$-sequence, $\mu^X_n \geq \kappa^X_n$ follows.
\end{proof}

\begin{remark}\label{modelsrmk}
   Note that $\mathcal{N}^X_n$ in the notation of \cite{cover} is the mouse $\mathcal{P}_\gamma$ where $\kappa_n = \aleph^{K_X}_\gamma$. Recall that $\mathcal{P}_\gamma$ is the least initial segment (if it exists) of $\mathcal{M}^X_{\delta}$ that defines a subset of $\kappa_n$ not in $K_X$ where $\delta < \zeta_X$ is least such that $\gen(E^X_\delta) > \kappa_n$. In addition, by the preceding lemma $\mathcal{P}_\gamma = \mathcal{Q}_\gamma$, i.e. we are avoiding protomice in this construction.
\end{remark}

Let then $\mathcal{N}_X := \dirlim(\<\mathcal{N}^X_n,\upsilon^X_{n,m}: n_X \leq n \leq m < \omega\>)$ and $\upsilon^X_n: \mathcal{C}_0(\mathcal{N}^X_n) \rightarrow \mathcal{C}_0(\mathcal{N}_X)$ the direct limit map. It should be easy to see that $\mathcal{N}_X$ is wellfounded and that the direct limit maps are $r\Sigma_{k_X + 1}$-elementary as they are generated by an iteration on $K$. But more is true:

\begin{lemma}\label{phalanx}
  The phalanx $((K_X,\mathcal{N}_X),\lambda_X)$ is iterable.
\end{lemma}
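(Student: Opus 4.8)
The plan is to prove iterability by the standard realization-and-copying method: I would exhibit embeddings of the two phalanx models $K_X$ and $\mathcal{N}_X$ into the single iterable structure $K$, and then copy any putative iteration tree on $(K_X,\mathcal{N}_X;\lambda_X)$ upward to an iteration of $K$, whose models are wellfounded by the iterability of $K$. The decisive simplification here is that, under the smallness assumption \eqref{smallness}, the preceding Proposition shows every normal iteration tree is linear; hence an iteration of the phalanx is just a linear sequence of ultrapowers, there is no cofinal branch to be chosen at limit stages, and iterability reduces entirely to the wellfoundedness of the resulting direct limits.

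For the first coordinate I use $\sigma_X\colon K_X \to K$ itself. For the second, recall that $\mathcal{N}_X = \dirlim(\langle \mathcal{N}^X_n,\upsilon^X_{n,m}\rangle)$ where, by Lemma~\ref{models}, each $\mathcal{N}^X_n$ is $(k_X+1)$-sound above $\kappa^X_n$ — so that $\mathcal{C}_0(\mathcal{N}^X_n) = \hull^{\mathcal{N}^X_n}_{k_X+1}(\kappa^X_n \cup \{p^X_n\})$ — and agrees with $K_X$ up to $\tau^X_n$. Since $\tau^X_n \to \lambda_X$, this is exactly the agreement between $K_X$ and $\mathcal{N}_X$ below $((\lambda_X)^+)^{\mathcal{N}_X}$ required for $(K_X,\mathcal{N}_X;\lambda_X)$ to be a phalanx. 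I would then build a realization $\pi\colon \mathcal{N}_X \to K$ by sending the soundness-hull of each $\mathcal{N}^X_n$ into $K$ using $\sigma_X$ on the generating ordinals below $\kappa^X_n$ together with a suitable image of the parameter $p^X_n$, arranging $\pi$ to be $r\Sigma_{k_X+1}$-elementary, to commute with the $\upsilon^X_{n,m}$ (so that it descends to the direct limit), and — crucially — to agree with $\sigma_X$ on ordinals below $\lambda_X$. Here the cutpoint hypothesis on the $\kappa^X_n$, exactly as exploited in the proof of Lemma~\ref{models}, forces all critical points and generators of the generating iteration to lie below $\lambda_X$, which is what makes such a $\pi$ available.

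With both $K_X$ and $\mathcal{N}_X$ realized into the common iterable model $K$ by maps agreeing below the exchange ordinal $\lambda_X$, I would run the copying construction. Given a linear iteration $\mathcal{T}$ on $(K_X,\mathcal{N}_X;\lambda_X)$, an extender applied on the $K_X$-side (critical point $<\lambda_X$) is shifted by $\sigma_X$ and one applied on the $\mathcal{N}_X$-side (critical point $\geq\lambda_X$) is shifted by $\pi$; the agreement of $\sigma_X$ and $\pi$ below $\lambda_X$ guarantees that the resulting $\mathcal{T}^\ast$ is a well-defined normal (hence, by \eqref{smallness}, linear) iteration of $K$, with copy maps commuting with the iteration maps. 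Since linearity removes any branch selection, wellfoundedness of the models of $\mathcal{T}^\ast$ — which holds because $K$ is iterable — transfers back along the copy maps to wellfoundedness of the models of $\mathcal{T}$, giving iterability of the phalanx.

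The hard part will be the construction of $\pi$ and the verification that it is genuinely elementary and genuinely agrees with $\sigma_X$ below $\lambda_X$, i.e. that $(\sigma_X,\pi)$ is a bona fide realization of the phalanx rather than merely a pair of embeddings. This is where Lemma~\ref{drop} is needed: it pins down the drop structure of the co-iteration (in particular that $E^X_0$, when defined, is not total over $K$), so that the push-forward respects the truncation and preserves the soundness of the $\mathcal{N}^X_n$ above $\kappa^X_n$; the cutpoint property and Corollary~\ref{smallnesskor} are what keep the relevant generators below $\lambda_X$. A secondary difficulty is that our sets $X$ are only required to satisfy the weak closure available from \cite{covernoclosure} rather than full $\omega$-closure, so the existence and persistence of the realization at limit stages of $\mathcal{T}^\ast$ must be argued through the stationarily-many-good-$X$ machinery of \cite{covernoclosure}, using the phalanx-iterability input $(2)_\alpha$ of \cite{cover} (equivalently \cite[8.6]{CMIP}); this is precisely the point at which the liftability and tightness requirements imposed on $X$ are consumed.
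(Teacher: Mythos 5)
Your overall scheme (realize both phalanx models into a single iterable structure and copy) is the right general shape for phalanx iterability proofs, but the decisive step --- the existence of an $r\Sigma_{k_X+1}$-elementary realization $\pi\colon \mathcal{C}_0(\mathcal{N}_X)\to K$ agreeing with $\sigma_X$ below $\lambda_X$ --- is exactly what you cannot assume here, and producing it is essentially equivalent to the conclusion you are trying to prove. The natural candidate for the target of such a map is $\mathcal{O}_X=\ult_{k_X}(\mathcal{N}_X;\sigma_X\restr K_X\vert\lambda_X)$, and the assertion that this ultrapower is (an initial segment of) $K$ is precisely what the paper \emph{derives from} Lemma \ref{phalanx} in the proof of Theorem \ref{dominikthmone}; so your argument is circular at its core. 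Nor is there any a priori embedding of $\mathcal{N}_X$ back into $K$: in Case 3 of Lemma \ref{models}, $\mathcal{N}_X$ is a direct limit of models $\mathcal{M}^X_{\eta^X_n}$ of the comparison iteration $\mathcal{I}_X$ past a drop, and iterates of $K$ after a truncation do not re-embed into $K$. Even in Case 1, where $\mathcal{N}_X\eextend K$ and the identity is available, the identity does not agree with $\sigma_X$ below $\lambda_X$ (since $\crit(\sigma_X)<\lambda_X$), so the shift lemma in your copying step breaks when an extender with critical point below $\lambda_X$ is applied to the $K_X$-coordinate. The machinery that actually handles this in core-model covering arguments is not a copy onto a single iteration of $K$ but the thick-class/soundness-witness analysis of \cite{cover} and \cite[8.6]{CMIP}, applied to the phalanx lifted by $\sigma_X$; that is the content of the ``lifting'' property imposed on $X$, and it is a substantially different mechanism from the one you describe.

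The paper's proof sidesteps all of this. Its main line is simply that the argument of \cite{cover} for the iterability of $((K_X,\mathcal{P}_\beta),\lambda_X)$ goes through verbatim with $\mathcal{N}_X$ in place of $\mathcal{P}_\beta$; and it records an ``easy cheat'' available under (\ref{smallness}): by Corollary \ref{smallnesskor}, $\lambda_X$ is a cutpoint of $\mathcal{N}_X$ (any extender overlapping the limit cardinal $\lambda_X$ would have too many generators), so in a normal iteration of the phalanx every extender used has index above $\lambda_X$ and hence critical point at least $\lambda_X$, the $K_X$-coordinate is never revisited, and iterability of the phalanx reduces to iterability of $\mathcal{N}_X$ alone --- which holds because $\mathcal{N}_X$ is an iterate of $K$. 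If you want a short self-contained proof, that cutpoint observation is the argument to make; your appeal to linearity of iterations under (\ref{smallness}) is in the right spirit but is doing the wrong job (it simplifies branch selection, which was never the obstruction --- the obstruction is realizing $\mathcal{N}_X$).
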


\begin{proof}
  We cannot quote \cite{cover} here as it seems a priori possible that the mouse (or weasel) $\mathcal{P}_{\beta}$, where $\lambda_X = \aleph^{K_X}_\beta$, from that proof is not equal to $\mathcal{N}_X$. (This would happen if $(\lambda^+_X)^{K_X}$ is not equal to $(\lambda^+_X)^{\mathcal{N}_X}$.)
  
  Nevertheless, the proof presented in \cite{cover} works just as well with $\mathcal{N}_X$ substituted for $\mathcal{P}_\beta$.
  
  For those readers not content with this answer, we want to point out that there is an easy cheat available to us in this case as (\ref{smallness}) implies that $\lambda_X$ must be a cutpoint in $\mathcal{N}_X$, and hence the iterability of the phalanx reduces to the iterability of $\mathcal{N}_X$. The latter holds as $\mathcal{N}_X$ is an iterate of the core model.
\end{proof}

\begin{theorem}\label{dominikthmone}
  Let $\lambda$ be a singular cardinal of countable cofinality. Let $\vec{\kappa} := \<\kappa_n: n < \omega\>$ be a sequence of $K$-cut points cofinal in $\lambda$. Let $\tau_n := (\kappa^+_n)^K$, then $\prod\limits_{n < \omega} \tau_n$ carries a continuous, tree-like scale.
\end{theorem}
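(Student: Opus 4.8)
The plan is to build the scale inside the Jensen--Steel core model $K$ using the fine-structure construction of Section~\ref{sec:finestructure}, and then to transfer its cofinality and continuity to $V$ by the covering apparatus of Lemmas~\ref{drop}, \ref{models}, and \ref{phalanx}.

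\emph{Step 1 (the $K$-scale).} First I would apply the successor-cardinal construction (Theorem~\ref{thmtwo}) with $\mathcal{M}=K$, $\lambda$, and $\vec{\kappa}$. Since $K$ is a premouse all of whose countable hulls are iterable and, by the smallness assumption~(\ref{smallness}) together with Corollary~\ref{smallnesskor}, has no anomalous ordinals, the construction yields (without the anomalous case) a sequence $\langle f_\alpha : \alpha \in C^{\lambda,K}\rangle \subseteq \prod\limits_{n<\omega}\tau_n$ that is $<^*$-increasing, tree-like, and continuous \emph{in $K$}: at every $\beta \in C^{\lambda,K}$ with $\cf^K(\beta)>\omega$ the function $f_\beta$ is the exact upper bound of $\langle f_\alpha : \alpha \in C^{\lambda,K}\cap\beta\rangle$ among functions of $K$. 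Tree-likeness is a property of the functions themselves, so it is inherited verbatim in $V$. By weak covering for $K$ (available under~(\ref{smallness})) we have $(\lambda^+)^K=\lambda^+$, so $C^{\lambda,K}$ has order type $\lambda^+$ and the candidate scale has the correct length.

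\emph{Step 2 (cofinality in $V$).} It remains to show $\langle f_\alpha\rangle$ is cofinal in $\prod\limits_{n<\omega}\tau_n$ as computed in $V$. Fix $g \in \prod\limits_{n<\omega}\tau_n$. Using that the good hulls form a stationary set, pick $X \prec H_\theta$ with the four properties listed after Lemma~\ref{drop} (phalanxes lift through $\sigma_X$, $\card(X)<\lambda$, $X$ tight on $\vec{\kappa}$) and with $g,\vec{\kappa}\in X$. Co-iterating $K_X$ against $K$ and forming, as in Lemmas~\ref{models} and~\ref{phalanx}, the models $\mathcal{N}^X_n$ with $\tau^X_n=((\kappa^X_n)^+)^{\mathcal{N}^X_n}$, the direct limit $\mathcal{N}_X$, and the iterable phalanx $((K_X,\mathcal{N}_X),\lambda_X)$, the key is to align these with the scale. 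Using iterability of the phalanx I would lift $\sigma_X$ to an embedding of $\mathcal{N}_X$ cofinally into a level $\mathcal{M}_{\alpha_X}\eextend K$ with $\alpha_X\in C^{\lambda,K}$, in such a way that each $\mathcal{N}^X_n$ is carried (via $\upsilon^X_n$ followed by the lift) onto the collapse $\mathcal{M}^n_{\alpha_X}=\mathcal{C}_0(\hull^{\mathcal{M}_{\alpha_X}}_{n_{\alpha_X}+1}(\kappa_n\cup\{p_{\alpha_X}\}))$ that defines $f_{\alpha_X}(n)$, with $n_{\alpha_X}$ the soundness degree $k_X$ of Lemma~\ref{models}. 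Since $\ptwimg{\sigma_X}{\tau^X_n}=X\cap\tau_n$ has supremum $\chi_X(\tau_n)$, reading off successor cardinals through this alignment gives $f_{\alpha_X}(n)=\chi_X(\tau_n)$ for all but finitely many $n$. As $g\in X$ and $g(n)<\tau_n$, we have $g(n)<g(n)+1\leq\chi_X(\tau_n)=f_{\alpha_X}(n)$ for almost all $n$, whence $g<^* f_{\alpha_X}$, proving cofinality.

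\emph{Step 3 (continuity in $V$) and the main obstacle.} Let $\beta\in C^{\lambda,K}$ with $\cf^V(\beta)>\omega$. Every cofinal map of $K$ is one of $V$, so $\cf^K(\beta)\geq\cf^V(\beta)>\omega$ and the exact-upper-bound property of Step~1 holds at $\beta$ for functions of $K$; to upgrade this to $V$ I would rerun the covering argument of Step~2 relative to the initial segment $\mathcal{M}_\beta\eextend K$ (itself a premouse with iterable countable hulls and $(\lambda^+)^{\mathcal{M}_\beta}=\beta$) and the sequence $\langle f_\beta(n)\rangle_n$, producing for each $g\in\prod\limits_{n<\omega}f_\beta(n)$ of $V$ an index $\alpha_X<\beta$ with $g<^* f_{\alpha_X}$; thus $\vec{f}\uhr\beta$ is $<^*$-cofinal in $\prod\limits_{n<\omega}f_\beta(n)$. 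The delicate point throughout is precisely the alignment in Step~2: that the covering structures $\mathcal{N}^X_n$ collapse to the hulls $\mathcal{M}^n_{\alpha_X}$ defining the fine-structural scale at a \emph{single} ordinal $\alpha_X$, so that $\chi^{\vec{\tau}}_X$ equals $f_{\alpha_X}$. This forces one to match the degree $k_X$ with the fine-structural $n_{\alpha_X}$, to use the non-totality and drop information of Lemma~\ref{drop} to see that $\mathcal{N}_X$ lifts to a genuine level of $K$ indexed below $(\lambda^+)^K$ rather than to an iterate beyond $K$, and to invoke tightness of $X$ on $\vec{\kappa}$ so that the maps $\upsilon^X_n$ assemble into a direct limit whose successor cardinals are read off correctly. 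Everything else is a routine transfer of the $K$-side properties already established in Section~\ref{sec:finestructure}.
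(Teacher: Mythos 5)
Your Steps 1 and 2 are essentially the paper's proof: the scale is $\<f^{\vec{\kappa},K}_\alpha : \alpha \in C^{\lambda,K}\>$ from Section \ref{sec:finestructure}, and cofinality in $V$ is obtained exactly as you describe, by forming $\mathcal{O}^X_n = \ult_{k_X}(\mathcal{N}^X_n;\sigma_X\restr K_X\vert\tau^X_n)$ and $\mathcal{O}_X = \ult_{k_X}(\mathcal{N}_X;\sigma_X\restr K_X\vert\lambda_X)$, realizing these as levels of $K$ via phalanx iterability, and checking that the lifted maps $\pi^X_n$ exhibit $\mathcal{C}_0(\mathcal{O}^X_n)$ as $\hull^{\mathcal{O}_X}_{k_X+1}(\kappa^X_n\cup\{p_{k_X+1}(\mathcal{O}_X)\})$, so that $\sup(X\cap\tau_n)=f^{\vec{\kappa},K}_{\alpha_X}(n)$ almost everywhere (modulo the paper's remark that $\alpha_X$ may fail to lie in $C^{\lambda,K}$, in which case one only gets domination by some $f_\beta$, $\beta\in C^{\lambda,K}$, which suffices).

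Your Step 3, however, rests on a misreading: the continuity lemma of Section \ref{sec:finestructure} is already a $V$-statement about arbitrary sequences $\<\alpha_n : n<\omega\>$ with $\alpha_n < f_\beta(n)$ --- its proof only uses that each $\alpha_n$ is individually collapsed in $\mathcal{M}^n_\beta$ and that $\rho_{n_\beta}(\mathcal{M}_\beta)$ has uncountable cofinality in $V$ (Lemma \ref{fscof}), not that the sequence belongs to the mouse. So no second covering argument is needed for continuity, which is fortunate, because the one you propose would not go through as stated: Lemmas \ref{drop}, \ref{models} and \ref{phalanx} are specific to the co-iteration of $K$ with $K_X=\sigma_X^{-1}[K]$ and cannot simply be rerun ``relative to'' an arbitrary set-sized initial segment $\mathcal{M}_\beta$. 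Dropping Step 3 and citing the Section \ref{sec:finestructure} continuity lemma directly brings your argument in line with the paper's.
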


\begin{proof}
  We will show that $\vec{f} := \<f^{\vec{\kappa},K}_\alpha : \alpha \in C^{\lambda,K}\>$ as defined in the last section is that scale. Towards that purpose we need to show that this sequence is cofinal in $\prod\limits_{n < \omega} \tau_n \slash J_{bd}$. Let $g \in \prod\limits_{n < \omega} \tau_n \slash J_{bd}$ be arbitrary. Let $X \prec (H_\theta;\in,K \vert\vert\theta,\vec{f})$ be of good type, as explained at the beginning of this section, with $g \in X$. It will suffice to show that there is some $\alpha_X$ such that $f^{\vec{\kappa},K}_{\alpha_X}(n) = \sup(X \cap \tau_n)$ for all but finitely many $n$.
  
  Let $\<\mathcal{N}^X_n,\upsilon^X_{n,m}: n_X \leq n \leq m < \omega\>$ and $\<\mathcal{N}_X,\upsilon^X_n: n_X \leq n < \omega\>$ be as previously discussed appropriate to our choice of $\vec{\kappa}$ and $X$.
  
  The first step will be to show that we can realize the least level of $K$ to define a surjection onto $\sup(X \cap \tau_n)$ by taking an ultrapower of $\mathcal{N}^X_n$ for $n \geq n_X$. Let $\mathcal{O}^X_n := \ult_{k_X}(\mathcal{N}^X_n; \sigma_X \restr K_X \vert \tau^X_n)$ and $\tilde{\sigma}^X_n$ be the ultrapower map for $n \geq n_X$. ( This ultrapower is formed using equivalence classes $\left[f,a\right]_{\sigma_X}$ where $a \in \finsubsets{\kappa_n}$ and $f$ is a function with domain $\left[\kappa^X_n\right]^{\vert a\vert}$ that is $r\Sigma_{k_X}$-definable over $\mathcal{N}^X_n$.)
  
  We do know that these models are wellfounded, in fact, the phalanx $((K,\mathcal{O}^X_n),\kappa_n)$ must be iterable. (This is $(2)_\beta$ from \cite{cover} or \cite{covernoclosure}, where $\kappa^X_n = \aleph^{K_X}_\beta$.) This means that $\mathcal{O}^X_n$ is an inital segment of $K$. Furthermore, $\mathcal{O}^X_n$ is sound above $\kappa_n$, and $\tilde{\sigma}^X_n(\tau^X_n) = \sup(X \cap \tau_n)$ is a cardinal there by the choice of $\mathcal{N}^X_n$. This means that $\mathcal{O}^X_n$ is the level of $K$ we are looking for.
  
  The next step must be to tie the sequence $\<\mathcal{O}^X_n : n_X \leq n < \omega\>$ to some level of $K$ projecting to $\lambda$. Our candidate is $\mathcal{O}_X := \ult_{k_X}(\mathcal{N}_X; \sigma_X \restr K_X \vert \lambda_X)$. Let $\tilde{\sigma}_X$ be the ultrapower map. By Lemma \ref{phalanx} and the lifting properties of our $X$ not only is $\mathcal{O}_X$ wellfounded, but it is an initial segment of the core model. Let $\alpha_X := (\lambda^+)^{\mathcal{O}_X}$.
  
  The last thing we need are appropriate embeddings from $\mathcal{O}^X_n$ into $\mathcal{O}_X$ for $n \geq n_X$. Define $\pi^X_n:\mathcal{C}_0(\mathcal{O}^X_n) \rightarrow \mathcal{C}_0(\mathcal{O}_X)$: let $\pi^X_n(\left[f,a\right]_{\sigma_X}) = \left[\upsilon^X_n(f) \restr \finsubsets{\kappa^X_n},a\right]_{\sigma_X}$. It is to be understood here that if $f$ is not an element of $\mathcal{C}_0(\mathcal{N}^X_n)$ but merely definable over it, then $\upsilon^X_n(f)$ is the function over $\mathcal{C}_0(\mathcal{N}_X)$ with the same definition and parameters moved according to $\upsilon^X_n$.
  
  Let now $f$ an $r\Sigma_{k_X}$ definable function over $\mathcal{C}_0(\mathcal{N}^X_n)$, $\phi$ an $r\Sigma_{k_X}$-formula, and $a \in \finsubsets{\kappa_n}$.
  
  \begin{align*}
      \mathcal{C}_0(\mathcal{O}^X_n) \models \phi(\left[f,a\right]_{\sigma_X}) & \Leftrightarrow a \in \sigma_X(\{b \in \finsubsets{\kappa^X_n}\vert \mathcal{N}^X_n \models \phi(f(b))\}) \\
                                                                & \Leftrightarrow a \in \sigma_X(\{b \in \finsubsets{\kappa^X_n}\vert \mathcal{N}_X \models \phi(\upsilon^X_n(f)(b))\}) \\
                                                                & \Leftrightarrow \mathcal{C}_0(\mathcal{O}_X) \models \phi(\left[\upsilon^X_n(f) \restr \finsubsets{\kappa^X_n},a\right]_{\sigma_X})
  \end{align*}
  
  This shows that $\pi^X_n$ is $r\Sigma_{k_X}$-elementary. Consider then the following diagram:
  
  \[
  \xymatrix{                                                                      & & \mathcal{C}_0(\mathcal{O}_X)                     \\
             \mathcal{C}_0(\mathcal{N}_X) \ar[urr]^{\tilde{\sigma}_X}                            & &                                   \\
                                                                                  & & \mathcal{C}_0(\mathcal{O}^X_n) \ar[uu]_{\pi^X_n} \\
             \mathcal{C}_0(\mathcal{N}^X_n) \ar[uu]_{\upsilon^X_n} \ar[urr]^{\tilde{\sigma^X_n}} & &}\]
  
  The diagram commutes, and all of $\upsilon^X_n,\tilde{\sigma_X},\tilde{\sigma^X_n}$ are cofinal (in $\rho_{k_X}(\cdot)$). Thus so is $\pi^X_n$ which shows that it is $r\Sigma_{k_X + 1}$-elementary. Also note that the critical point of $\pi^X_n$ is $\groessergleich\kappa_n$.
  
  It then follows that $\mathcal{C}_0(\mathcal{O}^X_n)$ is isomorphic to $\hull^{\mathcal{O}_X}_{k_X + 1}(\kappa^X_n \cup \{p_{k_X + 1}(\mathcal{O}_X)\})$, so $\sup(X \cap \tau_n) = f^{\vec{\kappa},K}_{\alpha_X}(n)$ for $n \geq n_X$.
\end{proof}

\begin{remark}
   The last line is inaccurate, as it seems possible that $\alpha_X \notin C^{\lambda,K}$ meaning $f^{\vec{\kappa},K}_{\alpha_X}$ might not be defined. Nevertheless the structure $\mathcal{O}^X_n$ is definable from $\alpha_X$ and $\kappa_n$ in $K$ which implies that the sequence $\<\sup(X \cap \tau_n): n_X \leq n < \omega\>$ is dominated by some $f^{\vec{\kappa},K}_\beta$ for $\beta \in C^{\lambda,K}$. 
\end{remark}

\begin{corollary}
   In the above situation $\alpha_X = \sup(X \cap \lambda^+)$.
\end{corollary}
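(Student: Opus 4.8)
The plan is to pin down $\alpha_X=(\lambda^+)^{\mathcal{O}_X}$ by comparing how the ultrapower map $\tilde\sigma_X$ and the inverse collapse $\sigma_X$ act on the ordinals below $(\lambda^+_X)^{K_X}$. First I would extract three facts from the proof of Theorem \ref{dominikthmone}: (i) $\mathcal{O}_X \eextend K$ is sound and projects to $\lambda$, so its largest cardinal is $\lambda$ and $\alpha_X=(\lambda^+)^{\mathcal{O}_X}=\on\cap\mathcal{O}_X$; (ii) the direct limit map $\tilde\sigma_X\colon\mathcal{C}_0(\mathcal{N}_X)\to\mathcal{C}_0(\mathcal{O}_X)$ is cofinal into $\rho_{k_X}(\mathcal{O}_X)$, and since $\rho_{k_X+1}(\mathcal{O}_X)=\lambda$ while $\rho_{k_X}(\mathcal{O}_X)$ is a cardinal of $\mathcal{O}_X$ above $\lambda$, we get $\rho_{k_X}(\mathcal{O}_X)=\alpha_X$ and hence $\alpha_X=\sup\tilde\sigma_X[\on\cap\mathcal{N}_X]$; and (iii) $\mathcal{N}_X$ agrees with $K_X$ up to and including $(\lambda^+_X)$, so that $\on\cap\mathcal{N}_X=(\lambda^+_X)^{\mathcal{N}_X}=(\lambda^+_X)^{K_X}$.

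Next I would compute $\sup(X\cap\lambda^+)$ on the collapse side. Because $\lambda$ is a singular cardinal, the weak covering property of the Jensen--Steel core model gives $(\lambda^+)^K=\lambda^+$; therefore $\sigma_X^{-1}(\lambda^+)=(\lambda^+_X)^{K_X}$, and $\sigma_X$ restricts to an order isomorphism of $(\lambda^+_X)^{K_X}$ onto the ordinals of $X$ below $\lambda^+$. This yields $\sup(X\cap\lambda^+)=\sup\sigma_X[(\lambda^+_X)^{K_X}]$.

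It then remains to show $\sup\tilde\sigma_X[(\lambda^+_X)^{K_X}]=\sup\sigma_X[(\lambda^+_X)^{K_X}]$. Both maps are induced by the single extender $E=\sigma_X\restr K_X|\lambda_X$, which measures exactly the subsets of $\lambda_X$ lying in $\mathcal{N}_X|\lambda_X=K_X|\lambda_X$. The canonical factor embedding $k\colon\mathcal{C}_0(\mathcal{O}_X)\to K$ satisfies $k\circ\tilde\sigma_X=\sigma_X$ on ordinals below $(\lambda^+_X)^{K_X}$; since $\mathcal{O}_X\eextend K$ is genuinely an initial segment (this is where the iterability of the phalanx $((K_X,\mathcal{N}_X),\lambda_X)$ from Lemma \ref{phalanx} is used, to realize $\mathcal{O}_X$ inside $K$ rather than merely embedding it), $k$ must be the identity on $\on\cap\mathcal{O}_X=\alpha_X$. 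Consequently $\tilde\sigma_X(\nu)=\sigma_X(\nu)$ for all $\nu<(\lambda^+_X)^{K_X}$, and combining with (ii) and the previous paragraph gives $\alpha_X=\sup\tilde\sigma_X[(\lambda^+_X)^{K_X}]=\sup\sigma_X[(\lambda^+_X)^{K_X}]=\sup(X\cap\lambda^+)$.

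The main obstacle is precisely this last step: upgrading the easy pointwise inequality $\tilde\sigma_X(\nu)\le\sigma_X(\nu)$ (immediate from $k\circ\tilde\sigma_X=\sigma_X$ together with $k$ being order preserving and non-decreasing) to genuine equality of suprema, which requires knowing that $k$ is trivial up to $\alpha_X$. If one prefers to bypass the factor-map bookkeeping, there is a parallel route through the scale: using the tightness of $X$ on $\vec{\tau}$ in place of internal approachability in Lemma \ref{lem:IAcharFunc}, the function $n\mapsto\sup(X\cap\tau_n)=f^{\vec\kappa,K}_{\alpha_X}(n)$ is the $<^*$-exact upper bound of $\langle f^{\vec\kappa,K}_\alpha\mid\alpha\in C^{\lambda,K}\cap\sup(X\cap\lambda^+)\rangle$; since $\vec{f}$ is continuous and strictly $<^*$-increasing and $\sup(X\cap\lambda^+)$ has uncountable cofinality for the good $X$ (taken from an internally approachable chain as in \cite{covernoclosure}), uniqueness of exact upper bounds at continuity points forces the index $\alpha_X$ to coincide with $\sup(X\cap\lambda^+)$.
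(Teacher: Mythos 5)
Your primary (fine-structural) route does not go through, but the ``parallel route'' you offer as a fallback is, essentially verbatim, the paper's proof: by continuity $f^{\vec{\kappa},K}_{\sup(X\cap\lambda^+)}$ is an exact upper bound of $\langle f^{\vec{\kappa},K}_\beta : \beta<\sup(X\cap\lambda^+)\rangle$; by tightness of $X$ the function $n\mapsto\sup(X\cap\tau_n)$ is another exact upper bound of the same sequence; Theorem \ref{dominikthmone} identifies the latter with $f^{\vec{\kappa},K}_{\alpha_X}$ mod finite; and since the scale is strictly $<^*$-increasing the two indices must coincide. Had you led with that argument, this would be a straightforward match.

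The factor-map route has several concrete gaps. First, there is no map $k$ with $k\circ\tilde{\sigma}_X=\sigma_X$: $\sigma_X$ is defined only on $H_X$, whereas $\mathcal{N}_X$ is an iterate of $K$ (or a proper end-extension of $K_X\vert(\lambda_X^+)^{K_X}$ inside $K$) whose ordinals above $\lambda_X$ are in general not in $H_X$ at all. Even on the common part of the domains, pointwise equality $\tilde{\sigma}_X(\nu)=\sigma_X(\nu)$ is false: for $\nu\in\left[\lambda_X,(\lambda_X^+)^{K_X}\right)$ the ultrapower by the extender derived from $\sigma_X\restr K_X\vert\lambda_X$ computes $\tilde{\sigma}_X(\nu)=\sup\ptwimg{\sigma_X}{\nu}$, which is strictly below $\sigma_X(\nu)$ at every discontinuity point of $\sigma_X$ --- this discrepancy is the whole point of the covering argument, and it is why the paper works with suprema of characteristic functions rather than with images of ordinals. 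Second, $\on\cap\mathcal{N}_X=(\lambda_X^+)^{K_X}$ fails in general: in Case 1 of Lemma \ref{models}, $\mathcal{N}_X=\mathcal{N}^*$ is the \emph{least} level of $K$ end-extending $K_X\vert(\lambda_X^+)^{K_X}$ that projects below $\lambda_X$, which typically has strictly larger height, and in Case 3 the height of the iterate $\mathcal{M}^X_{\eta^X_n}$ bears no relation to $(\lambda_X^+)^{K_X}$. Third, the identifications $\alpha_X=\on\cap\mathcal{O}_X=\rho_{k_X}(\mathcal{O}_X)$ are unjustified: $(\lambda^+)^{\mathcal{O}_X}$ equals $\on\cap\mathcal{O}_X$ only if $\lambda$ is the largest cardinal of $\mathcal{O}_X$, which is not established, and $\rho_{k_X}(\mathcal{O}_X)$ need only be \emph{some} $\mathcal{O}_X$-cardinal above $\rho_{k_X+1}(\mathcal{O}_X)$, not $(\lambda^+)^{\mathcal{O}_X}$. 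So the chain $\alpha_X=\sup\tilde{\sigma}_X\left[\on\cap\mathcal{N}_X\right]=\sup\sigma_X\left[(\lambda_X^+)^{K_X}\right]$ breaks at every link. Keep the PCF argument and discard the factor-map bookkeeping.
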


\begin{proof}
   By continuity $f^{\vec{\kappa},K}_{\sup(X \cap \lambda^+)}$ is the exact upper bound of $\<f^{\vec{\kappa},K}_\beta: \beta < \sup(X \cap \lambda^+)\>$. On the other hand, as we know that $\vec{f}$ is a scale, by the tightness of $X$ we also know that $\<\sup(X \cap \tau_n) : n < \omega\>$ is also an exact upper bound for this sequence. This implies that both agree almost everywhere, but the latter equals $f^{\vec{\kappa},K}_{\alpha_X}$ almost everywhere. The desired equality then follows.  
\end{proof}

Let us now move on to the second theorem. This one concerns scales on products that concentrate on ordinals that are inaccessible in $K$. We will see that scales on such ordinals are significantly more restricted.

\begin{theorem}\label{dominikthmtwo}
   Let $\lambda$ be a singular cardinal of countable cofinality. Let $\<\kappa_n:n < \omega\>$ be a cofinal sequence such that each $\kappa_n$ is an inaccessible limit of cutpoints of $K$. Assume there is some $\delta < \lambda$ such that ordinals $\beta$ with $\mitord^K(\beta) \geq \delta$ are bounded in each of the $\kappa_n$. Then $\prod\limits_{n < \omega} \kappa_n $ admits a continuous, tree-like scale.
\end{theorem}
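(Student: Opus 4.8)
The plan is to run the covering argument of Theorem~\ref{dominikthmone} in essentially the same shape, but to feed it the \emph{limit-cardinal} ($g$-)construction of Section~\ref{sec:finestructure} in place of the successor ($f$-)construction, with the Mitchell-order hypothesis doing the work of keeping the relevant levels of $K$ under control. First I would fix, in $K$, a parameter sequence $\vec{\alpha} = \<\alpha_n : n < \omega\>$ with $\alpha_n < \kappa_n$, $\sup_n \alpha_n = \lambda$ (e.g.\ $\alpha_n$ a $K$-cutpoint in $(\kappa_{n-1},\kappa_n)$). Since $\vec{\kappa},\vec{\alpha} \in K$, the last lemma of the limit-cardinal subsection supplies an adequate $\beta^*$ and a sequence $\vec{g} := \<g^{\vec{\kappa},\vec{\alpha},K}_\beta : \beta \in C^{\lambda,K}\setminus\beta^*,\ \cf(\beta) > \omega\>$ that is increasing mod finite, tree-like, continuous, and a scale in $\prod_{n<\omega}\kappa_n \cap K$. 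Tree-likeness is an intrinsic property of the sequence and hence absolute, and since $\cf^V(\beta) > \omega$ implies $\cf^K(\beta) > \omega$ every $V$-uncountable-cofinality point of $C^{\lambda,K}$ is a continuity point; so it remains only to show that $\vec{g}$ is cofinal in $\prod_{n<\omega}\kappa_n\slash J_{bd}$ \emph{as computed in $V$}.

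To that end, fix $g \in \prod_{n<\omega}\kappa_n$ in $V$ and choose $X \prec (H_\theta;\in,K\vert\vert\theta,\vec{g},\vec{\alpha})$ of good type (tight on $\vec{\kappa}$, $\card(X) < \lambda$, with the lifting properties of Section~\ref{sec:CMandTLS}) with $g \in X$; such $X$ are stationary. As $\card(X) < \lambda$ and $\vec{\kappa}$ is cofinal in $\lambda$, we have $\card(X) < \kappa_n$ for almost all $n$, so $\gamma_n := \sup(X\cap\kappa_n) < \kappa_n$ there by regularity of $\kappa_n$, while $g(n) < \gamma_n$ for all $n$ since $g \in X$. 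It therefore suffices to show that $\<\gamma_n : n<\omega\>$ is dominated mod finite by some member of $\vec{g}$.

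Now compare $K_X := \ptwimg{\sigma_X^{-1}}{K}$ with $K$; by Lemma~\ref{drop} the tree on the $K_X$-side is trivial, and we obtain the iteration $\mathcal{I}_X$ on $K$. I would repeat the three-case analysis of Lemma~\ref{models}, but reading off the models at the inaccessibles $\kappa^X_n$ rather than the successors $\tau^X_n$: this yields $n_X,k_X$, models $\<\mathcal{N}^X_n : n \geq n_X\>$ agreeing with $K_X$ up to $\kappa^X_n$ (with $\kappa^X_n$ inaccessible and $\mathcal{N}^X_n$ being $(k_X+1)$-sound above $\kappa^X_n$), and cofinal $r\Sigma_{k_X+1}$-maps $\upsilon^X_{n,m}$ of critical point $\geq \kappa^X_n$. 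Forming $\mathcal{O}^X_n := \ult_{k_X}(\mathcal{N}^X_n; \sigma_X \restr K_X\vert\kappa^X_n)$ and $\mathcal{O}_X := \ult_{k_X}(\mathcal{N}_X; \sigma_X \restr K_X\vert\lambda_X)$, the iterability of the phalanxes $((K,\mathcal{O}^X_n),\kappa_n)$ and $((K_X,\mathcal{N}_X),\lambda_X)$ (Lemma~\ref{phalanx}) makes these initial segments of $K$. Here $\tilde{\sigma}^X_n(\kappa^X_n) = \sup(\ptwimg{\sigma_X}{\kappa^X_n}) = \gamma_n$ is inaccessible in $\mathcal{O}^X_n$ but definably singularized over it, so $\mathcal{O}^X_n$ is precisely the least level of $K$ singularizing $\gamma_n$ — that is, the model $\mathcal{M}^n_{\alpha_X}$ of the $g$-construction applied to $\mathcal{O}_X$ with $\alpha_X := (\lambda^+)^{\mathcal{O}_X}$. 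Running the commuting-diagram computation of Theorem~\ref{dominikthmone} with the induced maps $\pi^X_n : \mathcal{C}_0(\mathcal{O}^X_n) \to \mathcal{C}_0(\mathcal{O}_X)$ then shows that $\mathcal{O}^X_n$ is definable in $K$ from $\alpha_X$ and $\kappa_n$, exactly as in the $g$-construction for $\mathcal{O}_X$. As in the Remark following Theorem~\ref{dominikthmone} this gives domination of $\<\gamma_n\>$ mod finite by $g^{\vec{\kappa},\vec{\alpha},K}_\beta$ for some $\beta \in C^{\lambda,K}$; matching the \emph{fixed} parameter $\vec{\alpha}$ to the covering output uses only that $\alpha_n \in X$ forces $\alpha_n < \gamma_n$ together with the parameter-stability of Corollary~\ref{stabilitycor}. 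Since $g(n) < \gamma_n$ for all $n$, we conclude $g <^* g_\beta$, establishing cofinality in $V$.

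The main obstacle is the behaviour of the comparison \emph{at the inaccessibles} $\kappa_n$, and this is where the hypothesis that $\{\beta : \mitord^K(\beta) \geq \delta\}$ is bounded in each $\kappa_n$ is essential. It guarantees that no extender with critical point below $\kappa_n$ has generators overlapping $\kappa_n$ with unboundedly large Mitchell order, so that the cutpoint structure below $\kappa_n$ is preserved through $\mathcal{I}_X$ and the singularizing level $\mathcal{O}^X_n$ genuinely sees $\gamma_n$ as regular with a cofinal map of length $<\kappa_n$ definable over it — which is what forces $\mathcal{O}^X_n = \mathcal{M}^n_{\alpha_X}$ and keeps the critical points of the $\upsilon^X_{n,m}$ and $\pi^X_n$ above $\kappa^X_n$. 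Verifying the three-case analogue of Lemma~\ref{models} at the inaccessibles, and checking that the Mitchell-order bound eliminates the problematic overlapping case, is the technical heart; the rest is a routine transcription of the successor-cardinal proof, after which mixing the two constructions (as noted at the end of Section~\ref{sec:finestructure}) handles sequences with both successor and inaccessible entries.
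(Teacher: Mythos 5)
Your overall architecture (run the covering argument of Theorem \ref{dominikthmone} against the limit-cardinal $g$-construction) is the paper's, but there is a genuine gap at the one point where this theorem differs from the successor case, and you have misidentified what the Mitchell-order hypothesis is for. The scale values $g_\beta(n)$ are by construction ordinals that are regular in $\mathcal{M}^n_\beta$ but definably singularized over it; in particular every value attained by the scale is singular in $K$. So for $\gamma_n := \sup(X \cap \kappa_n)$ to be captured at all, $\gamma_n$ must be singular in $K$ --- and this can fail (think of a $K$-measurable of high Mitchell order whose $V$-cofinality has been changed). The paper handles this by additionally requiring $\cof(\gamma_n) > \delta$ of the structures $X$ and then invoking the Mitchell--Schimmerling theorem (Theorem \ref{cofinalities}): a $K$-regular $\alpha$ with $\cof(\alpha) < \card(\alpha)$ has $\mitord^K(\alpha) \geq \nu$ where $\cof(\alpha) = \omega\cdot\nu$, so the hypothesis that $\{\beta : \mitord^K(\beta) \geq \delta\}$ is bounded in $\kappa_n$ forces $\gamma_n$ to be $K$-singular. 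That $K$-singularity is then used again, essentially, in the lemma producing the parameters $\tilde{\alpha}^X_n < \kappa^X_n$ with $\hull^{\mathcal{N}^X_n}_{k_X+1}(\tilde{\alpha}^X_n \cup \{p_{k_X+1}(\mathcal{N}^X_n)\})$ cofinal in $\kappa^X_n$: in the limit case of the iteration one must rule out a club of $K$-regular indiscernibles below $\gamma_n$, which is done by intersecting with a club of $K$-singulars. Your proposal never imposes the cofinality requirement on $X$, never cites Theorem \ref{cofinalities}, and asserts without justification that ``$\gamma_n$ is inaccessible in $\mathcal{O}^X_n$ but definably singularized over it''; the story you tell about the hypothesis (``no extender with critical point below $\kappa_n$ has generators overlapping $\kappa_n$ with unboundedly large Mitchell order'') is not the mechanism --- overlaps are already excluded by the global smallness assumption (\ref{smallness}).

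A secondary issue: you fix $\vec{\alpha}$ in advance (cutpoints in the intervals $(\kappa_{n-1},\kappa_n)$) and hope to match it to the covering output via Corollary \ref{stabilitycor}. That corollary only lets you \emph{increase} parameters below the already-computed value $g_\beta(n)$ without changing the function; it does not show that your pre-chosen $\alpha_n$, which may be far below the covering parameter $\alpha^X_n$, yields a hull whose supremum in $\kappa_n$ is still $\gamma_n$ rather than something strictly smaller. The paper instead lets the parameters $\alpha^X_n$ depend on $X$, uses Corollary \ref{stabilitycor} only to replace them by a sequence inside $X$, and then presses down (Fodor) over a stationary family of $X$'s to extract a single $\vec{\alpha}$ that works on a stationary set --- that last step is needed and is missing from your argument.
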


Let from now on $\vec{\kappa} := \<\kappa_n : n < \omega\>$ and $\delta < \lambda$ be as in the statement of the theorem. As this theorem deals with scales on ordinals which are inaccessible in $K$ we will have need of a theorem that provides information about the possible cofinalites of such ordinals. In general, we cannot expect these cofinalities to be high because of the existence of Prikry forcing. The next theorem essentially states that this is the only real obstacle. Versions of this theorem for different forms of the core model have existed for some time, but its newest form appropriate for the Jensen-Steel core model is due to Mitchell and Schimmerling.

\begin{theorem}[Mitchell-Schimmerling]\label{cofinalities}
   Assume there is no inner model with a Woodin cardinal, and let $K$ be the Jensen-Steel core model. Let $\alpha \geq \aleph_2$ be such that $\alpha$ is regular in $K$, but $\cof(\alpha) < \card(\alpha)$. Then $\mitord^K(\alpha) \geq \nu$ where $\cof(\alpha) = \omega \cdot \nu$.
\end{theorem}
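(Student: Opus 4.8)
The plan is to run the Jensen--Steel covering argument at $\alpha$ and to read the Mitchell order off the resulting system of indiscernibles; this is the covering-lemma method of Mitchell, specialized to the Jensen--Steel core model as in \cite{cover},\cite{covernoclosure}. Fix a cofinal $A \subseteq \alpha$ with $\otp(A) = \cof(\alpha)$ and, as set up at the beginning of this section, choose a hull $X \prec H_\theta$ of good type with $A \in X$, $\alpha \in X$, and $\card(X) = \cof(\alpha) < \card(\alpha)$. Let $\sigma := \sigma_X \colon K_X \to K$ be the anticollapse and $\bar\alpha := \sigma^{-1}(\alpha)$; since $A \subseteq X$ is cofinal in $\alpha$ of order type $\cof(\alpha)$, the map $\sigma$ sends $\bar\alpha$ cofinally into $\alpha$, so that $\bar\alpha$ is regular in $K_X$ yet $\cof(\bar\alpha) = \cof(\alpha)$, and $\crit(\sigma) < \alpha$. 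Comparing $K_X$ with $K$ and invoking the phalanx iterability (the properties labelled $(2)_\alpha$ in \cite{cover}) together with Lemma \ref{drop}, the $K_X$-side of the coiteration is trivial and we obtain an iteration tree on $K$ with a factor map; because $\alpha$ is regular in $K$ but $\cof(\alpha) < \card(\alpha)$, weak covering cannot account for the uncovered cofinal set $A$, forcing the comparison to apply extenders with critical points cofinal in $\alpha$.

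The heart of the matter is to convert this failure of covering into measures \emph{on} $\alpha$ appearing on the $K$-sequence. Following Mitchell's construction of the covering system, the uncovered set $A$ gives rise to a Magidor/Prikry-type system of indiscernibles $C \subseteq \alpha$, cofinal in $\alpha$, generated by a $\mitord$-increasing sequence of measures $\langle U_\xi \colon \xi < \nu\rangle$ on the $K$-sequence with critical point $\alpha$; for $\cof(\alpha) = \omega$ this degenerates to a single normal measure and the classical Prikry/Dodd--Jensen analysis. I would then argue by induction on $\nu$, where $\cof(\alpha) = \omega\cdot\nu$, that the nesting of $C$ forces $\mitord^K(\alpha) \ge \nu$. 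In the base case $\nu = 1$ a single $\omega$-cofinal indiscernible block yields one measure on $\alpha$, so $\mitord^K(\alpha) \ge 1$. At a successor step $\nu = \mu+1$, the topmost $\omega$-block of $C$ produces a measure $U$ on $\alpha$, while the residual part of $C$ lying below the generators of $U$ witnesses, through $\ult(K,U)$, that $\alpha$ already carries a system of the same kind of length $\mu$ in the Mitchell order below $U$, whence $\mitord^K(\alpha) \ge \mu+1$. Limit stages follow by taking suprema. Throughout, the absence of an inner model with a Woodin cardinal guarantees that $K$ exists, that all relevant coiterations are well-behaved and terminate, and that no proper initial segment obstructs reading off the measures.

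The main obstacle is exactly the correspondence used in the previous paragraph: matching the \emph{nesting type} of the cofinal system of indiscernibles to the \emph{Mitchell order} of the measures on $\alpha$ that support it. Extracting the coherent system $C$ from the comparison, verifying that its associated measures genuinely lie on the $K$-sequence and are $\mitord$-increasing of length exactly $\nu$, and controlling the behaviour of $C$ at its limit points (where measures of correspondingly higher Mitchell order must appear) constitutes the delicate technical core. It is here that the full strength of the Jensen--Steel covering machinery --- weak covering, phalanx iterability, and the maximality of $K$ --- is required, and it is precisely this analysis that is carried out by Mitchell and Schimmerling.
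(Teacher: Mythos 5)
The paper does not prove this theorem at all: it is imported as a black box, with the text immediately following the statement reading ``See \cite{MitSchim}. Alternatively, \ldots the results from \cite{Sean} even though not directly applicable can be mimicked here,'' and it is then used as an external ingredient in the proof of Theorem \ref{dominikthmtwo}. So there is no internal proof to compare your attempt against, and the question becomes whether your proposal is itself a proof. It is not: it is an accurate outline of the strategy of the Mitchell--Schimmerling argument, but the decisive step --- converting the failure of covering at $\alpha$ into a genuine $\mitord$-increasing sequence of measures of length $\nu$ with critical point $\alpha$ \emph{on the $K$-sequence}, and proving the exact correspondence between the nesting structure of the indiscernible system $C$ and the Mitchell order (in particular the limit-point analysis at limit stages of your induction) --- is precisely the mathematical content of the theorem, and your own text concedes that ``it is precisely this analysis that is carried out by Mitchell and Schimmerling.'' A proof whose core is deferred to the theorem's authors is a citation in disguise, which is legitimate (it is what the paper does), but it should be presented as such rather than as an argument.

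One concrete inaccuracy in the mechanism you describe: in the covering analysis the coiteration of $K_X$ with $K$ is trivial on the $K_X$ side (this is $(1)_\alpha$ of \cite{cover}, as the paper notes), and the indiscernibles do \emph{not} arise because ``the comparison applies extenders with critical points cofinal in $\alpha$'' --- in the relevant configurations the $K$-side iteration can be trivial or bounded as well. Rather, the indiscernible system comes from the Magidor-style analysis of how the mice (and, below a Woodin, the protomice that the paper is at pains to avoid in Remark \ref{modelsrmk}) covering $X \cap \alpha$ lift through $\sigma_X$; the measures whose Mitchell order is being counted are those applied in iterating the covering structures, with the indiscernibles as the images of their critical points. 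This is also where the hypothesis ``no inner model with a Woodin cardinal'' earns its keep, since the Jensen--Steel covering machinery must be run for iteration trees rather than linear iterations in general --- which is why the authors remark that \cite{Sean} is not directly applicable even though the iterations arising under the paper's smallness assumption (\ref{smallness}) happen to be linear.
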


See \cite{MitSchim}. Alternatively, as we only deal with linear iterations here it should be plausible that the results from \cite{Sean} even though not directly applicable can be mimicked here to achieve a similar end.

Let us now again consider some $X \prec (H_\theta;\in,\ldots)$ containing relevant objects. In addition to its previous properties we will require that $\cof(\sup(X \cap \kappa_n)) > \delta$. Note then that by our assumption and \ref{cofinalities}, and this fact will be crucial, $\sup(X \cap \kappa_n)$ is a singular cardinal in $K$.

We will once again have need of the directed system $\<\mathcal{N}^X_{n,m}, \upsilon^X_{n,m} : n_X \leq n \leq m < \omega\>$ and its limit $\<\mathcal{N}_X,\upsilon^X_n: n_X \leq n < \omega\>$, but we will require some additional properties.

\begin{lemma}
   There exist $\tilde{\alpha}^X_n < \kappa^X_n$ such that $\hull^{\mathcal{N}^X_n}_{k_X + 1}(\tilde{\alpha}^X_n \cup \{p_{k_X + 1}(\mathcal{N}^X_n)\})$ is cofinal in $\kappa^X_n$.
\end{lemma}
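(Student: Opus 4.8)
The plan is to transport the statement to the core model along the map $\tilde\sigma^X_n$ and exploit the crucial fact that $\bar\kappa_n := \sup(X \cap \kappa_n)$ is a \emph{singular} cardinal of $K$. Recall from the construction of the directed system that $\tilde\sigma^X_n : \mathcal{C}_0(\mathcal{N}^X_n) \to \mathcal{C}_0(\mathcal{O}^X_n)$ is an $r\Sigma_{k_X + 1}$-elementary and cofinal map into an initial segment $\mathcal{O}^X_n \eextend K$, that $\tilde\sigma^X_n(\kappa^X_n) = \bar\kappa_n$, and that $\ptwimg{\tilde\sigma^X_n}{\kappa^X_n}$ is cofinal in $\bar\kappa_n$. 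Since $\mathcal{N}^X_n$ is $(k_X + 1)$-sound above $\kappa^X_n$ and the generators of the defining extender lie below $\bar\kappa_n$, the map being cofinal yields that $\mathcal{O}^X_n$ is $(k_X + 1)$-sound above $\bar\kappa_n$, i.e. $\mathcal{C}_0(\mathcal{O}^X_n) = \hull^{\mathcal{O}^X_n}_{k_X + 1}(\bar\kappa_n \cup \{\bar p\})$ where $\bar p := \tilde\sigma^X_n(p_{k_X + 1}(\mathcal{N}^X_n))$.

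The heart of the argument is to produce a single $\beta < \bar\kappa_n$ for which $\hull^{\mathcal{O}^X_n}_{k_X + 1}(\beta \cup \{\bar p\})$ is already cofinal in $\bar\kappa_n$; that is, to show $\bar\kappa_n$ is definably singularized over $\mathcal{O}^X_n$ at level $k_X + 1$. This is where singularity enters. Because $\bar\kappa_n$ is a singular cardinal of $K$, there is a cofinal map $c \colon \cof^K(\bar\kappa_n) \to \bar\kappa_n$ lying in $K$; the point is that such a $c$ is already an element of $\mathcal{O}^X_n$. This in turn follows from the identification of $\mathcal{O}^X_n$ provided by Remark \ref{modelsrmk} together with the cutpoint hypothesis: $\mathcal{O}^X_n$ is a sufficiently tall level of $K$, computing $(\bar\kappa_n^+)^{\mathcal{O}^X_n} = \sup(X \cap \tau_n)$, which under the present hypotheses agrees with $(\bar\kappa_n^+)^K$, so every subset of $\bar\kappa_n$ lying in $K$ — in particular a code for $c$ — is an element of $\mathcal{O}^X_n$. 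Once $c \in \mathcal{O}^X_n$, soundness above $\bar\kappa_n$ gives a term $\tau$ and an ordinal $\xi < \bar\kappa_n$ with $c = \tau^{\mathcal{O}^X_n}(\xi, \bar p)$, whence $\ran(c) \subseteq \hull^{\mathcal{O}^X_n}_{k_X + 1}(\beta \cup \{\bar p\})$ for $\beta := \max(\xi, \cof^K(\bar\kappa_n)) + 1 < \bar\kappa_n$, and this hull is cofinal in $\bar\kappa_n$ since $\ran(c)$ is. I expect verifying that $c \in \mathcal{O}^X_n$ — i.e. that the singularity of $\bar\kappa_n$ is witnessed no later than the level $\mathcal{O}^X_n$ — to be the main obstacle, and it is precisely here that Theorem \ref{cofinalities}, the bound $\cof(\bar\kappa_n) > \delta$, and the weak covering machinery of \cite{cover} are needed.

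Finally I would pull the conclusion back along $\tilde\sigma^X_n$. Using the cofinality of $\ptwimg{\tilde\sigma^X_n}{\kappa^X_n}$ in $\bar\kappa_n$, pick $\tilde\alpha^X_n < \kappa^X_n$ with $\tilde\sigma^X_n(\tilde\alpha^X_n) > \beta$, so that $\hull^{\mathcal{O}^X_n}_{k_X + 1}(\tilde\sigma^X_n(\tilde\alpha^X_n) \cup \{\bar p\}) \supseteq \hull^{\mathcal{O}^X_n}_{k_X + 1}(\beta \cup \{\bar p\})$ is cofinal in $\bar\kappa_n$. Transfer is by the value computation in the proof of Lemma \ref{fscof}: for $\alpha < \kappa^X_n$ the ordinal $\sup(\hull^{\mathcal{N}^X_n}_{k_X + 1}(\alpha \cup \{p\}) \cap \kappa^X_n)$ is, whenever it is $< \kappa^X_n$, definable over $\mathcal{N}^X_n$ from $\alpha$ and $p$ via the $\Th^{\mathcal{N}^X_n}_{k_X}$-predicate, and $\tilde\sigma^X_n$ sends it to the corresponding supremum computed over $\mathcal{O}^X_n$. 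Hence if the $\mathcal{N}^X_n$-hull of $\tilde\alpha^X_n \cup \{p\}$ were bounded in $\kappa^X_n$, its image supremum would be $< \bar\kappa_n$, contradicting that the $\mathcal{O}^X_n$-hull of $\tilde\sigma^X_n(\tilde\alpha^X_n) \cup \{\bar p\}$ is cofinal in $\bar\kappa_n$. Therefore $\hull^{\mathcal{N}^X_n}_{k_X + 1}(\tilde\alpha^X_n \cup \{p\})$ is cofinal in $\kappa^X_n$, as required.
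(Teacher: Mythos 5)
There is a genuine gap at the central step. You reduce everything to the claim that a $K$-cofinal map $c\colon \cof^K(\bar\kappa_n)\to\bar\kappa_n$ (where $\bar\kappa_n=\sup(X\cap\kappa_n)$) is an \emph{element} of $\mathcal{O}^X_n$, and you justify this by asserting that $\mathcal{O}^X_n$ computes $(\bar\kappa_n^+)^K$ correctly. That assertion is not available and runs against the design of the construction: the structures $(\mathcal{O}^X_n)^*$ are identified in the proof of Theorem \ref{dominikthmtwo} as the \emph{least} initial segments of $K$ over which a witness to the singularity of $\bar\kappa_n$ is definable, so such a witness is definable over $(\mathcal{O}^X_n)^*$ but is precisely \emph{not} an element of it (if it were, a shorter level would already contain it), and $(\bar\kappa_n^+)^{(\mathcal{O}^X_n)^*}$ is in general strictly smaller than $(\bar\kappa_n^+)^K$. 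If you retreat to the weaker claim that $\bar\kappa_n$ is $r\Sigma_{k_X+1}$-definably singularized over $(\mathcal{O}^X_n)^*$ from parameters in $\bar\kappa_n\cup\{\bar p\}$, you are assuming exactly the conclusion of the lemma in its lifted form: in the paper, that definable singularization is a \emph{consequence} of the lemma (via the $\bar\sigma^X_n$-image of the cofinal hull), not an input to it. The transfer back along the ultrapower map at the end is fine, but the heart of the argument is missing.

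The paper proves the lemma on the $K_X$-side, by inspecting the iteration producing $\mathcal{N}^X_n=\mathcal{M}^X_{\eta^X_n}$. In Cases 1 and 2, and in Case 3 with $\eta^X_n$ a successor, soundness of the relevant model above a single ordinal $\kleiner\kappa^X_n$ gives the result at once (using that $\kappa^X_n$ is a limit cardinal, so the length of the last extender used lies below $\kappa^X_n$). The only delicate case is $\eta^X_n$ a limit, where $\kappa^X_n$ is a limit of critical points of the tail of the iteration; there one must rule out that $\kappa^X_n=\iota^X_{\beta,\eta^X_n}(\bar\kappa)$ with the thread $\iota^X_{\beta,\xi}(\bar\kappa)$ equal to $\crit(\iota^X_{\xi,\eta^X_n})$ unboundedly often, for otherwise the hull could indeed be bounded. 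This is where the $K$-singularity of $\bar\kappa_n$ enters, but in a different way than you use it: a bad thread would produce a club of indiscernibles (hence $K$-regulars) in $\kappa^X_n$, whose $\sigma_X$-image is an $\omega$-club of $K$-regulars in $\bar\kappa_n$, contradicting that $\bar\kappa_n$ is singular in $K$ and so carries a club of $K$-singulars. Eventual strictness then gives continuity of $\iota^X_{\nu,\eta^X_n}$ at the thread, and the cofinal hull follows from the soundness of $\mathcal{M}^X_\nu$ above $\crit(\iota^X_{\nu,\eta^X_n})$. Your proposal never engages with this case analysis, which is where the actual content lies.
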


\begin{proof}
  If the system is constructed as in Case 1 and 2 then there is a single $\alpha$ such that $\mathcal{N}^X_n$ (which is independent of $n$) is sound above $\alpha$ so the conclusion follows.
  
  Consider then that the system is constructed as in Case 3. Pick $n \geq n_X$. Recall that $\mathcal{N}^X_n = \mathcal{M}^X_{\eta^X_n}$ and $\gamma + 1$ is the last drop below $\eta^X_n$. Note that by definition of $\eta^X_n$ all critical points before that point are less than $\kappa^X_n$. There are two cases.
  
  \textbf{Case 3.1:} $\eta^X_n = \bar{\eta} + 1$.
  
  In that case as $\kappa^X_n$ is a limit cardinal we must have that $\length(E^X_{\bar{\eta}}) < \kappa^X_n$. Let $\tilde{\alpha}^X_n < \kappa^X_n$ be such that $\mathcal{M}^X_{\eta^X_n}$ is sound above $\tilde{\alpha}^X_n$. 
  
  \textbf{Case 3.2:} $\eta^X_n$ is a limit ordinal.
  
  Let $\gamma < \beta < \eta^X_n$ be such that $\iota^X_{\beta,\eta^X_n}(\bar{\kappa}) = \kappa^X_n$ for some $\bar{\kappa} \in \mathcal{M}^X_\beta$. We must have that $\iota^X_{\beta,\xi}(\bar{\kappa}) \geq \crit(\iota^X_{\xi,\eta^X_n})$ for all $\xi \in \left[\beta,\eta^X_n\right)$. The key is to consider when equality holds in the above equation.
  
  Let us assume towards a contradiction that $\iota^X_{\beta,\xi}(\bar{\kappa}) = \crit(\iota^X_{\xi,\eta^X_n})$ for an unbounded in $\eta^X_n$ set $A$. For $\nu \in \lim(A) \cap \eta^X_n$ we have 
  \begin{equation*}
      \crit(\iota^X_{\nu,\eta^X_n}) \geq \sup\limits_{\xi \in A \cap \nu} \crit(\iota^X_{\xi,\eta^X_n}) = \sup\limits_{\xi \in A \cap \nu} \iota^X_{\beta,\xi}(\bar{\kappa}) = \iota^X_{\beta,\nu}(\bar{\kappa})
  \end{equation*}
  and hence $\nu \in A$. But then $B := \{\iota^X_{\beta,\xi}(\bar{\kappa}) \vert \xi \in A\}$ is a club of indiscernibles in $\kappa^X_n$. As $\sigma_X$ is continuous at points of cofinality $\omega$, $C := \ptwimg{\sigma_X}{B}$ is an $\omega$-club in $\sup(X \cap \kappa_n)$. As the latter was singular there must exist a club $D \subset \sup(X \cap \kappa_n)$ consisting of $K$-singulars. But $C \cap D \neq \emptyset$, and $C$ consists of $K$-regulars. Contradiction!
  
  We conclude that $\crit(\iota^X_{\xi,\eta^X_n}) < \iota^X_{\beta,\xi}(\bar{\kappa})$ for all $\xi \geq \nu$ for some $\nu \in \left[\beta,\eta^X_n\right)$. This means that $\iota^X_{\nu,\eta^X_n}$ is continuous at $\iota^X_{\beta,\nu}(\bar{\kappa})$. We then finish the argument by noticing that $\mathcal{M}^X_\nu$ is $(k_X + 1)$-sound above $\crit(\iota^X_{\nu,\eta^X_n})$, and thus $\hull^{\mathcal{M}^X_{\eta^X_n}}_{k_X + 1}(\crit(\iota^X_{\nu,\eta^X_n}) \cup \{p_{k_X + 1}(\mathcal{M}^X_{\eta^x_n})\})$ is cofinal in $\kappa^X_n$.
  \end{proof}

 \begin{proof}[Proof of Theorem \ref{dominikthmtwo}]
  We want to show that for some $\vec{\alpha}_X := \<\alpha^X_n: n_X \leq n < \omega\> \in X$ the sequence $\<\sup(X \cap \kappa_n): n < \omega\>$ agrees almost everywhere with $g^{\vec{\kappa},K,\vec{\alpha}_X}_{\alpha_X}$. (Implicit here is that $\alpha_X$ will be adequate.) 
  
  Recall the structures $\mathcal{O}^X_n$ from the proof of the preceding theorem. We will need a slightly different structure here. Let $(\mathcal{O}^X_n)^* := \ult_{k_X}(\mathcal{N}^X_n; \sigma_X \restr K_X \vert \kappa^X_n)$. (This ultrapower is formed using equivalence classes $\left[f,a\right]_{\sigma_X}$ where $a \in \finsubsets{\sup(X \cap \kappa_n)}$ and $f$ is a function with domain $\left[\gamma\right]^{\vert a\vert}$ where $\gamma < \kappa_n$ is a cardinal with $a \subset \sigma_X(\gamma)$ and $f$ is $r\Sigma_{k_X}$-definable over $\mathcal{N}^X_n$. Note that functions with different domains can be compared by adding dummy values.)
  
  Let $\bar{\sigma}^X_n$ be the ultrapower map. Note that $\bar{\sigma}^X_n$ maps $\kappa^X_n$ cofinally into $\sup(X \cap \kappa_n)$ so we have $\hull^{(\mathcal{O}^X_n)^*}_{n_X + 1}(\alpha^X_n \cup \{p_{k_X + 1}((\mathcal{O}^X_n)^*)\})$ is cofinal in $\sup(X \cap \kappa_n)$ where $\alpha^X_n := \bar{\sigma}^X_n(\tilde{\alpha}^X_n)$.
  
  The phalanx $((K,(\mathcal{O}^X_n)^*),\sup(X \cap \kappa_n))$ is iterable as $\mathcal{C}_0((\mathcal{O}^X_n)^*)$ can be mapped into $\mathcal{C}_0(\mathcal{O}^X_n)$ by a map with critical point $\sup(X \cap \kappa_n)$, so $(\mathcal{O}^X_n)^*$ is an initial segment of $K$, in fact, the least one to define a witness to the singularity of $\sup(X \cap \kappa^X_n)$.
  
  Just as before we can map $\mathcal{C}_0((\mathcal{O}^X_n)^*)$ into $\mathcal{C}_0(\mathcal{O}_X)$, so $g^{\vec{\kappa},K,\vec{\alpha}_X}_{\alpha_X}(n) = \sup(X \cap \kappa_n)$ for all $n \geq n_X$. We would like to have $\vec{\alpha}_X \in X$. This is obvious if $X$ is $\omega$-closed. If $X$ is merely internally approachable then we can still find some $\vec{\alpha}' \in X \cap \prod\limits_{n < \omega} \sup(X \cap \kappa_n)$ that dominates $\vec{\alpha}_X$ almost everywhere. Then $g^{\vec{\kappa},K,\vec{\alpha}_X}_{\alpha_X}$ and $g^{\vec{\kappa},K,\vec{\alpha}'}_{\alpha_X}$ agree almost everywhere by Corollary \ref{stabilitycor}, so we can replace $\vec{\alpha}_X$ with $\vec{\alpha}'$.
  
  By Fodor's Lemma we then have a stationary set of $X$ and a single $\vec{\alpha}$ such that $g^{\vec{\kappa},K,\vec{\alpha}}_{\alpha_X}$ agrees with $\<\sup(X \cap \kappa_n : n < \omega\>$ almost everywhere. This then shows that $\<g^{\vec{\kappa},K,\vec{\alpha}}_{\alpha} : \alpha \in C \cap \cof(\groesser\omega)\>$ is a scale.
\end{proof}

We are going to finish by showing how to weaken the assumption of Theorem \ref{dominikthmone} yet achieving the same result. It is here that we will make use of the sequence $\<f^{\vec{\kappa},K,\vec{\alpha}}_\alpha: \alpha \in C^{\lambda,K}\>$.

We say a cardinal $\kappa \in K$ is a weak cutpoint if $\crit(E) < \kappa$ implies $\length(E) < (\kappa^+)^K$ for all extenders $E$ on the $K$-sequence.

\begin{theorem}\label{dominikthmthree}
Let $\lambda$ be a singular cardinal of countable cofinality. Let $\<\kappa_n: n < \omega\>$ be a sequence of weak cutpoints cofinal in $\lambda$. Let $\tau_n := (\kappa^+_n)^K$. Then $\prod\limits_{n < \omega} \tau_n $ carries a continuous, tree-like scale.
\end{theorem}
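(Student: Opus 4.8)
The plan is to run the covering argument behind Theorem \ref{dominikthmone} essentially verbatim, with the single modification that the levels $\mathcal{N}^X_n$ produced by the analogue of Lemma \ref{models} are now $(k_X+1)$-sound above an ordinal lying in the interval $[\kappa^X_n,\tau^X_n)$ rather than above $\kappa^X_n$ itself, and to absorb this slack by passing from the plain scale $\langle f^{\vec{\kappa},K}_\alpha\rangle$ to the parametrized scale $\langle f^{\vec{\kappa},\vec{\alpha},K}_\alpha\rangle$ of the previous section. First I would fix an arbitrary $g\in\prod_n\tau_n\slash J_{bd}$ and choose $X\prec(H_\theta;\in,K\|\theta,\langle f^{\vec{\kappa},\vec{\alpha},K}_\alpha\rangle)$ of good type with $g\in X$, exactly as before: $\card(X)<\lambda$, tight on $\vec{\kappa}$, and with the phalanx-lifting property. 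Comparing $K_X$ with $K$ yields the iteration $\mathcal{I}_X$ on $K$, which is linear by the smallness assumption (\ref{smallness}), and Lemma \ref{drop} applies unchanged.

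Next I would re-derive the directed system $\langle\mathcal{N}^X_n,\upsilon^X_{n,m}\rangle$ and its limit $\mathcal{N}_X$. The three cases of Lemma \ref{models} go through, but in Case 3 the cutpoint hypothesis was used precisely to force every generator of $\mathcal{I}_X$ below $\eta^X_n$ to lie beneath $\kappa^X_n$; under the weaker hypothesis this can fail, and resolving it is the main obstacle. The claim I would establish is that under the weak-cutpoint hypothesis the generators of $\mathcal{I}_X$ below $\eta^X_n$ are still bounded by some $\alpha^X_n<\tau^X_n$, so that $\mathcal{N}^X_n$ is $(k_X+1)$-sound above $\alpha^X_n$ with $\kappa^X_n\leq\alpha^X_n<\tau^X_n$. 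Indeed, every $E^X_\delta$ with $\delta<\eta^X_n$ has $\crit(E^X_\delta)<\kappa^X_n$, and if $E^X_\delta$ had a generator $\groessergleich\tau^X_n$ then, restricting to $\tau^X_n$ and completing trivially, the initial segment condition would place on the $K_X$-sequence an extender with critical point $\kleiner\kappa^X_n$ and length $\groessergleich(\kappa^X_n{}^+)^{K_X}$, contradicting that $\kappa^X_n$ is a weak cutpoint. Hence each relevant $\length(E^X_\delta)<\tau^X_n$; in Case 3.1 ($\eta^X_n$ a successor) the single extender $E^X_{\bar\eta}$ gives $\alpha^X_n:=\length(E^X_{\bar\eta})<\tau^X_n$ immediately, while in Case 3.2 ($\eta^X_n$ a limit) the continuity argument of the original proof adapts, using that $\sigma_X$ is continuous at points of cofinality $\omega$ and the regularity of $\tau^X_n$ in $K_X$ to keep the generators bounded below $\tau^X_n$.

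With this in hand I would form the ultrapowers $\mathcal{O}^X_n:=\ult_{k_X}(\mathcal{N}^X_n;\sigma_X\restr K_X\vert\tau^X_n)$ and $\mathcal{O}_X:=\ult_{k_X}(\mathcal{N}_X;\sigma_X\restr K_X\vert\lambda_X)$, set $\alpha_X:=(\lambda^+)^{\mathcal{O}_X}$ and $\alpha_n:=\tilde{\sigma}^X_n(\alpha^X_n)<\sup(X\cap\tau_n)<\tau_n$, and repeat the embedding-and-commuting-diagram computation of Theorem \ref{dominikthmone}. The only change is that $\mathcal{O}^X_n$, the least level of $K$ defining a surjection of $\kappa_n$ onto $\sup(X\cap\tau_n)$ but now only sound above $\alpha_n$, is isomorphic to $\hull^{\mathcal{O}_X}_{k_X+1}(\kappa^X_n\cup\{p_{k_X+1}(\mathcal{O}_X)\concat\langle\alpha_n\rangle\})$ rather than to the hull taken without the extra parameter. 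This is exactly the level named by the parametrized construction, so $\sup(X\cap\tau_n)=f^{\vec{\kappa},\vec{\alpha}_X,K}_{\alpha_X}(n)$ for all $n\geq n_X$.

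Finally, as in the proof of Theorem \ref{dominikthmtwo}, I would arrange $\vec{\alpha}_X\in X$: when $X$ is $\omega$-closed this is automatic, and when $X$ is only internally approachable I would replace $\vec{\alpha}_X$ by a dominating $\vec{\alpha}'\in X\cap\prod_n\sup(X\cap\tau_n)$ and invoke the stability of the parametrized construction under increasing parameters, i.e. the successor-cardinal analogue of Corollary \ref{stabilitycor}. Fodor's Lemma then produces a single $\vec{\alpha}$ and a stationary set of such $X$ with $\sup(X\cap\tau_n)=f^{\vec{\kappa},\vec{\alpha},K}_{\alpha_X}(n)$ almost everywhere, whence $g<^* f^{\vec{\kappa},\vec{\alpha},K}_{\alpha_X}$. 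This shows $\langle f^{\vec{\kappa},\vec{\alpha},K}_\alpha:\alpha\in C^{\lambda,K}\cap\cof(\groesser\omega)\rangle$ is cofinal, hence a scale; it is continuous and tree-like by the general properties of the parametrized construction established in Section \ref{sec:finestructure}. I expect the soundness/generator-bounding step of the second paragraph to be the genuinely new work, the rest being a faithful adaptation of the already-established covering machinery.
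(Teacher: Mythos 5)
Your proposal follows the paper's own proof in all essentials: pass to the parametrized sequence $\la f^{\vec{\kappa},\vec{\alpha},K}_\alpha\ra$, prove a variant of Lemma \ref{models} in which $\mathcal{N}^X_n$ is sound only relative to an extra ordinal parameter below $\tau^X_n$, bound that parameter via the initial segment condition plus the weak-cutpoint hypothesis, and finish with the lifting and pressing-down arguments of Theorems \ref{dominikthmone} and \ref{dominikthmtwo}. One cosmetic remark: in the limit case of Case 3 nothing new happens at all, since by the no-overlap proposition every generator used before a limit $\eta^X_n$ lies below some later critical point and hence below $\kappa^X_n$; the ``continuity argument'' you import from the lemma preceding Theorem \ref{dominikthmtwo} addresses a different issue (cofinality of hulls inside inaccessible $\kappa^X_n$) and is not needed here. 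The entire new content is the successor case $\eta^X_n=\tilde{\eta}^X_n+1$ with $\gen(E^X_{\tilde{\eta}^X_n})\geq\kappa^X_n$, which you do identify, and there your ISC-plus-weak-cutpoint bound agrees with the paper's.

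The one genuine gap is at the step where you assert $\mathcal{C}_0(\mathcal{O}^X_n)\cong\hull^{\mathcal{O}_X}_{k_X+1}(\kappa^X_n\cup\{p_{k_X+1}(\mathcal{O}_X)\concat\la\alpha_n\ra\})$ ``by repeating the commuting-diagram computation.'' Since $\alpha_n>\kappa_n$, this does not follow from $\crit(\pi^X_n)\geq\kappa_n$: one needs the system maps $\upsilon^X_{n,m}$, and hence their lifts $\pi^X_n$, to \emph{fix} the new parameter, for otherwise the hull of $\mathcal{O}_X$ computed with the parameter $\alpha_n$ need not collapse to $\mathcal{O}^X_n$, and the identification of $\sup(X\cap\tau_n)$ with $f^{\vec{\kappa},\vec{\alpha}_X,K}_{\alpha_X}(n)$ breaks down. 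The paper secures exactly this by choosing $\tilde{\alpha}^X_n$ to be the \emph{largest generator} of $E^X_{\tilde{\eta}^X_n}$ (which exists, since an unbounded set of generators in $(\kappa^X_n,\tau^X_n)$ would force a cardinal of $\mathcal{N}^X_n$ into that interval) and observing that iteration maps do not move generators, giving $\crit(\upsilon^X_{n,m})\geq\max\{\kappa^X_n,\tilde{\alpha}^X_n+1\}$; one also needs the generation claim that $\hull^{\mathcal{N}^X_n}_{k_X+1}(\kappa^X_n\cup\{p_{k_X+1}(\mathcal{N}^X_n)\concat\tilde{\alpha}^X_n\})$ is everything, which the paper proves by pulling back a surjection of $\kappa^X_n$ onto $\tilde{\alpha}^X_n$ to recover all generators. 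With your alternative parameter $\alpha^X_n=\length(E^X_{\tilde{\eta}^X_n})$ the same fixing property must additionally be checked (it is less immediate, since a priori only $\crit(E^X_{\eta^X_n})\geq\gen(E^X_{\tilde{\eta}^X_n})$ is given by normality). Once this point is supplied, your argument coincides with the paper's.
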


\begin{lemma}\label{modelsvar}
  There exist some $n_X,k_X < \omega$, a sequence of ordinals $\<\tilde{\alpha}^X_n : n_X \leq n < \omega\>$, a sequence of models  $\<\mathcal{N}^X_n: n_X \leq n < \omega\>$, and maps $\<\upsilon^X_{n,m}: n_X \leq n \leq m < \omega\>$ such that:
  \begin{itemize}
      \item $((\kappa^X_n)^+)^{\mathcal{N}^X_n} = \tau^X_n$ and $\mathcal{N}^X_n$ agrees with $K_X$ up to $\tau^X_n$ for all $n \geq n_X$;
      \item $\mathcal{N}^X_n$ is $(k_X + 1)$-sound above $\kappa^X_n$ relative to $p_{k_X + 1}(\mathcal{N}^X_n) \concat \tilde{\alpha}^X_n$ for all $n \geq n_X$;
      \item $\upsilon^X_{n,m}:\mathcal{C}_0(\mathcal{N}^X_n) \rightarrow \mathcal{C}_0(\mathcal{N}^X_m)$ is $r\Sigma_{k_X + 1}$-elementary for all $m \geq n \geq n_X$;
      \item $\crit(\upsilon^X_{n,m}) \geq \max\{\kappa^X_n,\tilde{\alpha}^X_n + 1\}$ for all $m \geq n \geq n_X$.
  \end{itemize}
\end{lemma}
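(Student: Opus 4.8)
The plan is to run the same three-case construction as in the proof of Lemma~\ref{models}, reading off $n_X$, $k_X$ and the system $\langle \mathcal{N}^X_n,\upsilon^X_{n,m}\rangle$ exactly as there, and to introduce the auxiliary ordinals $\tilde{\alpha}^X_n$ only to absorb the single feature that the cutpoint hypothesis had previously ruled out. In Cases~1 and~2 nothing changes: the model $\mathcal{N}^X_n$ is a fixed sound initial segment of $K$ (respectively a non-dropping iterate that is $(k_X+1)$-sound above some $\rho_{k_X+1}<\lambda_X$), so for $n\geq n_X$ it is already $(k_X+1)$-sound above $\kappa^X_n$; we may simply set $\tilde{\alpha}^X_n=0$ and let $\upsilon^X_{n,m}$ be the identity, whereupon all four clauses are immediate and the fourth reads $\crit(\upsilon^X_{n,m})\geq\kappa^X_n$ as before.

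The whole difficulty is concentrated in Case~3, where $\mathcal{N}^X_n=\mathcal{M}^X_{\eta^X_n}$ and $\upsilon^X_{n,m}=\iota^X_{\eta^X_n,\eta^X_m}$, with $\eta^X_n$ least such that $\crit(E^X_{\eta^X_n})\geq\kappa^X_n$. First I would record, using the Proposition that normal iterations produce no overlaps, that for every $\xi$ with $\xi+1<\eta^X_n$ one has $\gen(E^X_\xi)\leq\crit(E^X_{\xi+1})<\kappa^X_n$; hence only the single extender $E^X_{\bar\eta}$ applied at the last step (in the subcase $\eta^X_n=\bar\eta+1$) can possess generators $\geq\kappa^X_n$, while if $\eta^X_n$ is a limit no such generator occurs at all. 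Thus in the limit subcase the generators used up to $\eta^X_n$ are bounded by $\kappa^X_n$, exactly as in Lemma~\ref{models}, and $\tilde{\alpha}^X_n=0$ works. In the successor subcase the old cutpoint contradiction is replaced by the weak-cutpoint bound: the trivial completion of $E^X_{\bar\eta}\restr\kappa^X_n$ is on the $K_X$-sequence by the initial segment condition, has critical point $\crit(E^X_{\bar\eta})<\kappa^X_n$, and therefore has length $<\tau^X_n=((\kappa^X_n)^+)^{K_X}$, so that $\nu(E^X_{\bar\eta})<\tau^X_n$. Combined with the smallness assumption~(\ref{smallness}), which forces $\gen(E^X_{\bar\eta})<((\crit(E^X_{\bar\eta}))^{++})$, this pins all generators of $E^X_{\bar\eta}$ into the interval $[\kappa^X_n,\tau^X_n)$.

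I would then take $\tilde{\alpha}^X_n$ to be the largest generator of $E^X_{\bar\eta}$ (and, in the limit-type case, $\tilde{\alpha}^X_n:=\nu(E^X_{\bar\eta})$) and argue that $\mathcal{M}^X_{\eta^X_n}$ is $(k_X+1)$-sound above $\kappa^X_n$ relative to $p_{k_X+1}(\mathcal{M}^X_{\eta^X_n})\concat\tilde{\alpha}^X_n$. Since the stretch of the tree from the last drop $\gamma+1$ up to $\eta^X_n$ drops nowhere and begins over a model that is $(k_X+1)$-sound above $\crit(E^X_\gamma)$, the model $\mathcal{M}^X_{\eta^X_n}$ is $(k_X+1)$-sound above $\nu(E^X_{\bar\eta})$; the content of the clause is therefore that the whole interval $[\kappa^X_n,\nu(E^X_{\bar\eta}))$ is recovered from $\kappa^X_n\cup\{p,\tilde{\alpha}^X_n\}$. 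This is the point at which the initial segment condition does the work: each generator of $E^X_{\bar\eta}$ below $\tilde{\alpha}^X_n$ is a generator-supremum whose associated trivial completion is itself indexed on the $K_X$-sequence below $\tau^X_n$, so it is decoded inside $\mathcal{M}^X_{\eta^X_n}$ from $\tilde{\alpha}^X_n$ together with the fragments of $E^X_{\bar\eta}$ concentrated below $\kappa^X_n$. I expect this single-parameter reduction, namely the verification that one top generator rather than a possibly cofinal set of generators suffices to generate the excess, to be the main obstacle, and the place where the argument genuinely uses that $\tau^X_n$ is the \emph{successor} of $\kappa^X_n$ in $K_X$ and not a larger cardinal.

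Finally I would check the remaining clauses. Agreement of $\mathcal{N}^X_n$ with $K_X$ up to $\tau^X_n$, together with $((\kappa^X_n)^+)^{\mathcal{N}^X_n}=\tau^X_n$, follows from the comparison: as the $K_X$-side does not move, all disagreements below $\tau^X_n$ are exhausted by stage $\eta^X_n$, here using that $E^X_{\bar\eta}$ has index $<\tau^X_n$ while $\crit(E^X_{\eta^X_n})\geq\kappa^X_n$ forces the next index above $\tau^X_n$. The maps $\upsilon^X_{n,m}=\iota^X_{\eta^X_n,\eta^X_m}$ are $r\Sigma_{k_X+1}$-elementary because the relevant stretch of the tree lies past the final drop, exactly as in Lemma~\ref{models}. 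For the critical point clause, $\crit(\upsilon^X_{n,m})=\crit(E^X_{\eta^X_n})\geq\kappa^X_n$ by the choice of $\eta^X_n$, and $\crit(E^X_{\eta^X_n})\geq\gen(E^X_{\bar\eta})$ by the no-overlap Proposition; since $\tilde{\alpha}^X_n$ is chosen strictly below the strict supremum $\gen(E^X_{\bar\eta})$ (the limit-type case being settled by the inaccessibility of $\crit(E^X_{\eta^X_n})$ in its model), this yields $\crit(\upsilon^X_{n,m})\geq\max\{\kappa^X_n,\tilde{\alpha}^X_n+1\}$, as required.
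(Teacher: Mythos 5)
Your overall architecture matches the paper's: reuse the three cases of Lemma \ref{models} verbatim with $\tilde{\alpha}^X_n = 0$ wherever the old argument already gives soundness above $\kappa^X_n$, isolate the successor subcase of Case 3 in which $E^X_{\tilde{\eta}^X_n}$ has generators $\geq \kappa^X_n$, bound those generators below $\tau^X_n$ via the weak-cutpoint hypothesis, and take $\tilde{\alpha}^X_n$ to be the largest generator. The agreement, elementarity, and critical-point clauses are also handled as in the paper. However, the one step that carries the new content --- that $\kappa^X_n \cup \{p_{k_X+1}(\mathcal{N}^X_n), \tilde{\alpha}^X_n\}$ generates all of $\mathcal{C}_0(\mathcal{N}^X_n)$, i.e.\ that the single top generator recovers all the others --- is where your proposal has a genuine gap. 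You propose to ``decode'' each generator below $\tilde{\alpha}^X_n$ via the initial segment condition, on the grounds that each such generator is a generator-supremum whose trivial completion is indexed on the $K_X$-sequence. This is not right: a generator $\beta$ of $E^X_{\tilde{\eta}^X_n}$ need not satisfy $\beta = \gen(E^X_{\tilde{\eta}^X_n} \restr \beta)$ (generators can be isolated), so (ISC) does not apply to it, and even where it does apply it is unclear how indexing a fragment on the $K_X$-sequence would place the generator inside the hull over $\kappa^X_n \cup \{p, \tilde{\alpha}^X_n\}$. The paper's argument is much more elementary: form $\tilde{\mathcal{M}} = \ult(\mathcal{M}^X_{\tilde{\eta}^X_n}; E^X_{\tilde{\eta}^X_n} \restr \kappa^X_n \cup \{\tilde{\alpha}^X_n\})$ with canonical embedding $\tilde{\iota}$ into $\mathcal{N}^X_n$; since $\tilde{\alpha}^X_n \in (\kappa^X_n, \tau^X_n)$ lies in $\ran(\tilde{\iota})$ and $\kappa^X_n = \card^{\mathcal{N}^X_n}(\tilde{\alpha}^X_n)$, some surjection of $\kappa^X_n$ onto $\tilde{\alpha}^X_n$ lies in $\ran(\tilde{\iota})$, hence $\tilde{\alpha}^X_n \subset \ran(\tilde{\iota})$ and in particular every other generator does. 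This is exactly the use of ``$\tau^X_n$ is the $K_X$-successor of $\kappa^X_n$'' that you correctly sensed was needed, but the mechanism is a cardinality computation, not (ISC).

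Two smaller points. First, you treat the possibility that the generators above $\kappa^X_n$ have limit type as a live case and patch it with $\tilde{\alpha}^X_n := \nu(E^X_{\tilde{\eta}^X_n})$; the paper instead rules it out outright: if there were no largest generator then $\gen(E^X_{\tilde{\eta}^X_n})$ would be a cardinal of $\mathcal{N}^X_n$ lying strictly between $\kappa^X_n$ and its successor $\tau^X_n$, which is absurd. Your fallback also strains the fourth clause, since without that observation you only get $\crit(E^X_{\eta^X_n}) \geq \gen(E^X_{\tilde{\eta}^X_n})$ rather than the strict inequality $\geq \tilde{\alpha}^X_n + 1$. Second, your derivation of $\gen(E^X_{\tilde{\eta}^X_n}) < \tau^X_n$ from the trivial completion of $E^X_{\tilde{\eta}^X_n} \restr \kappa^X_n$ is a non sequitur as written (that restriction says nothing about generators above $\kappa^X_n$); the correct route is that a generator-supremum $\geq \tau^X_n$ would, via (ISC), put an extender with critical point below $\kappa^X_n$ and length $\geq \tau^X_n$ on the sequence, contradicting that $\kappa^X_n$ is a weak cutpoint.
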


\begin{proof}
  This proof goes through the same cases as the proof of Lemma \ref{models}, in fact, many of the cases will be the same. (In those cases we can take $\tilde{\alpha}^X_n$ to be $0$.) In the interest of time we shall only deal with the case that is unique to this situation.
  
  Let us assume that $\eta_X := \sup(\{\beta < \zeta_X \vert \length(E^X_\beta) < \lambda\})$ is a limit ordinal. Let $\gamma + 1$ be the last drop in the interval $\left(0,\eta_X \right)$. Let $k_X$ be minimal such that $\rho_{k_X + 1}((\mathcal{M}^X_{\gamma_1})^*) \leq \crit(E^X_\gamma)$. Let $n_X$ be minimal such that $\kappa^X_{n_X} \geq \length(E^X_\gamma)$. Let $\eta^X_n < \eta_X$ be minimal such that $\crit(E^X_{\eta^X_n}) \geq \kappa^X_n$ for $n \geq n_X$.
 
 Let then $\mathcal{N}^X_n := \mathcal{M}^X_{\eta^X_n}$ and $\upsilon^X_{n,m} := \iota^X_{\eta^X_n,\eta^X_m}$ for $m \geq n \geq n_X$. Let $n \geq n_X$ be such that $\eta^X_n = \tilde{\eta}^X_n + 1$ and $\gen(E^X_{\tilde{\eta}^X_n}) \geq \kappa_n$. Otherwise the argument will proceed just as in the proof of Lemma \ref{models} (and $\tilde{\alpha}^X_n = 0$).
 
 First note then $\gen(E^X_{\tilde{\eta}^X_n}) < \tau^X_n$ as otherwise $\kappa^X_n$ could not be a weak cutpoint by the initial segment condition. Moreover, it then follows that $E^X_{\tilde{\eta}^X_n}$ has a largest generator as otherwise $\gen(E^X_{\tilde{\eta}^X_n}) \in \left(\kappa^X_n,\tau^X_n\right)$ must be a cardinal in $\mathcal{N}^X_n$.
 
 Let $\tilde{\alpha}^X_n$ be that largest generator. We will be done if we can show that $\kappa^X_n \cup \{\tilde{\alpha}^X_n\}$ generates the whole ultrapower. Let then $\tilde{\mathcal{M}} := \ult(\mathcal{M}^X_{\tilde{\eta}^X_n}; E^X_{\tilde{\eta}^X_n} \restr \kappa^X_n \cup \{\tilde{\alpha}^X_n\})$ and $\tilde{\iota}:\mathcal{C}_0(\tilde{\mathcal{N}}) \rightarrow \mathcal{C}_0(\mathcal{N}^X_n)$ be the canonical embedding.
 
 We have that $\tilde{\alpha}^X_n \in \left(\kappa^X_n,\tau^X_n\right)$ is in the range of $\tilde{\iota}$, thus so is $\kappa^X_n = \card^{\mathcal{N}^X_n}(\tilde{\alpha}^X_n)$ and some surjection from $\kappa^X_n$ on to $\tilde{\alpha}^X_n$. Then $\tilde{\alpha}^X_n \subset \ran{\tilde{\iota}}$ and thus so are all of the other generators of $E^X_{\tilde{\eta}^X_n}$.
\end{proof}

\begin{remark}
     Note that in the ``special" case of Lemma \ref{modelsvar} unlike Remark \ref{modelsrmk} $\mathcal{N}^X_n$ is not equal to the mouse $\mathcal{P}_\gamma$ (where $\kappa_n = \aleph^{K_X}_\gamma$) from \cite{cover}, but it is equal to $\mathcal{Q}_\gamma$. To see this we must first realize that $\mathcal{P}_\gamma$ must be an initial segment of $\mathcal{M}^X_{\tilde{\eta}^X_n}$ as in this case $E^X_{\tilde{\eta}^X_n}$ has generators $\groessergleich\kappa_n$.
     
     As the Dodd projectum of $E^X_{\tilde{\eta}^X_n}$ is below $\kappa_n$ we have, in fact, $\mathcal{P}_\gamma = \mathcal{M}^X_{\tilde{\eta}^X_n} \vert \length(E^X_{\tilde{\eta}^X_n})$. Note though that lifting this mouse by $\sigma_X$ would create a proto mouse. Hence we must move to the mouse $\mathcal{Q}_\gamma$ which is formed by applying the extender $E^X_{\tilde{\eta}^X_n}$ using the usual iteration tree rules. Hence the resulting mouse must be equal to $\mathcal{M}^X_{\tilde{\eta}^X_n + 1} = \mathcal{N}^X_n$.
\end{remark}

\begin{proof}[Proof of Theorem \ref{dominikthmthree}]
Let $\<\mathcal{N}^X_n: n_X \leq n < \omega\>$,$\<\upsilon^X_{n,m}:n_X \leq n \leq m < \omega\>$ and $\<\tilde{\alpha}^X_n:n_X \leq n < \omega\>$ as in the lemma. We will find some $\alpha_X$ and $\vec{\alpha}_X := \<\alpha^X_n: n_X \leq n < \omega\>$ such that $\sup(X \cap \tau_n) = f^{\vec{\kappa},K,\vec{\alpha}}_{\alpha_X}(n)$ for all $n \geq n_X$. In fact, $\alpha^X_n = \sigma_X(\tilde{\alpha}^X_n)$ will do. A priori $\vec{\alpha}_X$ will depend on $X$ but we will be able to deal with that by pressing down just as in the proof of Theorem \ref{dominikthmtwo}.

We can mostly proceed as in the proof of Theorem \ref{dominikthmone}. We will form $\mathcal{O}^X_n$ and $\mathcal{O}_X$ as before, and generate embeddings between them by lifting $\upsilon^X_n$. Note that $\upsilon^X_n$ will not move $\tilde{\alpha}^X_n$ as iteration maps do not move generators. So neither will its lift move $\alpha^X_n$. Thus $\mathcal{C}_0(\mathcal{O}^X_n)$ will be isomorphic to $\hull^{\mathcal{O}_X}_{k_X + 1}(\kappa_n \cup \{p_{k_X + 1}(\mathcal{O}_X)\concat \alpha^X_n\})$ as required.

As before the fact that the phalanx $\<\<K,\mathcal{O}^X_n\>,\kappa_n\>$ is iterable follows from the covering lemma, noticing that in this case we might have to consider the mouse $\mathcal{Q}_\beta$ not $\mathcal{P}_\beta$ as explained above. Fortunately, this does not change anything about the rest of the argument. We skip further detail.
\end{proof}

\begin{proof}[Proof of Theorem \ref{thmthree}]
We have different cases depending on if the $\kappa_i$ are limit cardinals or successor cardinals in the core model. Let us first assume that all $\kappa_i$ share a type. If that shared type is limit cardinals, then we can use Theorem \ref{dominikthmtwo} to finish. If that type is successor cardinals we have two cases: if $\bar{\kappa_i}$ is the $K$-predecessor of $\kappa_i$ is measurable, then it must be a cutpoint by the smallness assumption therefore we can use Theorem \ref{dominikthmone} to finish; if it is not, then it must be a weak cutpoint thus we can use Theorem \ref{dominikthmthree} to finish.

In cases of mixed type, divide the sequence into three parts of pure type. Each of these parts do have a scale by the above. These individual scales can then be integrated. This works as individual elements of the different scales can be tied to some common ordinal $\kleiner\lambda^+$.
\end{proof}

\section{Open questions}\label{sec:open}

We conclude this work with a discussion on further possible developments, and open questions.
\begin{enumerate}

    \item 
    Consider the following natural strengthening of the ABSP with respect sequence of regular cardinals $\vec{\tau} = \la \tau_n \mid n < \omega\ra$ with $\lambda = \cup_n \tau_n$: For every sufficiently large regular cardinal $\theta$ and internally approachable structure $N \elem (H_\theta,\in,\vec{\tau})$, there is some $m < \omega$, so that
    for every strictly increasing sequence $d_0,\dots,d_k \in \omega\setminus m$ and $F \in N$, 
    $F : [\lambda]^k \to \lambda$, if
    \[
    F(\chi_N(\tau_{d_1}),\dots,\chi_N(\tau_{d_k})) < \tau_{d_0}
    \]
then 
\[
    F(\chi_N(\tau_{d_1}),\dots,\chi_N(\tau_{d_k})) \in N.
    \]

Is it consistent?
    
\item  
We saw in Section \ref{sec:DownToAlephOmega} that from the same large cardinal assumptions of Theorem \ref{thm:ABSPnew}, 
it is consistent that ABSP holds with respect to a sequence $\la \tau_n \mid n <\omega$ so that $\tau_n = \aleph_{2n}$ for all $n < \omega$.
Is ABSP consistent with respect to cofinite sequence of the $\aleph_n$'s?

\item  The definitions of Tree-like scales, Essentially Tree-like scales, ASFP, and ABFP naturally extend to uncountable sequences of cardinals 
$\la \tau_i \mid i < \rho\ra$, $\rho > \aleph_0$ regular. 
Are those principles consistent? If so, what is their consistency strength?

\item  Another natural extension of the principles AFSP and ABFP, is to require the appropriate principle to hold for any elementary substrucute $N \elem (H_\theta\in \vec{\tau})$. 
Is it consistent?



\item Is there a version of Theorem \ref{thmtwo} for Neeman-Steel long extender mice?

\item Pereira showed in \cite{Luis2} that it consistent relative to the existence of a supercompact cardinal that there exist products $\prod\limits_{n < \omega} \tau_n$ carrying a continuous tree-like scale of length greater than $\sup(\<\tau_n\>_n)^+$. Can the same be achieved from a weaker large cardinal assumptions at the level of strong cardinals? 

\end{enumerate}

\bibliographystyle{plain}
\bibliography{bibli}

\begin{thebibliography}{10}

\bibitem{BN-TS}
Omer Ben{-}Neria.
\newblock On singular stationarity ii.
\newblock {\em Journal of Symbolic Logic}, 84(1):320--342, 2019.

\bibitem{HOD2}
Omer Ben-Neria, Moti Gitik, Itay Neeman, and Spencer Unger.
\newblock On the powersets of singular cardinals in hod.
\newblock {\em Proceedings of the American Mathematical Society}, 148:1777 --
  1789, 2020.

\bibitem{HOD1}
Omer Ben{-}Neria and Spencer Unger.
\newblock Homogeneous changes in cofinalities with applications to hod.
\newblock {\em Journal of Mathematical Logic}, 17(2):1750007, 2017.

\bibitem{Sean}
Sean~D. Cox.
\newblock Covering theorems for the core model, with an application to
  stationary set reflection.
\newblock {\em Annals of Pure and Applied Logic}, 161:66 -- 93, 2009.

\bibitem{Cummings}
James Cummings.
\newblock Continuous tree like scales.
\newblock {\em Central European Journal of Mathematics}, 8:314 -- 318, 2010.

\bibitem{CFM}
James Cummings, Matthew Foreman, and Menachem Magidor.
\newblock Squares, scales and stationary reflection.
\newblock {\em Journal of Mathematical Logic}, 01(01):35--98, 2001.

\bibitem{DJS}
Hans-Dieter Donder, R.~B. Jensen, and L.~Stanley.
\newblock Condensation-coherent global square systems.
\newblock In Anil Nerode and Richard~A. Shore, editors, {\em Recursion Theory},
  1985.

\bibitem{ForemanMagidor-DefinableCounterOfCH}
Matthew Foreman and Menachem Magidor.
\newblock Large cardinals and definable counterexamples to the continuum
  hypothesis.
\newblock {\em Annals of Pure and Applied Logic}, 76(1):47 -- 97, 1995.

\bibitem{ForemanMagidorShelah}
Matthew Foreman, Menachem Magidor, and Saharon Shelah.
\newblock Martin's maximum, saturated ideals, and non-regular ultrafilters.
  part i.
\newblock {\em Annals of Mathematics}, 127(1):1--47, 1988.

\bibitem{GitikSEFII}
Moti Gitik.
\newblock Short extenders forcing ii.
\newblock preprint, online at \url{http://www.math.tau.ac.il/~gitik/short
  extenders forcing 2-2017.pdf}.

\bibitem{Gitik-Iter}
Moti Gitik.
\newblock Changing cofinalities and the nonstationary ideal.
\newblock {\em Israel Journal of Mathematics}, 56:280--314, 1986.

\bibitem{Gitik}
Moti Gitik.
\newblock On a question of pereira.
\newblock {\em Archive for Mathematical Logic}, 47:53 -- 64, 2008.

\bibitem{Gitik-HB}
Moti Gitik.
\newblock Prikry-type forcings.
\newblock In Matthew Foreman and Akihiro Kanamori, editors, {\em Handbook of
  Set Theory}, pages 1351--1447. Springer Netherlands, Dordrecht, 2010.

\bibitem{GitikSEFI}
Moti Gitik.
\newblock Short extenders forcing i.
\newblock {\em Journal of Mathematical Logic}, 12(2), 2013.

\bibitem{GitikMagidorSCHrevosed}
Moti Gitik and Menachem Magidor.
\newblock The singular cardinal hypothesis revisited.
\newblock In Haim Judah, Winfried Just, and Hugh Woodin, editors, {\em Set
  Theory of the Continuum}, pages 243 -- 279, 1992.

\bibitem{Jech}
Thomas Jech.
\newblock {\em Set Theory}.
\newblock Springer Verlag, 3rd edition, 2002.

\bibitem{NewFS}
Ronald Jensen.
\newblock A new fine structure.
\newblock handwritten notes, online at
  \url{https://www.mathematik.hu-berlin.de/~raesch/org/jensen.html}.

\bibitem{knomble}
Ronald Jensen and John Steel.
\newblock K without the measurable.
\newblock {\em The Journal of Symbolic Logic}, 78(3):708 -- 734, 2013.

\bibitem{Koepke}
Peter Koepke.
\newblock The consistency strength of the free subset problem for
  $\omega_\omega$.
\newblock {\em The Journal of Symbolic Logic}, 49(4):1198 -- 1204, 1984.

\bibitem{MitSchim}
William Mitchell and Ernest Schimmerling.
\newblock Covering at limit cardinals of {K}.
\newblock submitted, online preprint at
  \url{http://www.math.cmu.edu/~eschimme/Measurables-in-K.pdf}.

\bibitem{cover}
William Mitchell, Ernest Schimmerling, and J~Steel.
\newblock The covering lemma up to a woodin cardinal.
\newblock {\em Annals of Pure and Applied Logic}, 84(2):219 -- 255, 1997.

\bibitem{covernoclosure}
William Mithcell and Ernest Schimmerling.
\newblock Weak covering without countable closure.
\newblock {\em Mathematical Research Letters}, 2:595 -- 609, 1995.

\bibitem{LuisThesis}
Luis Pereira.
\newblock {\em Combinatoire des cardinaux singuliers et structures PCF}.
\newblock PhD thesis, University of Paris VII, 2007.

\bibitem{Luis1}
Luis Pereira.
\newblock The pcf conjecture and large cardinals.
\newblock {\em The Journal of Symbolic Logic}, 73(2):674 -- 688, 2008.

\bibitem{Luis2}
Luis Pereira.
\newblock Morasses, semimorasses and supercompact ultrafilters.
\newblock {\em Acta Mathematica Hungarica}, 152:257 -- 268, 2017.

\bibitem{ShelahCardArith}
Saharon Shelah.
\newblock {\em Cardinal Artihmetic}.
\newblock Oxford Science Publications, 1994.

\bibitem{ShelahFree}
Saharon Shelah.
\newblock Pcf and infinite free subsets in an algebra.
\newblock {\em Archive for Mathematical Logic}, 41:321--359, 2002.

\bibitem{CMIP}
John Steel.
\newblock {\em The Core Model Iterability Problem}, volume~8 of {\em Lecture
  Notes in Logic}.
\newblock Springer Verlag, 1996.

\bibitem{JohnHST}
John Steel.
\newblock An outline of inner model theory.
\newblock In M.~Foreman and A.~Kanamori, editors, {\em Handbook of Set Theory},
  pages 1595 -- 1684. Springer Netherlands, 2010.

\bibitem{FSIT}
John Steel and William Mitchell.
\newblock {\em Fine Structure and Iteration Trees}, volume~3 of {\em Lecture
  Notes in Logic}.
\newblock Springer Verlag, 1994.

\bibitem{Welch}
Philip Welch.
\newblock A question on free subsets in internally approachable models.
\newblock preprint.

\bibitem{Zeman}
Martin Zeman.
\newblock {\em Inner Models and Large Cardinals}.
\newblock De Gruyter, 2002.

\end{thebibliography}

\end{document}